\documentclass[11pt,a4paper]{article}
\usepackage[T1]{fontenc}
\usepackage[utf8]{inputenc}
\usepackage{lmodern}
\usepackage{geometry}
\usepackage{amsmath,amssymb,amsfonts,latexsym}
\usepackage{amsthm}
\usepackage{mathrsfs}
\usepackage{booktabs}
\usepackage{float}

\usepackage[colorlinks=true,linkcolor=blue,citecolor=blue,urlcolor=blue]{hyperref}

\numberwithin{equation}{section}
\theoremstyle{plain}
\newtheorem{theorem}{Theorem}[section]
\newtheorem{lemma}[theorem]{Lemma}
\newtheorem{proposition}[theorem]{Proposition}
\newtheorem{corollary}[theorem]{Corollary}
\theoremstyle{definition}

\newtheorem{definition}[theorem]{Definition}

\theoremstyle{remark}
\newtheorem{remark}[theorem]{Remark}

\newcommand{\R}{\mathbb{R}}

\newcommand{\Hh}{\mathbb{H}}
\newcommand{\B}{\mathbb{B}}

\newcommand{\gradH}{\nabla_{\Hh}}

\title{Topological Effects on Bubbling in the Critical Dirichlet Problem on Hyperbolic Domains: $3\leq N \leq5$}
\author{Tapendu Rana, Michael Ruzhansky, Zhipeng Yang\thanks{Corresponding author: yangzhipeng326@163.com.}}
\date{}

\AtEndDocument{%
	\par
	\bigskip
	\bigskip
	\noindent
	\textbf{Tapendu Rana:}\\[0.2em]
	\textsc{Department of Mathematics: Analysis, Logic and Discrete Mathematics, Ghent University, Belgium}\\[0.3em]
	\textit{E-mail address}: \href{mailto:tapendurana@gmail.com}{\nolinkurl{tapendurana@gmail.com}}\\[1.5em]
	\noindent
	\textbf{Michael Ruzhansky:}\\[0.2em]
	\textsc{Department of Mathematics: Analysis, Logic and Discrete Mathematics, Ghent University, Belgium}\\
	\textsc{School of Mathematical Sciences, Queen Mary University of London, United Kingdom}\\[0.3em]
	\textit{E-mail address}: \href{mailto:michael.ruzhansky@ugent.be}{\nolinkurl{michael.ruzhansky@ugent.be}}\\[1.5em]
	\noindent
	\textbf{Zhipeng Yang:}\\[0.2em]
	\textsc{Department of Mathematics, Yunnan Normal University, Kunming, China}\\
	\textsc{Department of Mathematics: Analysis, Logic and Discrete Mathematics, Ghent University, Belgium}\\[0.3em]
	\textit{E-mail address}: \href{mailto:yangzhipeng326@163.com}{\nolinkurl{yangzhipeng326@163.com}}%
}

\begin{document}
	\maketitle
	
	\begin{abstract}
		Let \(\Omega\Subset\mathbb H^N\), \(N\in\{3,4,5\}\), be a bounded connected \(C^2\) domain. We prove that the pure critical Dirichlet problem
		\[
		-\Delta_{\mathbb H}u=u^{\frac{N+2}{N-2}}
		\quad\text{in }\Omega,
		\qquad
		u=0
		\quad\text{on }\partial\Omega
		\]
		admits a positive solution whenever \(H_d(\Omega;\mathbb F_2)\neq0\) for some \(1\le d\le N-1\). This gives a hyperbolic Bahri-Coron theorem for \(3\le N\le5\) under \(C^2\) boundary regularity. Under conformal reduction, the hyperbolic geometry produces a positive potential and leads to dimension-dependent bubbling mechanisms. In dimension three, the required energy drop follows from the balance between diagonal corrections and pair interactions at fixed large multiplicity. In dimensions four and five, it is obtained at the matched scale through a normalized defect estimate and an all-pairs source-transfer bound. A unified Thom-barycenter construction converts these analytic estimates into the topological contradiction. Thus nontrivial domain topology forces existence despite critical loss of compactness and the additional geometric potential.
	\end{abstract}
	
	\medskip
	\noindent\textbf{Keywords.} Hyperbolic space; critical Sobolev exponent; topological bubbling.
	
	\medskip
	\noindent\textbf{2020 Mathematics Subject Classification.} Primary 35J91; Secondary 35B33, 58E30.

	\section{Introduction and main theorem}

	Let \(D\subset\mathbb R^N\), \(N\ge3\), be a smooth bounded domain and set
	\[
	2^*=\frac{2N}{N-2}.
	\]
	The critical Dirichlet equation
	\[
	-\Delta w=w^{2^*-1}\quad\text{in }D,\qquad
	w>0\quad\text{in }D,\qquad
	w=0\quad\text{on }\partial D
	\]
	lies at the endpoint of the Sobolev embedding \(H^1_0(D)\hookrightarrow L^{2^*}(D)\).  The embedding is continuous but not compact, and the best quotient on \(D\) equals the Euclidean Sobolev constant
	\[
	S=\inf_{0\ne z\in D^{1,2}(\mathbb R^N)}
	\frac{\displaystyle\int_{\mathbb R^N}|\nabla z|^2\,dx}
	{\displaystyle\left(\int_{\mathbb R^N}|z|^{2^*}\,dx\right)^{2/2^*}}.
	\]
	Aubin and Talenti identified the extremals as the translation-dilation family \cite{Aubin1976,Talenti1976}:
	\[
	U_{\lambda,\xi}(x)
	=c_N\left(\frac{\lambda}
	{1+\lambda^2|x-\xi|^2}\right)^{\frac{N-2}{2}},
	\qquad \lambda>0,\quad \xi\in\mathbb R^N.
	\]
	Cutoffs of these functions concentrate inside \(D\) and form minimizing sequences.  The bounded-domain infimum equals \(S\), but it is not attained. The concentration-compactness principle of Lions and the global compactness theorem of Struwe make this defect precise: bounded critical Palais-Smale sequences may split into a weak limit and finitely many rescaled entire profiles; for nonnegative sequences, the classification of Caffarelli, Gidas, and Spruck identifies these profiles as positive Talenti bubbles and gives quantized energy loss \cite{Lions84I,Lions84II,LionsLimit1985,Struwe1984,CGS1989}.
	
	Existence at the critical exponent is therefore controlled by mechanisms that prevent the min-max construction from disappearing into bubbles.  One such mechanism is a favorable lower-order term.  Brezis and Nirenberg studied
	\[
	-\Delta w=\mu w+w^{2^*-1}\quad\text{in }D,\qquad w=0\quad\text{on }\partial D,
	\]
	and proved, in particular, existence for \(N\ge4\) and \(0<\mu<\lambda_1(D)\); the three-dimensional case exhibits an additional threshold \cite{BN83}.  A different mechanism is the geometry of the domain. The Pohozaev identity rules out nontrivial solutions of the pure equation on star-shaped domains \cite{Poh65}, whereas a concentric annulus admits a positive radial solution \cite{KazdanWarner1975}.  Coron extended this observation to nonsymmetric domains with a sufficiently small hole \cite{Coron1984}.  Bahri and Coron then proved the general homological criterion: if
	\[
	H_d(D;\mathbb F_2)\ne0
	\qquad\text{for some }d>0,
	\]
	then the pure critical Dirichlet problem has a positive solution \cite{BahriCoron1988}.  Their finite-barycenter construction propagates a nonzero topological class through successive bubbling levels until a high-multiplicity energy estimate forces a contradiction.
	
	Bahri's critical points at infinity theory gives a Morse theoretic interpretation of bubbling orbits \cite{Bah89}; see also the contemporaneous survey \cite{Brezis1986}.  Green and Robin functions determine the leading self-action and interaction of concentrating profiles, and hence the topology contributed by multi-bubble configurations \cite{Rey1990,BLR95}.  Related barycenter methods were developed for critical equations on closed Riemannian manifolds \cite{BB96} and were subsequently adapted to other critical geometries, including the Heisenberg group and fractional Yamabe-type problems \cite{CU01,ACH18}.  The interaction estimate needed to recover topology is therefore operator- and geometry-dependent.
	
	The critical exponent is also central to the Yamabe problem, where the conformal Laplacian, rather than the Laplace-Beltrami operator alone, has the appropriate covariance under changes of metric \cite{Yam60,Aub76,Sch84,Aubin1998,Hebey1999}.  This distinction is decisive here: the pure hyperbolic Laplacian is not conformally covariant, and its Euclidean reduction retains a nonzero geometric potential.
	
	The hyperbolic Brezis-Nirenberg family exhibits this dependence.  On a bounded domain \(\Omega\Subset\mathbb H^N\), consider
	\[
	-\Delta_{\mathbb H}u=\mu u+|u|^{2^*-2}u,\qquad
	u\in H^1_0(\Omega;g_{\mathbb H}).
	\]
	In the Poincar\'e ball model, with \(\rho(x)=2/(1-|x|^2)\) and \(v=\rho^{(N-2)/2}u\), this equation becomes
	\[
	-\Delta v+
	\left(\frac{N(N-2)}4-\mu\right)\rho(x)^2v
	=|v|^{2^*-2}v.
	\]
	Thus the conformal value \(\mu_0=N(N-2)/4\) separates two analytically different regimes.  When \(\mu>\mu_0\), the reduced lower-order term has the favorable negative sign familiar from the Euclidean Brezis-Nirenberg problem.  For the pure Laplace-Beltrami equation, however, \(\mu=0\) and the reduced potential is strictly positive.
	
	The bounded domain theory with a spectral parameter was initiated by Stapelkamp \cite{Stapelkamp2002,Stapelkamp2003}.  For \(N\ge4\), existence holds in the shifted interval \(\mu_0<\mu<\lambda_1(-\Delta_{\mathbb H},\Omega)\), while a Pohozaev-type obstruction gives nonexistence on hyperbolically star-shaped domains when \(\mu\le\mu_0\).  In dimension three, Stapelkamp identified a geodesic-ball solution gap and uniqueness above its threshold; Benguria subsequently characterized the radial threshold for the continuous dimension range \(2<n<4\) \cite{Stapelkamp2002,Stapelkamp2003,Benguria2016}. Higher-order analogues for GJMS operators on bounded domains and on the complete hyperbolic space were established by Li, Lu, and Yang \cite{LiLuYang2022}.  More recently, Ghosh, Kumar, and Rana proved spectral multiplicity on smooth bounded hyperbolic domains for \(N\ge4\) and \(\mu>\mu_0\) \cite{GhoshKumarRana2026}.  These results develop the favorable spectral-perturbation regime, but they do not address topology-driven existence for the pure equation \(\mu=0\).
	
	A complementary literature concerns equations on the complete hyperbolic space.  Mancini and Sandeep classified positive finite-energy entire solutions and proved uniqueness modulo hyperbolic isometries in the critical existence range \cite{ManciniSandeep2008}.  Bhakta and Sandeep showed that critical Palais-Smale sequences may lose compactness in two different ways: through escape by hyperbolic isometries and through localized Euclidean Aubin-Talenti concentration \cite{BhaktaSandeep2012}.  Ganguly and Sandeep developed the corresponding sign-changing theory \cite{GangulySandeep2014}. For the forced variable coefficient whole space problem, Bhakta, Ganguly, Gupta, and Sahoo proved a global compactness theorem and obtained existence and multiplicity results \cite{BhaktaGangulyGuptaSahoo2025}.  For the corresponding homogeneous variable coefficient problem, they established positive-solution existence under energy-ordering and flatness hypotheses \cite{BhaktaGangulyGuptaSahoo2024}.  Sharp one-bubble and multi-bubble stability estimates for the Poincar\'e-Sobolev inequality were subsequently obtained in \cite{BhaktaGangulyKarmakarMazumdar2025a, BhaktaGangulyKarmakarMazumdar2025b}.  On a fixed \(\Omega\Subset\mathbb H^N\), the isometric escape channel is absent, but the positive geometric potential and Euclidean concentration remain.
	
	We study precisely this bounded, pure critical regime:
	\begin{equation}\label{eq1.1}
		\begin{cases}
			-\Delta_{\mathbb H}u=u^{2^*-1} &\text{in }\Omega,\\
			u>0 &\text{in }\Omega,\\
			u=0 &\text{on }\partial\Omega,
		\end{cases}
	\end{equation}
	where \(\Omega\) is bounded, connected, and of class \(C^2\).  The conformal reduction gives
	\begin{equation}\label{eq1.2}
		v=\rho^{\frac{N-2}{2}}u,\qquad
		-\Delta v+a(x)v=v^{2^*-1},\qquad
		a(x)=\frac{N(N-2)}{(1-|x|^2)^2}>0.
	\end{equation}
	The associated quotient
	\[
	\mathcal S_a(v)=
	\frac{\displaystyle\int_\Omega\bigl(|\nabla v|^2+a(x)v^2\bigr)\,dx}
	{\displaystyle\left(\int_\Omega|v|^{2^*}\,dx\right)^{2/2^*}}
	\]
	has infimum \(S\), but the infimum is not attained: positivity of \(a\) gives \(\mathcal S_a(v)>S\) for every nonzero \(v\), while concentrating Talenti bubbles approach \(S\).  Consequently the existence argument must go beyond direct minimization.  More importantly for the Bahri-Coron construction, each projected bubble carries a positive self-correction that must be overcome by its interaction with the other bubbles.
	
	The Euclidean positive potential literature contains two especially close comparisons.  For \(N\ge4\), \(x_0\in\Omega\), nonnegative \(\bar a\in L^{N/2}(\Omega)\), and nonzero nonnegative \(\alpha\in L^{N/2}(\mathbb R^N)\), Passaseo proved existence for potentials of the form
	\[
	a_\Lambda(x)=\bar a(x)
	+\Lambda^2\alpha\bigl(\Lambda(x-x_0)\bigr),
	\]
	when \(\Lambda\) is sufficiently large.  If \(\|\alpha\|_{L^{N/2}}<S(2^{2/N}-1)\), the cited theorem gives a second positive solution \cite{Passaseo1996}. These results do not cover the fixed geometric potential in \eqref{eq1.2}. In dimension three, Aldawood and Ndiaye proved existence on noncontractible bounded domains with connected smooth boundary for \(-\Delta+q\), where \(q\) is smooth and bounded \cite[Theorem~1.1]{AldawoodNdiaye2025}. The theorem below allows a \(C^2\) boundary of arbitrary connectivity in dimension three and establishes the corresponding result in dimensions four and five.  To the best of our knowledge, the present work provides the first unified topology-driven existence framework for the fixed geometric potential \eqref{eq1.2} in dimensions \(N=3,4,5\).
	
	The main result is the following.
	
	\begin{theorem}\label{Thm1.1}
		Let \(N\in\{3,4,5\}\), and let \(\Omega\Subset\mathbb H^N\) be a bounded connected \(C^2\) domain. If
		\[
		H_d(\Omega;\mathbb F_2)\ne0
		\qquad\text{for some }1\le d\le N-1,
		\]
		then \eqref{eq1.1} admits a positive weak solution \(u\in H^1_0(\Omega;g_{\mathbb H})\).
	\end{theorem}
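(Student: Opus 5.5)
We argue by contradiction in the conformally reduced Euclidean setting \eqref{eq1.2}. Since \(\Omega\Subset\mathbb H^N\) is identified with a bounded \(C^2\) domain compactly contained in the unit ball, \(a\) is smooth, bounded and positive on \(\overline\Omega\). Positive solutions of \eqref{eq1.1} correspond to positive critical points of
\[
J(v)=\frac{\int_\Omega(|\nabla v|^2+a v^2)}{\bigl(\int_\Omega |v|^{2^*}\bigr)^{2/2^*}}
\]
on \(H^1_0(\Omega)\). We restrict \(J\) to the positive cone \(\Sigma^+=\{v\ge0,\ |v|_{2^*}=1\}\) and assume that no critical point exists. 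The Struwe-type global compactness in bounded domains, combined with the fact that the reduced problem on \(\mathbb R^N\) has only Talenti bubbles as positive profiles, then shows that Palais–Smale sequences fail only at the levels \(k^{2/N}S\), \(k\ge1\). Indeed, the weak limit must vanish by the nonexistence assumption, and the potential \(a\) is invisible at concentration scales. Hence the deformation lemma retracts each sublevel \(\{J<(k+1)^{2/N}S\}\) onto a neighborhood of \(V(k,\varepsilon)\), the set of functions close to sums of \(k\) concentrated projected bubbles \(\sum\alpha_iPU_{\lambda_i,x_i}\). The barycenter map \(\beta_k\) then sends such a neighborhood into the space \(B_k(\Omega)\) of formal \(k\)-barycenters, that is, the weighted configurations.

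Next comes the Bahri–Coron topological induction. Take a nonzero class \([\gamma]\in H_d(\Omega;\mathbb F_2)\), represented by a cycle, possibly after thickening, in a compact subset \(\Omega'\Subset\Omega\). Using the Thom isomorphism for the pairs \((B_k,B_{k-1})\) and \(\mathbb F_2\) coefficients, we build inductively nontrivial relative classes \(\gamma_k\in H_{kd+k-1}(B_k(\Omega'),B_{k-1}(\Omega'))\), in the form of joins of \(\gamma\) with simplices of weights. These are realized by explicit maps
\[
f_k(t,x)=\frac{\sum_i t_iPU_{\lambda,x_i}}{\bigl|\sum_i t_iPU_{\lambda,x_i}\bigr|_{2^*}}
\]
with a common large \(\lambda\). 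Their boundaries fall into lower sublevels by induction, and the absence of critical points lets us push them down level by level. This reduces the proof to the following energy estimate: for some fixed \(k\), after choosing \(\lambda\), we need
\[
\sup J\circ f_k<(k+1)^{2/N}S,
\]
and the boundary strata must satisfy the corresponding lower-level bounds. With this estimate, \(\beta_k\) sends a null-homologous chain to \(\gamma_k\ne0\), which is a contradiction.

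The main obstacle is the energy expansion of \(J\) on \(f_k\). Here the positive potential contributes a diagonal penalty of order \(\lambda^{-2}\) for \(N=5\), \(\lambda^{-2}\log\lambda\) for \(N=4\), and \(\lambda^{-1}\) for \(N=3\), on top of the Robin-function terms \(H(x_i,x_i)\lambda^{2-N}\). Against this, the interactions \(\varepsilon_{ij}\sim(\lambda^2|x_i-x_j|^2)^{-(N-2)/2}\) lower the energy when weights are equal, and near-equal weights must be controlled. I would proceed dimension by dimension. For \(N=3\), all terms are of order \(\lambda^{-1}\), so fix \(k\) large: the negative pair sum \(\sum_{i\ne j}G(x_i,x_j)\) grows like \(k^2\) over \(\Omega'\), while the diagonal corrections grow like \(k\), so the interaction dominates at fixed large multiplicity. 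For \(N=4,5\), match the scale so that the distances satisfy \(|x_i-x_j|\approx\delta\) with \(\lambda\delta\) chosen against the \(a\)-correction, i.e. a normalized defect in which the penalty \(\|a\|_\infty\lambda^{-2}(\log\lambda)\) is absorbed by \(\varepsilon_{ij}\). This requires a uniform all-pairs bound on the source term \(\int aPU_iPU_j\) transferred to the interaction. The boundary strata, where some weight \(t_i\to0\) or two points coalesce, are handled by the inductive hypothesis and the standard observation that coalescing bubbles drop to level \(k^{2/N}S\) after renormalization. I expect the \(N=4,5\) balance, with its logarithmic and cross-term control, to be the delicate step.
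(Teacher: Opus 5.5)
\paragraph{Gap 1: the target inequality and the test functions.}
Your reduction aims at the wrong inequality. The bound $\sup J\circ f_k<(k+1)^{2/N}S$ only makes $f_k$ a map into the $k$-th sublevel. At fixed $k$ this holds automatically for large $\lambda$, so it yields no contradiction. What annihilates the nonzero relative class is the strict drop $\sup J\circ f_k<k^{2/N}S$, i.e.\ $f_k(B_k)$ lies in the $(k-1)$-st sublevel (Proposition~\ref{Prop5.19}).

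The test functions you use cannot deliver that drop. With ordinary Dirichlet projections $PU_{\lambda,x_i}$, the potential does more than add a diagonal penalty. The cross terms satisfy $\int_\Omega a\,PU_iPU_j\approx c\lambda^{2-N}\int_\Omega a\,G_\Omega(\cdot,x_i)G_\Omega(\cdot,x_j)$, which is exactly the order of the Green interaction in every dimension. At equal weights, the net energy-lowering pair coefficient is therefore proportional to $G_\Omega(x_i,x_j)-\int_\Omega G_\Omega(x_i,z)a(z)G_\Omega(z,x_j)\,dz$. This is a first Born approximation of $G_a$ and is not sign-definite. Conformally, it is $\phi(x_i)\phi(x_j)$ times the kernel of $R-c_0R^2$, where $R$ is the Dirichlet inverse of $-\Delta_{\mathbb H}-c_0$ and $c_0=N(N-2)/4$. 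On all of $\mathbb H^3$ this kernel equals $\frac{e^{-r/2}}{4\pi\sinh r}\bigl(1-\frac34 r\bigr)$, which is negative for $r>4/3$. So deep inside hyperbolically large domains, well-separated bubbles raise the quotient. Your $N=3$ count (pairs $\sim k^2$ beat diagonals $\sim k$) is then unjustified: these cross terms are leading order, not an error to be transferred.

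The paper builds its test maps from the $L_a$-projections $P_a\delta$ instead. For these, $\langle P_a\delta_i,P_a\delta_j\rangle_a=\sigma_N\int_\Omega\delta_i^{p-1}P_a\delta_j$. The pair coefficient is exactly $G_a>0$, bounded below by $g_K$ on $K$, and the diagonal coefficient is the $L_a$-Robin function $R_a$. This is what makes the choice $(m-1)g_K>\max_K R_a$ in \eqref{eq5.mchoice} possible.

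\paragraph{Gap 2: dimensions four and five.}
For $N=4,5$ the scheme ``fix $k$, then take $\lambda$ large'' fails outright. One bubble costs $\lambda^{-2}\log\lambda$ (resp.\ $\lambda^{-2}$), while one separated pair gains only $\lambda^{-2}$ (resp.\ $\lambda^{-3}$). You also cannot impose $|x_i-x_j|\approx\delta$: the supremum runs over all of $B_k(K)$, and a nontrivial cycle cannot be shrunk.

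The gain must come from the number of pairs. The paper couples multiplicity and scale through $\lambda_n=An^{2/(N-2)}$. Then the $n(n-1)$ pairs produce an order-one defect via the convexity inequality of Lemma~\ref{Lem5.7}. Meanwhile the exact normalization $\|V\|_a^2=n$ and the source-transfer bound of Lemma~\ref{Lem5.8} keep the total projection error at $Cn\varepsilon_N(\lambda_n)\to0$, uniformly in $n$ (Proposition~\ref{Prop5.10}).

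This creates a further conflict your plan does not address. The topological induction needs, at every level $j\le m$, a scale that is large in a non-quantitative way depending on $m$. The drop, however, is available only at the matched scale $\lambda_m$, and it fails as $\lambda\to\infty$ with $m$ fixed. The paper resolves this with the scale homotopy of Corollary~\ref{Cor5.11}. Its uniform bound $\mathcal J_a\le b_j\bigl(1+C\varepsilon_N(\lambda_m)\bigr)<b_{j+1}$ holds along the whole cylinder, which lets Proposition~\ref{Prop5.19} transport the zero map from $\lambda_m$ to the topology scale $\Lambda$.
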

	
	The hypothesis includes, for example, hyperbolic annular domains and domains with nontrivial handles.  The theorem shows that positive-degree topology can overcome both the critical loss of compactness and the unfavorable geometric potential.  Beyond the conformal reduction, the positive self-correction changes the high-multiplicity energy balance and requires an additional analytic mechanism.
	
	The dimension dependence enters through the competition between the positive single-bubble correction and the negative pair interactions on the balanced weak-interaction stratum.  The integers \(m\) and \(n\) both count bubbles, but they occur under different quantifiers.  In dimension three, \(m\) is one terminal multiplicity fixed before the concentration limit \(\lambda\to\infty\).  In dimensions four and five, \(n\) is the running number of bubbles in a uniform analytic estimate; after that estimate has been proved, one sufficiently large value is fixed and denoted by \(m\) in the topological argument.  Thus the four- and five-dimensional proof eventually specializes \(n=m\).
	
	The leading orders of one single-bubble correction and one fixed-separation pair interaction, before summation over the bubbles or pairs, are summarized in Table~\ref{tab:intro-dimension-comparison}.
	
	\begin{table}[H]
		\centering
		\small
		\begin{tabular}{@{}c c c p{6.1cm}@{}}
			\toprule
			Dimension & One-bubble term & One-pair term & Multiplicity--scale regime\\
			\midrule
			\(N=3\) & \(\lambda^{-1}\) & \(\lambda^{-1}\) &
			choose one fixed terminal multiplicity \(m\), then let
			\(\lambda\to\infty\)\\
			\(N=4\) & \(\lambda^{-2}\log\lambda\) & \(\lambda^{-2}\) &
			running multiplicity \(n\to\infty\), with \(\lambda_n=An\); then specialize
			\(n=m\)\\
			\(N=5\) & \(\lambda^{-2}\) & \(\lambda^{-3}\) &
			running multiplicity \(n\to\infty\), with
			\(\lambda_n=A n^{2/3}\); then specialize \(n=m\)\\
			\(N\ge6\) & \(\lambda^{-2}\) & \(\lambda^{-(N-2)}\) &
			\multicolumn{1}{l@{}}{nondecaying or growing transfer error}\\
			\bottomrule
		\end{tabular}
		\caption{Single-term scales and multiplicity--scale regimes.}
		\label{tab:intro-dimension-comparison}
	\end{table}
	
	For \(N=3\), the diagonal and pair terms have the same \(\lambda^{-1}\) order.  After normalization at balanced weights, the positive diagonal coefficient is bounded independently of \(m\), whereas the total negative Green-interaction coefficient grows like \(m-1\). One therefore first fixes \(m\) sufficiently large that the interaction dominates the diagonal contribution, and only then lets \(\lambda\to\infty\).
	
	For \(N\in\{4,5\}\), the proof instead uses a uniform estimate in the running multiplicity \(n\).  On the balanced full-support weak-interaction stratum there are \(n(n-1)\) ordered-pair contributions.  This leads to the matched scale
	\[
	\lambda_n=A n^{\frac{2}{N-2}},
	\]
	for which
	\[
	n(n-1)\lambda_n^{-(N-2)}
	=A^{-(N-2)}\left(1-\frac1n\right).
	\]
	Thus the aggregate interaction remains of order one.  Configurations with zero weights, unbalanced coefficients, or strong collisions are controlled separately by fixed defect estimates.
	
	After exact normalization, the projected-source estimate bounds the total transfer remainder by \(Cn\varepsilon_N(\lambda)\), where
	\[
	\varepsilon_4(\lambda)
	=\lambda^{-2}(1+\log\lambda)^2,
	\qquad
	\varepsilon_5(\lambda)
	=\lambda^{-2}(1+\log\lambda).
	\]
	At the matched scale,
	\[
	n\varepsilon_4(\lambda_n)
	=O_A\!\left(n^{-1}(1+\log n)^2\right),
	\qquad
	n\varepsilon_5(\lambda_n)
	=O_A\!\left(n^{-1/3}(1+\log n)\right),
	\]
	and both quantities tend to zero.  The transfer remainder can therefore be absorbed by the order-one defect, giving a strict drop below the \(n\)-bubble level.  The induced relative map is then transported to the larger topology scale by the scale homotopy, after which one fixes a large admissible value and writes \(n=m\).
	
	The same comparison identifies the endpoint of the present method.  For \(N\ge5\), the projection estimates used here have the base scale \(\lambda^{-2}\).  Even if all logarithmic losses are ignored, at the matched scale the accumulated bound has size
	\[
	n\lambda_n^{-2}
	=A^{-2}n^{\frac{N-6}{N-2}}.
	\]
	This tends to zero for \(N=5\), remains of order one for \(N=6\), and grows for \(N>6\).  Thus the present bounds yield a nondecaying transfer remainder in dimensions \(N\ge6\), even at the optimistic logarithm-free scale. Equivalently, retaining an order-one all-pairs defect requires \(\lambda_n\lesssim n^{2/(N-2)}\), whereas making the aggregate projection bound negligible would require \(\lambda_n\gg n^{1/2}\).  These requirements are compatible for \(N=4,5\), meet at the nondecaying borderline \(N=6\), and are reversed for \(N>6\).  Consequently, the error-absorption and scale-bridge argument developed here does not close for \(N\ge6\).
	
	The proof has a common analytic part and a topological part.  The conformal identity \eqref{eq1.2} reduces the hyperbolic variational problem to the Euclidean functional
	\[
	J(v)=\frac12\int_\Omega\big(|\nabla v|^2+a(x)v^2\big)\,dx
	-
	\frac1{2^*}\int_\Omega |v|^{2^*}\,dx.
	\]
	A global compactness theorem shows that noncompact nonnegative Palais-Smale sequences split into a solution and finitely many interior Talenti bubbles; a half-space Liouville theorem excludes boundary bubbles. The compactness decomposition uses ordinary Dirichlet projections.  For the barycenter test family, we instead use projections associated with \(L_a\), which encode the mixed potential terms through its positive Green kernel. Under the contradiction hypothesis that no positive solution exists, quotient sublevel pairs deform to finite-dimensional bubble tubes on which the preceding energy estimates apply.
	
	The topological module begins with a Thom realization \(h:V\to K\Subset \Omega\) of a nonzero class in \(H_d(\Omega;\mathbb F_2)\).  Working over \(\mathbb F_2\) avoids orientation restrictions.  The construction allows arbitrary smooth maps \(h\), including non-embeddings.  The resolved barycenter model treats zero-weight and intrinsic-collision faces, while separate analytic collars handle off-diagonal self-collisions \(z_r\ne z_s\) with \(h(z_r)=h(z_s)\).  After these strata are controlled, a cap-boundary identity propagates a nonzero relative homology class through every multiplicity.  At the top level, topology therefore makes the test map nonzero, whereas the strict energy drop makes the same map zero.  This is the final contradiction.
	
	The organization follows these dependencies.  Sections~2-4 establish the common conformal framework, compactness and deformation theory, and projected-bubble estimates.  Section~5 proves the fixed-multiplicity energy drop in dimension three, the matched-scale energy drop in dimensions four and five, and the scale bridge needed to compare the analytic and topological families.  Section~6 develops the Thom realization and resolved labelled-barycenter module, handles the three kinds of degeneracy just described, and closes the contradiction proving Theorem~\ref{Thm1.1}.

	\section{Conformal formulation, Green kernels, and projected bubbles}\label{sec:geom}

	\subsection{Conformal formulation and Green kernels}
	
	We use the Poincar\'e ball model
	\[
	\B^N=\{x\in\R^N:\ |x|<1\},\qquad
	g_{\Hh}=\rho(x)^2\delta,\qquad
	\rho(x)=\frac{2}{1-|x|^2}.
	\]
	Thus
	\[
	dV_{\Hh}=\rho^N\,dx,\qquad
	|\gradH u|^2=\rho^{-2}|\nabla u|^2,
	\]
	and
	\[
	\Delta_{\Hh}
	=
	\rho^{-2}\Delta+(N-2)\rho^{-2}\frac{2x}{1-|x|^2}\cdot\nabla
	=
	\Big(\frac{1-|x|^2}{2}\Big)^2\Delta
	+(N-2)\Big(\frac{1-|x|^2}{2}\Big)x\cdot\nabla.
	\]
	Since the domain is relatively compact in $\Hh^N$, we identify $\Omega$ with its image in the Poincar\'e ball and keep the same notation.  Then
	\[
	\overline\Omega\subset\B^N,\qquad \rho,\rho^{-1}\in C^\infty(\overline\Omega).
	\]
	
	Set
	\[
	\phi(x)=\rho(x)^{\frac{N-2}{2}},
	\qquad
	v=\phi u.
	\]
	Define
	\[
	a(x)=\frac{N(N-2)}{4}\rho(x)^2
	=\frac{N(N-2)}{(1-|x|^2)^2}
	\]
	and
	\[
	J(v)=
	\frac12\int_\Omega\big(|\nabla v|^2+a(x)v^2\big)\,dx
	-\frac1{2^*}\int_\Omega |v|^{2^*}\,dx.
	\]
	The next lemma records the exact form of the problem in the Euclidean coordinates.
	
	\begin{lemma}\label{Lem2.1}
		For every $u\in C_c^\infty(\Omega)$ and $v=\phi u$ one has
		\begin{equation}\label{eq2.1}
			\int_\Omega |\gradH u|^2\,dV_{\Hh}
			=
			\int_\Omega |\nabla v|^2\,dx
			+
			\int_\Omega a(x)v^2\,dx,
		\end{equation}
		Moreover,
		\begin{equation}\label{eq2.2}
			\int_\Omega |u|^{2^*}\,dV_{\Hh}
			=
			\int_\Omega |v|^{2^*}\,dx,
			\qquad 2^*=\frac{2N}{N-2}.
		\end{equation}
		Consequently the hyperbolic equation
		\begin{equation}\label{eq2.3}
			-\Delta_{\Hh}u=|u|^{2^*-2}u,\qquad u\in H^1_0(\Omega),
		\end{equation}
		is equivalent, under $v=\phi u$, to
		\begin{equation}\label{eq2.4}
			-\Delta v+a(x)v=|v|^{2^*-2}v,\qquad v\in H^1_0(\Omega).
		\end{equation}
		Critical points of the original hyperbolic functional are in one-to-one correspondence with critical points of $J$.
	\end{lemma}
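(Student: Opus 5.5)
The plan is a direct computation: expand $|\nabla v|^2$ for $v=\phi u$, integrate the cross term by parts, and identify the resulting zeroth-order coefficient with $a$. The volume identity \eqref{eq2.2} is immediate. Since $dV_{\Hh}=\rho^N dx$ and $\phi^{2^*}=\rho^{\frac{N-2}{2}\cdot\frac{2N}{N-2}}=\rho^N$, we get $|u|^{2^*}\rho^N=|\phi u|^{2^*}$.

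For \eqref{eq2.1}, the left side equals $\int_\Omega \rho^{N-2}|\nabla u|^2\,dx=\int_\Omega \phi^2|\nabla u|^2\,dx$. Writing $\nabla v=\phi\nabla u+u\nabla\phi$, we have
\[
|\nabla v|^2=\phi^2|\nabla u|^2+\tfrac12\nabla\phi^2\cdot\nabla(u^2)+u^2|\nabla\phi|^2 .
\]
Because $u$ has compact support and $\phi$ is smooth on $\overline\Omega$, integration by parts gives $\tfrac12\int\nabla\phi^2\cdot\nabla u^2=-\int u^2(\phi\Delta\phi+|\nabla\phi|^2)$. Hence
\[
\int|\nabla v|^2=\int\phi^2|\nabla u|^2-\int \frac{\Delta\phi}{\phi}\,v^2 .
\]
It remains to verify that $-\Delta\phi/\phi=-a$, i.e. $\Delta\phi=a\phi$. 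Put $\phi=2^{\frac{N-2}{2}}(1-r^2)^{-\frac{N-2}{2}}$ and $k=\frac{N-2}{2}$. Then $\nabla(1-r^2)^{-k}=2kx(1-r^2)^{-k-1}$, and
\[
\Delta(1-r^2)^{-k}=2kN(1-r^2)^{-k-1}+4k(k+1)r^2(1-r^2)^{-k-2}.
\]
Multiplying by $(1-r^2)^{k+2}$ gives $2kN(1-r^2)+4k(k+1)r^2=2kN+r^2(4k^2+4k-2kN)$. With $2k=N-2$ we have $4k^2+4k-2kN=2k(2k+2-N)=0$. So $\Delta\phi=2kN(1-r^2)^{-2}\phi=N(N-2)(1-r^2)^{-2}\phi=a\phi$, and therefore $\int\phi^2|\nabla u|^2=\int|\nabla v|^2+\int a v^2$. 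This step is the only real computation; the main risk is a sign or constant slip, and the cancellation of the $r^2$ term is the consistency check.

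For the equivalence of \eqref{eq2.3} and \eqref{eq2.4}, both identities extend to $H^1_0(\Omega)$ by density. Multiplication by $\phi$ is an isomorphism of $H^1_0(\Omega)$, since $\phi,\phi^{-1}\in C^\infty(\overline\Omega)$, and $H^1_0(\Omega;g_{\Hh})$ coincides with the Euclidean $H^1_0(\Omega)$ with equivalent norms, because $\rho$ is bounded above and below on $\overline\Omega$. Thus the hyperbolic functional $I(u)=\frac12\int|\gradH u|^2dV_{\Hh}-\frac1{2^*}\int|u|^{2^*}dV_{\Hh}$ satisfies $I(u)=J(\phi u)$. Since $u\mapsto\phi u$ is a linear homeomorphism, $dI(u)=dJ(\phi u)\circ\phi$, so critical points correspond bijectively.

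Pointwise, the same computation shows $-\Delta(\phi u)+a\phi u=\phi^{\frac{N+2}{N-2}}(-\Delta_{\Hh}u)$, using $\rho^2\phi=\phi^{\frac{N+2}{N-2}}$ and $|v|^{2^*-2}v=\phi^{2^*-1}|u|^{2^*-2}u$. This also gives the weak equivalence directly, after testing against $\phi\psi$.
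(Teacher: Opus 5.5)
Your proof is correct, but it reaches the key identity by a different route from the paper. The paper quotes the conformal covariance of the conformal Laplacian for $g_{\Hh}=\rho^2\delta$, using that the scalar curvature is $-N(N-1)$. This gives at once the pointwise identity $-\Delta v+\frac{N(N-2)}4\rho^2v=\rho^{\frac{N+2}{2}}(-\Delta_{\Hh}u)$, from which the equivalence of \eqref{eq2.3} and \eqref{eq2.4} is read off. The paper then obtains \eqref{eq2.1} by testing that identity against $u$ in $dV_{\Hh}$ and using $\int u^2\,dV_{\Hh}=\int\rho^2v^2\,dx$.

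You instead use the ground-state substitution $\int\phi^2|\nabla u|^2=\int|\nabla(\phi u)|^2-\int\frac{\Delta\phi}{\phi}(\phi u)^2$ and verify $\Delta\phi=a\phi$ by hand. This is exactly the case $u\equiv1$ of the conformal identity. Your computation therefore isolates the single fact that makes the reduction work and checks the constant $N(N-2)$ explicitly, but it does not produce the pointwise operator identity for free.

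For that pointwise identity you also need the gradient relation $2\nabla\phi=(N-2)\phi\,\frac{2x}{1-|x|^2}$, which matches the drift term of $\Delta_{\Hh}$. So ``the same computation'' involves slightly more than what you wrote out. Your argument does not actually depend on it, however. Since $I=J\circ T$ with $T$ a linear isomorphism of $H^1_0(\Omega)$, you already get the correspondence of critical points, and polarization of \eqref{eq2.1} gives the equivalence of the weak formulations. Your functional-level treatment of that last claim is more direct than the paper's, which simply asserts that density extends the identities.
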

	
	\begin{proof}
		The scalar curvature of $g_{\Hh}$ is $-N(N-1)$. The conformal Laplacian identity for $g_{\Hh}=\rho^2\delta$ gives
		\[
		-\Delta_{\Hh}u-\frac{N(N-2)}4u
		=
		\rho^{-\frac{N+2}{2}}(-\Delta v),
		\qquad v=\rho^{\frac{N-2}{2}}u.
		\]
		Multiplying by $\rho^{\frac{N+2}{2}}$ yields
		\[
		-\Delta v+\frac{N(N-2)}4\rho^2v
		=
		\rho^{\frac{N+2}{2}}(-\Delta_{\Hh}u).
		\]
		Since
		\[
		\rho^{\frac{N+2}{2}}|u|^{2^*-2}u
		=
		|v|^{2^*-2}v,
		\]
		\eqref{eq2.3} is equivalent to \eqref{eq2.4}. The identity \eqref{eq2.2} follows from
		\[
		|u|^{2^*}dV_{\Hh}
		=
		\rho^{-\frac{N-2}{2}2^*}|v|^{2^*}\rho^N\,dx
		=
		|v|^{2^*}\,dx.
		\]
		Finally, testing the conformal Laplacian identity against $u$ and using the vanishing trace gives
		\[
		\int_\Omega |\gradH u|^2\,dV_{\Hh}
		-\frac{N(N-2)}4\int_\Omega u^2\,dV_{\Hh}
		=
		\int_\Omega |\nabla v|^2\,dx.
		\]
		Since
		\[
		\int_\Omega u^2\,dV_{\Hh}
		=
		\int_\Omega \rho^{-\,(N-2)}v^2\rho^N\,dx
		=
		\int_\Omega \rho^2v^2\,dx,
		\]
		we obtain \eqref{eq2.1}. Density extends the identities to $H^1_0(\Omega)$.
	\end{proof}
	
	We use the norm
	\[
	\|v\|_a^2=\int_\Omega\big(|\nabla v|^2+a(x)v^2\big)\,dx.
	\]
	Since $a\in C^\infty(\overline\Omega)$ and $a>0$, this norm is equivalent to \(\|\cdot\|_{H^1_0(\Omega)}\).  The Sobolev inequality gives
	\[
	S\left(\int_\Omega |v|^{2^*}\,dx\right)^{2/2^*}
	\le
	\int_\Omega |\nabla v|^2\,dx
	\le
	\|v\|_a^2,
	\]
	where $S$ is the sharp Sobolev constant in $\R^N$.
	
	The closed coercive symmetric form
	\[
	\mathfrak a_a[u,w]
	=
	\int_\Omega\bigl(\nabla u\cdot\nabla w+a(x)uw\bigr)\,dx,
	\qquad
	D(\mathfrak a_a)=H^1_0(\Omega),
	\]
	will also be denoted by \(\langle u,w\rangle_a=\mathfrak a_a[u,w]\); its induced norm is the norm \(\|\cdot\|_a\) above.  This form defines a positive self-adjoint Friedrichs realization \(L_a=-\Delta+a(x)\) in \(L^2(\Omega)\).  Since \(\partial\Omega\) is \(C^2\) and \(a\in C^\infty(\overline\Omega)\), elliptic regularity gives
	\[
	D(L_a)
	=
	\{u\in H^1_0(\Omega):-\Delta u+a(x)u\in L^2(\Omega)\}
	=
	H^2(\Omega)\cap H^1_0(\Omega).
	\]
	The same symbol \(L_a\) denotes the isomorphism \(H^1_0(\Omega)\to H^{-1}(\Omega)\) induced by \(\mathfrak a_a\); its inverse \(L_a^{-1}:H^{-1}(\Omega)\to H^1_0(\Omega)\) is bounded and preserves positivity for nonnegative sources.  Accordingly \(L_a^{-1}\) below is also used in this Lax-Milgram sense.  We use the Green kernel of this realization in the standard potential-theoretic sense \cite{ArmitageGardiner2001}. It is denoted by \(G_a\):
	\[
	L_{a,x}G_a(x,y)=\delta_y\quad\hbox{in }\Omega,
	\qquad
	G_a(x,y)=0\quad\hbox{for }x\in\partial\Omega.
	\]
	By the maximum principle and self-adjointness, \(G_a(x,y)=G_a(y,x)>0\) for \(x\ne y\).  Its diagonal singularity is the same as that of the Laplacian Green kernel.  The comparison with \(G_\Omega\) used below is recorded in \eqref{eq2.6}.  The fixed-multiplicity interaction estimate in Section~\ref{sec:matched} is formulated with \(G_a\).
	
	Let $G_\Omega$ be the Green function of $-\Delta$ in $\Omega$ with zero boundary value:
	\[
	-\Delta_xG_\Omega(x,y)=\delta_y\quad\hbox{in }\Omega,\qquad
	G_\Omega(x,y)=0\quad\hbox{for }x\in\partial\Omega.
	\]
	We write
	\begin{equation}\label{eq2.5}
		G_\Omega(x,y)=\Gamma(x-y)-H_\Omega(x,y),
		\qquad
		\Gamma(z)=\frac1{(N-2)\omega_{N-1}}|z|^{2-N}.
	\end{equation}
	The Robin function is
	\[
	R_\Omega(x)=H_\Omega(x,x),\qquad x\in\Omega.
	\]
	
	\begin{lemma}\label{Lem2.2}
		Let $\Omega\subset\R^N$ be bounded with $C^2$ boundary. Then:
		\begin{enumerate}
			\item $G_\Omega(x,y)=G_\Omega(y,x)>0$ for $x\ne y$.
			\item $H_\Omega\in C^\infty(\Omega\times\Omega)$ and $R_\Omega\in C^\infty(\Omega)$.
			\item If $\Omega_1\subset\Omega_2$, then
			\[
			G_{\Omega_1}(x,y)\le G_{\Omega_2}(x,y),
			\qquad x,y\in\Omega_1,\ x\ne y,
			\]
			and
			\[
			R_{\Omega_1}(x)\ge R_{\Omega_2}(x),\qquad x\in\Omega_1.
			\]
			\item There are constants $c,C>0$, depending only on $\Omega$, such that
			\[
			c\,\operatorname{dist}(x,\partial\Omega)^{2-N}
			\le
			R_\Omega(x)
			\le
			C\,\operatorname{dist}(x,\partial\Omega)^{2-N},
			\qquad x\in\Omega.
			\]
			In particular $R_\Omega(x)\to+\infty$ as $x\to\partial\Omega$. More precisely, if \(r_{\rm e}>0\) is a uniform exterior-sphere radius, then, whenever \(d(x):=\operatorname{dist}(x,\partial\Omega)\le r_{\rm e}\),
			\begin{equation}\label{eq2.robin-explicit}
				\frac{1}{(N-2)\omega_{N-1}3^{N-2}}\,
				d(x)^{2-N}
				\le R_\Omega(x)
				\le
				\frac{1}{(N-2)\omega_{N-1}}\,d(x)^{2-N}.
			\end{equation}
			In dimension three the lower bound is
			\begin{equation}\label{eq2.robin-three}
				R_\Omega(x)\ge\frac1{12\pi d(x)}
				\qquad(0<d(x)\le r_{\rm e}).
			\end{equation}
		\end{enumerate}
	\end{lemma}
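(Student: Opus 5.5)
I would prove the four items in order, each by the maximum principle and comparison with explicit harmonic functions; no earlier result of the paper is needed beyond standard elliptic theory on \(C^2\) domains.

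\emph{Items 1 and 2.} For fixed \(y\in\Omega\), the regular part \(H_\Omega(\cdot,y)\) is the harmonic function in \(\Omega\) with boundary values \(\Gamma(\cdot-y)\) on \(\partial\Omega\). This Dirichlet problem is solvable with continuous data because a \(C^2\) domain satisfies the exterior sphere condition, so every boundary point is regular. Then \(G_\Omega(\cdot,y)\) is harmonic in \(\Omega\setminus\{y\}\), tends to \(+\infty\) at \(y\), and vanishes on \(\partial\Omega\), so the strong maximum principle gives \(G_\Omega>0\). Symmetry is the standard Green identity on \(\Omega\setminus(B_\varepsilon(x)\cup B_\varepsilon(y))\) applied to \(G_\Omega(\cdot,x)\) and \(G_\Omega(\cdot,y)\), letting \(\varepsilon\to0\). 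For smoothness, \(H_\Omega(\cdot,y)\) is harmonic in \(x\), and by symmetry \(H_\Omega(x,\cdot)\) is harmonic in \(y\). It is locally bounded on \(\Omega\times\Omega\): it lies between \(0\) and \(\max_{\partial\Omega}\Gamma(\cdot-y)\le C\operatorname{dist}(y,\partial\Omega)^{2-N}\). Separate harmonicity plus local boundedness gives joint real-analyticity, via the Poisson representation on small balls in each variable and interior derivative estimates. Hence \(H_\Omega\in C^\infty(\Omega\times\Omega)\) and \(R_\Omega\in C^\infty(\Omega)\).

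\emph{Item 3.} Fix \(y\in\Omega_1\). The difference \(G_{\Omega_2}(\cdot,y)-G_{\Omega_1}(\cdot,y)=H_{\Omega_1}(\cdot,y)-H_{\Omega_2}(\cdot,y)\) is harmonic in \(\Omega_1\). On \(\partial\Omega_1\) it equals \(G_{\Omega_2}(\cdot,y)\ge0\), so it is nonnegative in \(\Omega_1\). Evaluating the \(H\)-form on the diagonal gives \(R_{\Omega_1}\ge R_{\Omega_2}\).

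\emph{Item 4.} The substance is the two-sided bound, done by comparison with balls.
\begin{itemize}
\item \emph{Upper bound.} By the maximum principle, \(0\le H_\Omega(x,\cdot)\le\max_{\partial\Omega}\Gamma(\cdot-x)=\Gamma\bigl(d(x)\bigr)\). At \(x\) this gives \(R_\Omega(x)\le\frac{1}{(N-2)\omega_{N-1}}d(x)^{2-N}\) for all \(x\), which is the stated upper constant.
\item \emph{Lower bound near the boundary.} Let \(p\in\partial\Omega\) be a nearest point to \(x\), with \(d=d(x)\le r_{\rm e}\). Take the exterior ball \(B=B_d(q)\), of radius \(d\le r_{\rm e}\), tangent at \(p\), so \(\Omega\subset\mathbb R^N\setminus\overline B\) and \(|x-q|=2d\). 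Rather than working with the exterior-domain Green function, compare \(H_\Omega(x,\cdot)\) directly with the harmonic function \(h(z)=\Gamma\bigl(z-x^*\bigr)\cdot\lambda\), where \(x^*\) is the Kelvin reflection of \(x\) in \(\partial B\) and \(\lambda\) is chosen so that \(h\le\Gamma(\cdot-x)\) on \(\mathbb R^N\setminus B\). This is exactly the reflection giving the exterior-ball Green function, so \(h\) is harmonic in \(\Omega\).
\item Then \(H_\Omega(x,\cdot)\ge h\) on \(\partial\Omega\), and hence in \(\Omega\). Evaluating at \(z=x\) with \(|x-x^*|\le 2d\) (note \(x^*\in B\) and \(|x-x^*|\) lies between \(d\) and \(2d\)) yields a lower bound \(c_N d^{2-N}\).
\item \emph{Expected obstacle.} The main work is tracking the constant so that it is at least \(\frac{1}{(N-2)\omega_{N-1}3^{N-2}}\). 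Explicitly, \(x^*=q+d^2(x-q)/|x-q|^2\) is at distance \(3d/2\) from \(x\), and the Kelvin factor is \((d/|x-q|)^{N-2}=2^{2-N}\). This gives \(R_\Omega(x)\ge\Gamma(3d/2)\,2^{2-N}=\Gamma(3d)\), which is precisely the constant \(3^{2-N}\). For \(N=3\), \(\omega_2=4\pi\), giving \(1/(12\pi d)\).
\item \emph{Lower bound in the interior.} For \(d(x)\ge r_{\rm e}\), positivity and continuity of \(R_\Omega\) on a compact set, together with the strong maximum principle (\(H_\Omega>0\)), give the global bound with adjusted \(c\).
\end{itemize}
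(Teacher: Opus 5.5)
Your proof is correct and follows essentially the paper's route: the maximum principle and domain monotonicity for items 1--3, the bound $H_\Omega(x,\cdot)\le\max_{\partial\Omega}\Gamma(\cdot-x)$ for the upper estimate (equivalent to the paper's comparison with the inscribed ball $B(x,d)$), and Kelvin reflection in an exterior tangent sphere for the lower estimate. The differences are cosmetic. You compare $H_\Omega(x,\cdot)$ directly with the reflected fundamental solution rather than passing through the Robin function of the unbounded exterior domain. You also shrink the exterior ball to radius $d(x)$, which gives exactly $\frac{1}{(N-2)\omega_{N-1}}(3d)^{2-N}$; the paper instead uses radius $r_{\rm e}$ and then weakens $r_{\rm e}/(d(2r_{\rm e}+d))\ge 1/(3d)$ using $d\le r_{\rm e}$.
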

	
	\begin{proof}
		Existence, symmetry, and positivity of $G_\Omega$ follow from the self-adjointness of the Dirichlet Laplacian and the maximum principle.  Since $\Gamma(\cdot-y)$ has the same singularity as $G_\Omega(\cdot,y)$ at $y$, the difference $H_\Omega(\cdot,y)=\Gamma(\cdot-y)-G_\Omega(\cdot,y)$ is harmonic near $y$ after removal of the singularity.  Interior elliptic regularity in both variables gives $H_\Omega\in C^\infty(\Omega\times\Omega)$.
		
		The domain monotonicity follows by applying the maximum principle to $G_{\Omega_2}(\cdot,y)-G_{\Omega_1}(\cdot,y)$ in $\Omega_1\setminus\{y\}$.  Since $\Gamma$ is domain independent, the monotonicity of $R_\Omega$ follows by taking the diagonal value of $H_\Omega=\Gamma-G_\Omega$.
		
		It remains to prove the boundary estimate.  Put
		\[
		c_N=\frac1{(N-2)\omega_{N-1}},
		\qquad d=d(x).
		\]
		The ball \(B(x,d)\) is contained in \(\Omega\).  Domain monotonicity and the Green function of a ball, evaluated at its centre, give
		\[
		R_\Omega(x)\le R_{B(x,d)}(x)=c_Nd^{2-N}.
		\]
		
		For the reverse inequality we use the uniform exterior-sphere condition, which is valid for a compact \(C^2\) boundary.  Thus there is \(r_{\rm e}>0\) such that, for every \(p\in\partial\Omega\), one can find \(q_p\notin\Omega\) for which
		\[
		\overline {B(q_p,r_{\rm e})}\cap\overline\Omega=\{p\},
		\qquad
		B(q_p,r_{\rm e})\subset\mathbb R^N\setminus\overline\Omega .
		\]
		Decrease \(r_{\rm e}\), if necessary, below the tubular-neighbourhood radius of \(\partial\Omega\).  If \(d\le r_{\rm e}\), the nearest point \(p\) is unique and
		\[
		|x-q_p|=r_{\rm e}+d.
		\]
		The exterior domain
		\[
		D_p=\mathbb R^N\setminus\overline {B(q_p,r_{\rm e})}
		\]
		contains \(\Omega\).  Its Green function, with zero boundary value and decay at infinity, is obtained by Kelvin reflection.  On the diagonal its regular part is
		\[
		R_{D_p}(x)
		=
		c_N
		\left(
		\frac{r_{\rm e}}
		{|x-q_p|^2-r_{\rm e}^2}
		\right)^{N-2}
		=
		c_N
		\left(
		\frac{r_{\rm e}}
		{d(2r_{\rm e}+d)}
		\right)^{N-2}.
		\]
		Since \(\Omega\subset D_p\), domain monotonicity yields
		\[
		R_\Omega(x)\ge R_{D_p}(x)
		\ge c_N3^{2-N}d^{2-N},
		\qquad 0<d\le r_{\rm e}.
		\]
		This proves \eqref{eq2.robin-explicit}, and \(\omega_2=4\pi\) gives \eqref{eq2.robin-three}.  On the compact set \(\{d\ge r_{\rm e}\}\), positivity and continuity of \(R_\Omega\) extend the lower estimate after decreasing its constant.  The lemma follows.
	\end{proof}
	
	For the \(L_a\)-Green kernel we use the following comparison with the Dirichlet Laplacian.  Since
	\[
	L_aG_\Omega(\cdot,y)=\delta_y+a(\cdot)G_\Omega(\cdot,y),
	\]
	the function
	\[
	W_y(x)=G_\Omega(x,y)-G_a(x,y)
	\]
	satisfies
	\[
	L_aW_y=a(x)G_\Omega(x,y)\quad\hbox{in }\Omega,
	\qquad
	W_y=0\quad\hbox{on }\partial\Omega.
	\]
	Hence, by the maximum principle and Green representation,
	\[
	0\le W_y(x)
	=
	\int_\Omega G_a(x,\zeta)a(\zeta)G_\Omega(\zeta,y)\,d\zeta
	\le
	\|a\|_{L^\infty(\Omega)}
	\int_\Omega G_\Omega(x,\zeta)G_\Omega(\zeta,y)\,d\zeta.
	\]
	Using \eqref{eq2.5}, the boundedness of \(H_\Omega\) on \(K\times K\), and the following elementary convolution bound,
	\[
	\int_\Omega |x-z|^{2-N}|z-y|^{2-N}\,dz
	\le C_K\Psi_N(|x-y|),
	\qquad x,y\in K,
	\]
	we obtain the following: for every compact set \(K\Subset\Omega\) and \(N\ge3\), there exists \(C_K>0\) such that
	\begin{equation}\label{eq2.6}
		|G_a(x,y)-\Gamma(x-y)|
		\le
		C_K\Psi_N(|x-y|),
		\qquad
		x,y\in K,\ x\ne y,
	\end{equation}
	where
	\[
	\Psi_N(r)=
	\begin{cases}
		1,&N=3,\\
		1+|\log r|,&N=4,\\
		r^{4-N},&N\ge5.
	\end{cases}
	\]
	The convolution estimate is obtained by separating \(B_{r/2}(x)\), \(B_{r/2}(y)\), and the remaining region, where \(r=|x-y|\).  Each near-pole piece is bounded by \(Cr^{2-N}\int_0^{r/2}s\,ds\), while the remaining region is bounded by \(C\int_{r/2}^{C_K}s^{3-N}\,ds\).  This gives a bounded contribution in \(N=3\), a logarithmic contribution in \(N=4\), and a contribution of order \(r^{4-N}\) for \(N\ge5\). In particular,
	\[
	G_a(x,y)=\Gamma(x-y)+o(\Gamma(x-y))
	\qquad\text{as }x\to y,
	\]
	uniformly for \(y\) in compact subsets of \(\Omega\).  Moreover, for every compact set \(K\Subset\Omega\),
	\begin{equation}\label{eq2.7}
		\inf\{G_a(x,y):x,y\in K,\ x\ne y\}>0.
	\end{equation}
	The estimate \eqref{eq2.6} makes \(G_a\) arbitrarily large near the diagonal, while \(G_a\) is continuous and positive on \(\{(x,y)\in K\times K:\ |x-y|\ge\varepsilon\}\).
	
	We use the following boundary flattening in the compactness argument.  The coordinates are Euclidean coordinates in the Poincar\'e ball.
	
	\begin{lemma}\label{Lem2.3}
		Let $x_0\in\partial\Omega$.  After a rigid change of coordinates, there are $r_0>0$ and a $C^2$ function $\psi:B'_{r_0}\subset\R^{N-1}\to\R$ such that
		\[
		\Omega\cap B_{r_0}(x_0)
		=
		\{(x',x_N):\ x_N>\psi(x')\}\cap B_{r_0}(x_0),
		\qquad
		\psi(0)=0,\quad \nabla\psi(0)=0.
		\]
		If $\lambda_k\to\infty$, $x_k\in\Omega$, and $x_k\to x_0$, then the rescaled domains
		\[
		\Omega_k=\lambda_k(\Omega-x_k)
		\]
		either locally exhaust $\R^N$, if
		\[
		\lambda_k\operatorname{dist}(x_k,\partial\Omega)\to+\infty,
		\]
		or converge locally in $C^1$ to a half-space, after passing to a subsequence, if
		\[
		\lambda_k\operatorname{dist}(x_k,\partial\Omega)
		\]
		is bounded.
	\end{lemma}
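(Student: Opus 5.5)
The first part of Lemma~\ref{Lem2.3} is the standard local graph description of a \(C^2\) hypersurface. I would rotate and translate so that \(x_0=0\) and the inner unit normal at \(x_0\) is \(e_N\). The implicit function theorem, applied to a local \(C^2\) defining function of \(\partial\Omega\) whose gradient at \(0\) is parallel to \(e_N\), then gives \(\psi\) with \(\psi(0)=0\) and \(\nabla\psi(0)=0\). Shrinking \(r_0\) ensures that \(\Omega\cap B_{r_0}(0)\) is exactly the supergraph part.

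For the rescaling statement, set \(d_k=\operatorname{dist}(x_k,\partial\Omega)\). Then \(\operatorname{dist}(0,\partial\Omega_k)=\lambda_kd_k\), and \(B_{\lambda_kd_k}(0)\subset\Omega_k\).

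\emph{Case \(\lambda_kd_k\to\infty\).} Every fixed ball \(B_R(0)\) is eventually contained in \(\Omega_k\). This is exactly local exhaustion of \(\R^N\).

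\emph{Case \(\lambda_kd_k\) bounded.} Pass to a subsequence with \(\lambda_kd_k\to\ell\in[0,\infty)\). Since \(d_k\to0\), for large \(k\) the point \(x_k\) lies in the tubular neighbourhood, and it has a unique nearest point \(p_k\in\partial\Omega\) with \(p_k\to x_0\). I would re-center at \(p_k\) rather than at \(x_0\), since the direction of the normal moves with \(k\). Let \(R_k\) be a rotation taking the inner normal \(\nu(p_k)\) to \(e_N\). Because \(\partial\Omega\) is \(C^2\), the graph functions \(\psi_k\) of \(\partial\Omega\) near \(p_k\) in these rotated frames are defined on a common ball \(B'_{r_1}\). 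They satisfy \(\psi_k(0)=0\), \(\nabla\psi_k(0)=0\) and \(\|D^2\psi_k\|_\infty\le C\) uniformly in \(k\), by continuity of the normal and of the second fundamental form. In the rotated frame,
\[
R_k\Omega_k\cap B_{\lambda_kr_1}=\{y_N>-\lambda_kd_k+\lambda_k\psi_k(y'/\lambda_k)\}\cap B_{\lambda_kr_1}.
\]
The rescaled graph functions \(\tilde\psi_k(y')=\lambda_k\psi_k(y'/\lambda_k)\) obey \(|\tilde\psi_k(y')|\le C|y'|^2/\lambda_k\) and \(|\nabla\tilde\psi_k(y')|\le C|y'|/\lambda_k\). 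Hence \(\tilde\psi_k\to0\) in \(C^1_{\rm loc}(\R^{N-1})\), and the boundaries converge locally in \(C^1\) to the hyperplane \(\{y_N=-\ell\}\). Passing to a further subsequence so that \(R_k\to R\) (rotations form a compact group), the domains \(\Omega_k\) converge locally in \(C^1\) to the half-space \(R^{-1}\{y_N>-\ell\}\). I read ``converge locally in \(C^1\)'' in this sense: the boundaries are uniformly \(C^1\) graphs converging on compacts, and the indicator functions converge locally.

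The only real point of care is the uniformity of the graph representation around the moving base points \(p_k\). That is, one \(r_1\) and one \(C^2\) bound must work for all \(k\). This follows from compactness of \(\partial\Omega\) and the \(C^2\) regularity, for instance via a finite atlas together with the tubular neighbourhood theorem already used in the proof of Lemma~\ref{Lem2.2}. Everything else is elementary scaling.
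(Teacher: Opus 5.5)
Your argument is correct, but in the bounded case you take a different route from the paper. The paper never re-centres or rotates. It uses the signed distance function \(s\), which is \(C^2\) on a tubular neighbourhood of the \(C^2\) boundary, and sets \(F_k(z)=\lambda_k s(x_k+z/\lambda_k)\). It then Taylor-expands at \(x_k\) using \(s(x_k)=d_k\) and \(\nabla s(x_k)=\nu(y_k)\to e_N\). The single uniform \(C^2\) bound on \(s\) gives \(F_k(z)=\lambda_kd_k+\nu(y_k)\cdot z+O_R(\lambda_k^{-1})\), hence \(F_k\to\ell+z_N\) in \(C^1(B_R)\). Since \(\Omega_k=\{F_k>0\}\) locally and \(\nabla F_k\) is nondegenerate, this is the asserted \(C^1\) convergence to \(\{z_N>-\ell\}\).

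Your version instead needs graph charts centred at the moving nearest points \(p_k\), with a common radius and a common \(C^2\) bound, which you justify by compactness. That uniform-chart statement is essentially equivalent to the \(C^2\) regularity of \(s\), so both proofs rest on the same standard fact. The paper's packaging gets the uniformity from one global function and avoids the rotations \(R_k\). Your route buys an explicit graph description of \(\partial\Omega_k\) with quantitative rates \(|\tilde\psi_k(y')|\le C|y'|^2/\lambda_k\) and \(|\nabla\tilde\psi_k(y')|\le C|y'|/\lambda_k\). This matches the usual graph definition of local \(C^1\) convergence of domains more directly than a level-set formulation.

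Two minor remarks. First, the subsequence \(R_k\to R\) is unnecessary: since \(\nu(p_k)\to e_N\), you may choose \(R_k\to I\). In any case every limit \(R\) fixes \(e_N\), so your limit \(R^{-1}\{y_N>-\ell\}\) is just \(\{y_N>-\ell\}\), as in the paper. Second, your chart is centred at \(p_k\) while \(\Omega_k\) is centred at \(x_k\). The displayed identity should therefore be stated on a slightly smaller ball, e.g. \(B_{\lambda_k r_1/2}\), which is harmless because \(|x_k-p_k|=d_k\to0\).
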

	
	\begin{proof}
		Translate \(x_0\) to the origin and rotate the coordinates so that the inward unit normal to \(\partial\Omega\) at \(x_0\) is \(e_N\).  The local graph representation, with \(\psi(0)=0\) and \(\nabla\psi(0)=0\), follows from the \(C^2\) implicit function theorem.
		
		Put
		\[
		d_k=\operatorname{dist}(x_k,\partial\Omega).
		\]
		If \(\lambda_kd_k\to\infty\), then
		\[
		B(0,\lambda_kd_k)\subset\lambda_k(\Omega-x_k)=\Omega_k.
		\]
		Hence every fixed compact subset of \(\mathbb R^N\) is eventually contained in \(\Omega_k\), which is the asserted local exhaustion of \(\mathbb R^N\).
		
		Suppose now that \((\lambda_kd_k)\) is bounded.  Let \(s\) be the signed distance to \(\partial\Omega\), chosen positive in \(\Omega\).  Since \(\partial\Omega\) is \(C^2\), the function \(s\) is \(C^2\) in a tubular neighbourhood of the boundary.  For large \(k\), let \(y_k\) be the unique nearest boundary point to \(x_k\).  Then
		\[
		s(x_k)=d_k,\qquad \nabla s(x_k)=\nu(y_k),
		\]
		where \(\nu\) is the inward unit normal.  Moreover, \(y_k\to x_0\), so \(\nu(y_k)\to e_N\).  After passing to a subsequence, write
		\[
		\lambda_kd_k\to\ell\in[0,\infty).
		\]
		For \(z\) in a fixed ball, define
		\[
		F_k(z)=\lambda_k s\left(x_k+\frac{z}{\lambda_k}\right).
		\]
		The uniform \(C^2\) bound for \(s\) gives, for every fixed \(R>0\),
		\[
		F_k(z)=\lambda_kd_k+\nu(y_k)\cdot z+O_R(\lambda_k^{-1}),
		\qquad |z|\le R,
		\]
		and
		\[
		\nabla F_k(z)=\nu(y_k)+O_R(\lambda_k^{-1}).
		\]
		Consequently,
		\[
		F_k\to \ell+z_N
		\quad\hbox{in }C^1(B_R)
		\]
		for every \(R>0\).  In the same neighbourhood,
		\[
		\Omega_k=\{z:F_k(z)>0\}.
		\]
		Thus \(\Omega_k\) converges locally in \(C^1\) to the half-space \(\{z_N>-\ell\}\).
	\end{proof}
	
	\begin{lemma}\label{Lem2.4}
		Let \(N\ge3\).  If
		\[
		w\in D^{1,2}_0(\mathbb R^N_+)
		\]
		is a weak solution of
		\[
		-\Delta w=|w|^{2^*-2}w
		\quad\hbox{in }\mathbb R^N_+,
		\qquad
		w=0\quad\hbox{on }\partial\mathbb R^N_+,
		\]
		then \(w\equiv0\).
	\end{lemma}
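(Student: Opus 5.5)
The plan is to combine a Pohozaev identity on the half-space with a truncation argument that removes the defect of \(w\) lying only in \(D^{1,2}_0\). The half-space is star-shaped with respect to every boundary point, and the normal derivative has a sign on \(\partial\mathbb R^N_+\). This suggests the Pohozaev identity with the vector field \(x\) (centred at the origin, which lies on the boundary) should force \(\int_{\partial\mathbb R^N_+}|\partial_\nu w|^2(x\cdot\nu)\,d\sigma=0\). Since \(x\cdot\nu\equiv0\) on \(\partial\mathbb R^N_+\), that is useless, so I would use the translated field \(x+e_N\) (equivalently, apply the identity with the field \(e_N\), i.e. the translation invariance in the normal direction). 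Testing the equation against \(\partial_N w\) gives formally
\[
\int_{\partial\mathbb R^N_+}|\partial_N w|^2\,dx'=0,
\]
hence \(\partial_N w=0\) on \(\partial\mathbb R^N_+\). Unique continuation, or the Hopf lemma applied after showing \(w\ge0\) or \(w\le 0\) locally, then finishes the argument. A cleaner route is to reflect \(w\) oddly across \(x_N=0\). The odd extension \(\tilde w\) is a weak solution on \(\mathbb R^N\) exactly when the Neumann data vanish, and in that case \(\tilde w\in D^{1,2}(\mathbb R^N)\) solves the entire critical equation. We then need to rule out odd entire solutions vanishing on a hyperplane, and for the sign-changing case I would instead rely on unique continuation directly: \(w=\partial_Nw=0\) on the hyperplane, together with the equation, gives \(w\equiv0\) by Cauchy uniqueness (weak UCP for \(-\Delta w=Vw\) with \(V=|w|^{2^*-2}\in L^{N/2}_{\rm loc}\)).

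The steps, in order, are as follows.

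First, establish regularity. By the Brezis–Kato / Moser iteration (the potential \(|w|^{2^*-2}\) lies in \(L^{N/2}\)), \(w\in L^q\) for all \(q<\infty\). Flattening is trivial here, so boundary elliptic estimates give \(w\in C^{2,\alpha}_{\rm loc}(\overline{\mathbb R^N_+})\). A Kelvin transform, which preserves the half-space when centred at a boundary point and preserves the equation, yields decay \(|w|\le C|x|^{2-N}\) and \(|\nabla w|\le C|x|^{1-N}\) at infinity.

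Second, derive the identity rigorously. Multiply by \(\partial_N w\,\eta_R\), where \(\eta_R\) is a cutoff on \(B_{2R}\). Integrate by parts using \(\operatorname{div}(\nabla w\,\partial_Nw)-\tfrac12\partial_N|\nabla w|^2=\Delta w\,\partial_Nw\) and \(|w|^{2^*-2}w\,\partial_Nw=\partial_N(|w|^{2^*}/2^*)\). The boundary terms on \(\{x_N=0\}\) are \(-\tfrac12\int|\partial_Nw|^2\eta_R\) (using \(w=0\), hence \(\nabla w=\partial_Nw\,e_N\) there) plus \(0\) from the \(|w|^{2^*}\) term. The cutoff errors are bounded by \(R^{-1}\int_{B_{2R}\setminus B_R}(|\nabla w|^2+|w|^{2^*})\to0\), since \(w\in D^{1,2}\). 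Letting \(R\to\infty\) gives \(\partial_Nw\equiv0\) on the boundary.

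Third, conclude by zero Cauchy data plus unique continuation. Extend \(w\) by zero to \(x_N<0\). The extended function lies in \(H^1_{\rm loc}\) and solves \(-\Delta w=|w|^{2^*-2}w\) weakly across the hyperplane, because the flux \(\partial_Nw\) vanishes there. It vanishes on an open set, so the strong unique continuation theorem (Jerison–Kenig for \(L^{N/2}_{\rm loc}\) potentials, or simply classical UCP since \(w\) is \(C^2\) and \(V\) is bounded locally) gives \(w\equiv0\).

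The main obstacle is the rigorous justification of the boundary integral in the second step: regularity of \(w\) up to the flat boundary, and the vanishing of the cutoff errors. Both follow once boundary \(C^{1}\) regularity is secured via the Moser iteration and standard Schauder/\(L^p\) theory on the half-space, which I would invoke directly.
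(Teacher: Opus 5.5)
Your proof is correct, but it takes a different route from the paper's.

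The paper works on the ball. It uses the Cayley transform $\mathcal C:B_1\to\mathbb R^N_+$ and conformal covariance to turn $w$ into a function $u\in H^1_0(B_1)$, and obtains $u\in C^{1,\alpha}(\overline{B_1})$ from global $W^{2,q}$ theory on the ball. It then applies the dilation Pohozaev identity there. The volume terms cancel exactly because $N/2^*=(N-2)/2$, and $x\cdot\nu=1$ forces $\partial_\nu u=0$ on $\partial B_1$. Zero extension plus unique continuation then finish, just as in your third step.

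You instead test with $\partial_N w$ directly on the half-space; this is the Esteban--Lions translation identity. What your route buys:
\begin{itemize}
\item It needs no conformal change of variables.
\item The identity itself never uses criticality, only $F(0)=0$ for $F(s)=|s|^{2^*}/2^*$.
\item The cutoff errors are bounded by $CR^{-1}\int(|\nabla w|^2+|w|^{2^*})$, so they vanish from finite energy alone. Your Kelvin-transform decay estimates are therefore unnecessary.
\end{itemize}
The paper's route, in exchange, needs only regularity on a bounded smooth domain, including at the image of the point at infinity, and involves no cutoff at all.

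One correction to your preliminary discussion: the odd extension of $w$ is \emph{always} a weak solution of $-\Delta\tilde w=|\tilde w|^{2^*-2}\tilde w$ on $\mathbb R^N$, with no Neumann condition needed. It is the zero extension that requires $\partial_N w=0$. This fact is also the quickest way to get the boundary regularity $w\in C^{2,\alpha}_{\rm loc}(\overline{\mathbb R^N_+})$ that your second step relies on: apply interior regularity to $\tilde w\in D^{1,2}(\mathbb R^N)$.
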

	
	\begin{proof}
		Let \(\mathcal C:B_1\to\mathbb R^N_+\) be the Cayley transform
		\[
		\mathcal C(x)=
		\left(
		\frac{2x'}{|x-e_N|^2},
		\frac{1-|x|^2}{|x-e_N|^2}
		\right),
		\qquad
		\kappa(x)=\frac{2}{|x-e_N|^2}.
		\]
		Thus \(D\mathcal C(x)^TD\mathcal C(x)=\kappa(x)^2I\).  Define
		\[
		\quad
		u(x)=\kappa(x)^{\frac{N-2}{2}}w(\mathcal C(x)).
		\]
		The critical Kelvin-Cayley transform is an isometry for the Dirichlet integral and the \(L^{2^*}\)-norm.  Approximation of \(w\) by \(C_c^\infty(\mathbb R^N_+)\) therefore gives
		\[
		u\in H^1_0(B_1)
		\]
		and
		\[
		-\Delta u=|u|^{2^*-2}u
		\quad\hbox{weakly in }B_1.
		\]
		Indeed, the conformal covariance formula is
		\[
		-\Delta\!\left(
		\kappa^{\frac{N-2}{2}}w\circ\mathcal C
		\right)
		=
		\kappa^{\frac{N+2}{2}}(-\Delta w)\circ\mathcal C.
		\]
		
		Writing the equation as
		\[
		-\Delta u=V(x)u,
		\qquad
		V=|u|^{2^*-2}\in L^{N/2}(B_1),
		\]
		the Br\'ezis-Kato argument first gives \(u\in L^s(B_1)\) for every finite \(s\).  Choose \(s>N(2^*-1)\) and put \(q=s/(2^*-1)>N\).  Then \(|u|^{2^*-2}u\in L^q(B_1)\), and the global Dirichlet \(W^{2,q}\)-estimate on the ball, followed by Sobolev embedding, gives
		\[
		u\in W^{2,q}(B_1)\cap C^{1,\alpha}(\overline{B_1})
		\]
		for some \(\alpha\in(0,1)\).  Interior bootstrapping gives \(u\in C^2(B_1)\); see \cite{BrezisKato1979,GilbargTrudinger2001}.  The usual approximation argument therefore justifies the Pohozaev identity \cite{Poh65}:
		\[
		\frac{N-2}{2}\int_{B_1}|\nabla u|^2\,dx
		-
		\frac{N}{2^*}\int_{B_1}|u|^{2^*}\,dx
		+
		\frac12\int_{\partial B_1}
		(x\cdot\nu)(\partial_\nu u)^2\,dS
		=0.
		\]
		Testing the equation with \(u\) and using \(N/2^*=(N-2)/2\), the volume terms cancel.  Since \(x\cdot\nu=1\) on \(\partial B_1\),
		\[
		\int_{\partial B_1}(\partial_\nu u)^2\,dS=0.
		\]
		Consequently \(u=\partial_\nu u=0\) on \(\partial B_1\).
		
		Let \(\overline u\) be the extension of \(u\) by zero to \(\mathbb R^N\).  Because both the Dirichlet and normal traces vanish, integration by parts gives
		\[
		-\Delta\overline u
		=
		|\overline u|^{2^*-2}\overline u
		\quad\hbox{in }\mathcal D'(\mathbb R^N).
		\]
		Equivalently,
		\[
		-\Delta\overline u=\overline V\,\overline u,
		\qquad
		\overline V=|\overline u|^{2^*-2}
		\in L^{N/2}_{\mathrm{loc}}(\mathbb R^N).
		\]
		The vanishing Dirichlet and normal traces make the zero extension a weak solution of the displayed equation across \(\partial B_1\).  Local elliptic regularity places \(\overline u\) in the solution class required below. The strong unique-continuation theorem of Jerison and Kenig \cite[Theorem~6.3]{JerisonKenig1985} applies for \(N\ge3\) and potentials in \(L^{N/2}_{\mathrm{loc}}\). Since \(\overline u\) vanishes on the nonempty open set \(\mathbb R^N\setminus\overline{B_1}\), it follows that \(\overline u\equiv0\).  Hence \(u\equiv0\), and conformal invertibility gives \(w\equiv0\).
	\end{proof}
	
	\subsection{Projected Talenti bubbles and the diagonal potential}
	
	Let
	\[
	\alpha_N=[N(N-2)]^{\frac{N-2}{4}},
	\qquad
	U_{\lambda,\xi}(x)
	=
	\alpha_N\left(\frac{\lambda}{1+\lambda^2|x-\xi|^2}\right)^{\frac{N-2}{2}},
	\quad \lambda>0,\ \xi\in\Omega.
	\]
	Then
	\[
	-\Delta U_{\lambda,\xi}=U_{\lambda,\xi}^{2^*-1}\quad\hbox{in }\R^N,
	\]
	and
	\[
	\int_{\R^N}|\nabla U_{\lambda,\xi}|^2\,dx
	=
	\int_{\R^N}U_{\lambda,\xi}^{2^*}\,dx
	=
	S^{N/2}.
	\]
	We set
	\[
	I_\infty=\frac1N S^{N/2}.
	\]
	
	\begin{definition}
		For \(\lambda>0\) and \(\xi\in\Omega\), let \(PU_{\lambda,\xi}\in H^1_0(\Omega)\) be the unique weak solution of
		\begin{equation*}
			-\Delta PU_{\lambda,\xi}=U_{\lambda,\xi}^{2^*-1}
			\quad\hbox{in }\Omega,\qquad
			PU_{\lambda,\xi}=0
			\quad\hbox{on }\partial\Omega.
		\end{equation*}
		Equivalently,
		\begin{equation}\label{eq2.8}
			PU_{\lambda,\xi}(x)
			=
			\int_\Omega G_\Omega(x,y)U_{\lambda,\xi}(y)^{2^*-1}\,dy.
		\end{equation}
	\end{definition}
	
	For the compactness theory it is convenient to keep the ordinary projected bubble \(PU_{\lambda,\xi}\), because after blow-up the potential term disappears and the extracted profile is the Euclidean Talenti bubble.  For the Bahri-Coron test family, however, we use the projection associated with \(L_a\).  If \(\delta_{\lambda,\xi}\) denotes the Sobolev optimizer normalized by
	\[
	\int_{\mathbb R^N}|\nabla\delta_{\lambda,\xi}|^2\,dx=1,
	\]
	then there is a dimensional constant
	\[
	\sigma_N=\left(\int_{\mathbb R^N}\delta_{1,0}^{2^*}\,dx\right)^{-1}
	=S^{\frac{N}{N-2}}
	\]
	such that
	\[
	-\Delta\delta_{\lambda,\xi}=\sigma_N\delta_{\lambda,\xi}^{2^*-1}
	\quad\hbox{in }\mathbb R^N.
	\]
	The two bubble normalizations and their ordinary Dirichlet projections are related by
	\[
	\delta_{\lambda,\xi}=S^{-N/4}U_{\lambda,\xi},
	\qquad
	P\delta_{\lambda,\xi}=S^{-N/4}PU_{\lambda,\xi}.
	\]
	Thus \(PU_{\lambda,\xi}\) is the ordinary projection of the unnormalized compactness bubble, \(P\delta_{\lambda,\xi}\) is the ordinary projection of the normalized bubble, and the following \(L_a\)-projection is the one used in the quotient test maps. For \(\lambda>0\) and \(\xi\in\Omega\), we define \(P_a\delta_{\lambda,\xi}\in H^1_0(\Omega)\) by
	\begin{equation}\label{eq2.9}
		L_aP_a\delta_{\lambda,\xi}
		=\sigma_N\delta_{\lambda,\xi}^{2^*-1}
		\quad\hbox{in }\Omega,
		\qquad
		P_a\delta_{\lambda,\xi}=0
		\quad\hbox{on }\partial\Omega.
	\end{equation}
	Equivalently,
	\[
	P_a\delta_{\lambda,\xi}(x)
	=\sigma_N\int_\Omega G_a(x,y)\delta_{\lambda,\xi}(y)^{2^*-1}\,dy.
	\]
	The difference between \(P_a\delta_{\lambda,\xi}\) and the ordinary projection is small in the blow-up scale. More precisely,
	\[
	\|P_a\delta_{\lambda,\xi}-P\delta_{\lambda,\xi}\|_a=o(1)
	\]
	uniformly for \(\xi\in K\Subset\Omega\).  Here \(P\delta_{\lambda,\xi}\) is the ordinary Dirichlet projection of the normalized bubble.  The difference \(e\) satisfies \(L_ae=-aP\delta_{\lambda,\xi}\), and the right-hand side tends to zero in \(H^{-1}(\Omega)\) by the vanishing of the \(L^2\)-mass of a concentrating bubble.  Thus the compactness decomposition may be stated with \(P\delta\), whereas the high barycenter test maps are stated with \(P_a\delta\).
	
	\begin{lemma}\label{Lem2.6}
		Let $K\Subset\Omega$. Uniformly for $\xi\in K$,
		\begin{equation}\label{eq2.10}
			PU_{\lambda,\xi}(x)
			=
			U_{\lambda,\xi}(x)
			-
			A_N\lambda^{-\frac{N-2}{2}}H_\Omega(x,\xi)
			+
			r_{\lambda,\xi}(x),
		\end{equation}
		where, for every compact $K'\Subset\Omega$,
		\[
		\|r_{\lambda,\xi}\|_{C^1(K')}
		=
		O\!\left(\lambda^{-\frac N2}\right).
		\]
		Moreover,
		\begin{equation*}
			\|PU_{\lambda,\xi}-U_{\lambda,\xi}\|_{L^{2^*}(\Omega)}
			=
			O\!\left(\lambda^{-\frac{N-2}{2}}\right),
		\end{equation*}
		and
		\begin{equation}\label{eq2.11}
			PU_{\lambda,\xi}(\xi+z/\lambda)
			=
			U_{\lambda,\xi}(\xi+z/\lambda)
			-
			A_N\lambda^{-\frac{N-2}{2}}R_\Omega(\xi)
			+
			O\!\left(\lambda^{-\frac N2}(1+|z|)\right)
		\end{equation}
		locally uniformly in $z\in\R^N$.
	\end{lemma}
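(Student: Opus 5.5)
The plan is to study the difference $\varphi_{\lambda,\xi}=U_{\lambda,\xi}-PU_{\lambda,\xi}$. Since $-\Delta U_{\lambda,\xi}=U_{\lambda,\xi}^{2^*-1}$ in $\R^N$ and $-\Delta PU_{\lambda,\xi}=U_{\lambda,\xi}^{2^*-1}$ in $\Omega$, $\varphi_{\lambda,\xi}$ is harmonic in $\Omega$ with boundary values $U_{\lambda,\xi}|_{\partial\Omega}$. The aim is to compare it with $A_N\lambda^{-\frac{N-2}{2}}H_\Omega(\cdot,\xi)$, where $A_N$ is fixed by matching the far field of the bubble with the fundamental solution. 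Concretely, for $|x-\xi|\ge \operatorname{dist}(K,\partial\Omega)/2$,
\[
U_{\lambda,\xi}(x)=\alpha_N\lambda^{-\frac{N-2}{2}}|x-\xi|^{2-N}\bigl(1+\lambda^{-2}|x-\xi|^{-2}\bigr)^{-\frac{N-2}{2}}
=\alpha_N\lambda^{-\frac{N-2}{2}}|x-\xi|^{2-N}+O(\lambda^{-\frac{N+2}{2}}).
\]
This suggests $A_N=\alpha_N(N-2)\omega_{N-1}$, so that $\alpha_N|x-\xi|^{2-N}=A_N\Gamma(x-\xi)$. Since $H_\Omega(\cdot,\xi)=\Gamma(\cdot-\xi)$ on $\partial\Omega$, the function
\[
r_{\lambda,\xi}:=A_N\lambda^{-\frac{N-2}{2}}H_\Omega(\cdot,\xi)-\varphi_{\lambda,\xi}
\]
is harmonic in $\Omega$ with boundary values of size $O(\lambda^{-\frac{N+2}{2}})$, uniformly for $\xi\in K$. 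The maximum principle bounds $r_{\lambda,\xi}$ in $L^\infty(\Omega)$ by this quantity, and interior gradient estimates for harmonic functions on $K'$ give the $C^1(K')$ bound. This yields \eqref{eq2.10}, even with the better rate $\lambda^{-\frac{N+2}{2}}\le\lambda^{-\frac N2}$.

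For the $L^{2^*}$ estimate, the maximum principle gives $0\le\varphi_{\lambda,\xi}\le\sup_{\partial\Omega}U_{\lambda,\xi}\le C\lambda^{-\frac{N-2}{2}}$ on all of $\Omega$, uniformly in $\xi\in K$, since $\operatorname{dist}(\xi,\partial\Omega)\ge\operatorname{dist}(K,\partial\Omega)>0$. Because $\Omega$ is bounded, $\|PU_{\lambda,\xi}-U_{\lambda,\xi}\|_{L^{2^*}(\Omega)}\le C|\Omega|^{1/2^*}\lambda^{-\frac{N-2}{2}}$. This step needs no fine expansion, only the maximum principle.

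For \eqref{eq2.11}, I will set $x=\xi+z/\lambda$ in \eqref{eq2.10}. For fixed $R$, $|z|\le R$ and large $\lambda$, the point $x$ lies in the compact set $K'=\{\operatorname{dist}(\cdot,K)\le \operatorname{dist}(K,\partial\Omega)/2\}$. There the smoothness of $H_\Omega$ (Lemma~\ref{Lem2.2}) gives $|H_\Omega(x,\xi)-H_\Omega(\xi,\xi)|\le C|z|/\lambda$, with $C$ uniform over $K'\times K$. Hence
\[
A_N\lambda^{-\frac{N-2}{2}}H_\Omega(x,\xi)=A_N\lambda^{-\frac{N-2}{2}}R_\Omega(\xi)+O(\lambda^{-\frac N2}|z|),
\]
while $|r_{\lambda,\xi}(x)|=O(\lambda^{-\frac N2})$. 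Together these give the remainder $O(\lambda^{-\frac N2}(1+|z|))$, locally uniformly in $z$.

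The only delicate point is uniformity in $\xi\in K$. The $C^1$ bound on $r_{\lambda,\xi}$ must come with constants independent of $\xi$, and the boundary expansion of $U_{\lambda,\xi}$ must be uniform. Both follow from the uniform positive separation $\operatorname{dist}(K,\partial\Omega)>0$. This makes the boundary data of $r_{\lambda,\xi}$ uniformly small and lets the interior estimates on the fixed compact set $K'$ be applied with fixed constants. Beyond this, I expect no real obstacle: the argument is the standard Rey-type expansion, and the $C^2$ regularity of $\partial\Omega$ only enters through the solvability and the maximum principle already recorded in Lemma~\ref{Lem2.2}.
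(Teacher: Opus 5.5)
Your argument is correct, but it takes a different route from the paper's proof.

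\textbf{The paper's route.} The paper works from the Green representation \eqref{eq2.8} and splits $G_\Omega=\Gamma-H_\Omega$. It compares the $\Gamma$-part with the whole-space representation of $U_{\lambda,\xi}$, losing only the exterior source, which is $O(\lambda^{-(N+2)/2})$ on compacta. It then Taylor-expands $\int_\Omega H_\Omega(x,y)U_{\lambda,\xi}(y)^{2^*-1}\,dy$ about $y=\xi$. The first moment of $U_{\lambda,\xi}^{2^*-1}$ is what produces the $O(\lambda^{-N/2})$ remainder. The $C^1(K')$ bound comes from differentiating under the integral, and the constant appears directly as $A_N=\int_{\R^N}U_{1,0}^{2^*-1}$.

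\textbf{Your route and what it buys.} You compare the two harmonic functions $U_{\lambda,\xi}-PU_{\lambda,\xi}$ and $A_N\lambda^{-\frac{N-2}{2}}H_\Omega(\cdot,\xi)$ through their boundary data alone. This gives the stronger bound $\|r_{\lambda,\xi}\|_{L^\infty(\Omega)}=O(\lambda^{-\frac{N+2}{2}})$ on all of $\Omega$, not just on compacta. The $C^1(K')$ estimate then follows from interior gradient bounds for harmonic functions, with no differentiation of kernels.

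\textbf{What it costs.} Two points need checking.
\begin{itemize}
\item You need $PU_{\lambda,\xi}$ and $H_\Omega(\cdot,\xi)$ to be continuous on $\overline\Omega$ with the stated boundary values. This holds because a $C^2$ domain is Dirichlet-regular, and the paper's own $L^{2^*}$ step relies on the same fact.
\item You should verify, rather than only ``suggest'', that your far-field constant matches the paper's normalization, since Lemma~\ref{Lem2.7} uses $A_N=\int U_{1,0}^{2^*-1}$. The flux identity does this: $\int_{B_R}U_{1,0}^{2^*-1}\,dz=-\int_{\partial B_R}\partial_rU_{1,0}\,dS=\alpha_N(N-2)\omega_{N-1}R^N(1+R^2)^{-N/2}\to\alpha_N(N-2)\omega_{N-1}$.
\end{itemize}

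Your $L^{2^*}$ estimate via the maximum principle and your derivation of \eqref{eq2.11} by substituting $x=\xi+z/\lambda$ are the same as in the paper.
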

	
	\begin{proof}
		From \eqref{eq2.8} and \eqref{eq2.5},
		\[
		PU_{\lambda,\xi}(x)
		=
		\int_\Omega \Gamma(x-y)U_{\lambda,\xi}(y)^{2^*-1}\,dy
		-
		\int_\Omega H_\Omega(x,y)U_{\lambda,\xi}(y)^{2^*-1}\,dy.
		\]
		Since $U_{\lambda,\xi}$ solves the equation in $\R^N$,
		\[
		U_{\lambda,\xi}(x)
		=
		\int_{\R^N}\Gamma(x-y)U_{\lambda,\xi}(y)^{2^*-1}\,dy.
		\]
		The part of the last integral over $\R^N\setminus\Omega$ is $O(\lambda^{-(N+2)/2})$ uniformly on compact subsets of $\Omega$, because $\xi$ stays a positive distance away from $\partial\Omega$ and $U_{\lambda,\xi}^{2^*-1}$ has mass of order $\lambda^{-(N+2)/2}$ away from $\xi$. Thus
		\[
		PU_{\lambda,\xi}(x)
		=
		U_{\lambda,\xi}(x)
		-
		\int_\Omega H_\Omega(x,y)U_{\lambda,\xi}(y)^{2^*-1}\,dy
		+
		O(\lambda^{-\frac{N+2}{2}})
		\]
		locally uniformly in $x$.
		
		We expand the $H_\Omega$ integral. Since \(H_\Omega\) is smooth on compact subsets of \(\Omega\times\Omega\) and $U_{\lambda,\xi}^{2^*-1}$ concentrates at $\xi$,
		\[
		\int_\Omega H_\Omega(x,y)U_{\lambda,\xi}(y)^{2^*-1}\,dy
		=
		H_\Omega(x,\xi)\int_{\R^N}U_{\lambda,\xi}^{2^*-1}\,dy
		+
		O(\lambda^{-\frac N2}).
		\]
		A direct scaling gives
		\[
		\int_{\R^N}U_{\lambda,\xi}^{2^*-1}\,dy
		=
		A_N\lambda^{-\frac{N-2}{2}},
		\qquad
		A_N:=\int_{\R^N}U_{1,0}^{2^*-1}\,dz>0.
		\]
		This proves \eqref{eq2.10}.  Differentiating the same representation gives the $C^1$ estimate.  For the global $L^{2^*}$ estimate, set $h_{\lambda,\xi}=U_{\lambda,\xi}-PU_{\lambda,\xi}$.  This function is harmonic in $\Omega$ and has boundary values $U_{\lambda,\xi}$.  If $d_0=\operatorname{dist}(K,\partial\Omega)>0$, then, uniformly for $\xi\in K$,
		\[
		0\le U_{\lambda,\xi}
		\le C\lambda^{-\frac{N-2}{2}}
		\quad\hbox{on }\partial\Omega.
		\]
		The maximum principle therefore gives
		\[
		0\le U_{\lambda,\xi}-PU_{\lambda,\xi}
		\le C\lambda^{-\frac{N-2}{2}}
		\quad\hbox{throughout }\Omega,
		\]
		and hence
		\[
		\|PU_{\lambda,\xi}-U_{\lambda,\xi}\|_{L^{2^*}(\Omega)}
		\le C|\Omega|^{1/2^*}\lambda^{-\frac{N-2}{2}}.
		\]
		Finally, \eqref{eq2.11} is obtained by taking $x=\xi+z/\lambda$ in \eqref{eq2.10} and using the smoothness of $H_\Omega$:
		\[
		H_\Omega(\xi+z/\lambda,\xi)=R_\Omega(\xi)+O(|z|/\lambda).
		\]
	\end{proof}
	
	Set
	\[
	\varpi_N(\lambda)=
	\begin{cases}
		\lambda^{-1},&N=3,\\
		\lambda^{-2}\log\lambda,&N=4,\\
		\lambda^{-2},&N\ge5,
	\end{cases}
	\]
	and define the diagonal potential coefficient by
	\[
	\mathfrak p_N(\xi)=
	\begin{cases}
		\displaystyle
		\frac12\int_\Omega a(x)
		\left(
		\alpha_3|x-\xi|^{-1}
		-
		A_3H_\Omega(x,\xi)
		\right)^2\,dx,
		&N=3,\\[1.2em]
		\displaystyle
		\frac12\alpha_4^2\omega_3\,a(\xi),
		&N=4,\\[0.8em]
		\displaystyle
		\frac12\alpha_N^2a(\xi)
		\int_{\mathbb R^N}(1+|z|^2)^{-(N-2)}\,dz,
		&N\ge5.
	\end{cases}
	\]
	
	\begin{lemma}\label{Lem2.7}
		Let \(K\Subset\Omega\).  Uniformly for \(\xi\in K\), as \(\lambda\to\infty\),
		\begin{equation}\label{eq2.12}
			\frac12\int_\Omega a(x)(PU_{\lambda,\xi})^2\,dx
			=
			\mathfrak p_N(\xi)\varpi_N(\lambda)
			+
			o\big(\varpi_N(\lambda)\big).
		\end{equation}
	\end{lemma}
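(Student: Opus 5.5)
We split according to dimension and use the expansion of Lemma~\ref{Lem2.6} throughout. The common input is the pointwise control
\[
0\le PU_{\lambda,\xi}\le U_{\lambda,\xi},
\qquad
PU_{\lambda,\xi}=U_{\lambda,\xi}-A_N\lambda^{-\frac{N-2}{2}}H_\Omega(\cdot,\xi)+r_{\lambda,\xi}.
\]
The upper bound comes from the maximum principle in the proof of Lemma~\ref{Lem2.6}. Near \(\partial\Omega\) we use instead the bound \(0\le U-PU\le C\lambda^{-(N-2)/2}\), together with \(U\le C\lambda^{-(N-2)/2}\) there, uniformly for \(\xi\in K\). In particular \(PU=O(\lambda^{-(N-2)/2})\) away from \(\xi\).

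\emph{Case \(N=3\).} On compact sets \(K'\) not containing \(\xi\) we have \(U_{\lambda,\xi}(x)=\alpha_3\lambda^{-1/2}|x-\xi|^{-1}(1+O(\lambda^{-2}|x-\xi|^{-2}))\). Hence
\[
\lambda^{1/2}PU_{\lambda,\xi}(x)\to \alpha_3|x-\xi|^{-1}-A_3H_\Omega(x,\xi)=:g_\xi(x)
\]
pointwise for \(x\ne\xi\). The limit \(g_\xi\) equals \(\alpha_3\)-times \(4\pi G_\Omega\), using \(A_3=4\pi\alpha_3\) from the Green representation. We then apply dominated convergence to \(\lambda\int a(PU)^2\), with dominating function \(\lambda U^2\le \alpha_3^2|x-\xi|^{-2}\in L^1(\Omega)\) for \(N=3\), and \(a\) bounded on \(\overline\Omega\). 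This gives the limit \(\int a\,g_\xi^2\). Uniformity in \(\xi\in K\) follows because the convergence and the domination are uniform: we argue by contradiction along sequences \(\xi_k\to\xi_\infty\), using the continuity of \(H_\Omega\) and the fact that the \(C^1\) remainder bound is uniform near the boundary thanks to the maximum principle bound.

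\emph{Cases \(N\ge4\).} We write \(PU=U-\varphi\) with \(0\le\varphi\le C\lambda^{-(N-2)/2}\). Then
\[
\int a(PU)^2=\int aU^2-2\int aU\varphi+\int a\varphi^2 .
\]
The cross term is \(O(\lambda^{-(N-2)/2}\|U\|_{L^1})\). Since \(\|U\|_{L^1}=O(\lambda^{-(N-2)/2})\) for \(N\ge5\), and \(O(\lambda^{-1})\) for \(N=4\), this is \(O(\lambda^{-(N-2)})\), which is \(o(\varpi_N)\) when \(N\ge4\) (for \(N=4\) it is \(\lambda^{-2}\) against \(\lambda^{-2}\log\lambda\)). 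Likewise \(\int a\varphi^2=O(\lambda^{-(N-2)})=o(\varpi_N)\).

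For \(N\ge5\) we substitute \(x=\xi+z/\lambda\) in the main term:
\[
\int_\Omega aU^2=\lambda^{-2}\alpha_N^2\int_{\lambda(\Omega-\xi)}a(\xi+z/\lambda)(1+|z|^2)^{2-N}\,dz .
\]
The integrand \((1+|z|^2)^{2-N}\) is integrable for \(N\ge5\), so dominated convergence and the continuity of \(a\) give \(\lambda^{-2}\alpha_N^2a(\xi)\int(1+|z|^2)^{2-N}\), uniformly in \(\xi\).

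For \(N=4\) the integrand is \((1+|z|^2)^{-2}\sim|z|^{-4}\), so \(\int_{|z|\le\lambda d}\) grows like \(\omega_3\log\lambda+O(1)\). We split into \(|x-\xi|\le\delta\) and its complement. The complement contributes \(O(\lambda^{-2})\). On the ball, \(|a(x)-a(\xi)|\le C\delta\) gives
\[
\lambda^{-2}\alpha_4^2\bigl(a(\xi)+O(\delta)\bigr)\bigl(\omega_3\log\lambda+O_\delta(1)\bigr).
\]
Letting \(\lambda\to\infty\) and then \(\delta\to0\) yields \(\alpha_4^2\omega_3a(\xi)\lambda^{-2}\log\lambda\,(1+o(1))\). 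Halving matches \(\mathfrak p_N\) in both cases.

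\emph{Main obstacle.} The delicate step is \(N=3\), where the whole profile contributes at order \(\lambda^{-1}\) and no local scaling argument suffices. There we must justify the pointwise limit of \(\lambda^{1/2}PU\) up to the boundary, or at least in \(L^2\) weighted by \(a\), uniformly in \(\xi\). We plan to use the Green representation \eqref{eq2.8} directly: \(\lambda^{1/2}PU(x)=\int G_\Omega(x,y)\lambda^{1/2}U^{5}(y)\,dy\), and \(\lambda^{1/2}U^5\) tends to \(A_3\delta_\xi\) as a measure. The bound \(\lambda^{1/2}PU\le\alpha_3|x-\xi|^{-1}\) then serves as the domination on all of \(\Omega\).
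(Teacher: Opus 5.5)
Your proposal is correct and follows essentially the paper's route. For $N\ge5$ it uses rescaling plus dominated convergence; for $N=4$ it uses the logarithmic divergence of $\int_{|z|\le c\lambda}(1+|z|^2)^{-2}\,dz$, with the projection correction shown to be $O(\lambda^{-(N-2)})$; and for $N=3$ it uses the convergence of $\lambda^{1/2}PU_{\lambda,\xi}$ to $A_3G_\Omega(\cdot,\xi)$ under the domination $\lambda^{1/2}PU_{\lambda,\xi}\le\alpha_3|x-\xi|^{-1}$.

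Two corrections to your $N=3$ discussion. Convergence up to $\partial\Omega$ is not needed: interior pointwise convergence from Lemma~\ref{Lem2.6}, together with the global bound $\lambda U_{\lambda,\xi}^2\le\alpha_3^2|x-\xi|^{-2}$, suffices. Uniformity in $\xi$ then comes from a moving-dominator dominated convergence along $\xi_k\to\xi_\infty$, plus continuity of $\mathfrak p_3$ in $\xi$. Your claim that the $C^1$ remainder bound is uniform near the boundary is false, but the argument does not need it.
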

	
	\begin{proof}
		We write
		\[
		E_{\rm pot}(\lambda,\xi)
		=
		\frac12\int_\Omega a(x)(PU_{\lambda,\xi})^2\,dx.
		\]
		All estimates below are uniform for \(\xi\in K\).
		
		We treat \(N\ge5\) first.  By the projection expansion,
		\[
		PU_{\lambda,\xi}=U_{\lambda,\xi}
		+
		O\!\left(\lambda^{-\frac{N-2}{2}}\right)
		\]
		in the sense needed for the \(L^2\)-estimate, and the contribution of the projection correction to \(E_{\rm pot}\) is \(o(\lambda^{-2})\). Hence
		\[
		E_{\rm pot}(\lambda,\xi)
		=
		\frac12\alpha_N^2\lambda^{-2}
		\int_{\lambda(\Omega-\xi)}
		a(\xi+z/\lambda)(1+|z|^2)^{-(N-2)}\,dz
		+
		o(\lambda^{-2}).
		\]
		Since \(N\ge5\), the function \((1+|z|^2)^{-(N-2)}\) is integrable on \(\mathbb R^N\). Dominated convergence gives
		\[
		E_{\rm pot}(\lambda,\xi)
		=
		\mathfrak p_N(\xi)\lambda^{-2}
		+
		o(\lambda^{-2}).
		\]
		This proves \eqref{eq2.12} for \(N\ge5\).
		
		For \(N=4\), the same change of variables gives
		\[
		E_{\rm pot}(\lambda,\xi)
		=
		\frac12\alpha_4^2\lambda^{-2}
		\int_{\lambda(\Omega-\xi)}
		a(\xi+z/\lambda)(1+|z|^2)^{-2}\,dz
		+
		O(\lambda^{-2}).
		\]
		The logarithmic divergence is
		\[
		\int_{|z|\le c\lambda}(1+|z|^2)^{-2}\,dz
		=
		\omega_3\log\lambda+O(1),
		\]
		whereas the contribution of \(a(\xi+z/\lambda)-a(\xi)\) is \(O(1)\) after integration. Thus
		\[
		E_{\rm pot}(\lambda,\xi)
		=
		\frac12\alpha_4^2\omega_3 a(\xi)\lambda^{-2}\log\lambda
		+
		O(\lambda^{-2}).
		\]
		Since \(O(\lambda^{-2})=o(\lambda^{-2}\log\lambda)\), \eqref{eq2.12} follows for \(N=4\).
		
		Consider \(N=3\), and set
		\[
		F_{\lambda,\xi}:=\lambda^{1/2}PU_{\lambda,\xi},
		\qquad
		Q_\xi:=\alpha_3|\,\cdot-\xi|^{-1}-A_3H_\Omega(\cdot,\xi).
		\]
		Since \(A_3=\int_{\mathbb R^3}U_{1,0}^5=4\pi\alpha_3\) and \(\Gamma(z)=(4\pi|z|)^{-1}\), we have
		\[
		Q_\xi=A_3G_\Omega(\cdot,\xi).
		\]
		In particular, \(0\le Q_\xi(x)\le\alpha_3|x-\xi|^{-1}\).  Also, by the maximum principle,
		\begin{equation}\label{eq2.N3-pointwise}
			0\le F_{\lambda,\xi}(x)
			\le\lambda^{1/2}U_{\lambda,\xi}(x)
			\le\alpha_3|x-\xi|^{-1}.
		\end{equation}
		
		We next make the convergence away from the pole uniform in the centre.  Let \(d_0=\operatorname{dist}(K,\partial\Omega)>0\), fix \(0<\delta<d_0/4\), and put \(D_{\delta,\xi}=\Omega\setminus B(\xi,\delta)\).  The Green representation gives
		\begin{equation}\label{eq2.N3-Green}
			F_{\lambda,\xi}(x)
			=\int_\Omega G_\Omega(x,y)\,d\mu_{\lambda,\xi}(y),
			\qquad
			d\mu_{\lambda,\xi}(y)
			=\lambda^{1/2}U_{\lambda,\xi}(y)^5\,dy.
		\end{equation}
		After the change of variables \(z=\lambda(y-\xi)\), uniformly for \(\xi\in K\),
		\begin{equation}\label{eq2.N3-mass}
			\mu_{\lambda,\xi}(\Omega)=A_3+O(\lambda^{-2}),
			\qquad
			\mu_{\lambda,\xi}(\Omega\setminus B(\xi,\rho))
			=O((\lambda\rho)^{-2})
		\end{equation}
		for each fixed \(0<\rho<\delta/2\).  On the part \(|y-\xi|<\rho\), the kernel \(G_\Omega(x,y)\) is uniformly continuous in \(y\) for \(x\in\overline\Omega\) with \(|x-\xi|\ge\delta\).  On the complement, using \(G_\Omega(x,y)\le C|x-y|^{-1}\),
		\[
		\lambda^{1/2}U_{\lambda,\xi}(y)^5
		\le C\lambda^{-2}\rho^{-5},
		\qquad |y-\xi|\ge\rho,
		\]
		and \(\sup_x\int_\Omega|x-y|^{-1}\,dy<\infty\).  It follows from \eqref{eq2.N3-Green}-\eqref{eq2.N3-mass}, first by letting \(\lambda\to\infty\) and then \(\rho\downarrow0\), that
		\begin{equation}\label{eq2.N3-far}
			\sup_{\xi\in K}\sup_{x\in D_{\delta,\xi}}
			|F_{\lambda,\xi}(x)-Q_\xi(x)|\to0.
		\end{equation}
		
		Let \(N_{\delta,\xi}=\Omega\cap B(\xi,\delta)\) and \(e_{\lambda,\xi}=F_{\lambda,\xi}-Q_\xi\).  By \eqref{eq2.N3-pointwise},
		\begin{equation}\label{eq2.N3-near}
			\sup_{\lambda\ge1}\sup_{\xi\in K}
			\int_{N_{\delta,\xi}}a(x)
			\bigl(F_{\lambda,\xi}^2+Q_\xi^2\bigr)\,dx
			\le C\int_0^\delta dr
			\le C\delta.
		\end{equation}
		On \(D_{\delta,\xi}\), \eqref{eq2.N3-far} gives
		\[
		\sup_{\xi\in K}\int_{D_{\delta,\xi}}
		a(x)e_{\lambda,\xi}^2\,dx=o_\delta(1).
		\]
		Letting first \(\lambda\to\infty\) and then \(\delta\downarrow0\), we obtain
		\begin{equation}\label{eq2.N3-L2}
			\sup_{\xi\in K}\int_\Omega
			a(x)e_{\lambda,\xi}^2\,dx\to0.
		\end{equation}
		The family \(Q_\xi\) is uniformly bounded in \(L^2(\Omega,a\,dx)\), so Cauchy-Schwarz also yields
		\begin{equation}\label{eq2.N3-cross}
			\sup_{\xi\in K}
			\left|\int_\Omega a(x)Q_\xi e_{\lambda,\xi}\,dx\right|
			\to0.
		\end{equation}
		Consequently, uniformly for \(\xi\in K\),
		\[
		\begin{aligned}
			\lambda E_{\rm pot}(\lambda,\xi)
			&=\frac12\int_\Omega a(x)(Q_\xi+e_{\lambda,\xi})^2\,dx\\
			&=\frac12\int_\Omega a(x)Q_\xi^2\,dx+o(1)
			=\mathfrak p_3(\xi)+o(1).
		\end{aligned}
		\]
		This proves \eqref{eq2.12} for \(N=3\) and completes the proof.
	\end{proof}

	\section{Palais-Smale decomposition and deformation to bubble tubes}\label{sec:PS}

	Throughout this section \(\Omega\Subset\mathbb B^N\) is the image of the given bounded \(C^2\) hyperbolic domain in the Poincar\'e ball model.  We use the notation of Section~\ref{sec:geom}.  In particular,
	\[
	2^*=\frac{2N}{N-2},\qquad
	\phi(x)=\rho(x)^{\frac{N-2}{2}},\qquad
	\rho(x)=\frac{2}{1-|x|^2}.
	\]
	The map
	\[
	T:H^1_0(\Omega;g_{\Hh})\to H^1_0(\Omega),
	\qquad Tu=\phi u,
	\]
	is an isomorphism. The hyperbolic functional
	\[
	I(u)=
	\frac12\int_\Omega |\nabla_{\Hh}u|^2\,dV_{\Hh}
	-\frac1{2^*}\int_\Omega |u|^{2^*}\,dV_{\Hh}
	\]
	is transformed into
	\begin{equation*}
		J(v)=
		\frac12\int_\Omega\big(|\nabla v|^2+a(x)v^2\big)\,dx
		-\frac1{2^*}\int_\Omega |v|^{2^*}\,dx,
		\qquad v=Tu,
	\end{equation*}
	where
	\[
	a(x)=\frac{N(N-2)}{(1-|x|^2)^2}\in C^\infty(\overline\Omega),
	\qquad a>0.
	\]
	We write
	\[
	\|v\|_a^2=
	\int_\Omega\big(|\nabla v|^2+a(x)v^2\big)\,dx.
	\]
	The norms \(\|u\|_{H^1_0(\Omega;g_{\Hh})}\) and \(\|Tu\|_a\) are equivalent. Thus \((u_k)\) is a \((PS)_c\) sequence for \(I\) if and only if \((v_k)=(Tu_k)\) is a \((PS)_c\) sequence for \(J\).
	
	Let
	\[
	S=\inf_{0\ne w\in C_c^\infty(\mathbb R^N)}
	\frac{\displaystyle\int_{\mathbb R^N}|\nabla w|^2\,dx}
	{\left(\displaystyle\int_{\mathbb R^N}|w|^{2^*}\,dx\right)^{2/2^*}},
	\]
	and let \(U\) be the Talenti bubble normalized by
	\[
	-\Delta U=U^{2^*-1}\quad\hbox{in }\mathbb R^N,
	\qquad
	\int_{\mathbb R^N}|\nabla U|^2\,dx
	=
	\int_{\mathbb R^N}U^{2^*}\,dx
	=S^{N/2}.
	\]
	Set
	\begin{equation*}
		I_\infty=\frac1N S^{N/2}.
	\end{equation*}
	For \(\lambda>0\) and \(\xi\in\Omega\), let
	\[
	U_{\lambda,\xi}(x)=\lambda^{\frac{N-2}{2}}U(\lambda(x-\xi)),
	\]
	and let \(PU_{\lambda,\xi}\in H^1_0(\Omega)\) be the Dirichlet projection
	\[
	-\Delta PU_{\lambda,\xi}=U_{\lambda,\xi}^{2^*-1}
	\quad\hbox{in }\Omega,
	\qquad
	PU_{\lambda,\xi}=0
	\quad\hbox{on }\partial\Omega.
	\]
	
	\subsection{Weak limits, splitting, and concentration}
	
	\begin{definition}
		A sequence \((v_k)\subset H^1_0(\Omega)\) is a \((PS)_c\) sequence for \(J\) if
		\[
		J(v_k)\to c,
		\qquad
		J'(v_k)\to0
		\quad\hbox{in }H^{-1}(\Omega).
		\]
	\end{definition}
	
	\begin{lemma}\label{Lem3.2}
		Every \((PS)_c\) sequence \((v_k)\) for \(J\) is bounded in \(H^1_0(\Omega)\). Up to a subsequence,
		\[
		v_k\rightharpoonup v\quad\hbox{weakly in }H^1_0(\Omega),
		\qquad
		v_k\to v\quad\hbox{strongly in }L^p(\Omega),\quad 1\le p<2^*,
		\]
		and \(v_k(x)\to v(x)\) almost everywhere in \(\Omega\). Moreover \(v\) is a weak solution of
		\begin{equation}\label{eq3.1}
			-\Delta v+a(x)v=|v|^{2^*-2}v
			\quad\hbox{in }\Omega,
			\qquad
			v=0
			\quad\hbox{on }\partial\Omega.
		\end{equation}
	\end{lemma}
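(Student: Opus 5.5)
We follow the standard argument for critical Palais--Smale sequences, working throughout with the equivalent norm \(\|\cdot\|_a\).

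\emph{Boundedness.} Combine the energy and the derivative. Since \(J(v_k)\to c\) and \(\|J'(v_k)\|_{H^{-1}}\to0\),
\[
J(v_k)-\frac1{2^*}\langle J'(v_k),v_k\rangle
=\Bigl(\frac12-\frac1{2^*}\Bigr)\|v_k\|_a^2
=\frac1N\|v_k\|_a^2 .
\]
The left-hand side is at most \(c+1+o(1)\|v_k\|_a\). Hence \(\frac1N\|v_k\|_a^2\le C+o(1)\|v_k\|_a\), so \((v_k)\) is bounded in \(\|\cdot\|_a\). Since \(a\in C^\infty(\overline\Omega)\) and \(a>0\), this norm is equivalent to the \(H^1_0(\Omega)\) norm.

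\emph{Compactness of a subsequence.} \(H^1_0(\Omega)\) is reflexive, so a subsequence converges weakly to some \(v\). Rellich--Kondrachov on the bounded domain \(\Omega\) gives strong convergence in \(L^p(\Omega)\) for \(1\le p<2^*\). A further subsequence converges almost everywhere.

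\emph{Weak limit equation.} Fix \(\varphi\in C_c^\infty(\Omega)\). The linear part converges: \(\langle v_k,\varphi\rangle_a\to\langle v,\varphi\rangle_a\) by weak convergence.

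For the nonlinear term, the sequence \(|v_k|^{2^*-2}v_k\) is bounded in \(L^{2^*/(2^*-1)}(\Omega)\) and converges almost everywhere to \(|v|^{2^*-2}v\). By the standard Brezis--Lieb-type lemma (a bounded sequence in \(L^q\), \(q>1\), converging a.e. converges weakly in \(L^q\) to its a.e. limit), it converges weakly in \(L^{2^*/(2^*-1)}\). Therefore
\[
\int_\Omega|v_k|^{2^*-2}v_k\varphi\,dx\to\int_\Omega|v|^{2^*-2}v\varphi\,dx .
\]
Alternatively, one can use Vitali's theorem: equi-integrability of \(|v_k|^{2^*-1}|\varphi|\) follows from Hölder's inequality together with the \(L^{2^*}\) bound.

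Since \(\langle J'(v_k),\varphi\rangle\to0\), we conclude
\[
\langle v,\varphi\rangle_a=\int_\Omega|v|^{2^*-2}v\varphi\,dx .
\]
By density of \(C_c^\infty(\Omega)\) in \(H^1_0(\Omega)\), and because both sides are continuous in \(\varphi\in H^1_0(\Omega)\) (Sobolev embedding), this identity holds for all test functions. This is exactly the weak form of \eqref{eq3.1}.

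The only point needing care is the passage to the limit in the critical nonlinear term, where strong \(L^{2^*}\) convergence fails. It is handled by the weak-\(L^{q}\) convergence argument above, so no real obstacle arises.
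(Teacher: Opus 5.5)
Your proposal is correct and follows essentially the same route as the paper: the identity $J(v)-\frac{1}{2^*}\langle J'(v),v\rangle=\frac1N\|v\|_a^2$ gives boundedness, Rellich--Kondrachov gives the subsequence, and the critical term passes to the limit through weak $L^{(2^*)'}$ convergence of $|v_k|^{2^*-2}v_k$, which follows from boundedness plus a.e.\ convergence. The paper phrases that last step via Vitali's theorem and uniqueness of the a.e.\ limit, and then concludes by density exactly as you do.
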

	
	\begin{proof}
		For every \(v\in H^1_0(\Omega)\),
		\[
		\langle J'(v),v\rangle
		=
		\|v\|_a^2-\int_\Omega |v|^{2^*}\,dx.
		\]
		Therefore
		\[
		J(v)-\frac1{2^*}\langle J'(v),v\rangle
		=
		\left(\frac12-\frac1{2^*}\right)\|v\|_a^2
		=
		\frac1N\|v\|_a^2.
		\]
		If \((v_k)\) is a \((PS)_c\) sequence, then
		\[
		\frac1N\|v_k\|_a^2
		=
		J(v_k)-\frac1{2^*}\langle J'(v_k),v_k\rangle
		\le C+o(1)\|v_k\|_a.
		\]
		Since \(\|\cdot\|_a\) is equivalent to the usual \(H^1_0\)-norm, this gives boundedness in \(H^1_0(\Omega)\). The stated weak convergence and the compact \(L^p\)-convergence for \(p<2^*\) follow after passing to a subsequence.
		
		We pass to the limit in the equation. Let \(\varphi\in C_c^\infty(\Omega)\). The linear terms pass to the limit by weak convergence in \(H^1_0\) and strong convergence in \(L^2\). For the nonlinear term, \((v_k)\) is bounded in \(L^{2^*}\), \(v_k\to v\) almost everywhere, and hence
		\[
		|v_k|^{2^*-2}v_k
		\rightharpoonup
		|v|^{2^*-2}v
		\quad\hbox{weakly in }L^{(2^*)'}(\Omega).
		\]
		This follows from the boundedness in \(L^{(2^*)'}\), Vitali's theorem on sets of finite measure after truncation, and uniqueness of the almost-everywhere limit. Passing to the limit in \(\langle J'(v_k),\varphi\rangle=o(1)\) gives the weak formulation of \eqref{eq3.1}. Density gives the identity for all \(\varphi\in H^1_0(\Omega)\).
	\end{proof}
	
	\begin{lemma}\label{Lem3.3}
		Let \(v_k\rightharpoonup v\) in \(H^1_0(\Omega)\) and set
		\[
		w_k=v_k-v.
		\]
		Then, after passing to a subsequence,
		\begin{align}
			\|v_k\|_a^2
			&=
			\|v\|_a^2+\|w_k\|_a^2+o(1),\notag\\
			\int_\Omega |v_k|^{2^*}\,dx
			&=
			\int_\Omega |v|^{2^*}\,dx
			+
			\int_\Omega |w_k|^{2^*}\,dx
			+o(1),\label{eq3.2}\\
			J(v_k)
			&=
			J(v)+J(w_k)+o(1).\label{eq3.3}
		\end{align}
		If, in addition, \(J'(v_k)\to0\) and \(J'(v)=0\), then
		\begin{equation*}
			J'(w_k)\to0
			\quad\hbox{in }H^{-1}(\Omega).
		\end{equation*}
	\end{lemma}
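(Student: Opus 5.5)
The plan is to prove the three splitting identities and the Palais--Smale transfer in turn. Throughout I use only the weak convergence $w_k\rightharpoonup0$ in $H^1_0(\Omega)$ and the consequences of Rellich compactness.

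\emph{Quadratic identity.} Expand
\[
\|v_k\|_a^2=\|v\|_a^2+2\langle v,w_k\rangle_a+\|w_k\|_a^2 .
\]
The cross term tends to zero because $w_k\rightharpoonup0$ in the Hilbert space $(H^1_0(\Omega),\langle\cdot,\cdot\rangle_a)$. This gives the first identity.

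\emph{Critical power.} Pass to a subsequence so that $v_k\to v$ almost everywhere, as in Lemma~\ref{Lem3.2}. Since $(v_k)$ is bounded in $L^{2^*}$, the Brezis--Lieb lemma yields \eqref{eq3.2}. Combining the two identities, and noting that $J$ is exactly the quadratic part minus $\frac1{2^*}$ times the $L^{2^*}$ part, gives \eqref{eq3.3}.

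\emph{Derivative.} For $\varphi\in H^1_0(\Omega)$ we have
\[
\langle J'(v_k)-J'(v)-J'(w_k),\varphi\rangle
=-\int_\Omega\bigl(f(v+w_k)-f(v)-f(w_k)\bigr)\varphi\,dx,
\qquad f(s)=|s|^{2^*-2}s,
\]
because the linear parts cancel exactly. By Sobolev's inequality it suffices to show
\[
f(v+w_k)-f(v)-f(w_k)\to0\quad\text{in }L^{(2^*)'}(\Omega)=L^{\frac{2N}{N+2}}(\Omega).
\]
Then the hypotheses $J'(v_k)\to0$ and $J'(v)=0$ give $J'(w_k)\to0$ in $H^{-1}(\Omega)$.

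This $L^{(2^*)'}$ convergence is the main obstacle, since the nonlinearity is not compact at the critical exponent. I would handle it by the standard Brezis--Lieb-type argument for the derivative. Start from the elementary inequality
\[
|f(a+b)-f(a)-f(b)|\le C\bigl(|a|^{2^*-2}|b|+|a||b|^{2^*-2}\bigr),
\]
and note that $2^*-2\le 4/(N-2)$ covers both the case $2^*-2\ge1$ and the case $2^*-2<1$ (for $N\ge6$ it would hold in Hölder form). Then treat the two terms separately.

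\begin{itemize}
\item[(i)] For $|v|^{2^*-2}|w_k|$, fix $\varepsilon>0$ and split $v=v_\varepsilon+(v-v_\varepsilon)$ with $v_\varepsilon\in L^\infty$ and $\|v-v_\varepsilon\|_{L^{2^*}}<\varepsilon$. The bounded part contributes $\|v_\varepsilon\|_\infty^{2^*-2}\|w_k\|_{L^{(2^*)'}}\to0$, using $w_k\to0$ in $L^p$ for $p<2^*$ and $(2^*)'<2^*$. The remainder is $O(\varepsilon^{2^*-2})$ by Hölder's inequality and boundedness of $(w_k)$ in $L^{2^*}$.
\item[(ii)] For $|v||w_k|^{2^*-2}$, observe that $|w_k|^{(2^*-2)(2^*)'}$ is bounded in $L^{2^*/((2^*-2)(2^*)')}$ and tends to zero almost everywhere. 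Hence it converges weakly to zero in that space, and pairing with $|v|^{(2^*)'}$ in the dual exponent gives $\int |v|^{(2^*)'}|w_k|^{(2^*-2)(2^*)'}\to0$. Alternatively, the same truncation of $v$ combined with Egorov's theorem works.
\end{itemize}

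The exponent bookkeeping in (ii) should be checked: the condition is $(2^*-2)(2^*)'\cdot\frac{2^*}{2^*-(2^*)'}\le 2^*$, which holds with equality. Once that is verified, the conclusion $J'(w_k)\to0$ follows, completing the proof.
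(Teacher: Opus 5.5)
Your proof is correct. The quadratic identity, the Brezis--Lieb step, and the reduction of the derivative claim to $R_k:=f(v+w_k)-f(v)-f(w_k)\to0$ in $L^{(2^*)'}(\Omega)$ via the same pointwise inequality all match the paper.

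You diverge from the paper only in how you prove that last convergence. The paper does not split the two cross terms. It combines the pointwise bound with Young's inequality into
$|R_k|^{(2^*)'}\le\eta|w_k|^{2^*}+C_\eta|v|^{2^*}$, so that $(|R_k|^{(2^*)'}-\eta|w_k|^{2^*})_+\le C_\eta|v|^{2^*}$. Dominated convergence, using only $R_k\to0$ a.e., then gives $\limsup_k\|R_k\|_{(2^*)'}^{(2^*)'}\le\eta\sup_k\|w_k\|_{2^*}^{2^*}$, and one lets $\eta\downarrow0$. That argument needs only a.e.\ convergence and $L^{2^*}$-boundedness, with no exponent bookkeeping and no separate treatment of the two terms.

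Your route treats $|v|^{2^*-2}|w_k|$ by truncation plus Rellich, and $|v|\,|w_k|^{2^*-2}$ by weak convergence in a reflexive Lebesgue space. It shows what each term needs, at the cost of the bookkeeping. Two small corrections:

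\begin{itemize}
\item In (ii), state explicitly that the exponent is $s=2^*/\bigl((2^*-2)(2^*)'\bigr)=(N+2)/4>1$. The principle ``bounded and a.e.\ null implies weakly null'' fails in $L^1$, so $s>1$ is what makes this step valid.
\item Your aside about the cases $2^*-2\ge1$ versus $2^*-2<1$ is unnecessary, since $2^*-2=4/(N-2)$ identically. The pointwise inequality holds for every exponent $p>2$: apply the mean value theorem separately in the cases $|b|\le|a|$ and $|a|\le|b|$.
\end{itemize}
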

	
	\begin{proof}
		Since \(w_k\rightharpoonup0\) in \(H^1_0(\Omega)\),
		\[
		\|v_k\|_a^2
		=
		\|v\|_a^2+\|w_k\|_a^2
		+
		2\int_\Omega(\nabla v\cdot\nabla w_k+a(x)vw_k)\,dx
		=
		\|v\|_a^2+\|w_k\|_a^2+o(1).
		\]
		The identity \eqref{eq3.2} is the Brezis-Lieb lemma on the finite measure space \((\Omega,dx)\) \cite{BrezisLieb1983}. Hence \eqref{eq3.3} follows.
		
		For the derivative splitting, write for \(\psi\in H^1_0(\Omega)\)
		\[
		\langle J'(v_k)-J'(v)-J'(w_k),\psi\rangle
		=
		-\int_\Omega R_k\psi\,dx,
		\]
		where
		\[
		R_k=
		|v_k|^{2^*-2}v_k
		-
		|v|^{2^*-2}v
		-
		|w_k|^{2^*-2}w_k.
		\]
		The following elementary nonlinear decoupling estimate gives
		\[
		R_k\to0
		\quad\hbox{in }L^{(2^*)'}(\Omega).
		\]
		Put \(p=2^*\) and \(p'=p/(p-1)\).  The pointwise inequality
		\[
		\big||s+t|^{p-2}(s+t)-|s|^{p-2}s-|t|^{p-2}t\big|
		\le
		C\big(|s|^{p-2}|t|+|t|^{p-2}|s|\big)
		\]
		and Young's inequality imply that, for every \(\eta>0\),
		\[
		\big||s+t|^{p-2}(s+t)-|s|^{p-2}s-|t|^{p-2}t\big|^{p'}
		\le \eta|t|^p+C_\eta|s|^p.
		\]
		Since \(w_k\to0\) almost everywhere, \(R_k\to0\) almost everywhere, and
		\[
		\bigl(|R_k|^{p'}-\eta|w_k|^p\bigr)_+
		\le C_\eta|v|^p.
		\]
		Dominated convergence and the boundedness of \(w_k\) in \(L^p\) therefore give
		\[
		\limsup_{k\to\infty}\|R_k\|_{p'}^{p'}
		\le \eta\sup_k\|w_k\|_p^p.
		\]
		Letting \(\eta\downarrow0\) proves the asserted strong convergence.  Thus
		\[
		\sup_{\|\psi\|_a\le1}
		\left|\int_\Omega R_k\psi\,dx\right|
		\le
		C\|R_k\|_{L^{(2^*)'}(\Omega)}
		\to0.
		\]
		Consequently
		\[
		J'(w_k)=J'(v_k)-J'(v)+o(1)=o(1)
		\quad\hbox{in }H^{-1}(\Omega).
		\]
	\end{proof}
	
	\begin{lemma}\label{Lem3.4}
		Let \((z_k)\subset H^1_0(\Omega)\) be bounded, \(z_k\rightharpoonup0\) in \(H^1_0(\Omega)\), and
		\[
		J'(z_k)\to0
		\quad\hbox{in }H^{-1}(\Omega).
		\]
		Extend \(z_k\) by zero outside \(\Omega\). After passing to a subsequence,
		\[
		|\nabla z_k|^2\,dx \rightharpoonup \mu,
		\qquad
		|z_k|^{2^*}\,dx \rightharpoonup \nu
		\]
		weakly as finite Radon measures on \(\mathbb R^N\). Then the defect measure \(\nu\) is purely atomic and supported in \(\overline\Omega\):
		\[
		\nu=\sum_{j\in J}\nu_j\delta_{x_j},
		\qquad x_j\in\overline\Omega,\quad \nu_j>0,
		\]
		with at most countably many atoms. Moreover, for every atom,
		\[
		\nu_j\ge S^{N/2}.
		\]
		Consequently, if \(K\subset\mathbb R^N\) is compact and there exist \(r_0>0\), \(\eta_0<S^{N/2}\) such that
		\[
		\limsup_{k\to\infty}
		\int_{B(x,r)\cap\Omega}|z_k|^{2^*}\,dx
		\le \eta_0
		\]
		for every \(x\in K\) and every \(0<r<r_0\) with \(\nu(\partial B(x,r))=0\), then
		\[
		z_k\to0
		\quad\hbox{strongly in }L^{2^*}(K\cap\Omega).
		\]
	\end{lemma}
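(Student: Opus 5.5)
The plan is Lions' second concentration-compactness lemma adapted to Palais-Smale sequences. Since \(z_k\rightharpoonup0\) in \(H^1_0(\Omega)\) and \(\Omega\) is bounded, Rellich gives \(z_k\to0\) in \(L^2(\Omega)\). Hence \(\int a z_k^2\varphi^2\to0\) for every bounded \(\varphi\), so the potential term plays no role in the local analysis. After extracting weak-\(*\) limits \(\mu,\nu\) of \(|\nabla z_k|^2dx\) and \(|z_k|^{2^*}dx\) (both have bounded total mass, and both are supported in \(\overline\Omega\) because \(z_k\) vanishes outside \(\Omega\)), Lions' lemma applied to the sequence \(z_k\rightharpoonup0\) in \(D^{1,2}(\mathbb R^N)\) gives the reverse Hölder inequality
\[
S\Big(\int|\varphi|^{2^*}d\nu\Big)^{2/2^*}\le\int|\varphi|^2\,d\mu,\qquad \varphi\in C_c^\infty(\mathbb R^N).
\]
To prove it, I apply Sobolev to \(\varphi z_k\) and observe that \(\nabla(\varphi z_k)=\varphi\nabla z_k+o(1)\) in \(L^2\), which holds because \(z_k\to0\) in \(L^2_{\rm loc}\). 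From this inequality \(\nu\) is atomic, \(\nu=\sum\nu_j\delta_{x_j}\) with countably many atoms, and \(\mu\ge S\nu_j^{2/2^*}\delta_{x_j}\).

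The PS condition is used to reverse the inequality at the atoms. Fix an atom \(x_j\) and a cutoff \(\varphi_\varepsilon(x)=\varphi((x-x_j)/\varepsilon)\) with \(0\le\varphi\le1\) and \(\varphi=1\) on \(B_1\). Since \(\varphi_\varepsilon z_k\) is bounded in \(H^1_0(\Omega)\), the condition \(J'(z_k)\to0\) yields
\[
\int\varphi_\varepsilon|\nabla z_k|^2+\int z_k\nabla z_k\cdot\nabla\varphi_\varepsilon+\int a\varphi_\varepsilon z_k^2-\int\varphi_\varepsilon|z_k|^{2^*}=o(1).
\]
The cross term is bounded by \(\|z_k\|_{L^2(\operatorname{supp}\nabla\varphi_\varepsilon)}\|\nabla z_k\|_2\to0\), and the potential term also tends to \(0\). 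Therefore \(\int\varphi_\varepsilon d\mu=\int\varphi_\varepsilon d\nu\). Letting \(\varepsilon\to0\) gives \(\mu(\{x_j\})=\nu_j\). Combined with \(\mu(\{x_j\})\ge S\nu_j^{2/2^*}\), this gives \(\nu_j^{1-2/2^*}\ge S\), that is, \(\nu_j\ge S^{N/2}\).

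The last assertion follows from the quantization. Suppose \(x\in K\) carried an atom. Choose \(r<r_0\) with \(\nu(\partial B(x,r))=0\); such radii exist because only countably many radii are excluded. Weak convergence then gives \(\lim\int_{B(x,r)\cap\Omega}|z_k|^{2^*}=\nu(B(x,r))\ge S^{N/2}>\eta_0\), which contradicts the hypothesis. So \(\nu\) has no atom in \(K\). Since \(\nu\) is purely atomic with finitely many atoms of mass at least \(S^{N/2}\) (the total mass is finite), we can pick an open neighbourhood \(U\supset K\) with \(\nu(\overline U)=0\). Taking \(\psi\in C_c(\mathbb R^N)\) with \(\psi=1\) on \(K\), \(0\le\psi\le1\), and support in \(U\), we get \(\int_{K\cap\Omega}|z_k|^{2^*}\le\int\psi|z_k|^{2^*}\to\int\psi\,d\nu=0\).

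The main obstacle is making the PS identity rigorous near boundary atoms. The test functions \(\varphi_\varepsilon z_k\) must lie in \(H^1_0(\Omega)\); they do, because \(z_k\) does. I also have to make sure no boundary-flux term appears, and none does, because all integrals are over \(\Omega\) with the zero extension. A second point to handle is the passage \(\varepsilon\to0\) in \(\int\varphi_\varepsilon d\mu\). I intend to use that \(\mu\) is a finite measure together with dominated convergence for \(\varphi_\varepsilon\to\mathbf 1_{\{x_j\}}\).
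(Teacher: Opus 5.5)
Your proposal is correct and follows the same route as the paper. Lions' limit-case lemma gives the atomic structure and $S\nu_j^{2/2^*}\le\mu(\{x_j\})$; testing $J'(z_k)$ against $\varphi_\varepsilon z_k\in H^1_0(\Omega)$, with the cross and potential terms removed by Rellich, gives $\mu(\{x_j\})=\nu_j$; and the quantization $\nu_j\ge S^{N/2}$ excludes atoms in $K$. Your use of the finiteness of the atoms to find a neighbourhood $U\supset K$ with $\nu(\overline U)=0$ only makes explicit the paper's one-line final step.
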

	
	\begin{proof}
		The zero extensions are bounded in \(D^{1,2}(\mathbb R^N)\).  The limit-case concentration-compactness theorem \cite[Lemma~I.1, pp.~158-159]{LionsLimit1985} gives, after passing to a subsequence, finite Radon measures \(\mu,\nu\) and at most countably many atoms \(x_j\in\overline\Omega\) such that
		\[
		\nu=\sum_{j\in J}\nu_j\delta_{x_j},
		\qquad
		\mu\ge\sum_{j\in J}\mu_j\delta_{x_j},
		\qquad
		S\nu_j^{2/2^*}\le \mu_j.
		\]
		Here the weak limit is zero, and therefore there is no absolutely continuous part associated with a nonzero weak limit.
		
		We compare \(\mu_j\) and \(\nu_j\). Fix an atom \(x_j\) and choose \(\eta_\varepsilon\in C_c^\infty(\mathbb R^N)\) such that
		\[
		0\le\eta_\varepsilon\le1,\qquad
		\eta_\varepsilon\equiv1 \hbox{ on } B_\varepsilon(x_j),\qquad
		\operatorname{supp}\eta_\varepsilon\subset B_{2\varepsilon}(x_j),
		\qquad
		|\nabla\eta_\varepsilon|\le C\varepsilon^{-1}.
		\]
		Since \(z_k\in H^1_0(\Omega)\), the function \(\eta_\varepsilon z_k\) belongs to \(H^1_0(\Omega)\), even if \(x_j\in\partial\Omega\). Testing \(J'(z_k)=o(1)\) with \(\eta_\varepsilon z_k\) gives
		\[
		\int_\Omega \nabla z_k\cdot\nabla(\eta_\varepsilon z_k)\,dx
		+
		\int_\Omega a(x)\eta_\varepsilon z_k^2\,dx
		-
		\int_\Omega \eta_\varepsilon |z_k|^{2^*}\,dx
		=o(1).
		\]
		For fixed \(\varepsilon\), \(z_k\to0\) strongly in \(L^2(\Omega)\), and \((z_k)\) is bounded in \(H^1_0(\Omega)\). Hence
		\[
		\int_\Omega z_k\nabla z_k\cdot\nabla\eta_\varepsilon\,dx=o(1),
		\qquad
		\int_\Omega a(x)\eta_\varepsilon z_k^2\,dx=o(1).
		\]
		Thus
		\[
		\int_\Omega \eta_\varepsilon |\nabla z_k|^2\,dx
		=
		\int_\Omega \eta_\varepsilon |z_k|^{2^*}\,dx+o(1).
		\]
		Passing to the weak limits of measures and then letting \(\varepsilon\downarrow0\), we obtain
		\[
		\mu(\{x_j\})=\nu(\{x_j\}).
		\]
		Therefore \(\mu_j\le\nu_j\). Combining this with \(S\nu_j^{2/2^*}\le\mu_j\) yields
		\[
		S\nu_j^{2/2^*}\le\nu_j.
		\]
		If \(\nu_j>0\), then
		\[
		\nu_j^{2/N}\ge S,
		\]
		and hence
		\[
		\nu_j\ge S^{N/2}.
		\]
		
		The final assertion follows immediately. If strong convergence failed on \(K\cap\Omega\), then \(\nu\) would have positive mass in \(K\). Since \(\nu\) is purely atomic, some atom \(x_j\in K\cap\overline\Omega\) would be present, and its mass would be at least \(S^{N/2}\), contradicting the assumed small-mass bound.
	\end{proof}
	
	\begin{lemma}\label{Lem3.5}
		Let \(\Omega_k\subset\mathbb R^N\) converge locally in \(C^1\) either to \(\mathbb R^N\) or to a half-space \(\mathcal H\).  Let \(Z_k\in D^{1,2}_0(\Omega_k)\), and denote its zero extension to \(\mathbb R^N\) by \(\overline Z_k\).  Assume
		\[
		\sup_k\|\nabla\overline Z_k\|_{L^2(\mathbb R^N)}<\infty,
		\qquad
		\overline Z_k\rightharpoonup0
		\quad\hbox{in }D^{1,2}_{\mathrm{loc}}(\mathbb R^N).
		\]
		Assume also the following local dual residual condition: for every compact \(Q\subset\mathbb R^N\) and every sequence \(\psi_k\in D^{1,2}_0(\Omega_k)\) satisfying
		\[
		\operatorname{supp}\psi_k\subset Q,
		\qquad
		\sup_k\|\nabla\psi_k\|_{L^2(\Omega_k)}<\infty,
		\]
		one has
		\begin{equation}\label{eq3.local-residual}
			\int_{\Omega_k}\nabla Z_k\cdot\nabla\psi_k\,dz
			-
			\int_{\Omega_k}|Z_k|^{2^*-2}Z_k\psi_k\,dz
			=o(1).
		\end{equation}
		If, for some \(\rho>0\) and \(\eta<S^{N/2}\),
		\[
		\sup_{y\in\mathbb R^N}
		\int_{B(y,\rho)\cap\Omega_k}|Z_k|^{2^*}\,dz
		\le\eta
		\]
		for all sufficiently large \(k\), then
		\[
		\int_{L\cap\Omega_k}|Z_k|^{2^*}\,dz\to0
		\]
		for every compact \(L\subset\mathbb R^N\).  In the half-space case this includes compact sets meeting \(\partial\mathcal H\).
	\end{lemma}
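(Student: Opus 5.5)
The plan is the standard local concentration argument. Localize with a cutoff, test the residual condition \eqref{eq3.local-residual} against $\eta^2 Z_k$, and use the smallness of the $L^{2^*}$ mass on $\rho$-balls to absorb the nonlinear term through the Sobolev inequality.

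Fix $y\in\mathbb R^N$ and $\eta_y\in C_c^\infty(B(y,\rho))$ with $0\le\eta_y\le1$, $\eta_y\equiv1$ on $B(y,\rho/2)$ and $|\nabla\eta_y|\le C/\rho$. Since $Z_k\in D^{1,2}_0(\Omega_k)$, the function $\psi_k=\eta_y^2Z_k$ lies in $D^{1,2}_0(\Omega_k)$, is supported in the compact set $\overline{B(y,\rho)}$, and is bounded in $D^{1,2}$ because $\|Z_k\eta_y\nabla\eta_y\|_2\le C\|Z_k\|_{L^{2^*}(B)}$ by Hölder. No boundary issue arises, since the zero trace of $Z_k$ is inherited; this is why half-space limits and sets $L$ meeting $\partial\mathcal H$ cause no difficulty. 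Writing $\nabla Z_k\cdot\nabla(\eta^2Z_k)=|\nabla(\eta Z_k)|^2-Z_k^2|\nabla\eta|^2$, the residual identity becomes
\[
\int|\nabla(\eta_yZ_k)|^2\,dz=\int|Z_k|^{2^*-2}(\eta_yZ_k)^2\,dz+\int Z_k^2|\nabla\eta_y|^2\,dz+o(1).
\]

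Next I control the nonlinear term by Hölder and the mass bound:
\[
\int|Z_k|^{2^*-2}(\eta_yZ_k)^2\le\Bigl(\int_{B(y,\rho)\cap\Omega_k}|Z_k|^{2^*}\Bigr)^{2/N}\|\eta_yZ_k\|_{2^*}^2\le\eta^{2/N}S^{-1}\|\nabla(\eta_yZ_k)\|_2^2 .
\]
Since $\eta<S^{N/2}$, we have $\eta^{2/N}S^{-1}<1$, so this term is absorbed into the left side, leaving $(1-\eta^{2/N}S^{-1})\|\nabla(\eta_yZ_k)\|_2^2\le\int_{B(y,\rho)}Z_k^2|\nabla\eta_y|^2+o(1)$.

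The last term is where I use the weak limit. From $\overline Z_k\rightharpoonup0$ in $D^{1,2}_{\rm loc}$ and Rellich, $\overline Z_k\to0$ in $L^2_{\rm loc}$, so $\int Z_k^2|\nabla\eta_y|^2\to0$ for fixed $y$. Hence $\|\nabla(\eta_yZ_k)\|_2\to0$, and by Sobolev $\int_{B(y,\rho/2)\cap\Omega_k}|Z_k|^{2^*}\to0$. Finally, cover any compact $L$ by finitely many balls $B(y_i,\rho/2)$ and sum.

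The only delicate point I expect is checking that $\eta_y^2Z_k$ is an admissible test function with a $k$-uniform support and gradient bound, which is exactly what the residual hypothesis requires. Once that holds, the rest is the usual $\varepsilon$-regularity absorption.
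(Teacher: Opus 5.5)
Your proof is correct, but it follows a different route from the paper's. The paper localizes with a cutoff $\chi$ and sets $Y_k=\chi\overline Z_k\in H^1_0(D)$. It then invokes Lions's limit-case concentration-compactness lemma to write the limit of $|Y_k|^{2^*}$ as a purely atomic measure with $S\nu_\ell^{2/2^*}\le\mu_\ell$. Testing the residual with $\eta_\varepsilon Z_k$, then letting $k\to\infty$ and $\varepsilon\downarrow0$, gives $\mu(\{y_\ell\})=\nu(\{y_\ell\})$, so every atom has mass $0$ or at least $S^{N/2}$. The small-mass hypothesis, applied on a ball of radius $r<\rho$ with $\nu(\partial B)=0$, then excludes all atoms in $\{\chi=1\}$. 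You instead run the direct Struwe-type absorption at finite $k$. Testing with $\eta_y^2Z_k$, the H\"older--Sobolev bound with factor $\eta^{2/N}S^{-1}<1$ absorbs the nonlinear term. The only leftover term, $\int Z_k^2|\nabla\eta_y|^2$, vanishes by Rellich, which is the same $L^2_{\mathrm{loc}}$ input the paper uses for its commutator term $\int Z_k\nabla Z_k\cdot\nabla\eta_\varepsilon$. Your version is shorter and self-contained: it avoids limit measures, the choice of radii with null boundary mass, and passage to subsequences, so it gives convergence of the whole sequence. It also gives the explicit bound $\|\nabla(\eta_yZ_k)\|_2^2\le(1-\eta^{2/N}S^{-1})^{-1}\bigl(\int Z_k^2|\nabla\eta_y|^2+o(1)\bigr)$. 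The paper's version reuses the concentration-compactness framework of Lemma~\ref{Lem3.4} and makes the atom-quantization dichotomy explicit. Neither argument uses the $C^1$ convergence of $\Omega_k$. Both need only the zero extension and the admissibility of localized test functions in $D^{1,2}_0(\Omega_k)$, which is why compact sets meeting $\partial\mathcal H$ cost nothing in either approach.
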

	
	\begin{proof}
		Fix a compact \(L\subset\mathbb R^N\).  Choose a bounded smooth domain \(D\) and a cutoff \(\chi\in C_c^\infty(D)\) such that
		\[
		L\Subset\{\chi=1\}\Subset D.
		\]
		Set
		\[
		Y_k=\chi\,\overline Z_k.
		\]
		This localization is performed before the restriction to \(D\).  Since \(\overline Z_k\in D^{1,2}(\mathbb R^N)\) and \(\chi\) vanishes near \(\partial D\),
		\[
		Y_k\in H^1_0(D).
		\]
		Moreover, \((Y_k)\) is bounded in \(H^1_0(D)\), and the assumed weak convergence of the zero extensions gives
		\[
		Y_k\rightharpoonup0\quad\hbox{in }H^1_0(D).
		\]
		
		After passing to a subsequence,
		\[
		|\nabla Y_k|^2\,dz\rightharpoonup\mu,
		\qquad
		|Y_k|^{2^*}\,dz\rightharpoonup\nu
		\]
		as finite Radon measures on \(D\).  The limit-case lemma on concentration compactness \cite[Lemma~I.1, pp.~158-159]{LionsLimit1985} gives
		\[
		\nu=\sum_{\ell\in\mathcal J}\nu_\ell\delta_{y_\ell},
		\qquad
		\mu\ge\sum_{\ell\in\mathcal J}\mu_\ell\delta_{y_\ell},
		\qquad
		S\nu_\ell^{2/2^*}\le\mu_\ell.
		\]
		
		Consider an atom \(y_\ell\) lying in \(\{\chi=1\}\).  Choose \(\eta_\varepsilon\in C_c^\infty(\{\chi=1\})\) such that
		\[
		\eta_\varepsilon=1\ \hbox{on }B_\varepsilon(y_\ell),
		\qquad
		\operatorname{supp}\eta_\varepsilon
		\subset B_{2\varepsilon}(y_\ell),
		\qquad
		|\nabla\eta_\varepsilon|\le C\varepsilon^{-1}.
		\]
		The functions
		\[
		\psi_k=\eta_\varepsilon Z_k
		\]
		belong to \(D^{1,2}_0(\Omega_k)\), have support in one fixed compact set, and are bounded in \(D^{1,2}\).  They are therefore admissible in \eqref{eq3.local-residual}.  Since \(Y_k=Z_k\) on \(\operatorname{supp}\eta_\varepsilon\), and \(Y_k\to0\) strongly in \(L^2(D)\), we obtain
		\[
		\int_{\Omega_k}
		Z_k\nabla Z_k\cdot\nabla\eta_\varepsilon\,dz=o(1).
		\]
		Therefore
		\[
		\int_{\Omega_k}\eta_\varepsilon|\nabla Z_k|^2\,dz
		=
		\int_{\Omega_k}\eta_\varepsilon|Z_k|^{2^*}\,dz+o(1).
		\]
		Passing first to \(k\to\infty\) and then to \(\varepsilon\downarrow0\) yields
		\[
		\mu(\{y_\ell\})=\nu(\{y_\ell\}).
		\]
		Hence \(\mu_\ell\le\nu_\ell\), and therefore
		\[
		\nu_\ell=0
		\quad\hbox{or}\quad
		\nu_\ell\ge S^{N/2}.
		\]
		
		The local mass hypothesis rules out the second alternative.  Indeed, choose \(r<\rho\) with \(\nu(\partial B(y_\ell,r))=0\).  Since \(|Y_k|\le|\overline Z_k|\),
		\[
		\nu_\ell
		\le
		\lim_{k\to\infty}
		\int_{B(y_\ell,r)\cap\Omega_k}|Z_k|^{2^*}\,dz
		\le\eta<S^{N/2},
		\]
		so no atom lies in \(\{\chi=1\}\).  Therefore
		\[
		\int_{L\cap\Omega_k}|Z_k|^{2^*}\,dz
		=
		\int_L|Y_k|^{2^*}\,dz
		\to0.
		\]
	\end{proof}
	
	\begin{lemma}\label{Lem3.6}
		Let \((w_k)\) be bounded in \(H^1_0(\Omega)\), \(w_k\rightharpoonup0\) in \(H^1_0(\Omega)\), and \(J'(w_k)\to0\) in \(H^{-1}(\Omega)\). If \(w_k\not\to0\) in \(L^{2^*}(\Omega)\), then there exist points \(x_k\in\Omega\), radii \(r_k\downarrow0\), and a number
		\[
		0<\eta<S^{N/2}
		\]
		such that
		\begin{equation}\label{eq3.4}
			\int_{B(x_k,r_k)\cap\Omega}|w_k|^{2^*}\,dx\ge\eta,
			\qquad
			\sup_{y\in\mathbb R^N}
			\int_{B(y,r_k/4)\cap\Omega}|w_k|^{2^*}\,dx
			\le\eta.
		\end{equation}
	\end{lemma}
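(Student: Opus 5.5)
We use the concentration function of Lévy type, combined with Lemma~\ref{Lem3.4}.

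\emph{Step 1: a positive concentration level.} Extend \(w_k\) by zero and set
\[
Q_k(r)=\sup_{y\in\mathbb R^N}\int_{B(y,r)\cap\Omega}|w_k|^{2^*}\,dx .
\]
Each \(Q_k\) is continuous and nondecreasing in \(r\), with \(Q_k(0^+)=0\). Since \(w_k\not\to0\) in \(L^{2^*}(\Omega)\), after passing to a subsequence we have \(\int_\Omega|w_k|^{2^*}\ge 2\beta\) for some \(\beta>0\).

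Cover \(\overline\Omega\) by finitely many balls of radius \(r\). For each fixed \(r>0\) this gives \(Q_k(r)\ge c(r)>0\) for all large \(k\). The size of \(c(r)\) is not yet controlled.

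\emph{Step 2: the concentration radius must shrink.} We claim that for every fixed \(r>0\),
\[
\liminf_k Q_k(r)\ge \min(\beta,S^{N/2}/2)
\]
cannot hold only at scales bounded below. More precisely, we show \(Q_k(r)\to 0\) is impossible uniformly while the radius \(r_k\) defined below stays bounded away from zero.

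Fix \(\eta\in(0,\min(\beta,S^{N/2}))\); a convenient choice is \(\eta=\min(\beta,S^{N/2}/2)\). By continuity of \(Q_k\), choose \(s_k>0\) with \(Q_k(s_k)=\eta\). Suppose \(s_k\ge s_0>0\) along a subsequence. Then every ball of radius \(s_0\) carries \(L^{2^*}\)-mass at most \(\eta<S^{N/2}\), uniformly in \(k\).

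Lemma~\ref{Lem3.4}, applied with \(K=\overline\Omega\), \(r_0=s_0\) and \(\eta_0=\eta\), then gives \(w_k\to0\) in \(L^{2^*}(\Omega)\). This contradicts Step 1. Hence \(s_k\to0\).

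\emph{Step 3: choice of centres and radii, and the main obstacle.} The delicate point is to satisfy both inequalities in \eqref{eq3.4} with the same \(\eta\) and with radii \(r_k\) and \(r_k/4\).

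Put \(r_k=4s_k\downarrow0\). Then
\[
\sup_y\int_{B(y,r_k/4)\cap\Omega}|w_k|^{2^*}=Q_k(s_k)=\eta,
\]
which is the second inequality.

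For the first, note that \(Q_k(s_k)=\eta\) is a supremum, so for each \(k\) we may choose \(y_k\) with \(\int_{B(y_k,s_k)\cap\Omega}|w_k|^{2^*}\ge\eta-1/k\). The radius \(r_k=4s_k\) is larger than \(s_k\), and the enlarged ball absorbs this loss only up to the defect \(1/k\). To remove the defect, use that \(y\mapsto\int_{B(y,s_k)}|w_k|^{2^*}\) is continuous and vanishes for \(y\) far from \(\overline\Omega\). It therefore attains its supremum at some \(y_k\) with \(\operatorname{dist}(y_k,\overline\Omega)\le s_k\), and the first inequality holds exactly on \(B(y_k,s_k)\subset B(y_k,r_k)\).

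The statement requires \(x_k\in\Omega\). Since the ball \(B(y_k,s_k)\) meets \(\Omega\), pick \(x_k\in\Omega\) with \(|x_k-y_k|<s_k\). Then \(B(y_k,s_k)\subset B(x_k,2s_k)\subset B(x_k,r_k)\), so
\[
\int_{B(x_k,r_k)\cap\Omega}|w_k|^{2^*}\ge\eta .
\]

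The only substantive input is Step 2, the use of Lemma~\ref{Lem3.4} to exclude non-shrinking concentration. The small-mass hypothesis of that lemma is required only at radii avoiding atoms of \(\partial B\). This causes no difficulty, because our bound holds for all balls of radius at most \(s_0\).
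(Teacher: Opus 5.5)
Your proof is correct and is essentially the paper's argument: you use a L\'evy concentration function at a fixed level \(\eta<S^{N/2}\) below the mass lower bound, take an attained maximizing ball and move its centre into \(\Omega\), and invoke Lemma~\ref{Lem3.4} to rule out radii bounded below. The only difference is cosmetic: you take the supremum over all \(y\in\mathbb R^N\) and set \(r_k=4s_k\), which gives the second inequality in \eqref{eq3.4} at once, whereas the paper takes it over \(\overline\Omega\) with \(r_k=2s_k\) and adds a ball-inclusion step; the garbled opening sentences of your Step~2 and the \(\eta-1/k\) detour in Step~3 should simply be deleted.
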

	
	\begin{proof}
		Since \(w_k\not\to0\) in \(L^{2^*}\), after passing to a subsequence there is \(\delta_0>0\) such that
		\[
		\int_\Omega |w_k|^{2^*}\,dx\ge2\delta_0.
		\]
		Choose
		\[
		0<\eta<\min\{\delta_0,S^{N/2}\}.
		\]
		For \(r\ge0\) define
		\[
		Q_k(r)=
		\sup_{x\in\overline\Omega}
		\int_{B(x,r)\cap\Omega}|w_k|^{2^*}\,dx.
		\]
		The function \(Q_k\) is continuous, \(Q_k(0)=0\), and \(Q_k(\operatorname{diam}\Omega)\ge2\delta_0\). Hence there is \(s_k>0\) such that \(Q_k(s_k)=\eta\). Choose \(\bar x_k\in\overline\Omega\) at which the supremum is attained.  Since \(\Omega\) is open and dense in \(\overline\Omega\), choose \(x_k\in\Omega\) with \(|x_k-\bar x_k|<s_k\), and put \(r_k=2s_k\).  Then \(B(\bar x_k,s_k)\subset B(x_k,r_k)\), so the first inequality in \eqref{eq3.4} holds, while
		\[
		\sup_{x\in\Omega}\int_{B(x,r_k/2)\cap\Omega}|w_k|^{2^*}\,dx
		=Q_k(s_k)=\eta .
		\]
		If \(B(y,r_k/4)\cap\Omega\ne\varnothing\), choose \(x\in B(y,r_k/4)\cap\Omega\).  Then \(B(y,r_k/4)\cap\Omega\subset B(x,r_k/2)\cap\Omega\), so the preceding \(\Omega\)-centred bound proves the second inequality in \eqref{eq3.4}; the empty-intersection case is immediate.
		
		We show \(r_k\to0\). Suppose not. Then, after passing to a subsequence, \(r_k\ge r_*>0\). Hence
		\[
		\sup_{x\in\overline\Omega}
		\int_{B(x,r_*/2)\cap\Omega}|w_k|^{2^*}\,dx
		\le\eta<S^{N/2}.
		\]
		By Lemma~\ref{Lem3.4}, no concentration atom can occur in \(\overline\Omega\). Therefore \(w_k\to0\) strongly in \(L^{2^*}(\Omega)\), contradicting the assumption. Thus \(r_k\downarrow0\), after passing to a subsequence.
	\end{proof}
	
	\subsection{Bubble extraction and boundary exclusion}
	
	\begin{proposition}\label{Prop3.7}
		Let \((w_k)\) be bounded in \(H^1_0(\Omega)\), \(w_k\rightharpoonup0\), and \(J'(w_k)\to0\) in \(H^{-1}(\Omega)\). Let \((x_k,r_k)\) be chosen as in Lemma~\ref{Lem3.6}, and set
		\[
		\lambda_k=r_k^{-1}.
		\]
		Then
		\begin{equation*}
			\lambda_k\operatorname{dist}(x_k,\partial\Omega)\to+\infty.
		\end{equation*}
	\end{proposition}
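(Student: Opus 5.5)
Suppose, for contradiction, that along a subsequence \(\lambda_k\operatorname{dist}(x_k,\partial\Omega)\) stays bounded. Since \(r_k\downarrow0\) and \(\overline\Omega\) is compact, we may assume \(x_k\to x_0\in\partial\Omega\). If instead \(x_0\in\Omega\), then \(\operatorname{dist}(x_k,\partial\Omega)\) would stay bounded below and \(\lambda_k\operatorname{dist}\to\infty\) automatically. By Lemma~\ref{Lem2.3}, after a further subsequence the rescaled domains \(\Omega_k=\lambda_k(\Omega-x_k)\) converge locally in \(C^1\) to a half-space \(\mathcal H=\{z_N>-\ell\}\). We then work with the rescaled functions
\[
Z_k(z)=\lambda_k^{-\frac{N-2}{2}}w_k\bigl(x_k+z/\lambda_k\bigr),\qquad z\in\Omega_k .
\]
These preserve the Dirichlet integral and the \(L^{2^*}\) norm, so their zero extensions \(\overline Z_k\) are bounded in \(D^{1,2}(\mathbb R^N)\). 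By \eqref{eq3.4} they satisfy
\[
\int_{B(0,1)\cap\Omega_k}|Z_k|^{2^*}\,dz\ge\eta,\qquad
\sup_{y}\int_{B(y,1/4)\cap\Omega_k}|Z_k|^{2^*}\,dz\le\eta<S^{N/2}.
\]

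First, I would check the local dual residual condition \eqref{eq3.local-residual}. Take \(\psi_k\) supported in a fixed compact \(Q\) and bounded in \(D^{1,2}_0(\Omega_k)\), and set \(\varphi_k(x)=\lambda_k^{\frac{N-2}{2}}\psi_k(\lambda_k(x-x_k))\in H^1_0(\Omega)\). Then \(\|\nabla\varphi_k\|_{L^2}=\|\nabla\psi_k\|_{L^2}\), and \(\langle J'(w_k),\varphi_k\rangle=o(1)\). Changing variables gives the left side of \eqref{eq3.local-residual}, plus the potential term \(\int_\Omega a\,w_k\varphi_k\,dx\). This extra term is bounded by \(\|a\|_\infty\|w_k\|_{L^2(B(x_k,C/\lambda_k))}\|\varphi_k\|_{L^2}\). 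Since \(\|\varphi_k\|_{L^2}\le C\lambda_k^{-1}\|\varphi_k\|_{L^{2^*}}\) by Hölder on a ball of radius \(C/\lambda_k\), the term is \(O(\lambda_k^{-1})\to0\). Thus the \(a\)-term disappears in the blow-up.

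Next, pass to a weak limit: \(\overline Z_k\rightharpoonup Z\) in \(D^{1,2}(\mathbb R^N)\). Because \(\Omega_k\to\mathcal H\) locally in \(C^1\), we get \(Z=0\) a.e.\ outside \(\mathcal H\), so \(Z\in D^{1,2}_0(\mathcal H)\). Passing to the limit in the residual identity with \(\psi\in C_c^\infty(\mathcal H)\) works because such \(\psi\) is supported in \(\Omega_k\) for large \(k\); the nonlinear term converges weakly as in Lemma~\ref{Lem3.2}. Hence \(Z\) is a weak solution on \(\mathcal H\), and after translation Lemma~\ref{Lem2.4} gives \(Z\equiv0\). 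So \(\overline Z_k\rightharpoonup0\), and every hypothesis of Lemma~\ref{Lem3.5} holds with \(\rho=1/4\). That lemma gives \(\int_{\overline{B(0,1)}\cap\Omega_k}|Z_k|^{2^*}\to0\), contradicting the lower bound \(\ge\eta\). Note that Lemma~\ref{Lem3.5} covers compact sets meeting \(\partial\mathcal H\).

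The main obstacle is the weak-limit step near the moving boundary. Two things are needed. First, the limit must lie in \(D^{1,2}_0(\mathcal H)\), not merely in \(D^{1,2}\) of a neighbourhood. Second, the equation must hold against all of \(D^{1,2}_0(\mathcal H)\), so that Lemma~\ref{Lem2.4} applies. For the first, I would use the \(C^1\) convergence of the defining functions \(F_k\) from Lemma~\ref{Lem2.3}: \(\overline Z_k\) vanishes on \(\{F_k<0\}\), and a.e.\ convergence on compacts of \(\{z_N<-\ell\}\) gives \(Z=0\) there. Since \(\partial\mathcal H\) is a null set, a function in \(D^{1,2}(\mathbb R^N)\) vanishing on the complementary open half-space lies in \(D^{1,2}_0(\mathcal H)\) by the standard characterization for Lipschitz domains. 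For the second, density of \(C_c^\infty(\mathcal H)\) suffices.
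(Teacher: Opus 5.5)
Your proposal is correct and follows essentially the same route as the paper: rescale to \(\Omega_k\to\mathcal H\), verify the local dual residual condition (the potential term vanishes in the blow-up; the paper records it as \(O(\lambda_k^{-2})\), and your cruder \(O(\lambda_k^{-1})\) suffices), pass to a weak limit in \(D^{1,2}_0(\mathcal H)\) solving the half-space equation, and combine Lemma~\ref{Lem2.4} with Lemma~\ref{Lem3.5} and the normalization \eqref{eq3.4}. The only difference is the order in which the two ingredients are invoked. The paper first uses Lemma~\ref{Lem3.5} to show \(W\not\equiv0\) and then contradicts Lemma~\ref{Lem2.4}, whereas you apply Lemma~\ref{Lem2.4} to get \(Z\equiv0\) and then let Lemma~\ref{Lem3.5} contradict the mass lower bound; this is the same argument.
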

	
	\begin{proof}
		Assume by contradiction that
		\[
		\lambda_k d_k\le C,
		\qquad
		d_k=\operatorname{dist}(x_k,\partial\Omega).
		\]
		Since \(r_k\downarrow0\) and \(\lambda_k=r_k^{-1}\), we have \(d_k\to0\), and hence, after passing to a subsequence,
		\[
		x_k\to x_0\in\partial\Omega.
		\]
		Define
		\[
		\Omega_k=\lambda_k(\Omega-x_k),
		\qquad
		\widetilde w_k(z)=
		\lambda_k^{-\frac{N-2}{2}}w_k(x_k+z/\lambda_k),
		\qquad z\in\Omega_k,
		\]
		and extend \(\widetilde w_k\) by zero outside \(\Omega_k\). The sequence \((\widetilde w_k)\) is bounded in \(D^{1,2}_0(\Omega_k)\), because the Dirichlet norm is invariant under the critical scaling. By Lemma~\ref{Lem2.3}, after a rigid motion the domains \(\Omega_k\) converge in \(C^1_{\mathrm{loc}}\) to a half-space \(\mathcal H\).
		
		Let \(\varphi\in C_c^\infty(\mathcal H)\). For large \(k\),
		\[
		\varphi_k(x)=
		\lambda_k^{\frac{N-2}{2}}\varphi(\lambda_k(x-x_k))
		\]
		belongs to \(H^1_0(\Omega)\). Testing \(J'(w_k)=o(1)\) with \(\varphi_k\) and changing variables gives
		\[
		\int_{\Omega_k}\nabla\widetilde w_k\cdot\nabla\varphi\,dz
		+
		\lambda_k^{-2}\int_{\Omega_k}
		a(x_k+z/\lambda_k)\widetilde w_k\varphi\,dz
		-
		\int_{\Omega_k}
		|\widetilde w_k|^{2^*-2}\widetilde w_k\varphi\,dz
		=o(1).
		\]
		The potential term is \(o(1)\), since \(a\in L^\infty(\Omega)\) and \(\lambda_k^{-2}\to0\).  Extend \(\widetilde w_k\) by zero outside \(\Omega_k\).  These extensions are bounded in \(D^{1,2}(\mathbb R^N)\).  Passing to a subsequence,
		\[
		\widetilde w_k\rightharpoonup W
		\quad\text{weakly in }D^{1,2}_{\mathrm{loc}}(\mathcal H).
		\]
		Local compactness gives strong convergence in \(L^q_{\mathrm{loc}}(\mathcal H)\) for every \(q<2^*\) and almost-everywhere convergence.  Hence
		\[
		|\widetilde w_k|^{2^*-2}\widetilde w_k
		\rightharpoonup |W|^{2^*-2}W
		\quad\text{in }L^{(2^*)'}_{\mathrm{loc}}(\mathcal H).
		\]
		The weak limit of the zero extensions vanishes on the complementary half-space.  By the zero-extension characterization of the homogeneous Dirichlet space, lower semicontinuity and exhaustion give
		\[
		W\in D^{1,2}_0(\mathcal H).
		\]
		The preceding weak formulation gives
		\[
		-\Delta W=|W|^{2^*-2}W
		\quad\text{in }\mathcal H,
		\qquad
		W=0
		\quad\text{on }\partial\mathcal H.
		\]
		
		More generally, let \(Q\subset\mathbb R^N\) be compact and let \(\psi_k\in D^{1,2}_0(\Omega_k)\) be supported in \(Q\), with \(\sup_k\|\nabla\psi_k\|_2<\infty\).  The rescaled functions
		\[
		\Psi_k(x)=
		\lambda_k^{\frac{N-2}{2}}\psi_k(\lambda_k(x-x_k))
		\]
		are bounded in \(H^1_0(\Omega)\).  Testing \(J'(w_k)\) with \(\Psi_k\) therefore proves \eqref{eq3.local-residual}; its rescaled potential term is \(O_Q(\lambda_k^{-2})\).  If \(W=0\), every weak limit of the zero extensions is supported in \(\overline{\mathcal H}\), vanishes in \(\mathcal H\), and is zero almost everywhere outside \(\mathcal H\).  Hence those extensions converge weakly to zero in \(D^{1,2}_{\mathrm{loc}}(\mathbb R^N)\).
		
		We prove \(W\not\equiv0\). The normalization from Lemma~\ref{Lem3.6} becomes
		\[
		\int_{B(0,1)\cap\Omega_k}|\widetilde w_k|^{2^*}\,dz\ge\eta,
		\qquad
		\sup_{y\in\mathbb R^N}
		\int_{B(y,1/4)\cap\Omega_k}|\widetilde w_k|^{2^*}\,dz
		\le \eta<S^{N/2}.
		\]
		If \(W=0\), all hypotheses of Lemma~\ref{Lem3.5} are satisfied on compact subsets of the half-space, with \(\rho=1/4\). Therefore
		\[
		\widetilde w_k\to0
		\quad\text{strongly in }L^{2^*}(B(0,1)\cap\Omega_k),
		\]
		contradicting the first lower bound. Hence \(W\not\equiv0\).
		
		This contradicts the half-space Liouville theorem Lemma~\ref{Lem2.4}, which excludes every finite-energy Dirichlet solution in a half-space, including sign-changing ones. Therefore
		\[
		\lambda_k\operatorname{dist}(x_k,\partial\Omega)\to+\infty.
		\]
	\end{proof}
	
	\begin{lemma}\label{Lem3.8}
		Let
		\[
		B_k=PU_{\mu_k,\eta_k},
		\qquad
		\mu_k\operatorname{dist}(\eta_k,\partial\Omega)\to\infty,
		\]
		and let \((\lambda_k,\xi_k)\) be another scale-interior parameter sequence. If
		\[
		\frac{\lambda_k}{\mu_k}
		+
		\frac{\mu_k}{\lambda_k}
		+
		\lambda_k\mu_k|\xi_k-\eta_k|^2
		\to\infty,
		\]
		then, after zero extension,
		\[
		\lambda_k^{-\frac{N-2}{2}}
		B_k(\xi_k+\cdot/\lambda_k)
		\rightharpoonup0
		\quad\hbox{in }D^{1,2}_{\mathrm{loc}}(\mathbb R^N)
		\]
		and weakly in \(L^{2^*}_{\mathrm{loc}}(\mathbb R^N)\).
	\end{lemma}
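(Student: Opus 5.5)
The plan is to reduce to the classical orthogonality of Talenti bubbles. By Lemma~\ref{Lem2.6}, or more simply by the maximum principle as in its proof, we have $0\le PU_{\mu_k,\eta_k}\le U_{\mu_k,\eta_k}$ in $\Omega$. Moreover $\|\nabla(PU_{\mu_k,\eta_k})\|_{L^2(\Omega)}^2=\int_\Omega U_{\mu_k,\eta_k}^{2^*-1}PU_{\mu_k,\eta_k}\le S^{N/2}$. Set
\[
\widetilde B_k(z)=\lambda_k^{-\frac{N-2}{2}}B_k(\xi_k+z/\lambda_k),
\qquad
\widetilde U_k(z)=\lambda_k^{-\frac{N-2}{2}}U_{\mu_k,\eta_k}(\xi_k+z/\lambda_k)=U_{\mu_k/\lambda_k,\,\lambda_k(\eta_k-\xi_k)}(z).
\]
By critical scaling invariance, the zero extensions of $\widetilde B_k$ are bounded in $D^{1,2}(\mathbb R^N)$ and satisfy $0\le\widetilde B_k\le\widetilde U_k$. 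After passing to a subsequence, $\widetilde B_k\rightharpoonup W$ weakly in $D^{1,2}_{\mathrm{loc}}$ and in $L^{2^*}_{\mathrm{loc}}$, and almost everywhere by Rellich. So it suffices to show that $\widetilde U_k\to0$ in $L^{2^*}(L)$ for each compact $L$; this forces $W=0$ on $L$. Since every subsequence then has the same limit $0$, the full sequence converges weakly.

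Write $\tau_k=\mu_k/\lambda_k$ and $p_k=\lambda_k(\eta_k-\xi_k)$. The hypothesis reads $\tau_k+\tau_k^{-1}+\tau_k|p_k|^2\to\infty$. Along a subsequence one of three cases holds.

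\emph{Case $\tau_k\to0$.} Here $\widetilde U_k\le\alpha_N\tau_k^{(N-2)/2}$ pointwise, so its $L^{2^*}(L)$ norm tends to zero.

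\emph{Case $\tau_k\to\infty$.} Now $\int_L\widetilde U_k^{2^*}=\int_{\tau_k(L-p_k)}U_{1,0}^{2^*}$. This tends to zero because $|\tau_k(L-p_k)|=\tau_k^N|L|$, while the mass of $U_{1,0}^{2^*}$ near a point at scale $\tau_k$ shrinks. More precisely, $U_{\tau,p}^{2^*}$ concentrates at $p$: its mass outside $B(p,r)$ tends to zero, and its mass in $B(p,r)$ is at most the full mass. Weak convergence nevertheless holds in this case even if $L^{2^*}(L)$ convergence may fail. I would therefore instead test against $\varphi\in C_c^\infty$ and use
\[
\Bigl|\int\widetilde U_k\varphi\Bigr|\le\|\varphi\|_\infty\int_L\widetilde U_k\le C\tau_k^{-(N-2)/2}\to0.
\]
Combined with $0\le\widetilde B_k\le\widetilde U_k$, this gives $W\le0$, and hence $W=0$ because $W\ge0$.

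\emph{Case $\tau_k$ bounded above and below.} Then $|p_k|\to\infty$, and $\widetilde U_k\to0$ uniformly on $L$.

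In all three cases the limit vanishes, and uniqueness of the limit gives the statement both in $D^{1,2}_{\mathrm{loc}}$ and weakly in $L^{2^*}_{\mathrm{loc}}$.

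The only delicate point is the concentrating case $\tau_k\to\infty$. There strong local convergence fails, and one must argue through the $L^1_{\mathrm{loc}}$ smallness of $\widetilde U_k$ together with the pointwise domination, rather than through $L^{2^*}$ norms. The interiority assumption $\mu_k\operatorname{dist}(\eta_k,\partial\Omega)\to\infty$ is not even needed beyond guaranteeing that $B_k$ is well defined with the stated bounds.
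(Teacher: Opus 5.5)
Your argument is correct, but it reduces to the unprojected bubble by a different mechanism than the paper. The paper uses the interiority $\mu_k\operatorname{dist}(\eta_k,\partial\Omega)\to\infty$, through a cutoff and the Dirichlet-orthogonality of the projection, to show $\|\overline{PU_{\mu_k,\eta_k}}-U_{\mu_k,\eta_k}\|_{D^{1,2}(\mathbb R^N)}\to0$. By scale invariance, the rescaled $B_k$ then equals the rescaled Talenti bubble up to a strongly vanishing error, and the paper quotes the three standard cases (vanishing, concentration, escape) for $q_k^{(N-2)/2}U(q_k(z-\mathbf y_k))$.

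You instead use the order relation $0\le PU_{\mu_k,\eta_k}\le U_{\mu_k,\eta_k}$ from the maximum principle, together with nonnegativity of the weak limit. This removes the interiority hypothesis entirely and gives explicit bounds in each case. The price is that your route depends on positivity and order preservation of the projection. The paper's norm-approximation route applies to any projection known to be $D^{1,2}$-close to the bubble, regardless of sign. It also yields the stronger conclusion that the rescalings of $B_k$ and $U_{\mu_k,\eta_k}$ differ by $o(1)$ in $D^{1,2}$.

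In the write-up, delete the sentence claiming $\int_L\widetilde U_k^{2^*}\to0$ when $\tau_k\to\infty$. It is false whenever $p_k$ stays in a compact subset of the interior of $L$, since the local mass then tends to the full bubble mass $S^{N/2}$. Your own next remark already recognizes this.

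Note also that $\int_L\widetilde U_k\to0$ in all three cases:
\begin{itemize}
\item it is at most $\alpha_N\tau_k^{(N-2)/2}|L|$ when $\tau_k\to0$;
\item it is at most $C\tau_k^{-(N-2)/2}$ when $\tau_k\to\infty$;
\item it tends to zero by uniform decay when $\tau_k$ is bounded and $|p_k|\to\infty$.
\end{itemize}
So the preliminary reduction to $L^{2^*}(L)$-vanishing can be replaced by the single test $0\le\int\widetilde B_k\varphi\le\|\varphi\|_\infty\int_L\widetilde U_k$, for $0\le\varphi\in C_c^\infty(\mathbb R^N)$ with $\operatorname{supp}\varphi\subset L$. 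This gives $W=0$ uniformly across the cases.
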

	
	\begin{proof}
		The scale-interior condition and the Dirichlet projection property give
		\[
		\left\|
		\overline{PU_{\mu_k,\eta_k}}-U_{\mu_k,\eta_k}
		\right\|_{D^{1,2}(\mathbb R^N)}
		\to0.
		\]
		Indeed, \(PU_{\mu_k,\eta_k}\) is the Dirichlet-orthogonal projection of \(U_{\mu_k,\eta_k}|_\Omega\), and a cutoff which is one on \(B(\eta_k,\tfrac12\operatorname{dist}(\eta_k,\partial\Omega))\) approximates \(U_{\mu_k,\eta_k}\) in \(D^{1,2}\).
		
		Put
		\[
		q_k=\frac{\mu_k}{\lambda_k},
		\qquad
		\mathbf y_k=\lambda_k(\eta_k-\xi_k).
		\]
		In the \((\lambda_k,\xi_k)\)-variables the unprojected bubble is
		\[
		q_k^{\frac{N-2}{2}}U(q_k(z-\mathbf y_k)),
		\]
		and orthogonality becomes
		\[
		q_k+q_k^{-1}+q_k|\mathbf y_k|^2\to\infty.
		\]
		After passing to a subsequence, either \(q_k\to0\), or \(q_k\to\infty\), or \(q_k\) stays bounded above and below while \(|\mathbf y_k|\to\infty\). In the first case the dilates vanish locally, in the second they concentrate and converge weakly to zero, and in the third they escape every compact set. This proves both weak convergences.
	\end{proof}
	
	Let \((w_k)\) be as in Proposition~\ref{Prop3.7} and assume \(w_k\not\to0\) in \(L^{2^*}(\Omega)\). Let \((x_k,r_k)\) be chosen by Lemma~\ref{Lem3.6}, set
	\[
	\lambda_k=r_k^{-1},
	\qquad
	\xi_k=x_k,
	\]
	and define
	\[
	\widetilde w_k(z)=
	\lambda_k^{-\frac{N-2}{2}}w_k(\xi_k+z/\lambda_k),
	\qquad
	\Omega_k=\lambda_k(\Omega-\xi_k).
	\]
	
	\begin{lemma}\label{Lem3.9}
		With the preceding notation,
		\[
		\lambda_k\operatorname{dist}(\xi_k,\partial\Omega)\to+\infty,
		\]
		\(\Omega_k\) exhausts \(\mathbb R^N\), and, after passing to a subsequence,
		\[
		\widetilde w_k\rightharpoonup V
		\quad\hbox{weakly in }D^{1,2}_{\mathrm{loc}}(\mathbb R^N),
		\]
		where \(V\in D^{1,2}(\mathbb R^N)\), \(V\not\equiv0\), and
		\begin{equation*}
			-\Delta V=|V|^{2^*-2}V
			\quad\hbox{in }\mathbb R^N.
		\end{equation*}
		Moreover, if this profile is extracted in the inductive procedure from an original nonnegative sequence \((v_k)\) and is orthogonal to all previously extracted bubbles, then \(V\ge0\). In that case \(V\) is a positive Talenti bubble:
		\[
		V(z)=\mu^{\frac{N-2}{2}}U(\mu(z-z_0))
		\]
		for some \(\mu>0\) and \(z_0\in\mathbb R^N\).
	\end{lemma}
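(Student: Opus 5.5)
The plan has three parts: the interior-scale statement, the weak limit and its equation, and the sign claim.

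\emph{Interior scale.} The first assertion is exactly Proposition~\ref{Prop3.7} applied to \((w_k)\) with the pair \((x_k,r_k)\) from Lemma~\ref{Lem3.6}. Lemma~\ref{Lem2.3} then shows that \(\Omega_k=\lambda_k(\Omega-\xi_k)\) exhausts \(\mathbb R^N\), since \(B(0,\lambda_k d_k)\subset\Omega_k\) with \(\lambda_k d_k\to\infty\).

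\emph{Weak limit and equation.} Critical scaling preserves the Dirichlet integral, so the zero extensions \(\widetilde w_k\) are bounded in \(D^{1,2}(\mathbb R^N)\). Along a subsequence they converge weakly to some \(V\in D^{1,2}(\mathbb R^N)\), strongly in \(L^q_{\rm loc}\) for \(q<2^*\), and almost everywhere.
\begin{itemize}
\item Fix \(\varphi\in C_c^\infty(\mathbb R^N)\). For large \(k\) its support lies in \(\Omega_k\), so \(\varphi_k(x)=\lambda_k^{(N-2)/2}\varphi(\lambda_k(x-\xi_k))\) is admissible and bounded in \(H^1_0(\Omega)\).
\item Testing \(J'(w_k)=o(1)\) against \(\varphi_k\), exactly as in the proof of Proposition~\ref{Prop3.7}, gives the rescaled weak formulation. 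The potential term there is \(O(\lambda_k^{-2})\).
\item The nonlinear term converges weakly in \(L^{(2^*)'}_{\rm loc}\) to \(|V|^{2^*-2}V\), by boundedness plus almost-everywhere convergence.
\end{itemize}
This yields \(-\Delta V=|V|^{2^*-2}V\) in \(\mathbb R^N\).

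To see \(V\not\equiv0\), rescale \eqref{eq3.4}:
\[
\int_{B(0,1)}|\widetilde w_k|^{2^*}\ge\eta,
\qquad
\sup_y\int_{B(y,1/4)}|\widetilde w_k|^{2^*}\le\eta<S^{N/2}.
\]
Suppose \(V=0\). The local residual condition \eqref{eq3.local-residual} holds by the same rescaled testing, now with \(\psi_k\) supported in a fixed compact set. Lemma~\ref{Lem3.5} in the whole-space case then forces \(\widetilde w_k\to0\) in \(L^{2^*}(B(0,1))\), which contradicts the first bound.

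\emph{Sign and classification.} The positivity claim is the main obstacle. In the inductive procedure we have \(w_k=v_k-v-\sum_j PU_{\mu_k^j,\eta_k^j}\) with \(v_k\ge0\). We rescale each term at \((\lambda_k,\xi_k)\).
\begin{itemize}
\item The rescaled weak limit \(\lambda_k^{-(N-2)/2}v(\xi_k+\cdot/\lambda_k)\) tends to zero in \(D^{1,2}_{\rm loc}\) and weakly in \(L^{2^*}_{\rm loc}\), because \(\lambda_k\to\infty\) and \(v\) is a fixed \(H^1_0\) function.
\item By Lemma~\ref{Lem3.8}, orthogonality to the earlier bubbles makes each rescaled \(PU_{\mu^j_k,\eta^j_k}\) converge weakly to zero.
\end{itemize}
Hence \(V\) is also the weak limit of the rescaled \(v_k\), which are nonnegative. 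Nonnegativity passes to the almost-everywhere limit, after passing to a further subsequence using local strong \(L^q\) convergence. So \(V\ge0\), and \(V\not\equiv0\).

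The point needing care is that the subtracted bubbles converge only weakly, not almost everywhere, to zero. The sign should therefore be inferred from the weak limit of the \(v_k\)-part alone: the weak limits of the other parts are zero, and nonnegativity is preserved under weak \(L^2_{\rm loc}\) limits, since the cone of nonnegative functions is closed and convex.

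Finally, the strong maximum principle gives \(V>0\). The Caffarelli–Gidas–Spruck classification \cite{CGS1989} of positive finite-energy entire solutions then gives \(V=\mu^{(N-2)/2}U(\mu(\cdot-z_0))\).
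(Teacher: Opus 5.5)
Your proposal is correct and follows the paper's proof essentially step for step: Proposition~\ref{Prop3.7} for the interior scale, rescaled testing of $J'(w_k)$ for the limiting equation, Lemma~\ref{Lem3.5} on the whole space for $V\not\equiv0$, and Lemma~\ref{Lem3.8} combined with weak closedness of the nonnegative cone for $V\ge0$, followed by the strong maximum principle and Caffarelli--Gidas--Spruck. Your aside that the subtracted bubbles converge only weakly and not almost everywhere is inaccurate: they are bounded in $D^{1,2}_{\mathrm{loc}}$, so Rellich gives almost-everywhere convergence to zero along a subsequence. The remark is harmless, though, because the weak-closedness argument you settle on is exactly the paper's.
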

	
	\begin{proof}
		The boundary separation follows from Proposition~\ref{Prop3.7}. Thus
		\[
		\Omega_k=\lambda_k(\Omega-\xi_k)
		\]
		exhausts \(\mathbb R^N\). After zero extension, the rescaled sequence is bounded in \(D^{1,2}(\mathbb R^N)\). Hence, after passing to a subsequence,
		\[
		\widetilde w_k\rightharpoonup V
		\quad\text{weakly in }D^{1,2}_{\mathrm{loc}}(\mathbb R^N).
		\]
		For every \(R>0\),
		\[
		\int_{B_R}|\nabla V|^2\,dz
		\le
		\liminf_{k\to\infty}
		\int_{B_R\cap\Omega_k}|\nabla\widetilde w_k|^2\,dz
		\le C.
		\]
		Letting \(R\to\infty\) gives
		\[
		V\in D^{1,2}(\mathbb R^N).
		\]
		
		Testing \(J'(w_k)=o(1)\) against
		\[
		\varphi_k(x)=
		\lambda_k^{\frac{N-2}{2}}\varphi(\lambda_k(x-\xi_k)),
		\qquad
		\varphi\in C_c^\infty(\mathbb R^N),
		\]
		and changing variables gives
		\[
		\int_{\Omega_k}\nabla\widetilde w_k\cdot\nabla\varphi\,dz
		+
		\lambda_k^{-2}\int_{\Omega_k}
		a(\xi_k+z/\lambda_k)\widetilde w_k\varphi\,dz
		-
		\int_{\Omega_k}
		|\widetilde w_k|^{2^*-2}\widetilde w_k\varphi\,dz
		=o(1).
		\]
		The potential term tends to zero. Passing to the limit gives
		\[
		-\Delta V=|V|^{2^*-2}V
		\quad\text{in }\mathbb R^N.
		\]
		
		The same computation is uniform for every sequence \(\psi_k\in D^{1,2}_0(\Omega_k)\) supported in a fixed compact set and bounded in \(D^{1,2}\): after scaling \(\psi_k\) back to \(\Omega\), it is an admissible bounded test sequence for \(J'(w_k)=o(1)\), while the potential term is \(O(\lambda_k^{-2})\).  Thus \eqref{eq3.local-residual} holds.  If \(V=0\), the zero extensions converge weakly to zero in \(D^{1,2}_{\mathrm{loc}}(\mathbb R^N)\).
		
		The profile is nontrivial:
		\[
		\int_{B(0,1)\cap\Omega_k}|\widetilde w_k|^{2^*}\,dz\ge\eta,
		\qquad
		\sup_{y\in\mathbb R^N}
		\int_{B(y,1/4)\cap\Omega_k}|\widetilde w_k|^{2^*}\,dz
		\le\eta<S^{N/2}.
		\]
		If \(V=0\), Lemma~\ref{Lem3.5}, with limiting domain \(\mathbb R^N\), gives strong convergence to zero in \(L^{2^*}(B(0,1))\), contradicting the normalization. Hence \(V\not\equiv0\).
		
		Assume now that this profile is extracted during the induction from an original nonnegative sequence \(v_k\ge0\), and that the new parameters are orthogonal to all previously extracted bubbles.  In the new scale,
		\[
		\lambda_k^{-\frac{N-2}{2}}v(\xi_k+z/\lambda_k)
		\to0
		\quad\hbox{strongly in }D^{1,2}_{\mathrm{loc}}(\mathbb R^N)
		\cap L^{2^*}_{\mathrm{loc}}(\mathbb R^N),
		\]
		by absolute continuity of the integrals of \(|\nabla v|^2\) and \(|v|^{2^*}\).  By Lemma~\ref{Lem3.8}, every previously extracted projected bubble converges weakly to zero in the new variables. Consequently the weak local limit of the rescaled remainder is the same as the weak local limit of the rescaled nonnegative functions \(v_k\). The nonnegative cone is weakly closed in \(L^{2^*}_{\mathrm{loc}}\); therefore \(V\ge0\).
		
		Since \(V\ge0\), \(V\not\equiv0\), and \(V\in D^{1,2}(\mathbb R^N)\), the strong maximum principle gives \(V>0\). By the Caffarelli-Gidas-Spruck classification theorem \cite{CGS1989}, \(V\) is a Talenti bubble. Absorbing its dilation and translation into the parameters \((\lambda_k,\xi_k)\), we normalize the profile to be \(U\).
	\end{proof}
	
	\begin{lemma}\label{Lem3.10}
		For every Talenti bubble
		\[
		V(z)=\mu^{\frac{N-2}{2}}U(\mu(z-z_0))
		\]
		one has
		\[
		\frac12\int_{\mathbb R^N}|\nabla V|^2\,dz
		-\frac1{2^*}\int_{\mathbb R^N}|V|^{2^*}\,dz
		=I_\infty.
		\]
	\end{lemma}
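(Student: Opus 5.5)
The plan is to reduce everything to the normalized bubble \(U=U_{1,0}\) by exploiting the invariance of both integrals under the critical scaling, and then to use the normalization fixed in Section~2.

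First, set \(z=z_0+y/\mu\), so that \(V(z)=\mu^{\frac{N-2}{2}}U(y)\) and \(\nabla V(z)=\mu^{\frac N2}\nabla U(y)\), with \(dz=\mu^{-N}dy\). Then
\[
\int_{\mathbb R^N}|\nabla V|^2\,dz=\mu^{N}\mu^{-N}\int_{\mathbb R^N}|\nabla U|^2\,dy,
\qquad
\int_{\mathbb R^N}|V|^{2^*}\,dz=\mu^{\frac{N-2}{2}2^*}\mu^{-N}\int_{\mathbb R^N}U^{2^*}\,dy .
\]
Since \(\frac{N-2}{2}2^*=N\), both integrals are independent of \(\mu\) and \(z_0\). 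They are finite because \(U\in D^{1,2}(\mathbb R^N)\cap L^{2^*}(\mathbb R^N)\) decays like \(|y|^{2-N}\).

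Second, I invoke the normalization \(\int|\nabla U|^2=\int U^{2^*}=S^{N/2}\) recorded in Section~2 and at the start of Section~3. If one prefers to derive the equality of the two integrals rather than cite it, test \(-\Delta U=U^{2^*-1}\) against \(U\) times cutoffs \(\chi_R\). The boundary error is bounded by \(\int_{R<|y|<2R}|\nabla U|\,U\,R^{-1}\), which is of order \(R^{2-N}\to0\). Their common value \(S^{N/2}\) is the standard fact that \(U\) is a Sobolev extremal, so \(S=\|\nabla U\|_2^2/\|U\|_{2^*}^2=t/t^{2/2^*}=t^{2/N}\) with \(t\) the common value.

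Finally, I compute
\[
\Bigl(\tfrac12-\tfrac1{2^*}\Bigr)S^{N/2}=\tfrac1N S^{N/2}=I_\infty .
\]
There is no real obstacle here. The only point requiring care is the exponent bookkeeping in the change of variables, together with the justification of the integration by parts at infinity, and the decay of \(U\) handles the latter.
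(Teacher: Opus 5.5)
Your proposal is correct and follows essentially the same route as the paper. Both arguments test the bubble equation against the bubble and use that Talenti bubbles attain $S$, so each integral equals $S^{N/2}$; then $\bigl(\tfrac12-\tfrac1{2^*}\bigr)S^{N/2}=\tfrac1N S^{N/2}=I_\infty$. Your explicit scaling reduction to $U$, and the cutoff justification of the integration by parts, only spell out details the paper leaves implicit by applying the identity directly to $V$.
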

	
	\begin{proof}
		Multiplying \(-\Delta V=V^{2^*-1}\) by \(V\) gives
		\[
		\int_{\mathbb R^N}|\nabla V|^2\,dz
		=
		\int_{\mathbb R^N}V^{2^*}\,dz.
		\]
		Talenti bubbles attain the best Sobolev constant, so both integrals are \(S^{N/2}\). Hence
		\[
		\frac12\int |\nabla V|^2
		-\frac1{2^*}\int V^{2^*}
		=
		\left(\frac12-\frac1{2^*}\right)S^{N/2}
		=
		\frac1N S^{N/2}
		=I_\infty.
		\]
	\end{proof}
	
	\begin{lemma}\label{Lem3.11}
		Let \(\lambda_k\to+\infty\), \(\xi_k\in\Omega\), and
		\[
		\lambda_k\operatorname{dist}(\xi_k,\partial\Omega)\to+\infty.
		\]
		Set
		\[
		B_k=PU_{\lambda_k,\xi_k}.
		\]
		Then
		\begin{align}
			\int_\Omega|\nabla B_k|^2\,dx&=S^{N/2}+o(1),\notag\\
			\int_\Omega B_k^{2^*}\,dx&=S^{N/2}+o(1),\notag\\
			\int_\Omega a(x)B_k^2\,dx&=o(1),\notag\\
			\|B_k\|_a^2&=S^{N/2}+o(1),\label{eq3.5}\\
			J(B_k)&=I_\infty+o(1),\label{eq3.6}\\
			J'(B_k)&\to0
			\quad\hbox{in }H^{-1}(\Omega).\notag
			\notag\end{align}
		Moreover, in the variables \(z=\lambda_k(x-\xi_k)\),
		\[
		\lambda_k^{-\frac{N-2}{2}}B_k(\xi_k+z/\lambda_k)\to U(z)
		\]
		strongly in \(D^{1,2}_{\mathrm{loc}}(\mathbb R^N)\) and in \(L^{2^*}_{\mathrm{loc}}(\mathbb R^N)\).
	\end{lemma}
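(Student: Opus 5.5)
The plan is to reduce every assertion to a single estimate: after zero extension,
\[
\|\overline{B_k}-U_{\lambda_k,\xi_k}\|_{D^{1,2}(\mathbb R^N)}\to0 .
\]
This is the estimate already used in the proof of Lemma~\ref{Lem3.8}. To prove it, put \(d_k=\operatorname{dist}(\xi_k,\partial\Omega)\). Choose a cutoff \(\chi_k\) equal to one on \(B(\xi_k,d_k/2)\), supported in \(B(\xi_k,d_k)\), with \(|\nabla\chi_k|\le C/d_k\). Rescaling by \(z=\lambda_k(x-\xi_k)\) shows that
\[
\|\chi_kU_{\lambda_k,\xi_k}-U_{\lambda_k,\xi_k}\|_{D^{1,2}}
\]
is controlled by the tail \(\int_{|z|\ge\lambda_kd_k/2}\bigl(|\nabla U|^2+|z|^{-2}U^2\bigr)\,dz\), and this tends to zero because \(\lambda_kd_k\to\infty\).

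Since \(PU_{\lambda_k,\xi_k}\) is the Dirichlet-orthogonal projection of \(U_{\lambda_k,\xi_k}\) onto \(H^1_0(\Omega)\), and \(\chi_kU_{\lambda_k,\xi_k}\in H^1_0(\Omega)\), the projection is at least as close to \(U_{\lambda_k,\xi_k}\) as \(\chi_kU_{\lambda_k,\xi_k}\) is. Here we view \(U_{\lambda_k,\xi_k}\) in \(D^{1,2}(\mathbb R^N)\) and \(H^1_0(\Omega)\subset D^{1,2}(\mathbb R^N)\) as a closed subspace; integrating by parts against test functions in \(H^1_0(\Omega)\) shows that \(PU\) is exactly this orthogonal projection. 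This yields the displayed convergence. Sobolev's inequality then upgrades it to \(\|B_k-U_{\lambda_k,\xi_k}\|_{L^{2^*}}\to0\).

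The first two energy identities follow immediately from \(\int|\nabla U_{\lambda,\xi}|^2=\int U_{\lambda,\xi}^{2^*}=S^{N/2}\). For the potential term, I would use \(0\le B_k\le U_{\lambda_k,\xi_k}\) (maximum principle) together with
\[
\int_\Omega U_{\lambda,\xi}^2\le C\lambda^{-2}\int_{|z|\le C\lambda}U^2\,dz ,
\]
which is \(O(\lambda^{-1})\) for \(N=3\), \(O(\lambda^{-2}\log\lambda)\) for \(N=4\) and \(O(\lambda^{-2})\) for \(N\ge5\). In every case it is \(o(1)\), since \(a\) is bounded. Together these give \eqref{eq3.5} and \eqref{eq3.6}, with \(J(B_k)=\tfrac12S^{N/2}-\tfrac1{2^*}S^{N/2}+o(1)=I_\infty+o(1)\).

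For the derivative, test with \(\psi\) satisfying \(\|\psi\|_a\le1\):
\[
\langle J'(B_k),\psi\rangle=\int_\Omega\bigl(U_{\lambda_k,\xi_k}^{2^*-1}-B_k^{2^*-1}\bigr)\psi+\int_\Omega aB_k\psi .
\]
The last term is at most \(C\|B_k\|_{L^2}\|\psi\|_{L^2}=o(1)\). The first term is at most \(\|U^{2^*-1}-B_k^{2^*-1}\|_{L^{(2^*)'}}\|\psi\|_{L^{2^*}}\). The elementary inequality \(|s^{p-1}-t^{p-1}|\le C(s^{p-2}+t^{p-2})|s-t|\) and Hölder's inequality bound this by \(C\|U\|_{2^*}^{2^*-2}\|U-B_k\|_{2^*}\to0\). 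The local statement is simply the rescaled form of the global convergence: \(\lambda_k^{-(N-2)/2}U_{\lambda_k,\xi_k}(\xi_k+z/\lambda_k)=U(z)\), and the rescaling is an isometry on \(D^{1,2}\) and \(L^{2^*}\).

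The only step needing care is the identification of \(PU\) as a \(D^{1,2}(\mathbb R^N)\)-orthogonal projection, together with the cutoff tail estimate. For \(N=3\) and \(N=4\) one must check that the \(|\nabla\chi_k|^2U^2\) contribution on the annulus \(\{d_k/2\le|x-\xi_k|\le d_k\}\) decays. After rescaling it scales like \((\lambda_kd_k)^{2-N}\), so it tends to zero in all dimensions \(N\ge3\).
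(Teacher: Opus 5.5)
Your proposal is correct and follows essentially the same route as the paper: the cutoff comparison, combined with the fact that the Dirichlet projection is the $D^{1,2}$-nearest element of $H^1_0(\Omega)$, gives $\|\overline{B_k}-U_{\lambda_k,\xi_k}\|_{D^{1,2}(\mathbb R^N)}\to0$, and the norm, $L^{2^*}$, potential (via $0\le B_k\le U_{\lambda_k,\xi_k}$), derivative and rescaled local statements then follow exactly as in the paper. The only differences are cosmetic. You take the cutoff support in $B(\xi_k,d_k)$ rather than $B(\xi_k,3d_k/4)$, and you obtain $\int_\Omega|\nabla B_k|^2\to S^{N/2}$ directly from norm convergence rather than by testing the projection equation against $B_k$.
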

	
	\begin{proof}
		Put \(d_k=\operatorname{dist}(\xi_k,\partial\Omega)\) and \(R_k=\lambda_kd_k\).  Choose \(\chi_k\in C_c^\infty(\Omega)\) such that
		\[
		0\le\chi_k\le1,\qquad
		\chi_k=1\ \hbox{on }B(\xi_k,d_k/2),\qquad
		\operatorname{supp}\chi_k\subset B(\xi_k,3d_k/4),
		\qquad
		|\nabla\chi_k|\le C d_k^{-1}.
		\]
		The ordinary Dirichlet projection \(B_k=PU_{\lambda_k,\xi_k}\) is the \(D^{1,2}\)-orthogonal projection of \(U_{\lambda_k,\xi_k}|_\Omega\) onto \(H^1_0(\Omega)\).  Hence, after extending \(B_k\) and \(\chi_kU_{\lambda_k,\xi_k}\) by zero outside \(\Omega\),
		\[
		\begin{aligned}
			\|\overline B_k-U_{\lambda_k,\xi_k}\|_{D^{1,2}(\mathbb R^N)}^2
			&\le
			\|U_{\lambda_k,\xi_k}
			-\overline{\chi_kU_{\lambda_k,\xi_k}}\|_{D^{1,2}(\mathbb R^N)}^2\\
			&\le C\int_{|z|\ge R_k/2}|\nabla U(z)|^2\,dz
			+\frac{C}{R_k^2}
			\int_{R_k/2\le|z|\le3R_k/4}U(z)^2\,dz
			=o(1).
		\end{aligned}
		\]
		Here the last limit follows from \(R_k\to\infty\) and the Talenti decay. The Sobolev inequality therefore yields
		\begin{equation}\label{eq3.projected-scale-interior}
			\|\overline B_k-U_{\lambda_k,\xi_k}\|_{L^{2^*}(\mathbb R^N)}=o(1).
		\end{equation}
		After rescaling, these two estimates give the asserted strong local convergences to \(U\).
		
		Using
		\[
		-\Delta B_k=U_{\lambda_k,\xi_k}^{2^*-1}
		\quad\hbox{in }\Omega,
		\]
		we obtain
		\[
		\int_\Omega|\nabla B_k|^2\,dx
		=
		\int_\Omega U_{\lambda_k,\xi_k}^{2^*-1}B_k\,dx
		=
		S^{N/2}+o(1).
		\]
		Similarly, by \eqref{eq3.projected-scale-interior},
		\[
		\int_\Omega B_k^{2^*}\,dx
		=
		S^{N/2}+o(1).
		\]
		The maximum principle gives
		\[
		0\le B_k\le U_{\lambda_k,\xi_k}
		\quad\hbox{in }\Omega.
		\]
		Consequently,
		\[
		\int_\Omega a(x)B_k^2\,dx=o(1),
		\]
		because \(a\in L^\infty(\Omega)\) and the \(L^2\)-mass of a concentrating bubble in a bounded domain tends to zero.  The corresponding orders are \(O(\lambda_k^{-1})\) for \(N=3\), \(O(\lambda_k^{-2}\log\lambda_k)\) for \(N=4\), and \(O(\lambda_k^{-2})\) for \(N\ge5\).  Thus \eqref{eq3.5} and, using Lemma~\ref{Lem3.10}, \eqref{eq3.6} follow.
		
		Finally, for \(\psi\in H^1_0(\Omega)\),
		\[
		\langle J'(B_k),\psi\rangle
		=
		\int_\Omega a(x)B_k\psi\,dx
		+
		\int_\Omega
		\big(U_{\lambda_k,\xi_k}^{2^*-1}-B_k^{2^*-1}\big)\psi\,dx.
		\]
		The first term is \(o(1)\|\psi\|_a\) because \(B_k\to0\) in \(L^{(2^*)'}(\Omega)\).  For the nonlinear term, \eqref{eq3.projected-scale-interior} gives
		\[
		\|U_{\lambda_k,\xi_k}-B_k\|_{L^{2^*}(\Omega)}\to0,
		\]
		and hence, with \(p=2^*\),
		\[
		\begin{aligned}
			\|U_{\lambda_k,\xi_k}^{p-1}-B_k^{p-1}\|_{L^{p'}(\Omega)}
			&\le
			C\bigl(\|U_{\lambda_k,\xi_k}\|_{L^p}^{p-2}
			+\|B_k\|_{L^p}^{p-2}\bigr)
			\|U_{\lambda_k,\xi_k}-B_k\|_{L^p}  \\
			&=o(1).
		\end{aligned}
		\]
		By Sobolev embedding, the second term in the display for \(\langle J'(B_k),\psi\rangle\) is therefore \(o(1)\|\psi\|_a\).  Thus \(J'(B_k)\to0\) in \(H^{-1}(\Omega)\).
	\end{proof}
	
	\begin{lemma}\label{Lem3.12}
		Let \(w_k\rightharpoonup0\) in \(H^1_0(\Omega)\), and suppose that, for some \(\lambda_k\to+\infty\) and \(\xi_k\in\Omega\) with
		\[
		\lambda_k\operatorname{dist}(\xi_k,\partial\Omega)\to+\infty,
		\]
		one has
		\[
		\lambda_k^{-\frac{N-2}{2}}w_k(\xi_k+z/\lambda_k)
		\rightharpoonup U
		\quad\hbox{in }D^{1,2}_{\mathrm{loc}}(\mathbb R^N).
		\]
		Let \(B_k=PU_{\lambda_k,\xi_k}\) and set
		\[
		r_k=w_k-B_k.
		\]
		Then
		\begin{align}
			\int_\Omega\nabla B_k\cdot\nabla r_k\,dx&=o(1),\label{eq3.7}\\
			\int_\Omega a(x)B_kr_k\,dx&=o(1),\label{eq3.8}\\
			\int_\Omega |w_k|^{2^*}\,dx
			&=
			\int_\Omega B_k^{2^*}\,dx
			+
			\int_\Omega |r_k|^{2^*}\,dx
			+o(1),\label{eq3.9}\\
			|w_k|^{2^*-2}w_k
			-
			B_k^{2^*-1}
			-
			|r_k|^{2^*-2}r_k
			&\to0
			\quad\hbox{in }H^{-1}(\Omega).\notag
			\notag\end{align}
		Moreover, the rescaling of \(r_k\) around \((\lambda_k,\xi_k)\) converges weakly to zero in \(D^{1,2}_{\mathrm{loc}}(\mathbb R^N)\).
	\end{lemma}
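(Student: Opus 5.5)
The plan is to work throughout in the rescaled variables $z=\lambda_k(x-\xi_k)$. Put $\widetilde w_k$, $\widetilde B_k$, $\widetilde r_k$ for the $\lambda_k^{-(N-2)/2}$-rescalings of $w_k$, $B_k$, $r_k$, extended by zero outside $\Omega_k$. By Lemma~\ref{Lem3.11}, $\widetilde B_k\to U$ strongly in $D^{1,2}_{\mathrm{loc}}\cap L^{2^*}_{\mathrm{loc}}$. More usefully, \eqref{eq3.projected-scale-interior} and the $D^{1,2}$ estimate in its proof give $\|\overline B_k-U_{\lambda_k,\xi_k}\|_{D^{1,2}(\mathbb R^N)}\to0$ globally. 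Hence $\widetilde r_k=\widetilde w_k-\widetilde B_k\rightharpoonup U-U=0$ in $D^{1,2}_{\mathrm{loc}}$, which is the last assertion. Since $(w_k)$ is bounded, $(\widetilde r_k)$ is bounded in $D^{1,2}(\mathbb R^N)$. Passing to subsequences, we may also use a.e.\ convergence $\widetilde r_k\to0$, via the compact local embedding into $L^q_{\mathrm{loc}}$ for $q<2^*$.

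For \eqref{eq3.7}, replace $B_k$ by $U_{\lambda_k,\xi_k}$ at cost $o(1)$, using the global $D^{1,2}$ closeness and boundedness of $r_k$. By scale invariance we then need $\int\nabla U\cdot\nabla\widetilde r_k\,dz\to0$. This holds because $\nabla U\in L^2(\mathbb R^N)$ is a fixed element and $\widetilde r_k\rightharpoonup0$ weakly in $D^{1,2}(\mathbb R^N)$: a bounded sequence whose local weak limit is $0$ has global weak limit $0$. Estimate \eqref{eq3.8} is immediate, since $\int aB_k^2\to0$ by Lemma~\ref{Lem3.11}, $\|r_k\|_{L^2}$ is bounded, and Cauchy--Schwarz applies.

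For \eqref{eq3.9} and the dual-norm statement, again replace $B_k$ by $U_{\lambda_k,\xi_k}$ in $L^{2^*}$, with $o(1)$ error by \eqref{eq3.projected-scale-interior}. The error in $|\cdot|^{2^*-2}\cdot$ is $o(1)$ in $L^{(2^*)'}$ by the same Lipschitz-type bound used at the end of the proof of Lemma~\ref{Lem3.11}. After rescaling, the question becomes a Brezis--Lieb splitting on $\mathbb R^N$ for $U+\widetilde r_k$ with $\widetilde r_k\to0$ a.e. and bounded in $L^{2^*}$. The Brezis--Lieb lemma gives \eqref{eq3.9}. For the nonlinear term, reuse the decoupling argument from Lemma~\ref{Lem3.3} with $s=U$ and $t=\widetilde r_k$: pointwise we have $(|R_k|^{p'}-\eta|\widetilde r_k|^p)_+\le C_\eta U^p$, and $U^p\in L^1(\mathbb R^N)$ is fixed. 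Dominated convergence then yields $R_k\to0$ in $L^{(2^*)'}(\mathbb R^N)$. Since $L^{(2^*)'}$ norms are scale-invariant under the critical rescaling, and $L^{(2^*)'}(\Omega)\hookrightarrow H^{-1}(\Omega)$ by Sobolev, the $H^{-1}$ convergence follows.

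The main obstacle is the passage from local weak convergence to global statements on the unbounded domain $\mathbb R^N$. There are two instances: in \eqref{eq3.7}, we need weak convergence of $\widetilde r_k$ to $0$ in all of $D^{1,2}(\mathbb R^N)$; in the dominated convergence step, the dominating function must be integrable on $\mathbb R^N$. Both are handled by the global control on $U$ (finite Dirichlet energy and $L^{2^*}$ integrability), together with the global, not merely local, approximation $\overline B_k\approx U_{\lambda_k,\xi_k}$ coming from $\lambda_k\operatorname{dist}(\xi_k,\partial\Omega)\to\infty$.
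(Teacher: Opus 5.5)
Your proof is correct, but it is organized differently from the paper's.

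\textbf{The paper's route.} The paper keeps $B_k$ throughout and localizes with a core--tail split.
\begin{itemize}
\item For \eqref{eq3.7}, it tests the projection equation $-\Delta B_k=U_{\lambda_k,\xi_k}^{2^*-1}$ against $w_k$. It shows $\int_\Omega U_{\lambda_k,\xi_k}^{2^*-1}w_k\to S^{N/2}$ by treating $B_R$ and its complement separately, and then subtracts $\|\nabla B_k\|_2^2=S^{N/2}+o(1)$.
\item For \eqref{eq3.9} and the $H^{-1}$ decoupling, it applies Brezis--Lieb and the $\eta$-decoupling of Lemma~\ref{Lem3.3} only on the core $B(\xi_k,R/\lambda_k)$. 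The exterior is controlled by the $L^{2^*}$-tail of $B_k$, and the proof lets $k\to\infty$ first and then $R\to\infty$.
\end{itemize}

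\textbf{Your route.} You first replace $B_k$ by the unprojected bubble $U_{\lambda_k,\xi_k}$ globally. This uses the $D^{1,2}(\mathbb R^N)$ and $L^{2^*}$ closeness proved inside Lemma~\ref{Lem3.11}, so everything reduces to the fixed profile $U$ on all of $\mathbb R^N$.
\begin{itemize}
\item Estimate \eqref{eq3.7} becomes global weak convergence $\widetilde r_k\rightharpoonup0$ in $D^{1,2}(\mathbb R^N)$. You deduce this correctly from boundedness plus uniqueness of the local weak limit.
\item The nonlinear splittings follow from Brezis--Lieb and dominated convergence on $\mathbb R^N$, with the integrable majorant $C_\eta U^{2^*}$. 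The integrands vanish identically off $\Omega_k$ because $\widetilde r_k=0$ there, so no boundary bookkeeping is needed.
\item The critical rescaling preserves the $L^{(2^*)'}$ norm, which transfers the result back to $\Omega$.
\end{itemize}

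\textbf{Comparison.} Your version removes the double limit in $R$ and is shorter. Its cost is that it leans entirely on the global approximation $\overline B_k\approx U_{\lambda_k,\xi_k}$, which is where the scale-interior hypothesis $\lambda_k\operatorname{dist}(\xi_k,\partial\Omega)\to\infty$ enters. The paper's gradient step instead uses only the projection equation and the energy asymptotics of $B_k$, and its nonlinear step uses only local strong convergence of the rescaled $B_k$ plus tail smallness.
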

	
	\begin{proof}
		Let
		\[
		\widehat w_k(z)=
		\lambda_k^{-\frac{N-2}{2}}w_k(\xi_k+z/\lambda_k),
		\qquad
		\widehat B_k(z)=
		\lambda_k^{-\frac{N-2}{2}}B_k(\xi_k+z/\lambda_k).
		\]
		Then
		\[
		\widehat w_k\rightharpoonup U
		\quad\text{in }D^{1,2}_{\mathrm{loc}}(\mathbb R^N),
		\qquad
		\widehat B_k\to U
		\quad\text{strongly in }D^{1,2}_{\mathrm{loc}}(\mathbb R^N)
		\]
		and in \(L^{2^*}_{\mathrm{loc}}\). Therefore
		\[
		\widehat r_k:=\widehat w_k-\widehat B_k
		\rightharpoonup0
		\quad\text{in }D^{1,2}_{\mathrm{loc}}(\mathbb R^N).
		\]
		
		We prove the gradient orthogonality first. Since
		\[
		-\Delta B_k=U_{\lambda_k,\xi_k}^{2^*-1},
		\]
		\[
		\int_\Omega\nabla B_k\cdot\nabla w_k\,dx
		=
		\int_\Omega U_{\lambda_k,\xi_k}^{2^*-1}w_k\,dx.
		\]
		After the change of variables \(z=\lambda_k(x-\xi_k)\), this becomes
		\[
		\int_{\Omega_k}U^{2^*-1}(z)\widehat w_k(z)\,dz.
		\]
		For fixed \(R\), the integral over \(B_R\) converges to
		\[
		\int_{B_R}U^{2^*}\,dz.
		\]
		The tail outside \(B_R\) is uniformly small because \(U^{2^*-1}\in L^{(2^*)'}(\mathbb R^N)\) and \((\widehat w_k)\) is bounded in \(L^{2^*}\). Hence
		\[
		\int_\Omega\nabla B_k\cdot\nabla w_k\,dx
		\to
		\int_{\mathbb R^N}U^{2^*}\,dz
		=
		S^{N/2}.
		\]
		Since
		\[
		\int_\Omega|\nabla B_k|^2\,dx=S^{N/2}+o(1)
		\]
		by Lemma~\ref{Lem3.11}, we obtain
		\[
		\int_\Omega\nabla B_k\cdot\nabla r_k\,dx=o(1).
		\]
		
		Since \(a\in L^\infty(\Omega)\), \(\|B_k\|_2\to0\), and \(r_k\) is bounded in \(L^2(\Omega)\), the potential term is orthogonal:
		\[
		\left|\int_\Omega a(x)B_kr_k\,dx\right|
		\le
		\|a\|_\infty\|B_k\|_2\|r_k\|_2=o(1).
		\]
		
		We prove the \(L^{2^*}\)-splitting. Fix \(R>1\). In the core \(B(\xi_k,R/\lambda_k)\), the rescaled functions satisfy
		\[
		\widehat w_k=\widehat B_k+\widehat r_k,
		\qquad
		\widehat B_k\to U\text{ strongly in }L^{2^*}(B_R),
		\qquad
		\widehat r_k\rightharpoonup0.
		\]
		After passing to a subsequence, local Rellich compactness gives \(\widehat r_k\to0\) almost everywhere on \(B_R\). After a further subsequence, \(\widehat B_k\to U\) almost everywhere, and hence \(\widehat w_k\to U\) almost everywhere on \(B_R\).  The scalar Brezis-Lieb lemma \cite{BrezisLieb1983}, applied with the fixed limit \(U\), gives
		\[
		\int_{B_R}\bigl(
		|\widehat w_k|^{2^*}
		-|\widehat w_k-U|^{2^*}
		-|U|^{2^*}\bigr)\,dz\to0.
		\]
		Since \(\widehat B_k-U\to0\) in \(L^{2^*}(B_R)\) and \(\widehat r_k\) is bounded there, the inequality
		\[
		\big||a+b|^{2^*}-|a|^{2^*}\big|
		\le C\bigl(|a|^{2^*-1}|b|+|b|^{2^*}\bigr)
		\]
		shows that \(|\widehat w_k-U|^{2^*}\) may be replaced by \(|\widehat r_k|^{2^*}\), while strong convergence similarly replaces \(|U|^{2^*}\) by \(|\widehat B_k|^{2^*}\).  Rescaling therefore gives
		\[
		\int_{B(\xi_k,R/\lambda_k)}
		\left(
		|w_k|^{2^*}
		-
		B_k^{2^*}
		-
		|r_k|^{2^*}
		\right)\,dx
		\to0
		\quad(k\to\infty).
		\]
		On the complement of \(B(\xi_k,R/\lambda_k)\), use
		\[
		\big||\alpha+\beta|^{2^*}-|\beta|^{2^*}\big|
		\le
		C\left(|\alpha|^{2^*}+|\alpha||\beta|^{2^*-1}\right)
		\]
		with \(\alpha=B_k\), \(\beta=r_k\). The \(L^{2^*}\)-norm of \(B_k\) outside the core is bounded by the tail of \(U\), which tends to zero as \(R\to\infty\), uniformly in \(k\). Hence, after first letting \(k\to\infty\) and then \(R\to\infty\), we obtain
		\[
		\int_\Omega |w_k|^{2^*}\,dx
		=
		\int_\Omega B_k^{2^*}\,dx
		+
		\int_\Omega |r_k|^{2^*}\,dx
		+o(1).
		\]
		
		The derivative splitting is proved by the same core-tail argument.  On the core, first replace \(\widehat B_k\) by \(U\) using
		\[
		\bigl\||f|^{2^*-2}f-|g|^{2^*-2}g\bigr\|_{(2^*)'}
		\le C\bigl(\|f\|_{2^*}+\|g\|_{2^*}\bigr)^{2^*-2}
		\|f-g\|_{2^*},
		\]
		and then apply the explicit \(\eta\)-decoupling estimate from Lemma~\ref{Lem3.3} to the fixed function \(U\) and the bounded sequence \(\widehat r_k\).  This gives convergence to zero in \(L^{(2^*)'}(B_R)\).  On the exterior region, use
		\[
		\big||\alpha+\beta|^{2^*-2}(\alpha+\beta)
		-
		|\alpha|^{2^*-2}\alpha
		-
		|\beta|^{2^*-2}\beta\big|
		\le
		C\left(
		|\alpha|^{2^*-1}
		+
		|\alpha||\beta|^{2^*-2}
		\right),
		\]
		again with \(\alpha=B_k\), \(\beta=r_k\), and control the exterior contribution by the \(L^{2^*}\)-tail of \(B_k\). Therefore
		\[
		|w_k|^{2^*-2}w_k
		-
		B_k^{2^*-1}
		-
		|r_k|^{2^*-2}r_k
		\to0
		\quad\text{in }L^{(2^*)'}(\Omega),
		\]
		and hence in \(H^{-1}(\Omega)\).
	\end{proof}
	
	\begin{lemma}\label{Lem3.13}
		Let \((w_k)\) be bounded in \(H^1_0(\Omega)\), \(w_k\rightharpoonup0\), and \(J'(w_k)\to0\) in \(H^{-1}(\Omega)\). Suppose that a positive Talenti profile \(U\) is extracted at parameters \((\lambda_k,\xi_k)\), with
		\[
		\lambda_k\operatorname{dist}(\xi_k,\partial\Omega)\to+\infty,
		\]
		and let
		\[
		B_k=PU_{\lambda_k,\xi_k},
		\qquad
		r_k=w_k-B_k.
		\]
		Then
		\begin{align}
			J(w_k)&=I_\infty+J(r_k)+o(1),\label{eq3.10}\\
			\|w_k\|_a^2&=S^{N/2}+\|r_k\|_a^2+o(1),\label{eq3.11}\\
			J'(r_k)&\to0
			\quad\hbox{in }H^{-1}(\Omega).\notag
			\notag\end{align}
		Moreover, after rescaling about the extracted bubble, the remainder satisfies
		\[
		\widehat r_k\rightharpoonup0
		\quad\hbox{in }D^{1,2}_{\mathrm{loc}}(\mathbb R^N).
		\]
	\end{lemma}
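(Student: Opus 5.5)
The plan is to combine the splitting identities of Lemma~\ref{Lem3.12} with the single-bubble asymptotics of Lemma~\ref{Lem3.11}. The last assertion, \(\widehat r_k\rightharpoonup0\), is already part of Lemma~\ref{Lem3.12}, so only the three displayed statements need proof.

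\emph{Norm splitting.} Expand
\[
\|w_k\|_a^2=\|B_k\|_a^2+\|r_k\|_a^2+2\int_\Omega\bigl(\nabla B_k\cdot\nabla r_k+a(x)B_kr_k\bigr)\,dx .
\]
The cross term is \(o(1)\) by \eqref{eq3.7} and \eqref{eq3.8}. By \eqref{eq3.5}, \(\|B_k\|_a^2=S^{N/2}+o(1)\). This gives \eqref{eq3.11}.

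\emph{Energy splitting.} Combine \eqref{eq3.11} with the \(L^{2^*}\) splitting \eqref{eq3.9} and with \(\int_\Omega B_k^{2^*}=S^{N/2}+o(1)\) from Lemma~\ref{Lem3.11}. This yields
\[
J(w_k)=\Bigl(\tfrac12-\tfrac1{2^*}\Bigr)S^{N/2}+J(r_k)+o(1)=I_\infty+J(r_k)+o(1),
\]
using \(I_\infty=\frac1N S^{N/2}\), which is \eqref{eq3.10}.

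\emph{Derivative splitting.} For \(\psi\in H^1_0(\Omega)\), linearity of the quadratic part gives
\[
\langle J'(w_k)-J'(B_k)-J'(r_k),\psi\rangle
=-\int_\Omega\Bigl(|w_k|^{2^*-2}w_k-B_k^{2^*-1}-|r_k|^{2^*-2}r_k\Bigr)\psi\,dx .
\]
This uses \(B_k\ge0\), so that \(|B_k|^{2^*-2}B_k=B_k^{2^*-1}\). By the last statement of Lemma~\ref{Lem3.12}, the bracket tends to zero in \(L^{(2^*)'}\), hence in \(H^{-1}\). We also have \(J'(w_k)\to0\) by hypothesis and \(J'(B_k)\to0\) by Lemma~\ref{Lem3.11}. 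Therefore \(J'(r_k)\to0\) in \(H^{-1}(\Omega)\).

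The only delicate point is already handled upstream: the orthogonality \eqref{eq3.7} and the nonlinear decoupling in Lemma~\ref{Lem3.12}, which rely on the scale-interior condition \(\lambda_k\operatorname{dist}(\xi_k,\partial\Omega)\to\infty\). Given those lemmas, the present proof is bookkeeping. The one thing to check is that \(r_k\) stays bounded, which is needed for the cross terms in \eqref{eq3.8}. This holds because \(w_k\) is bounded by hypothesis and \(B_k\) is bounded by \eqref{eq3.5}.
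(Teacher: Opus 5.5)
Your proposal is correct and follows the paper's proof essentially step for step: the norm splitting from \eqref{eq3.5} and \eqref{eq3.7}--\eqref{eq3.8}, the energy splitting from \eqref{eq3.9} together with the single-bubble asymptotics of Lemma~\ref{Lem3.11}, and $J'(r_k)\to0$ from the nonlinear decoupling in Lemma~\ref{Lem3.12} combined with $J'(w_k)\to0$ and $J'(B_k)\to0$. One small labelling slip: the decoupling you invoke is the fourth display of Lemma~\ref{Lem3.12}, not its last statement; the last statement is the rescaled-remainder convergence, which you correctly use separately for $\widehat r_k\rightharpoonup0$.
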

	
	\begin{proof}
		The norm decomposition \eqref{eq3.11} follows from Lemma~\ref{Lem3.11} and \eqref{eq3.7}-\eqref{eq3.8}. The nonlinear splitting \eqref{eq3.9}, together with
		\[
		J(B_k)=I_\infty+o(1),
		\]
		gives the energy splitting \eqref{eq3.10}.
		
		For the derivative, write
		\[
		J'(r_k)=J'(w_k)-J'(B_k)+\mathcal R_k,
		\]
		where
		\[
		\langle\mathcal R_k,\psi\rangle
		=
		\int_\Omega
		\Big(
		|w_k|^{2^*-2}w_k
		-
		B_k^{2^*-1}
		-
		|r_k|^{2^*-2}r_k
		\Big)\psi\,dx.
		\]
		By Lemma~\ref{Lem3.12}, \(\mathcal R_k\to0\) in \(H^{-1}\). Since \(J'(w_k)\to0\) by assumption and \(J'(B_k)\to0\) by Lemma~\ref{Lem3.11}, we obtain \(J'(r_k)\to0\). The final weak convergence statement is also part of Lemma~\ref{Lem3.12}.
	\end{proof}
	
	Let \((v_k)\) be a nonnegative \((PS)_c\) sequence for \(J\), let \(v\) be its weak limit, and define
	\[
	B_{i,k}=PU_{\lambda_{i,k},\xi_{i,k}},
	\qquad 1\le i\le j,
	\qquad
	r_{j,k}=v_k-v-\sum_{i=1}^jB_{i,k}.
	\]
	
	\begin{lemma}\label{Lem3.14}
		Suppose that the bubbles \(B_{i,k}\) have been extracted so that
		\[
		\lambda_{i,k}\operatorname{dist}(\xi_{i,k},\partial\Omega)\to+\infty
		\]
		and, for \(i\ne h\),
		\begin{equation}\label{eq3.12}
			\frac{\lambda_{i,k}}{\lambda_{h,k}}
			+
			\frac{\lambda_{h,k}}{\lambda_{i,k}}
			+
			\lambda_{i,k}\lambda_{h,k}|\xi_{i,k}-\xi_{h,k}|^2
			\to+\infty.
		\end{equation}
		Assume also that \(J'(r_{j,k})\to0\), \(r_{j,k}\rightharpoonup0\), and, around each previously extracted bubble, the rescaling of \(r_{j,k}\) converges weakly to zero in \(D^{1,2}_{\mathrm{loc}}(\mathbb R^N)\). If a further nontrivial profile is extracted from \(r_{j,k}\), then its parameters are orthogonal to all previous bubbles in the sense of \eqref{eq3.12}, and the new profile is nonnegative. Consequently it is a positive Talenti bubble.
	\end{lemma}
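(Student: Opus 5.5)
The plan is to prove orthogonality by contradiction, and then to obtain nonnegativity by comparing the new profile with the weak limit of the rescaled original sequence, as in the final part of Lemma~\ref{Lem3.9}. Let the new profile be extracted from \(r_{j,k}\) by Lemma~\ref{Lem3.6}, Proposition~\ref{Prop3.7} and Lemma~\ref{Lem3.9}, at parameters \((\lambda_k,\xi_k)\) with \(\lambda_k\operatorname{dist}(\xi_k,\partial\Omega)\to\infty\). Thus
\[
\widehat r_k(z)=\lambda_k^{-\frac{N-2}{2}}r_{j,k}(\xi_k+z/\lambda_k)\rightharpoonup V\not\equiv0
\quad\hbox{in }D^{1,2}_{\mathrm{loc}}(\mathbb R^N).
\]

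\emph{Orthogonality.} Suppose \eqref{eq3.12} fails for some \(i\le j\). After passing to a subsequence we then have
\[
\lambda_k/\lambda_{i,k}\to q\in(0,\infty),
\qquad
\lambda_{i,k}(\xi_k-\xi_{i,k})\to y_0.
\]
Hence the two rescalings are related by a fixed affine change of variables \(z\mapsto q(z-y_0)\) up to \(o(1)\), together with the fixed conformal weight \(q^{(N-2)/2}\). By hypothesis the rescaling of \(r_{j,k}\) about \((\lambda_{i,k},\xi_{i,k})\) converges weakly to zero. Weak convergence in \(D^{1,2}_{\mathrm{loc}}\) is preserved under convergent affine changes of variables, because test functions pull back to test functions and the error from the \(o(1)\) mismatch in parameters is controlled by continuity of translations and dilations in \(L^{(2^*)'}\). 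So \(\widehat r_k\rightharpoonup0\), which contradicts \(V\not\equiv0\). This step needs some care: we should pair \(\widehat r_k\) with \(\varphi\in C_c^\infty\), transfer the pairing to the \(i\)-th frame, and use that the transported test functions converge strongly in \(L^{(2^*)'}\) and in \(D^{1,2}\).

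\emph{Nonnegativity.} Write
\[
\lambda_k^{-\frac{N-2}{2}}v_k(\xi_k+\cdot/\lambda_k)=\widehat v+\sum_{i}\widehat B_{i,k}+\widehat r_k .
\]
The rescaled weak limit \(\widehat v\) tends to zero strongly in \(D^{1,2}_{\mathrm{loc}}\cap L^{2^*}_{\mathrm{loc}}\). This follows from absolute continuity of the integrals of \(|\nabla v|^2\) and \(|v|^{2^*}\): the rescaled ball \(B(\xi_k,R/\lambda_k)\) has vanishing measure, and \(\lambda_k\to\infty\) because \(r_k\downarrow0\) in Lemma~\ref{Lem3.6}. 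By the orthogonality just proved, Lemma~\ref{Lem3.8} gives \(\widehat B_{i,k}\rightharpoonup0\) for each \(i\). Hence \(V\) equals the weak \(L^{2^*}_{\mathrm{loc}}\)-limit of the rescaled \(v_k\ge0\). The nonnegative cone is convex and closed, hence weakly closed, so \(V\ge0\).

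\emph{Conclusion.} By Lemma~\ref{Lem3.9}, \(V\in D^{1,2}(\mathbb R^N)\) solves \(-\Delta V=V^{2^*-1}\), and \(V\not\equiv0\). The strong maximum principle gives \(V>0\), and the Caffarelli--Gidas--Spruck classification identifies \(V\) as a Talenti bubble; its dilation and translation are absorbed into the parameters. The main obstacle is the orthogonality step, in particular making the frame change rigorous when the parameters are only asymptotically comparable. I would handle it with the explicit test-function transport described above, using the boundedness of \(\widehat r_k\) in \(D^{1,2}\) to absorb the \(o(1)\) parameter mismatch.
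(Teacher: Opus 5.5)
Your proposal is correct and follows essentially the same route as the paper. You prove orthogonality by contradiction, transporting the zero weak limit around the old bubble to the new frame through a bounded dilation and translation. You then obtain nonnegativity by removing the rescaled weak limit \(v\) via absolute continuity and the old bubbles via Lemma~\ref{Lem3.8}, and conclude with weak closedness of the nonnegative cone, the strong maximum principle, and the Caffarelli--Gidas--Spruck classification. Your explicit test-function transport simply spells out the frame change that the paper asserts in one line.
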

	
	\begin{proof}
		Let the new profile be extracted at \((\lambda_k,\xi_k)\). Suppose first that the new parameters are not orthogonal to some old bubble, say \(B_{i,k}\). Then after passing to a subsequence,
		\[
		\frac{\lambda_k}{\lambda_{i,k}}
		+
		\frac{\lambda_{i,k}}{\lambda_k}
		+
		\lambda_k\lambda_{i,k}|\xi_k-\xi_{i,k}|^2
		\]
		remains bounded. Therefore the rescaling around \((\lambda_k,\xi_k)\) differs from the rescaling around \((\lambda_{i,k},\xi_{i,k})\) only by a bounded dilation and translation. A nonzero weak limit in the new variables would then produce a nonzero weak local limit of the rescaled remainder around the old bubble.  This contradicts the induction hypothesis that the rescaling of \(r_{j,k}\) around every old bubble converges weakly to zero. Hence the new parameters are orthogonal to all previous ones.
		
		Now rescale the identity
		\[
		r_{j,k}=v_k-v-\sum_{i=1}^jB_{i,k}
		\]
		around the new parameters. Since \(\lambda_k\to\infty\),
		\[
		\lambda_k^{-\frac{N-2}{2}}
		v(\xi_k+z/\lambda_k)
		\to0
		\quad\hbox{strongly in }
		D^{1,2}_{\mathrm{loc}}(\mathbb R^N)
		\cap L^{2^*}_{\mathrm{loc}}(\mathbb R^N),
		\]
		by absolute continuity of the integrals of \(|\nabla v|^2\) and \(|v|^{2^*}\).  Lemma~\ref{Lem3.8} shows that every old projected bubble converges weakly, though not necessarily strongly, to zero in the new variables.  Hence the weak local limit of the rescaled remainder equals the weak local limit of the rescaled nonnegative functions \(v_k\). The nonnegative cone is weakly closed in \(L^{2^*}_{\mathrm{loc}}(\mathbb R^N)\), so the new profile is nonnegative. By Lemma~\ref{Lem3.9} and the Caffarelli-Gidas-Spruck classification theorem \cite{CGS1989}, it is a positive Talenti bubble.
	\end{proof}
	
	\subsection{Iteration, separation, quantization, and quotient deformation}
	
	\begin{lemma}\label{Lem3.15}
		Any two distinct bubbles produced by the extraction procedure satisfy the orthogonality condition \eqref{eq3.12}. Moreover each bubble satisfies
		\[
		\lambda_{i,k}\operatorname{dist}(\xi_{i,k},\partial\Omega)\to+\infty.
		\]
	\end{lemma}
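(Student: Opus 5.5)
The proof will be an induction on the number of extracted bubbles, with Proposition~\ref{Prop3.7}, Lemma~\ref{Lem3.9}, Lemma~\ref{Lem3.13} and Lemma~\ref{Lem3.14} as the inductive engine. Start from the nonnegative \((PS)_c\) sequence \((v_k)\) with weak limit \(v\). By Lemma~\ref{Lem3.2}, \(v\) solves \eqref{eq3.1}, and by Lemma~\ref{Lem3.3} the remainder \(w_k=r_{0,k}=v_k-v\) satisfies \(w_k\rightharpoonup0\) and \(J'(w_k)\to0\). Suppose \(B_{1,k},\dots,B_{j,k}\) have been extracted. The inductive hypothesis has three parts: each pair satisfies \eqref{eq3.12}; each bubble satisfies \(\lambda_{i,k}\operatorname{dist}(\xi_{i,k},\partial\Omega)\to\infty\); and the remainder obeys \(r_{j,k}\rightharpoonup0\), \(J'(r_{j,k})\to0\), with rescaled weak limit zero around every old bubble.

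The step from \(j\) to \(j+1\) proceeds as follows. If \(r_{j,k}\not\to0\) in \(L^{2^*}\), choose \((x_k,r_k)\) by Lemma~\ref{Lem3.6}. Proposition~\ref{Prop3.7} then gives the boundary separation \(\lambda_k\operatorname{dist}(x_k,\partial\Omega)\to\infty\) for \(\lambda_k=r_k^{-1}\). This is the second assertion of the lemma for the new bubble; it persists after absorbing the bounded dilation and translation of the profile into the parameters, since such a change alters \(\lambda_k\) and \(\lambda_k\xi_k\) by bounded factors and amounts.

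Lemma~\ref{Lem3.14} gives orthogonality of the new parameters to all previous ones. The argument is by contradiction: non-orthogonality would make the two rescalings differ by a bounded conformal change, so a nonzero profile in the new frame would yield a nonzero weak limit of the rescaled remainder in the old frame. It then remains to propagate the hypothesis to \(r_{j+1,k}=r_{j,k}-B_{j+1,k}\). Lemma~\ref{Lem3.13} gives \(J'(r_{j+1,k})\to0\) and a vanishing rescaled limit around the new bubble. Around an old bubble \(i\), the rescaled limit of \(r_{j,k}\) is zero by hypothesis, and the rescaled \(B_{j+1,k}\) tends weakly to zero by Lemma~\ref{Lem3.8}, using the just-established orthogonality. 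Weak convergence \(r_{j+1,k}\rightharpoonup0\) in \(H^1_0\) follows because a concentrating projected bubble converges weakly to zero.

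The main obstacle is ensuring that orthogonality is checked against \emph{all} earlier bubbles, not just the last one extracted. This is why the inductive hypothesis carries the vanishing of rescaled limits around every old bubble, and why that property must be re-verified via Lemma~\ref{Lem3.8} at each stage. The procedure terminates because \eqref{eq3.11} iterated gives
\[
\|r_{j,k}\|_a^2=\|w_k\|_a^2-jS^{N/2}+o(1)\ge0 .
\]
Hence \(j\le \sup_k\|w_k\|_a^2/S^{N/2}\), and the statement holds for every pair of bubbles produced.
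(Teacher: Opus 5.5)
Your proposal is correct and follows the paper's route: boundary separation comes from Proposition~\ref{Prop3.7}, which is stable under absorbing the profile's bounded dilation and translation into the parameters, and orthogonality is the first conclusion of Lemma~\ref{Lem3.14}, applied when the later of the two bubbles is extracted. Your addition is the explicit inductive bookkeeping, namely propagating the vanishing rescaled limits around all old bubbles via Lemma~\ref{Lem3.8}, which the paper carries out only implicitly in the proof of Theorem~\ref{Thm3.19}; your termination count is not needed for this lemma and belongs to Lemma~\ref{Lem3.16}.
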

	
	\begin{proof}
		The boundary condition follows from Proposition~\ref{Prop3.7}. The orthogonality is precisely the first conclusion of Lemma~\ref{Lem3.14}, applied inductively at the moment when the later of the two bubbles is extracted.
	\end{proof}
	
	\begin{lemma}\label{Lem3.16}
		The extraction process stops after finitely many steps. If \(m\) bubbles are extracted from an original nonnegative Palais-Smale sequence, then the final remainder \(r_{m,k}\) satisfies
		\[
		r_{m,k}\to0
		\quad\hbox{strongly in }H^1_0(\Omega).
		\]
	\end{lemma}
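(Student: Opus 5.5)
The plan is to combine energy quantization with the non-concentration criterion of Lemma~\ref{Lem3.4}. Since \((v_k)\) is Palais-Smale, it is bounded by Lemma~\ref{Lem3.2}, so \(\|v_k\|_a^2\le M\) for some \(M\).

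\textbf{Finiteness of the extraction.} Apply Lemma~\ref{Lem3.3} to \(w_k=v_k-v\), and then Lemma~\ref{Lem3.13} once per extracted bubble; each step removes a bubble and leaves a remainder still satisfying \(J'(\cdot)\to0\). Iterating, I will show by induction on \(j\) that
\[
\|v_k\|_a^2=\|v\|_a^2+\sum_{i=1}^j\|B_{i,k}\|_a^2+\|r_{j,k}\|_a^2+o(1)=\|v\|_a^2+jS^{N/2}+\|r_{j,k}\|_a^2+o(1).
\]
The inductive step needs the cross terms \(\langle B_{i,k},B_{h,k}\rangle_a\) and \(\langle B_{i,k},r_{j,k}\rangle_a\) to vanish. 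For the bubble--bubble terms, \(\int\nabla B_{i,k}\cdot\nabla B_{h,k}=\int U_{\lambda_{i,k},\xi_{i,k}}^{2^*-1}B_{h,k}\). After rescaling about bubble \(i\), this tends to zero by Lemma~\ref{Lem3.8} together with the orthogonality of Lemma~\ref{Lem3.15}. The potential part is \(o(1)\) because \(\|B_{i,k}\|_{L^2}\to0\).

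For the bubble--remainder terms, \(\int\nabla B_{i,k}\cdot\nabla r_{j,k}=\int U_{i,k}^{2^*-1}r_{j,k}\). Rescaled about bubble \(i\), this is \(\int U^{2^*-1}\widehat r_{j,k}\). It tends to zero because \(\widehat r_{j,k}\rightharpoonup0\) locally (the hypothesis carried in Lemma~\ref{Lem3.14}) and \(U^{2^*-1}\in L^{(2^*)'}\) controls the tails, exactly as in the proof of Lemma~\ref{Lem3.12}. Since \(\|r_{j,k}\|_a^2\ge0\), we get \(jS^{N/2}\le M+o(1)\). Hence at most \(M/S^{N/2}\) bubbles can be extracted, and the process stops at some finite \(m\).

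\textbf{Strong convergence of the final remainder.} Stopping means no further profile can be extracted. By Lemma~\ref{Lem3.6}, Proposition~\ref{Prop3.7} and Lemma~\ref{Lem3.9}, a nonzero interior profile is always available whenever \(r_{m,k}\not\to0\) in \(L^{2^*}(\Omega)\). Together with Lemma~\ref{Lem3.14}, which makes such a profile a positive orthogonal bubble, this would let the iteration continue. Therefore \(r_{m,k}\to0\) in \(L^{2^*}(\Omega)\).

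Now test \(J'(r_{m,k})\to0\) against \(r_{m,k}\), which is bounded in \(H^1_0(\Omega)\):
\[
\|r_{m,k}\|_a^2=\int_\Omega|r_{m,k}|^{2^*}\,dx+o(1)=o(1).
\]
By the equivalence of \(\|\cdot\|_a\) with the \(H^1_0\) norm, this gives \(r_{m,k}\to0\) strongly in \(H^1_0(\Omega)\).

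\textbf{Main obstacle.} The delicate point is maintaining the inductive hypotheses of Lemma~\ref{Lem3.14} at each stage: \(J'(r_{j,k})\to0\), \(r_{j,k}\rightharpoonup0\), and the weak vanishing of rescaled remainders about all earlier bubbles, not only the newest one. Lemma~\ref{Lem3.13} supplies the first two. For the third, I would write \(r_{j+1,k}=r_{j,k}-B_{j+1,k}\) and rescale about an old bubble \(i\). The old remainder tends weakly to zero by hypothesis, and the new bubble \(B_{j+1,k}\) tends weakly to zero by Lemma~\ref{Lem3.8}, using the orthogonality just established. Hence the vanishing propagates from stage \(j\) to stage \(j+1\).
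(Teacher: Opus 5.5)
Your proposal is correct and follows the paper's proof. The norm identity \(\|v_k-v\|_a^2=jS^{N/2}+\|r_{j,k}\|_a^2+o(1)\) bounds the number of extractions. Maximality, together with Lemmas~\ref{Lem3.6}, \ref{Lem3.9}, \ref{Lem3.14} and Proposition~\ref{Prop3.7}, then forces \(r_{m,k}\to0\) in \(L^{2^*}(\Omega)\), and testing \(J'(r_{m,k})\) against \(r_{m,k}\) upgrades this to strong \(H^1_0\) convergence.

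The differences are only in detail. You verify the quantization by checking every pairwise cross term, where the paper telescopes Lemma~\ref{Lem3.13}. You also spell out why the rescaled remainders around the older bubbles keep converging weakly to zero, which the paper leaves implicit in its induction.
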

	
	\begin{proof}
		After \(j\) extractions, repeated use of Lemma~\ref{Lem3.13} gives
		\[
		\|v_k-v\|_a^2
		=
		jS^{N/2}
		+
		\|r_{j,k}\|_a^2
		+o(1).
		\]
		Since \((v_k)\) is bounded in \(H^1_0(\Omega)\), the integer \(j\) is bounded. Hence the process stops after finitely many steps.
		
		Let \(r_{m,k}\) be the final remainder. It is still a Palais-Smale sequence for \(J\), weakly convergent to zero. If \(r_{m,k}\not\to0\) in \(L^{2^*}(\Omega)\), then Lemma~\ref{Lem3.6}, Proposition~\ref{Prop3.7}, Lemma~\ref{Lem3.9}, and Lemma~\ref{Lem3.14} would produce another positive Talenti bubble, contradicting maximality. Hence
		\[
		r_{m,k}\to0
		\quad\hbox{in }L^{2^*}(\Omega).
		\]
		Since \(J'(r_{m,k})\to0\),
		\[
		\|r_{m,k}\|_a^2
		=
		\int_\Omega |r_{m,k}|^{2^*}\,dx+o(1)\|r_{m,k}\|_a=o(1),
		\]
		and therefore \(r_{m,k}\to0\) in \(H^1_0(\Omega)\).
	\end{proof}
	
	\begin{lemma}\label{Lem3.17}
		Let \(m\) be the number of extracted bubbles. Then
		\[
		J(v_k)=J(v)+mI_\infty+o(1),
		\qquad
		\|v_k\|_a^2=\|v\|_a^2+mS^{N/2}+o(1).
		\]
	\end{lemma}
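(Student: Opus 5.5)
The argument chains the splitting lemmas already proved. The first step is to split off the weak limit. By Lemma~\ref{Lem3.2} the sequence \((v_k)\) is bounded, and after passing to a subsequence \(v_k\rightharpoonup v\) with \(J'(v)=0\). Set \(w_k=v_k-v\). Lemma~\ref{Lem3.3} then gives
\[
J(v_k)=J(v)+J(w_k)+o(1),\qquad
\|v_k\|_a^2=\|v\|_a^2+\|w_k\|_a^2+o(1),
\]
and, since \(J'(v)=0\), also \(J'(w_k)\to0\) with \(w_k\rightharpoonup0\). In the notation of the paper, \(w_k=r_{0,k}\).

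The second step is an induction over the extracted bubbles. For \(0\le j<m\), the remainder \(r_{j,k}\) satisfies the hypotheses of Lemma~\ref{Lem3.13}: it is bounded, converges weakly to zero, and \(J'(r_{j,k})\to0\). The next positive Talenti profile is extracted at parameters \((\lambda_{j+1,k},\xi_{j+1,k})\), which are interior by Proposition~\ref{Prop3.7} and Lemma~\ref{Lem3.15} and orthogonal to the earlier ones by Lemma~\ref{Lem3.14}. Since \(r_{j+1,k}=r_{j,k}-B_{j+1,k}\), Lemma~\ref{Lem3.13} yields
\[
J(r_{j,k})=I_\infty+J(r_{j+1,k})+o(1),\qquad
\|r_{j,k}\|_a^2=S^{N/2}+\|r_{j+1,k}\|_a^2+o(1),
\]
and \(J'(r_{j+1,k})\to0\).

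For the induction to continue, I must also maintain the hypothesis of Lemma~\ref{Lem3.14}: the rescaling of \(r_{j+1,k}\) about each older bubble must converge weakly to zero. Around the newest bubble this is the final assertion of Lemma~\ref{Lem3.13}. Around an older bubble \(i\le j\), the rescaling of \(r_{j,k}\) tends to zero by the induction hypothesis, and the rescaling of \(B_{j+1,k}\) tends to zero by Lemma~\ref{Lem3.8}, using the orthogonality \eqref{eq3.12}. I expect this bookkeeping of weak limits about every earlier bubble to be the only delicate point; Lemma~\ref{Lem3.8} handles it directly.

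The last step is to sum the identities. Summing over \(j=0,\dots,m-1\) gives
\[
J(w_k)=mI_\infty+J(r_{m,k})+o(1),\qquad
\|w_k\|_a^2=mS^{N/2}+\|r_{m,k}\|_a^2+o(1).
\]
By Lemma~\ref{Lem3.16}, \(r_{m,k}\to0\) strongly in \(H^1_0(\Omega)\), so \(J(r_{m,k})\to0\) and \(\|r_{m,k}\|_a\to0\). Combining this with the first step gives both asserted identities. Each of the finitely many steps passes to a subsequence, which is harmless.
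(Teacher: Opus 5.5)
Your proposal is correct and follows the paper's proof essentially verbatim. You split off the weak limit with Lemma~\ref{Lem3.3}, remove one $I_\infty$ and one $S^{N/2}$ per bubble via Lemma~\ref{Lem3.13}, and let the final remainder vanish by Lemma~\ref{Lem3.16}; your use of Lemma~\ref{Lem3.8} to keep the rescaled remainder weakly null about every older bubble, which is needed to re-enter Lemma~\ref{Lem3.14}, makes explicit a point the paper's induction states only for the newest bubble.
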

	
	\begin{proof}
		Apply Lemma~\ref{Lem3.3} to \(v_k=v+(v_k-v)\). Then apply Lemma~\ref{Lem3.13} successively to the remainders. Each extracted positive Talenti bubble contributes \(I_\infty\) to the energy and \(S^{N/2}\) to the squared \(a\)-norm. The final remainder is \(o(1)\) in \(H^1_0(\Omega)\) by Lemma~\ref{Lem3.16}.  This gives both identities.
	\end{proof}
	
	\begin{corollary}
		If \((v_k)\) is a nonnegative \((PS)_c\) sequence with \(c<I_\infty\), then \(v_k\to v\) strongly in \(H^1_0(\Omega)\). More generally, if
		\[
		c\in[\ell I_\infty,(\ell+1)I_\infty)
		\]
		for some \(\ell\in\mathbb N\), then at most \(\ell\) bubbles can occur.
	\end{corollary}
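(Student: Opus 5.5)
The corollary follows from the quantized energy identity of Lemma~\ref{Lem3.17}, combined with a sign observation on the weak limit. Let \((v_k)\) be a nonnegative \((PS)_c\) sequence. By Lemma~\ref{Lem3.2} it is bounded, and after passing to a subsequence \(v_k\rightharpoonup v\), where \(v\) is a weak solution of \eqref{eq3.1}. Since \(v_k\ge0\) and \(v_k\to v\) almost everywhere, we also have \(v\ge0\). Run the extraction procedure of Lemmas~\ref{Lem3.6}--\ref{Lem3.16} on \(v_k-v\). It stops after some finite number \(m\) of positive Talenti bubbles, and Lemma~\ref{Lem3.17} gives
\[
c=\lim_{k\to\infty}J(v_k)=J(v)+mI_\infty .
\]

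The key point is that \(J(v)\ge0\). Since \(J'(v)=0\), testing with \(v\) gives \(\|v\|_a^2=\int_\Omega|v|^{2^*}\,dx\), exactly as in the proof of Lemma~\ref{Lem3.2}. Hence
\[
J(v)=J(v)-\tfrac1{2^*}\langle J'(v),v\rangle=\tfrac1N\|v\|_a^2\ge0 .
\]
It follows that \(mI_\infty=c-J(v)\le c\).

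If \(c<I_\infty\), this forces \(m=0\). In that case Lemma~\ref{Lem3.16}, applied with no extracted bubbles, says the remainder \(r_{0,k}=v_k-v\) tends to zero strongly in \(H^1_0(\Omega)\). Equivalently, Lemma~\ref{Lem3.17} gives \(\|v_k\|_a^2=\|v\|_a^2+o(1)\), and together with weak convergence in the Hilbert norm \(\|\cdot\|_a\) this yields strong convergence. In either form, \(v_k\to v\) in \(H^1_0(\Omega)\).

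More generally, if \(c\in[\ell I_\infty,(\ell+1)I_\infty)\), then \(mI_\infty\le c<(\ell+1)I_\infty\). Since \(m\) is an integer, \(m\le\ell\).

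The only point needing care is that the decomposition genuinely applies. Lemma~\ref{Lem3.17} was stated for nonnegative \((PS)_c\) sequences, and the nonnegativity is what makes each extracted profile a positive Talenti bubble carrying exactly the energy \(I_\infty\) (Lemmas~\ref{Lem3.10} and~\ref{Lem3.14}). No sign-changing profiles of larger energy need to be considered. Beyond this bookkeeping, no new estimate is required.
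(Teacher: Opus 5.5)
Your proposal is correct and follows the paper's own proof exactly: the paper likewise combines the quantized identity \(c=J(v)+mI_\infty\) from Lemma~\ref{Lem3.17} with \(J(v)=\frac1N\|v\|_a^2\ge0\) for the critical point \(v\). Your added details, namely that \(m=0\) yields strong convergence via Lemma~\ref{Lem3.16} (or via norm convergence plus weak convergence) and that integrality gives \(m\le\ell\), simply spell out what the paper compresses into ``the assertions follow.''
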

	
	\begin{proof}
		By Lemma~\ref{Lem3.17},
		\[
		c=J(v)+mI_\infty.
		\]
		Since \(v\) is a critical point,
		\[
		J(v)=\frac1N\|v\|_a^2\ge0.
		\]
		The assertions follow.
	\end{proof}
	
	\begin{theorem}\label{Thm3.19}
		Let \((u_k)\) be a nonnegative \((PS)_c\) sequence for \(I\), and set
		\[
		v_k=Tu_k.
		\]
		Then, after passing to a subsequence, there exist a nonnegative weak solution \(v\) of \eqref{eq3.1}, an integer \(m\ge0\), points \(\xi_{i,k}\in\Omega\), and scales \(\lambda_{i,k}\to+\infty\), such that
		\[
		\lambda_{i,k}\operatorname{dist}(\xi_{i,k},\partial\Omega)\to+\infty,
		\]
		\[
		\frac{\lambda_{i,k}}{\lambda_{h,k}}
		+
		\frac{\lambda_{h,k}}{\lambda_{i,k}}
		+
		\lambda_{i,k}\lambda_{h,k}|\xi_{i,k}-\xi_{h,k}|^2
		\to+\infty
		\qquad (i\ne h),
		\]
		and
		\begin{equation}\label{eq3.13}
			v_k
			=
			v+\sum_{i=1}^m PU_{\lambda_{i,k},\xi_{i,k}}+o(1)
			\quad\hbox{strongly in }H^1_0(\Omega).
		\end{equation}
		Moreover,
		\[
		J(v_k)=J(v)+mI_\infty+o(1),
		\qquad
		\|v_k\|_a^2=\|v\|_a^2+mS^{N/2}+o(1).
		\]
		Equivalently, in the original hyperbolic variables,
		\[
		u_k
		=
		u+\sum_{i=1}^m\phi^{-1}PU_{\lambda_{i,k},\xi_{i,k}}+o(1)
		\quad\hbox{strongly in }H^1_0(\Omega;g_{\Hh}),
		\qquad
		u=\phi^{-1}v,
		\]
		and
		\[
		I(u_k)=I(u)+mI_\infty+o(1).
		\]
	\end{theorem}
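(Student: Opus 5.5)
The plan is to transfer the problem to the Euclidean side, run the extraction procedure of this section, and assemble the conclusions already proved in the preceding lemmas.

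\textbf{Transfer.} Since \(T\) is an isomorphism and \(I(u)=J(Tu)\) by Lemma~\ref{Lem2.1}, the sequence \(v_k=Tu_k\) is a \((PS)_c\) sequence for \(J\). Because \(\phi>0\), it is also nonnegative.

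\textbf{Weak limit.} Lemma~\ref{Lem3.2} gives boundedness, a subsequence with \(v_k\rightharpoonup v\), and the fact that \(v\) solves \eqref{eq3.1}. Almost-everywhere convergence gives \(v\ge0\). By Lemma~\ref{Lem3.3}, \(w_k=v_k-v\) satisfies \(w_k\rightharpoonup0\), \(J'(w_k)\to0\), and \(J(v_k)=J(v)+J(w_k)+o(1)\).

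\textbf{Induction on extracted bubbles.} I then run the extraction as an induction on \(j\), the number of bubbles already removed. The inductive hypothesis is the package in Lemma~\ref{Lem3.14}:
\begin{itemize}
\item the remainder \(r_{j,k}\) satisfies \(r_{j,k}\rightharpoonup0\) and \(J'(r_{j,k})\to0\);
\item its rescaling around each previously extracted bubble converges weakly to zero;
\item the bubbles satisfy the interior condition and the orthogonality \eqref{eq3.12}.
\end{itemize}

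For the inductive step, if \(r_{j,k}\not\to0\) in \(L^{2^*}\), I proceed as follows.
\begin{itemize}
\item Lemma~\ref{Lem3.6} selects concentration parameters.
\item Proposition~\ref{Prop3.7}, via the half-space Liouville Lemma~\ref{Lem2.4}, rules out boundary concentration.
\item Lemma~\ref{Lem3.9} yields a nontrivial entire profile.
\item Lemma~\ref{Lem3.14} shows the new parameters are orthogonal to the old ones and that the profile is nonnegative. This uses the nonnegativity of the original \(v_k\) together with Lemma~\ref{Lem3.8}. Caffarelli--Gidas--Spruck then identifies the profile as a Talenti bubble, which after renormalizing parameters is \(U\).
\end{itemize}
Lemma~\ref{Lem3.13}, applied to \(r_{j,k}\), then propagates the hypothesis to \(r_{j+1,k}=r_{j,k}-B_{j+1,k}\). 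It supplies the energy splitting, the norm splitting, \(J'(r_{j+1,k})\to0\), and weak vanishing around the new bubble.

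The main subtle point is to keep weak vanishing of \(r_{j+1,k}\) around the \emph{old} bubbles. I would handle this by combining the hypothesis for \(r_{j,k}\) with Lemma~\ref{Lem3.8}, applied with the roles swapped: the new bubble, rescaled around an old centre, tends weakly to zero by orthogonality. Lemma~\ref{Lem3.15} records the interior and orthogonality conclusions.

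\textbf{Termination and conclusion.} Lemma~\ref{Lem3.16} shows that the process stops after some finite \(m\), because each step removes \(S^{N/2}\) from a bounded squared \(a\)-norm. It also gives \(r_{m,k}\to0\) in \(H^1_0\), which is exactly \eqref{eq3.13}. The energy and norm identities are Lemma~\ref{Lem3.17}.

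\textbf{Back to hyperbolic variables.} Finally I apply \(T^{-1}\), which is multiplication by \(\phi^{-1}\) and a bounded isomorphism \(H^1_0(\Omega)\to H^1_0(\Omega;g_{\Hh})\). This turns \eqref{eq3.13} into the stated expansion of \(u_k\) with \(u=\phi^{-1}v\ge0\). The identity \(I(u_k)=J(v_k)\), together with \(I(u)=J(v)\), gives \(I(u_k)=I(u)+mI_\infty+o(1)\).
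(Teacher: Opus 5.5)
Your proposal is correct and follows the paper's proof step by step: transfer through \(T\), the weak limit and splitting from Lemmas~\ref{Lem3.2}--\ref{Lem3.3}, inductive extraction via Lemma~\ref{Lem3.6}, Proposition~\ref{Prop3.7}, and Lemmas~\ref{Lem3.9}, \ref{Lem3.14}, \ref{Lem3.13}, termination, then Lemma~\ref{Lem3.17} and \(T^{-1}\). Two small differences are worth noting. Your use of Lemma~\ref{Lem3.8} with the roles swapped, to keep the rescaled remainder weakly vanishing around the old bubbles, fills in a step the paper leaves implicit. Your termination argument through the norm count of Lemma~\ref{Lem3.16} is equivalent to the paper's energy count \(J(r_{j,k})=J(r_{0,k})-jI_\infty+o(1)\).
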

	
	\begin{proof}
		By Lemma~\ref{Lem3.2}, after passing to a subsequence,
		\[
		v_k\rightharpoonup v
		\quad\hbox{in }H^1_0(\Omega),
		\]
		where \(v\) solves \eqref{eq3.1}. Since \(v_k\ge0\), also \(v\ge0\). Set
		\[
		r_{0,k}=v_k-v.
		\]
		By Lemma~\ref{Lem3.3},
		\[
		J'(r_{0,k})\to0,
		\qquad
		J(v_k)=J(v)+J(r_{0,k})+o(1).
		\]
		If \(r_{0,k}\to0\) strongly in \(H^1_0(\Omega)\), the theorem holds with \(m=0\).
		
		Otherwise Lemma~\ref{Lem3.6}, Proposition~\ref{Prop3.7}, and Lemma~\ref{Lem3.9} give a first nontrivial interior profile. Since \(v_k\ge0\) and \(v\) disappears in the blow-up scale, this profile is nonnegative.  After translating and dilating the parameters, it is the normalized Talenti bubble \(U\).  Define
		\[
		B_{1,k}=PU_{\lambda_{1,k},\xi_{1,k}},
		\qquad
		r_{1,k}=r_{0,k}-B_{1,k}.
		\]
		Lemma~\ref{Lem3.13} gives
		\[
		J'(r_{1,k})\to0,
		\qquad
		J(r_{0,k})=I_\infty+J(r_{1,k})+o(1),
		\]
		and the rescaling of \(r_{1,k}\) around the first bubble converges weakly to zero.
		
		Assume inductively that \(j\) positive bubbles have been extracted and that
		\[
		r_{j,k}
		=
		v_k-v-\sum_{i=1}^jB_{i,k}
		\]
		satisfies
		\[
		J'(r_{j,k})\to0,\qquad r_{j,k}\rightharpoonup0,
		\]
		and has zero weak profile around each previously extracted bubble. If \(r_{j,k}\to0\) strongly in \(H^1_0(\Omega)\), the extraction stops. Otherwise, Lemma~\ref{Lem3.6} and Proposition~\ref{Prop3.7} produce a new interior profile. Lemma~\ref{Lem3.14} shows that the new parameters are orthogonal to all previous ones and that the new profile is nonnegative. Hence, by Lemma~\ref{Lem3.9}, it is a positive Talenti bubble. After normalizing the parameters, set
		\[
		B_{j+1,k}=PU_{\lambda_{j+1,k},\xi_{j+1,k}},
		\qquad
		r_{j+1,k}=r_{j,k}-B_{j+1,k}.
		\]
		Lemma~\ref{Lem3.13} shows that the new remainder is again a Palais-Smale sequence, has zero weak profile around the new bubble, and lowers the energy and squared norm by \(I_\infty\) and \(S^{N/2}\), respectively.
		
		Each extraction lowers the energy by one copy of the Talenti energy.  More precisely, after the \(j\)-th extraction Lemma~\ref{Lem3.13} gives
		\[
		J(r_{j,k})=J(r_{0,k})-jI_\infty+o(1).
		\]
		Since the level of the original Palais-Smale sequence is finite and \(J(r_{j,k})\ge -o(1)\) for a residual Palais-Smale sequence at the critical exponent, only finitely many nontrivial profiles can be extracted. Let \(m\) be the maximal number of extracted profiles.  If the final remainder \(r_{m,k}\) did not converge strongly to zero, the concentration lemma would produce a further nontrivial interior profile, contradicting this maximality.  Hence
		\[
		r_{m,k}\to0
		\quad\hbox{strongly in }H^1_0(\Omega).
		\]
		This gives \eqref{eq3.13}. Lemma~\ref{Lem3.15} gives the orthogonality and boundary separation conditions, and Lemma~\ref{Lem3.17} gives the energy and norm identities. Finally, since \(T\) is an isomorphism and \(I(u)=J(Tu)\), applying \(T^{-1}\) gives the hyperbolic-variable decomposition.
	\end{proof}

	\subsection{Deformation of quotient sublevels to bubble tubes}

	We pass from the energy functional \(J\) to the quotient functional used in the barycenter argument.  Set
	\begin{equation*}
		\Sigma_{a,+}
		=
		\{v\in H^1_0(\Omega): v\ge 0\ \text{almost everywhere in }\Omega,\ \|v\|_a=1\}
	\end{equation*}
	and
	\begin{equation*}
		\mathcal J_a(v)=
		\left(\int_\Omega v^{2^*}\,dx\right)^{-1},
		\qquad v\in\Sigma_{a,+}.
	\end{equation*}
	We realize this quotient as the restriction to the positive cone of the \(C^1\) (in fact \(C^2\); see Lemma~\ref{Lem3.26}) functional
	\[
	\mathcal J_{a,+}(v)=
	\left(\int_\Omega (v^+)^{2^*}\,dx\right)^{-1},
	\qquad
	v\in\Sigma_a,
	\quad
	v^+\not\equiv0,
	\]
	where \(\Sigma_a=\{v\in H^1_0(\Omega):\|v\|_a=1\}\).  Since \(2^*>2\), the map \(v\mapsto\int_\Omega(v^+)^{2^*}\,dx\) is \(C^1\) on \(H^1_0(\Omega)\), with
	\[
	D\left(\int_\Omega(v^+)^{2^*}\,dx\right)[\psi]
	=2^*\int_\Omega(v^+)^{2^*-1}\psi\,dx.
	\]
	We use \(\mathcal J_{a,+}\) only on the open subset \(\{v\in\Sigma_a:v^+\not\equiv0\}\) of the smooth Hilbert sphere. If \((v_k)\subset\Sigma_a\) is a quotient Palais-Smale sequence for \(\mathcal J_{a,+}\), then its negative part tends to zero.  Writing
	\[
	F(v)=\int_\Omega (v^+)^{2^*}\,dx,
	\]
	the constrained Palais-Smale condition gives real multipliers \(\mu_k\) such that
	\[
	-2^*F(v_k)^{-2}(v_k^+)^{2^*-1}-\mu_k L_av_k=o(1)
	\quad\hbox{in }H^{-1}(\Omega).
	\]
	Testing with \(v_k\) yields
	\[
	-2^*F(v_k)^{-1}-\mu_k=o(1),
	\]
	and hence \(|\mu_k|\ge c>0\), since \(F(v_k)\) stays bounded away from zero and infinity on every bounded quotient level.  Testing the same residual with \(v_k^-\) gives
	\[
	\mu_k\|v_k^-\|_a^2=o(1)\|v_k^-\|_a,
	\]
	because \((v_k^+)^{2^*-1}v_k^-=0\) almost everywhere.  Thus \(v_k^-\to0\) in \(H^1_0(\Omega)\). After replacing \(v_k\) by
	\[
	\widehat v_k=\frac{v_k^+}{\|v_k^+\|_a},
	\]
	one obtains an equivalent Palais-Smale sequence in \(\Sigma_{a,+}\).  Critical points obtained in this way are nonnegative, and the strong maximum principle makes them positive.
	
	If \(v\in\Sigma_{a,+}\) is a critical point of \(\mathcal J_a\), then there is a Lagrange multiplier \(\ell=\mathcal J_a(v)\) such that
	\[
	L_av=\ell v^{2^*-1}.
	\]
	Consequently
	\begin{equation*}
		w=\ell^{\frac{N-2}{4}}v
	\end{equation*}
	solves
	\[
	L_aw=w^{2^*-1}
	\quad\hbox{in }\Omega,
	\qquad w=0\quad\hbox{on }\partial\Omega.
	\]
	Conversely, every positive solution of this equation gives a critical point of \(\mathcal J_a\) after normalization in \(\|\cdot\|_a\).
	
	Let
	\begin{equation*}
		\sigma_N=
		\left(\int_{\mathbb R^N}\delta_{1,0}^{2^*}\,dx\right)^{-1}
		=S^{\frac{N}{N-2}},
		\qquad
		b_n=\sigma_N n^{\frac{2}{N-2}}.
	\end{equation*}
	We also set \(b_0=0\). The number \(b_n\) is the value of \(\mathcal J_a\) on a normalized sum of \(n\) mutually orthogonal normalized bubbles with equal weights.  We choose regular levels \(c_j\) satisfying
	\begin{equation*}
		b_j<c_j<b_{j+1},
	\end{equation*}
	and define
	\[
	W_{a,j}=\{v\in\Sigma_{a,+}:\mathcal J_a(v)\le c_j\}.
	\]
	We also set \(W_{a,0}=\varnothing\).  Equivalently, choose a level \(c_0<b_1\) for which the Sobolev inequality gives \(\{\mathcal J_a\le c_0\}=\varnothing\).  With the normalization used here,
	\begin{equation}\label{eq3.15}
		b_m^{\frac{N-2}{2}}=mS^{N/2},
		\qquad
		\frac1N b_m^{\frac{N-2}{2}}=mI_\infty.
	\end{equation}
	Here \(\sigma_N=S^{N/(N-2)}\) and \(I_\infty=N^{-1}S^{N/2}\). For each fixed \(n\), the level \(c_{n-1}\in(b_{n-1},b_n)\) will be chosen after the fixed-\(n\) estimates have been established.  In particular, at the single multiplicity \(m\) used in the proof, Section~\ref{sec:matched} gives a strict inequality
	\[
	\sup_{\mu\in B_m(K)}
	\mathcal J_a(g_{a,\lambda,m}(\mu))<b_m,
	\]
	and \(c_{m-1}\) is then chosen strictly between this supremum (and \(b_{m-1}\)) and \(b_m\).
	\begin{definition}\label{Def3.20}
		For \(n\ge1\) and \(\varepsilon>0\), let \(\mathcal V_{a,\le n}(\varepsilon)\subset\Sigma_{a,+}\) consist of the functions \(v\) for which there are an integer \(1\le m\le n\), numbers \(\alpha_i>0\), points \(x_i\in\Omega\), and scales \(\lambda_i>\varepsilon^{-1}\) such that
		\[
		\left\|
		v-
		\frac{\sum_{i=1}^m\alpha_iP_a\delta_{\lambda_i,x_i}}
		{\left\|\sum_{i=1}^m\alpha_iP_a\delta_{\lambda_i,x_i}\right\|_a}
		\right\|_a<\varepsilon,
		\]
		\[
		\lambda_i\operatorname{dist}(x_i,\partial\Omega)>\varepsilon^{-1}
		\quad(1\le i\le m),
		\]
		and
		\[
		\varepsilon_{ij}
		=
		\left(
		\frac{\lambda_i}{\lambda_j}
		+
		\frac{\lambda_j}{\lambda_i}
		+
		\lambda_i\lambda_j|x_i-x_j|^2
		\right)^{-\frac{N-2}{2}}
		<\varepsilon
		\quad(i\ne j).
		\]
		Parameter tuples are identified up to permutation, and the coefficients range independently over \((0,\infty)^m\).
	\end{definition}
	
	For \(n\ge1\), \(0<\rho<1\), \(L>1\), and
	\[
	0<\zeta<2^{-\frac{N-2}{2}},
	\]
	let \(\mathscr Q_n(\rho,L,\zeta)\) be the labelled parameter manifold of tuples
	\[
	q=(\alpha_1,\ldots,\alpha_n;
	x_1,\ldots,x_n;\lambda_1,\ldots,\lambda_n)
	\]
	satisfying
	\[
	\alpha_i>0,\qquad
	\sum_i\alpha_i=1,\qquad
	|n\alpha_i-1|<\rho,
	\]
	\[
	x_i\in\Omega,\qquad
	\lambda_i>L,\qquad
	\lambda_i\operatorname{dist}(x_i,\partial\Omega)>L,
	\]
	and
	\[
	\left(
	\frac{\lambda_i}{\lambda_k}
	+
	\frac{\lambda_k}{\lambda_i}
	+
	\lambda_i\lambda_k|x_i-x_k|^2
	\right)^{-\frac{N-2}{2}}
	<\zeta
	\qquad(i\ne k).
	\]
	The notation \(\overline{\mathscr Q}_n(\rho,L,\zeta)\) means that the strict weight-balance, lower-scale, scale-boundary, and interaction inequalities above are replaced by their weak counterparts; positivity of the coefficients remains automatic because \(\rho<1\).  This closure is taken within the finite-scale parameter space.  The normalization \(\sum_i\alpha_i=1\) removes the common coefficient scaling.  The symmetric group \(\mathfrak S_n\) acts by simultaneously permuting \((\alpha_i,x_i,\lambda_i)\).  The action is free, because the interaction cutoff excludes two identical bubble parameters. We use the quotient \(\mathscr Q_n(\rho,L,\zeta)/\mathfrak S_n\), rather than imposing a discontinuous global ordering.  In a tower with the same physical centre we label the factors by increasing scale; in every local quotient chart the labels are fixed by the unique matching with its base tuple.  Two such charts therefore differ on an overlap by one constant permutation.
	
	For the one-parameter notation used in this section, set
	\[
	\mathcal P_n^{<}(\varepsilon)
	=
	\mathscr Q_n(\varepsilon,\varepsilon^{-1},\varepsilon)
	/\mathfrak S_n,
	\qquad
	\mathcal P_n^{\le}(\varepsilon)
	=
	\overline{\mathscr Q}_n(\varepsilon,\varepsilon^{-1},\varepsilon)
	/\mathfrak S_n,
	\]
	where
	\[
	0<\varepsilon<
	\min\{1,2^{-\frac{N-2}{2}}\}.
	\]
	Thus every weight-balance, lower-scale, scale-boundary, and interaction inequality is strict in \(\mathcal P_n^{<}(\varepsilon)\) and weak in \(\mathcal P_n^{\le}(\varepsilon)\).  In both charts \(x_i\in\Omega\), the coefficients are positive and sum to one, and the tuples are taken modulo the simultaneous \(\mathfrak S_n\)-action described above.  We use the closed-parameter convention by default:
	\[
	\mathcal P_n(\varepsilon)=\mathcal P_n^{\le}(\varepsilon).
	\]
	Define
	\[
	\Phi_{a,n}(q)
	=
	\frac{\sum_{i=1}^n\alpha_iP_a\delta_{\lambda_i,x_i}}
	{\left\|\sum_{i=1}^n\alpha_iP_a\delta_{\lambda_i,x_i}\right\|_a}.
	\]
	
	\begin{definition}\label{Def3.21}
		\[
		\mathcal T_{a,n}^{\square}(\varepsilon)
		=
		\left\{
		v\in\Sigma_{a,+}:
		\inf_{q\in\mathcal P_n^{\square}(\varepsilon)}
		\|v-\Phi_{a,n}(q)\|_a<\varepsilon
		\right\},
		\qquad \square\in\{<,\le\}.
		\]
		We use the closed-parameter convention \(\mathcal T_{a,n}(\varepsilon)= \mathcal T_{a,n}^{\le}(\varepsilon)\) unless a superscript is displayed.
	\end{definition}
	
	Repeated centers are allowed when their scales are sufficiently separated; all amplitudes in this chart are positive. Both tubes are open in \(\Sigma_{a,+}\), because their residual-distance inequality is strict.  If \(0<\delta<\varepsilon\), then
	\begin{equation}\label{eq3.strict-closed-sandwich}
		\mathcal T_{a,n}^{<}(\delta)
		\subset
		\mathcal T_{a,n}^{\le}(\delta)
		\subset
		\mathcal T_{a,n}^{<}(\varepsilon)
		\subset
		\mathcal T_{a,n}^{\le}(\varepsilon).
	\end{equation}
	Indeed, the corresponding parameter sets have the same inclusions, and the residual bound \(<\delta\) implies the residual bound \(<\varepsilon\).
	
	\begin{remark}\label{Rem3.22}
		The distinction between \(\mathcal V_{a,\le n}(\varepsilon)\) and \(\mathcal T_{a,n}(\varepsilon)\) is essential.  A bounded quotient Palais-Smale sequence below \(b_{n+1}\) may split into any number \(m\le n\) of bubbles, which is why the compactness tube must contain all lower multiplicities and arbitrary preliminary coefficients.  On the strip
		\[
		c_{n-1}<\mathcal J_a<c_n,
		\qquad
		b_{n-1}<c_{n-1}<b_n<c_n<b_{n+1},
		\]
		quantization forces \(m=n\), and normalization forces the coefficients to be asymptotically equal.  Only the balanced tube describes this relative layer.
	\end{remark}
	
	\begin{lemma}\label{Lem3.23}
		Let \((v_k)\subset\Sigma_{a,+}\) satisfy
		\[
		\mathcal J_a(v_k)\to\ell>0,
		\qquad
		\|\nabla_{\Sigma_a}\mathcal J_a(v_k)\|_a\to0.
		\]
		Set
		\[
		w_k=\mathcal J_a(v_k)^{\frac{N-2}{4}}v_k.
		\]
		Then \((w_k)\) is a Palais-Smale sequence for \(J\), and, for every \(k\),
		\[
		J(w_k)
		=
		\frac1N\mathcal J_a(v_k)^{\frac{N-2}{2}}.
		\]
	\end{lemma}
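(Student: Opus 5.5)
The proof is a direct computation relating the constrained gradient of \(\mathcal J_a\) on the sphere \(\Sigma_a\) to the free gradient of \(J\) after rescaling.

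\emph{Energy identity.} Write \(F(v)=\int_\Omega v^{2^*}\,dx\), so that \(\mathcal J_a(v)=F(v)^{-1}\) and \(\|v\|_a=1\) on \(\Sigma_{a,+}\). Put \(t_k=\mathcal J_a(v_k)^{\frac{N-2}{4}}\) and \(w_k=t_kv_k\). Then
\[
\|w_k\|_a^2=t_k^2,
\qquad
\int_\Omega w_k^{2^*}\,dx=t_k^{2^*}F(v_k)=t_k^{2^*}\mathcal J_a(v_k)^{-1}.
\]
Since \(t_k^{2^*-2}=\mathcal J_a(v_k)^{\frac{N-2}{4}\cdot\frac{4}{N-2}}=\mathcal J_a(v_k)\), we get \(\int_\Omega w_k^{2^*}\,dx=t_k^2\). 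Hence
\[
J(w_k)=\Bigl(\tfrac12-\tfrac1{2^*}\Bigr)t_k^2=\tfrac1N\mathcal J_a(v_k)^{\frac{N-2}{2}}
\]
exactly, for every \(k\). This proves the displayed identity, and it shows that \(J(w_k)\to\frac1N\ell^{\frac{N-2}{2}}\).

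\emph{Gradient comparison.} The differential of \(\mathcal J_a\) is \(D\mathcal J_a(v)[\psi]=-2^*F(v)^{-2}\int_\Omega v^{2^*-1}\psi\,dx\). On the sphere, the tangential \(a\)-gradient is
\[
\nabla_{\Sigma_a}\mathcal J_a(v)=g-\langle g,v\rangle_a v,
\qquad
g=-2^*F(v)^{-2}L_a^{-1}\bigl(v^{2^*-1}\bigr).
\]
Testing with \(v\) gives \(\langle g,v\rangle_a=-2^*F(v)^{-1}\). Therefore
\[
\nabla_{\Sigma_a}\mathcal J_a(v)=2^*F(v)^{-2}\Bigl(F(v)\,v-L_a^{-1}\bigl(v^{2^*-1}\bigr)\Bigr).
\]
Since \(F(v_k)\to\ell^{-1}\in(0,\infty)\), the hypothesis yields
\[
\bigl\|\mathcal J_a(v_k)^{-1}v_k-L_a^{-1}\bigl(v_k^{2^*-1}\bigr)\bigr\|_a\to0.
\]
Now \(L_a^{-1}J'(w)=w-L_a^{-1}(w^{2^*-1})\), and by homogeneity
\[
L_a^{-1}J'(w_k)=t_k\,v_k-t_k^{2^*-1}L_a^{-1}\bigl(v_k^{2^*-1}\bigr)
=t_k^{2^*-1}\Bigl(t_k^{2-2^*}v_k-L_a^{-1}\bigl(v_k^{2^*-1}\bigr)\Bigr).
\]
Because \(t_k^{2-2^*}=\mathcal J_a(v_k)^{-1}\), the bracket is exactly the quantity above, which tends to zero. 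The prefactor \(t_k^{2^*-1}\) stays bounded because \(\mathcal J_a(v_k)\to\ell\). Since \(L_a\) is an isometry from \((H^1_0,\|\cdot\|_a)\) onto \(H^{-1}\) with the dual norm, this gives \(J'(w_k)\to0\) in \(H^{-1}(\Omega)\). Together with the convergence of \(J(w_k)\), \((w_k)\) is a Palais-Smale sequence for \(J\) at level \(\frac1N\ell^{\frac{N-2}{2}}\).

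The only delicate point is the positive part. On \(\Sigma_{a,+}\) the functions are nonnegative, so \(v^{2^*-1}=(v^+)^{2^*-1}\) and \(\mathcal J_a\) coincides with \(\mathcal J_{a,+}\). Thus the gradient formula is the one for the \(C^1\) extension \(\mathcal J_{a,+}\) introduced above, and no further argument is needed there. The main thing to check carefully is the exponent bookkeeping \(t_k^{2^*-2}=\mathcal J_a(v_k)\), which makes the energy identity exact rather than asymptotic.
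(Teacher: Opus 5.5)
Your proof is correct and follows essentially the same route as the paper. The paper writes the constrained Palais--Smale condition as \(L_av_k-\theta_kv_k^{2^*-1}\to0\) in \(H^{-1}(\Omega)\), identifies \(\theta_k=\mathcal J_a(v_k)+o(1)\) by testing with \(v_k\), and then rescales using \(t_k^{2^*-2}=\mathcal J_a(v_k)\), exactly as you do. Your explicit tangential-projection formula is a slightly sharper version of that step, since it gives the exact proportionality \(L_a^{-1}J'(w_k)=\tfrac1{2^*}t_k^{2^*-1}F(v_k)^2\,\nabla_{\Sigma_a}\mathcal J_a(v_k)\) rather than only an asymptotic identification of the multiplier.
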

	
	\begin{proof}
		The constrained Palais-Smale condition gives numbers \(\theta_k\) such that
		\[
		L_av_k-\theta_kv_k^{2^*-1}\to0
		\quad\hbox{in }H^{-1}(\Omega).
		\]
		Testing with \(v_k\), and using \(\|v_k\|_a=1\) and \(\int_\Omega v_k^{2^*}=\mathcal J_a(v_k)^{-1}\), gives
		\[
		\theta_k=\mathcal J_a(v_k)+o(1).
		\]
		Multiplication by \(\mathcal J_a(v_k)^{(N-2)/4}\) now yields
		\[
		L_aw_k-w_k^{2^*-1}\to0
		\quad\hbox{in }H^{-1}(\Omega).
		\]
		Moreover,
		\[
		\|w_k\|_a^2
		=
		\int_\Omega w_k^{2^*}\,dx
		=
		\mathcal J_a(v_k)^{\frac{N-2}{2}},
		\]
		which proves the exact energy identity.
	\end{proof}
	
	\begin{proposition}\label{Prop3.24}
		Assume that
		\[
		L_av=v^{2^*-1},\qquad
		v>0\ \hbox{in }\Omega,\qquad
		v=0\ \hbox{on }\partial\Omega
		\]
		has no solution.  Let \((v_k)\subset\Sigma_a\) satisfy
		\[
		v_k^+\not\equiv0,\qquad
		\mathcal J_{a,+}(v_k)\to\ell>0,\qquad
		\|\nabla_{\Sigma_a}\mathcal J_{a,+}(v_k)\|_a\to0.
		\]
		Then \(v_k^-\to0\) in \(H^1_0(\Omega)\).  After replacing \(v_k\) by
		\[
		\widehat v_k=\frac{v_k^+}{\|v_k^+\|_a},
		\]
		there is an integer \(m\ge1\) such that
		\[
		\ell=b_m
		\]
		and, for every \(\varepsilon>0\),
		\[
		\widehat v_k\in\mathcal V_{a,\le m}(\varepsilon)
		\]
		for all sufficiently large \(k\).  In particular, if \(\ell\le c_n<b_{n+1}\), then \(m\le n\).
	\end{proposition}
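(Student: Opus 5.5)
The plan is to reduce the quotient Palais--Smale sequence to an energy Palais--Smale sequence for \(J\), invoke the global compactness Theorem~\ref{Thm3.19}, and then rescale back to the sphere.

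\emph{Step 1: removing the negative part.} This is the multiplier computation displayed before Definition~\ref{Def3.20}. Write the constrained residual
\[
-2^*F(v_k)^{-2}(v_k^+)^{2^*-1}-\mu_kL_av_k=o(1).
\]
Testing with \(v_k\) gives \(|\mu_k|\ge c>0\), since \(F(v_k)=\mathcal J_{a,+}(v_k)^{-1}\to\ell^{-1}\). Testing with \(v_k^-\) gives \(\|v_k^-\|_a\to0\). Hence \(\|v_k^+\|_a\to1\), and \(\widehat v_k\in\Sigma_{a,+}\) differs from \(v_k\) by \(o(1)\) in \(H^1_0\). Because \(\mathcal J_{a,+}\) is \(C^1\) and the sphere projection is smooth near \(\Sigma_a\), \(\widehat v_k\) is again a constrained Palais--Smale sequence for \(\mathcal J_a\) at level \(\ell\).

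\emph{Step 2: passing to \(J\).} Lemma~\ref{Lem3.23} turns \(w_k=\mathcal J_a(\widehat v_k)^{(N-2)/4}\widehat v_k\ge0\) into a nonnegative \((PS)_c\) sequence for \(J\) with \(c=\frac1N\ell^{(N-2)/2}\). Theorem~\ref{Thm3.19} then gives
\[
w_k=w+\sum_{i=1}^mPU_{\lambda_{i,k},\xi_{i,k}}+o(1)\quad\text{in }H^1_0(\Omega),
\]
with \(w\ge0\) a weak solution, orthogonal interior bubbles, and \(c=J(w)+mI_\infty\). By the strong maximum principle, \(w\) is either \(\equiv0\) or positive, and the nonexistence hypothesis forces \(w=0\). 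So \(c=mI_\infty\) with \(m\ge1\), since \(c>0\) and \(\ell>0\). By \eqref{eq3.15}, this means \(\ell^{(N-2)/2}=mS^{N/2}=b_m^{(N-2)/2}\), that is, \(\ell=b_m\).

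\emph{Step 3: membership in the tube.} Replace each \(PU_{\lambda,\xi}=S^{N/4}P\delta_{\lambda,\xi}\) by \(S^{N/4}P_a\delta_{\lambda,\xi}\), using \(\|P_a\delta-P\delta\|_a=o(1)\) for scale-interior parameters. The paper states this uniformly on compacts \(K\); I would re-run the \(H^{-1}\) estimate of \(aP\delta\), whose \(L^2\)-mass still vanishes as long as \(\lambda\operatorname{dist}(\xi,\partial\Omega)\to\infty\), since \(a\) is bounded on \(\overline\Omega\). This gives \(\widehat v_k=w_k/\|w_k\|_a\) within \(o(1)\) of the normalized combination \(\sum\alpha_iP_a\delta_{\lambda_{i,k},\xi_{i,k}}/\|\cdot\|_a\), with \(\alpha_i=S^{N/4}\). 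The scale, boundary-distance, and interaction conditions \(\varepsilon_{ij}\to0\) come directly from Theorem~\ref{Thm3.19}. Hence \(\widehat v_k\in\mathcal V_{a,\le m}(\varepsilon)\) for large \(k\). The final claim is immediate: \(b_m=\ell\le c_n<b_{n+1}\), and \(b_j\) is increasing, so \(m\le n\).

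The main obstacle I expect is this uniform replacement \(P\delta\to P_a\delta\) for centres that are not confined to a fixed compact set. Only \(\lambda\operatorname{dist}\to\infty\) is available there, so the \(o(1)\) bound has to be checked via the \(L^2\) smallness of \(P\delta\), using \(0\le P\delta\le\delta\). A smaller technical point is justifying that normalization preserves the constrained Palais--Smale property in Step 1.
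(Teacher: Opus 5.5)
Your proposal is correct and follows essentially the same route as the paper. You remove \(v_k^-\) by the multiplier test, pass to the energy sequence of Lemma~\ref{Lem3.23}, and apply Theorem~\ref{Thm3.19} together with the nonexistence hypothesis to get \(\ell=b_m\). You then replace \(P\delta\) by \(P_a\delta\) via the coercivity bound \(\|P_a\delta-P\delta\|_a\le C\|aP\delta\|_{H^{-1}}=o(1)\) for scale-interior sequences. The only difference is cosmetic: you control \(\|aP\delta\|_{H^{-1}}\) through the \(L^2\)-mass of \(P\delta\) instead of its \(L^{(2^*)'}\)-norm, which is equivalent on a bounded domain.
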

	
	\begin{proof}
		The negative-part test in the constrained Euler residual gives
		\[
		v_k^-\to0\quad\hbox{in }H^1_0(\Omega).
		\]
		Thus the normalized positive parts form an equivalent quotient Palais-Smale sequence, which we again denote by \(v_k\). Lemma~\ref{Lem3.23} gives an energy Palais-Smale sequence
		\[
		w_k=\mathcal J_a(v_k)^{\frac{N-2}{4}}v_k
		\]
		at level
		\[
		\frac1N\ell^{\frac{N-2}{2}}.
		\]
		Theorem~\ref{Thm3.19} and the nonexistence assumption imply
		\[
		w_k
		=
		\sum_{i=1}^mPU_{\lambda_{i,k},\xi_{i,k}}+o(1)
		\quad\hbox{in }H^1_0(\Omega),
		\]
		where the parameters are mutually orthogonal and scale-interior.  Energy quantization gives
		\[
		\frac1N\ell^{\frac{N-2}{2}}=mI_\infty,
		\]
		and hence \(\ell=b_m\).
		
		After division by
		\[
		b_m^{\frac{N-2}{4}}
		=
		\sqrt m\,S^{N/4},
		\]
		and after passing from \(U\) to the normalization \(\|\nabla\delta\|_2=1\), one obtains
		\begin{equation}\label{eq3.equal-coefficient-decomposition}
			v_k
			=
			\frac1{\sqrt m}
			\sum_{i=1}^mP\delta_{\lambda_{i,k},\xi_{i,k}}
			+o(1)
			\quad\hbox{in }H^1_0(\Omega).
		\end{equation}
		The replacement of \(P\delta\) by \(P_a\delta\) is uniform for every scale-interior sequence.  Indeed, if
		\[
		R_{\lambda,\xi}
		=
		P_a\delta_{\lambda,\xi}-P\delta_{\lambda,\xi},
		\]
		then
		\[
		L_aR_{\lambda,\xi}=-aP\delta_{\lambda,\xi}
		\]
		and coercivity gives
		\[
		\|R_{\lambda,\xi}\|_a
		\le
		C\|aP\delta_{\lambda,\xi}\|_{H^{-1}(\Omega)}
		=o(1)
		\]
		whenever \(\lambda\to\infty\) and \(\lambda\operatorname{dist}(\xi,\partial\Omega)\to\infty\). This follows from \(0\le P\delta_{\lambda,\xi}\le\delta_{\lambda,\xi}\), the boundedness of \(a\), and the vanishing of the subcritical norm \(\|P\delta_{\lambda,\xi}\|_{L^{(2^*)'}(\Omega)}\). Thus \eqref{eq3.equal-coefficient-decomposition} puts \(v_k\) in the broad tube for every fixed width.  The last assertion follows from \eqref{eq3.15}.
	\end{proof}
	
	\begin{corollary}
		\label{Cor3.25}
		Assume the nonexistence hypothesis of Proposition~\ref{Prop3.24}.  If \((v_k)\subset\Sigma_a\) satisfies
		\[
		c_{n-1}<\mathcal J_{a,+}(v_k)\le c_n,
		\qquad
		\|\nabla_{\Sigma_a}\mathcal J_{a,+}(v_k)\|_a\to0,
		\]
		then, after normalization of the positive part,
		\[
		\mathcal J_a(\widehat v_k)\to b_n
		\]
		and, for every \(\varepsilon>0\),
		\[
		\widehat v_k\in\mathcal T_{a,n}(\varepsilon)
		\]
		for all sufficiently large \(k\).
	\end{corollary}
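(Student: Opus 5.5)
The plan is to obtain Corollary~\ref{Cor3.25} directly from Proposition~\ref{Prop3.24}, arguing by subsequences. The tube \(\mathcal T_{a,n}(\varepsilon)\) is open, and the claim is that membership holds for all large \(k\). So it suffices to show that every subsequence has a further subsequence which eventually lies in \(\mathcal T_{a,n}(\varepsilon)\).

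\emph{Step 1: identify the limit level.} Since \(\mathcal J_{a,+}(v_k)\in(c_{n-1},c_n]\) is bounded, I pass to a subsequence with \(\mathcal J_{a,+}(v_k)\to\ell\in[c_{n-1},c_n]\). Proposition~\ref{Prop3.24} then gives \(v_k^-\to0\), \(\ell=b_m\) for some integer \(m\ge1\), and the decomposition \eqref{eq3.equal-coefficient-decomposition}. Passing to \(\widehat v_k\) changes \(\mathcal J\) only by \(o(1)\), since \(\|v_k^+\|_a\to1\). The ordering \(b_{n-1}<c_{n-1}\le\ell\le c_n<b_{n+1}\), together with the strict monotonicity of \(b_j\) in \(j\), forces \(m=n\). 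Hence \(\mathcal J_a(\widehat v_k)\to b_n\). Because the limit is the same along every subsequence, the full sequence converges.

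\emph{Step 2: place \(\widehat v_k\) in the balanced tube.} By \eqref{eq3.equal-coefficient-decomposition} with \(m=n\), and after replacing \(P\delta\) by \(P_a\delta\) as in the proof of Proposition~\ref{Prop3.24},
\[
\widehat v_k=n^{-1/2}\sum_{i=1}^n P_a\delta_{\lambda_{i,k},\xi_{i,k}}+o(1)\quad\text{in }H^1_0(\Omega).
\]
I take the tuple \(q_k\) with \(\alpha_i=1/n\), points \(\xi_{i,k}\) and scales \(\lambda_{i,k}\). Then \(|n\alpha_i-1|=0\le\varepsilon\). Theorem~\ref{Thm3.19} gives \(\lambda_{i,k}\to\infty\), \(\lambda_{i,k}\operatorname{dist}(\xi_{i,k},\partial\Omega)\to\infty\), and \(\varepsilon_{ij}\to0\). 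So for large \(k\) all constraints of \(\mathcal P_n^{\le}(\varepsilon)\) hold, and \(q_k\in\mathcal P_n(\varepsilon)\).

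It remains to check that \(\|\widehat v_k-\Phi_{a,n}(q_k)\|_a<\varepsilon\) for large \(k\). This reduces to the norm computation
\[
\Bigl\|n^{-1}\sum_i P_a\delta_{\lambda_{i,k},\xi_{i,k}}\Bigr\|_a=n^{-1/2}+o(1).
\]
It follows from \(\|P_a\delta_{\lambda,\xi}\|_a=1+o(1)\) for scale-interior parameters (Lemma~\ref{Lem3.11} together with the \(o(1)\) comparison of \(P_a\delta\) and \(P\delta\)). It also uses \(\langle P_a\delta_i,P_a\delta_j\rangle_a=o(1)\) for \(i\ne j\). To get this, I write \(\langle P_a\delta_i,P_a\delta_j\rangle_a=\sigma_N\int\delta_i^{2^*-1}P_a\delta_j\) and bound it by \(\sigma_N\int\delta_i^{2^*-1}\delta_j+o(1)\). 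The last integral is \(O(\varepsilon_{ij})\to0\) by orthogonality of the parameters (Lemma~\ref{Lem3.8}). Hence \(\Phi_{a,n}(q_k)=n^{-1/2}\sum_iP_a\delta_i+o(1)\), and the residual tends to zero.

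\emph{Main obstacle.} The only delicate point is Step 1's exclusion of a different multiplicity. At the endpoint \(\ell=c_{n-1}\), the level could a priori be a \(b_j\) only if \(c_{n-1}=b_j\); this is ruled out by the strict choice \(b_{n-1}<c_{n-1}<b_n\). I also need that the balanced tube uses the closed-parameter convention, so that the exact equal weights and the limiting interaction bounds are admissible. Both points are bookkeeping rather than analysis.
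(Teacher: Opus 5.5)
Your proposal is correct and follows the paper's own argument. Proposition~\ref{Prop3.24} quantizes the limit as \(b_m\), the strict ordering \(b_{n-1}<c_{n-1}<b_n<c_n<b_{n+1}\) forces \(m=n\), and the equal-coefficient decomposition \eqref{eq3.equal-coefficient-decomposition}, with \(P_a\delta\) in place of \(P\delta\), places \(\widehat v_k\) in \(\mathcal T_{a,n}(\varepsilon)\). Your subsequence bookkeeping and explicit equal-weight tuple just spell out what the paper leaves implicit. The cross-term estimate is not needed, because \(\|\widehat v_k\|_a=1\) already forces \(\|n^{-1/2}\sum_iP_a\delta_{\lambda_{i,k},\xi_{i,k}}\|_a\to1\).
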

	
	\begin{proof}
		Proposition~\ref{Prop3.24} gives a quantized limit \(b_m\).  The choice
		\[
		b_{n-1}<c_{n-1}<b_n<c_n<b_{n+1}
		\]
		forces \(m=n\).  Formula \eqref{eq3.equal-coefficient-decomposition} then gives exactly \(n\) mutually orthogonal scale-interior bubbles with asymptotically equal coefficients, which is the defining condition of \(\mathcal T_{a,n}(\varepsilon)\).
	\end{proof}
	
	Put
	\[
	\widetilde\Sigma_a
	=
	\{v\in\Sigma_a:v^+\not\equiv0\},
	\qquad
	f(v)=\mathcal J_{a,+}(v)
	=
	\left(\int_\Omega(v^+)^{2^*}\,dx\right)^{-1}.
	\]
	
	\begin{lemma}
		\label{Lem3.26}
		Then \(f\in C^2(\widetilde\Sigma_a)\).  For each \(c>0\), its constrained Hessian is uniformly bounded on
		\[
		\{v\in\widetilde\Sigma_a:f(v)\le c\}.
		\]
	\end{lemma}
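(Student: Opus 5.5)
The plan is to treat \(f\) as the composition of the smooth map \(t\mapsto t^{-1}\) on \((0,\infty)\) with the functional
\[
F(v)=\int_\Omega (v^+)^{2^*}\,dx
\]
on the ambient Hilbert space \(H^1_0(\Omega)\), and then restrict to the smooth submanifold \(\widetilde\Sigma_a\). Put \(p=2^*\); for \(N\in\{3,4,5\}\) we have \(p>2\), and in fact \(p-2=4/(N-2)>0\).

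\textbf{Step 1 (\(F\) is \(C^2\) on \(H^1_0(\Omega)\)).} The scalar function \(g(s)=(s^+)^p\) is \(C^2\), with
\[
g'(s)=p(s^+)^{p-1},\qquad g''(s)=p(p-1)(s^+)^{p-2},
\]
and both are continuous because \(p>2\). I claim
\[
DF(v)[\psi]=p\int_\Omega (v^+)^{p-1}\psi\,dx,\qquad
D^2F(v)[\psi,\varphi]=p(p-1)\int_\Omega (v^+)^{p-2}\psi\varphi\,dx .
\]
To justify this:
\begin{itemize}
\item Use the Sobolev embedding \(H^1_0\hookrightarrow L^p\) with constant \(S^{-1/2}\) for the \(\|\cdot\|_a\)-norm.
\item Use continuity of the Nemytskii maps \(v\mapsto (v^+)^{p-1}\) from \(L^p\) to \(L^{p/(p-1)}\) and \(v\mapsto (v^+)^{p-2}\) from \(L^p\) to \(L^{p/(p-2)}\). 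These follow from a.e.\ convergence along subsequences, the growth bounds \(|g^{(j)}(s)|\le C|s|^{p-j}\), and Vitali/dominated convergence.
\item Obtain the Fr\'echet remainders from the integral form of Taylor's formula, \(g(s+t)-g(s)-g'(s)t=\int_0^1(1-\tau)g''(s+\tau t)t^2\,d\tau\), together with H\"older with exponents \((p/(p-2),p/2)\) and the continuity of \(v\mapsto g''(v)\) into \(L^{p/(p-2)}\).
\end{itemize}

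\textbf{Step 2 (\(f\) is \(C^2\) on \(\widetilde\Sigma_a\)).} On the open set \(\{F>0\}\) the function \(1/F\) is \(C^2\), with
\[
Df=-F^{-2}DF,\qquad D^2f=2F^{-3}DF\otimes DF-F^{-2}D^2F .
\]
The set \(\widetilde\Sigma_a=\Sigma_a\cap\{F>0\}\) is open in the smooth Hilbert sphere \(\Sigma_a\). Hence the restriction of \(f\) is \(C^2\), and its constrained Hessian at \(v\), acting on tangent vectors \(\psi,\varphi\perp_a v\), is
\[
\nabla^2_{\Sigma_a}f(v)[\psi,\varphi]=D^2f(v)[\psi,\varphi]-Df(v)[v]\,\langle\psi,\varphi\rangle_a .
\]
The second term comes from the second fundamental form of the unit sphere.

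\textbf{Step 3 (uniform bounds on the sublevel).} On \(\{f\le c\}\) we have two-sided control of \(F\):
\[
\frac1c\le F(v)\le\|v\|_{L^p}^p\le S^{-p/2},\qquad\text{using }\|v\|_a=1 .
\]
H\"older and Sobolev then give, for \(\|\psi\|_a=\|\varphi\|_a=1\),
\[
|DF(v)[\psi]|\le pS^{-p/2},\qquad
|D^2F(v)[\psi,\varphi]|\le p(p-1)S^{-p/2},\qquad
|Df(v)[v]|=pF^{-1}\le pc .
\]
Inserting these into the formula of Step 2 bounds the constrained Hessian by a constant depending only on \(c\), \(N\) and \(S\).

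The only genuinely delicate point is the \(C^2\) (not merely \(C^1\)) Nemytskii differentiability in Step 1. There I would verify explicitly that \(\|g''(v+h)-g''(v)\|_{L^{p/(p-2)}}\to0\) as \(h\to0\) in \(L^p\). This uses that \(g''\) is continuous with growth \(|s|^{p-2}\); the positive-part kink at \(s=0\) causes no trouble because \(p-2>0\). Everything else is bookkeeping.
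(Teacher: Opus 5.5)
Your proposal is correct and follows essentially the same route as the paper. The paper also treats $G(v)=\int_\Omega(v^+)^{p}\,dx$ as a $C^2$ Nemytskii functional and bounds $DG$, $D^2G$ on $\|v\|_a=1$ by H\"older and Sobolev. It then uses $G\ge c^{-1}$ on the sublevel set to control $Df=-G^{-2}DG$ and $D^2f=2G^{-3}DG\otimes DG-G^{-2}D^2G$, and notes that restriction to the sphere adds only a bounded second-fundamental-form term.

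Your version makes that correction term $-Df(v)[v]\langle\psi,\varphi\rangle_a$ and the constants explicit. One small fix: for the $C^2$ step, apply the first-order Taylor formula to $g'$ rather than the second-order one to $g$. Your continuity estimate for $g''$ into $L^{p/(p-2)}$ already covers this.
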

	
	\begin{proof}
		Set \(p=2^*>2\) and
		\[
		G(v)=\int_\Omega(v^+)^p\,dx.
		\]
		The scalar function \(s\mapsto(s^+)^p\) is \(C^2\), and the corresponding Nemytskii functional on \(L^p(\Omega)\) satisfies
		\[
		DG(v)[h]
		=
		p\int_\Omega(v^+)^{p-1}h\,dx,
		\]
		\[
		D^2G(v)[h,k]
		=
		p(p-1)\int_\Omega(v^+)^{p-2}hk\,dx.
		\]
		H\"older's inequality and the Sobolev embedding give, on \(\|v\|_a=1\),
		\[
		\|DG(v)\|\le C,
		\qquad
		\|D^2G(v)\|\le C.
		\]
		On \(f\le c\) one has \(G(v)\ge c^{-1}\), and hence
		\[
		Df=-G^{-2}DG,
		\qquad
		D^2f
		=
		2G^{-3}DG\otimes DG-G^{-2}D^2G
		\]
		are uniformly bounded.  Restriction to the Hilbert sphere adds only the bounded second fundamental form term.  This proves the assertion.
	\end{proof}
	
	\begin{remark}\label{Rem3.27}
		The preceding lemma is needed in the modified-functional argument below: the map
		\[
		v\longmapsto
		\|\nabla_{\Sigma_a}\mathcal J_{a,+}(v)\|_a^2
		\]
		is \(C^1\) by the \(C^2\)-regularity of \(\mathcal J_{a,+}\) established above.  This regularity supplies the Hessian estimate required below, beyond the \(C^1\) regularity used in an ordinary deformation lemma.  The estimate is uniform at bubbling sequences because the unit \(H^1_0\)-norm and the lower bound \(G(v)\ge c_n^{-1}\) control all terms in the Hessian.
	\end{remark}
	
	\begin{lemma}\label{Lem3.28}
		Assume that the \(L_a\)-Dirichlet problem has no positive solution, and fix \(n\ge1\).  Choose
		\[
		b_{n-1}<c_{n-1}<b_n<c_n<b_{n+1},
		\]
		with \(c_{n-1}\) sufficiently close to \(b_n\). For every sufficiently small \(\varepsilon>0\), there are
		\[
		0<\delta<\varepsilon
		\]
		and a closed set \(\mathcal F_{a,n}\subset W_{a,n}\) such that
		\begin{enumerate}
			\item
			\[
			W_{a,n-1}\subset\mathcal F_{a,n};
			\]
			\item
			\[
			(\mathcal F_{a,n},W_{a,n-1})
			\hookrightarrow
			(W_{a,n},W_{a,n-1})
			\]
			is a strong deformation equivalence of pairs;
			\item
			\[
			\mathcal F_{a,n}\setminus W_{a,n-1}
			\subset\mathcal T_{a,n}^{\le}(\delta);
			\]
			\item the widths can be chosen so that
			\begin{equation}\label{eq3.tube-nesting}
				\overline{\mathcal T_{a,n}^{\le}(\delta)}
				\subset\mathcal T_{a,n}^{\le}(\varepsilon)
			\end{equation}
			in the strong topology of \(\Sigma_{a,+}\).
		\end{enumerate}
		Moreover, there are
		\[
		\eta_n,\ s_{0,n},\ s_{1,n},\ \vartheta_{n,*},\
		\mu_{n,*}>0,
		\]
		depending only on \(n,\Omega,\varepsilon\), such that the same deformation data work for every regular pair of levels satisfying
		\[
		b_n-\vartheta_{n,*}<c_{n-1}<b_n<c_n<b_{n+1}
		\]
		and \(0<s_{0,n}<s_{1,n}\).
	\end{lemma}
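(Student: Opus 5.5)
The plan is the classical deformation argument of Bahri--Coron, carried out with a modified pseudo-gradient flow on the Hilbert sphere $\widetilde\Sigma_a$ and adapted to the positive cone.

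\textbf{Step 1 (gradient bound off the tube).} Fix $\varepsilon$ small, and choose $\delta<\varepsilon$ with \eqref{eq3.tube-nesting}; this is possible by \eqref{eq3.strict-closed-sandwich} applied with an intermediate width. The first claim is a uniform gradient lower bound outside the small tube:
\[
\inf\bigl\{\|\nabla_{\Sigma_a}\mathcal J_{a,+}(v)\|_a:\ c_{n-1}\le\mathcal J_{a,+}(v)\le c_n,\ v\notin\mathcal T_{a,n}^{<}(\delta/2)\bigr\}=:\eta_n>0 .
\]
It is proved by contradiction. A violating sequence is a quotient Palais--Smale sequence in the strip. Corollary~\ref{Cor3.25}, which rests on Proposition~\ref{Prop3.24} and the nonexistence hypothesis, then places its normalized positive part in every tube $\mathcal T_{a,n}(\varepsilon')$ eventually. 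Since $v_k^-\to0$ and normalization is continuous, the sequence itself eventually lies in $\mathcal T_{a,n}^{<}(\delta/2)$, which is a contradiction.

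The same argument shows that the strip $[b_n-\vartheta_{n,*},c_n]$ contains no critical values apart from possible critical points at infinity at $b_n$. For levels $c_{n-1}>b_n-\vartheta_{n,*}$, the constant $\eta_n$ is therefore independent of the particular regular pair; this gives the uniformity statement at the end of the lemma.

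\textbf{Step 2 (the flow).} Take a locally Lipschitz cutoff $\chi$ that equals $1$ on $\{\mathcal J_{a,+}\ge c_{n-1}\}\setminus\mathcal T^{\le}_{a,n}(\delta)$ and equals $0$ on $W_{a,n-1}$ and on $\mathcal T^{<}_{a,n}(\delta/2)$. Flow along $-\chi\,\nabla_{\Sigma_a}\mathcal J_{a,+}/\|\nabla_{\Sigma_a}\mathcal J_{a,+}\|_a^{2}$. This vector field is defined and Lipschitz wherever $\chi\ne0$, by Step 1 and Lemma~\ref{Lem3.26}; the $C^2$ regularity is what makes the normalized gradient Lipschitz, as noted in Remark~\ref{Rem3.27}.

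Along the flow, $\mathcal J_{a,+}$ decreases at unit rate while $\chi=1$. Hence a point either reaches $\{\mathcal J_{a,+}\le c_{n-1}\}=W_{a,n-1}$ within time $c_n-c_{n-1}$, or it enters the tube region where $\chi<1$.

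Positivity is kept by the cone argument. The gradient of $\mathcal J_{a,+}$ involves only $(v^+)^{2^*-1}$, and $L_a^{-1}$ preserves positivity. Consequently the flow does not increase the negative part: a standard estimate, comparing the flow with the projection $v\mapsto v^+/\|v^+\|_a$, keeps trajectories starting in $\Sigma_{a,+}$ inside $\Sigma_{a,+}$, or one composes with this retraction, which leaves sublevels invariant.

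\textbf{Step 3 (the set $\mathcal F_{a,n}$).} Define $\mathcal F_{a,n}$ as $W_{a,n-1}$ together with the closure in $W_{a,n}$ of the trajectory endpoints that remain in $\mathcal T^{\le}_{a,n}(\delta)$. The time-dependent flow, stopped at $W_{a,n-1}$ and frozen on the inner tube, provides the strong deformation retraction of the pair $(W_{a,n},W_{a,n-1})$ onto $(\mathcal F_{a,n},W_{a,n-1})$. This gives items 1--3, and item 4 is the choice of $\delta$ made in Step 1.

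\textbf{Main obstacle.} The delicate point is that trajectories may leave the tube $\mathcal T_{a,n}(\delta)$ and come back, or drift to the tube boundary in the parameter space, where weights become unbalanced or scales degenerate. This must not happen while $\chi$ is switching off.

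I would handle it with the Hessian bound. On the annulus $\mathcal T^{\le}(\delta)\setminus\mathcal T^{<}(\delta/2)$ the gradient is at least $\eta_n$, so crossing the annulus, which has width of order $\delta$, lowers $\mathcal J_{a,+}$ by at least a fixed amount $s_{0,n}\sim\eta_n\delta$. Choosing $\vartheta_{n,*}<s_{0,n}$ then forbids re-entry followed by a second crossing within the strip $c_n-c_{n-1}$, and the constant $s_{1,n}$ bounds the total flow time. I expect the real work to lie in making these constants uniform, using the uniform Hessian bound from Lemma~\ref{Lem3.26} near bubbling sequences.
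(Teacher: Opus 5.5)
Your Step~1 gives the right gradient bound. Item~4 also holds as you argue, and no compactness is needed: for $\delta<\delta'<\varepsilon$, the closure of $\mathcal T_{a,n}^{\le}(\delta)$ lies in $\{v:\inf_{q\in\mathcal P_n^{\le}(\delta)}\|v-\Phi_{a,n}(q)\|_a\le\delta\}\subset\mathcal T_{a,n}^{<}(\delta')$.

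The genuine gap is in Steps~2--3. Nothing in your construction makes $\mathcal F_{a,n}$ invariant under the deformation, and without invariance there is no strong deformation equivalence of pairs. On the annulus $\mathcal T_{a,n}^{\le}(\delta)\setminus\mathcal T_{a,n}^{<}(\delta/2)$ your vector field is nonzero. Points there, and points that have entered it, can leave the tube again before reaching $W_{a,n-1}$. Consequently:
\begin{itemize}
\item the stopped map $v\mapsto\varphi_{\min\{\tau(v),T\}}(v)$ is not the identity on its image;
\item the set of ``endpoints remaining in the tube'' (or its closure) is not carried into itself by the homotopy;
\item at best one gets surjectivity in homology, not the isomorphism, let alone the strong deformation equivalence, that item~2 requires.
\end{itemize}
Your single-crossing estimate does not repair this. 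One exit from the tube already destroys invariance. Moreover, the band $[c_{n-1},c_n]$ may have width close to $b_{n+1}-b_n$, so $\vartheta_{n,*}<s_{0,n}\sim\eta_n\delta$ does not even limit the number of crossings. Separately, the cutoff as you describe it cannot exist: it would have to equal both $1$ and $0$ on $\{\mathcal J_{a,+}=c_{n-1}\}\setminus\mathcal T_{a,n}^{\le}(\delta)$.

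The missing idea is a Lyapunov function whose sublevel set \emph{is} the core. This is what the constants $s_{0,n}<s_{1,n}$, $\vartheta_{n,*}$ and $\mu_{n,*}$ in the statement encode; they are not flow-time bounds. The paper's construction is as follows.
\begin{itemize}
\item It flows along the plain gradient $-\nabla_{\Sigma_a}f$, with $f=\mathcal J_{a,+}$.
\item It sets $\mathscr G=\|\nabla_{\Sigma_a}f\|_a^2$ and $\mathscr F_n=f-\chi_{n,\mathrm{cut}}(f)\,\zeta(\mathscr G)$. Here $\chi_{n,\mathrm{cut}}$ is supported in the fixed window $|f-b_n|<2\eta_n$, and $\zeta=2\vartheta_{n,*}$ on $[0,s_{0,n}]$, $\zeta=0$ on $[s_{1,n},\infty)$, with $s_{1,n}<\gamma_{\delta,n}^2$.
\item It defines $\mathcal F_{a,n}=\{v\in W_{a,n}:\mathscr F_n(v)\le c_{n-1}\}$.
\end{itemize}
The Hessian bound of Lemma~\ref{Lem3.26} is needed exactly here. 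The bound $|d\mathscr G[\nabla_{\Sigma_a}f]|\le 2C_n\mathscr G$ together with $\|\zeta'\|_\infty\le(8C_n)^{-1}$ gives $\frac{d}{dt}\mathscr F_n\le-\frac12\mathscr G$, so $\mathcal F_{a,n}$ is forward invariant. Item~3 then follows from your Step~1 bound, since $\mathscr F_n<f$ forces $\mathscr G<s_{1,n}$ and $|f-b_n|<2\eta_n$. A quantization argument gives the transversality $\mathscr G\ge\mu_{n,*}$ on $\{\mathscr F_n=c_{n-1}\}$, hence a continuous first hitting time $\tau_n$. Then $\varphi_{s\tau_n(v)}(v)$ is a strong deformation retraction fixing $\mathcal F_{a,n}$ pointwise.

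Your uniformity claim also needs this localization. Your $\eta_n$ is an infimum over the whole strip $[c_{n-1},c_n]$. The contradiction argument that produces it uses $c_n<b_{n+1}$ to exclude the quantized limit $b_{n+1}$, so it is not uniform as $c_n\uparrow b_{n+1}$. The paper uses the gradient bound only in the fixed window around $b_n$, which is what makes the data level-independent.
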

	
	\begin{proof}
		We first isolate the only quantized level relevant to the \(n\)-th core. Choose
		\[
		0<\eta_n<
		\frac14\min\{b_n-b_{n-1},\,b_{n+1}-b_n\}.
		\]
		For every sufficiently small \(\delta>0\), there is \(\gamma_{\delta,n}>0\), independent of the exact levels, such that
		\begin{equation}\label{eq3.balanced-gradient-bound}
			\|\nabla_{\Sigma_a}\mathcal J_a(v)\|_a
			\ge\gamma_{\delta,n}
			\quad\hbox{if}\quad
			|\,\mathcal J_a(v)-b_n\,|\le2\eta_n,\qquad
			v\notin\mathcal T_{a,n}(\delta).
		\end{equation}
		Otherwise there would be a sequence in the \(n\)-th strip with constrained gradient converging to zero, with quotient in the fixed window \([b_n-2\eta_n,b_n+2\eta_n]\), but remaining outside the balanced tube. Proposition~\ref{Prop3.24} quantizes its limit; the choice of \(\eta_n\) forces that limit to be \(b_n\), and Corollary~\ref{Cor3.25} gives a contradiction.
		
		Write \(f=\mathcal J_{a,+}\) on \(\widetilde\Sigma_a\) and set
		\[
		\mathscr G(v)=\|\nabla_{\Sigma_a}f(v)\|_a^2.
		\]
		By Lemma~\ref{Lem3.26}, there is \(C_n>0\) such that
		\begin{equation}\label{eq3.G-bound}
			|d\mathscr G(v)[\nabla_{\Sigma_a}f(v)]|
			\le2C_n\mathscr G(v)
			\qquad(f(v)\le b_{n+1}).
		\end{equation}
		Choose
		\[
		0<s_{0,n}<s_{1,n}<\gamma_{\delta,n}^2.
		\]
		Let \(\chi_{n,\mathrm{cut}}\in C^1(\mathbb R,[0,1])\) satisfy
		\[
		\chi_{n,\mathrm{cut}}=1
		\quad\hbox{on }[b_n-\eta_n,b_n+\eta_n],
		\qquad
		\chi_{n,\mathrm{cut}}=0\quad\hbox{outside }
		(b_n-2\eta_n,b_n+2\eta_n).
		\]
		Choose, before fixing \(c_{n-1}\) and \(c_n\),
		\begin{equation}\label{eq3.tau-choice}
			0<\vartheta_{n,*}<
			\min\left\{
			\frac{\eta_n}{4},\
			\frac{b_n-b_{n-1}}4,\
			\frac{s_{1,n}-s_{0,n}}{32C_n},\
			\frac1{8(1+\|\chi_{n,\mathrm{cut}}'\|_\infty)}
			\right\}.
		\end{equation}
		We henceforth require
		\[
		b_n-\vartheta_{n,*}<c_{n-1}<b_n;
		\]
		the upper level remains any regular value \(b_n<c_n<b_{n+1}\). Let \(\zeta\in C^1([0,\infty),[0,2\vartheta_{n,*}])\) satisfy
		\[
		\zeta=2\vartheta_{n,*}\quad\hbox{on }[0,s_{0,n}],
		\qquad
		\zeta=0\quad\hbox{on }[s_{1,n},\infty),
		\qquad
		\|\zeta'\|_\infty\le\frac1{8C_n}.
		\]
		Such a cutoff exists by the third bound in \eqref{eq3.tau-choice}. Define the modified functional
		\[
		\mathscr F_n(v)
		=
		f(v)-\chi_{n,\mathrm{cut}}(f(v))\zeta(\mathscr G(v))
		\]
		and the core
		\[
		\mathcal F_{a,n}
		=
		\{v\in W_{a,n}:\mathscr F_n(v)\le c_{n-1}\}.
		\]
		It is closed and contains \(W_{a,n-1}\).  If
		\[
		v\in\mathcal F_{a,n}\setminus W_{a,n-1},
		\]
		then \(\chi_{n,\mathrm{cut}}(f(v))\zeta(\mathscr G(v))>0\).  Hence \(|f(v)-b_n|<2\eta_n\) and \(\mathscr G(v)<s_{1,n}<\gamma_{\delta,n}^2\). The gradient bound \eqref{eq3.balanced-gradient-bound} implies
		\[
		v\in\mathcal T_{a,n}^{\le}(\delta).
		\]
		
		Let
		\[
		X(v)=-\nabla_{\Sigma_a}f(v)
		\]
		and let \(\varphi_t\) be its flow.  Lemma~\ref{Lem3.26} makes \(X\) locally Lipschitz on \(\widetilde\Sigma_a\), and \(X(v)\in T_v\Sigma_a\).  The first-derivative formula in the proof of that lemma gives
		\[
		\sup_{\{f\le c_n\}}\|X(v)\|_a<\infty.
		\]
		Thus the vector field is bounded on the invariant sublevel.  It is defined for all positive time on \(W_{a,n}\): the quotient decreases, the Hilbert sphere is complete, and
		\[
		f\le c_n
		\quad\to\quad
		\int_\Omega(v^+)^{2^*}\,dx\ge c_n^{-1},
		\]
		so the positive lower bound keeps the flow away from the boundary of \(\widetilde\Sigma_a\).  Hence the flow remains in \(\widetilde\Sigma_a\).  The positive cone is invariant.  Indeed, the constrained gradient is
		\[
		-\nabla_{\Sigma_a}f(v)
		=
		pG(v)^{-2}L_a^{-1}((v^+)^{p-1})
		-
		pG(v)^{-1}v,
		\qquad p=2^*.
		\]
		Since \(L_a^{-1}\) is positivity-preserving, multiplication of the flow equation by the integrating factor \(\exp(\int_0^t pG(\varphi_s(v))^{-1}\,ds)\) shows directly that a nonnegative initial datum remains nonnegative. In particular, every quotient sublevel contained in \(W_{a,n}\), including \(W_{a,n-1}\), is forward invariant.
		
		Along this flow, \eqref{eq3.G-bound} gives
		\[
		\begin{split}
			\frac{d}{dt}\mathscr F_n(\varphi_t(v))
			&=
			-\mathscr G
			+\chi_{n,\mathrm{cut}}'(f)\zeta(\mathscr G)\mathscr G
			+\chi_{n,\mathrm{cut}}(f)\zeta'(\mathscr G)
			d\mathscr G[\nabla_{\Sigma_a}f]
			\\
			&\le-\frac12\mathscr G.
		\end{split}
		\]
		Here all quantities on the middle line are evaluated at \(\varphi_t(v)\); \eqref{eq3.tau-choice}, the bound on \(\zeta'\), and \eqref{eq3.G-bound} give the last inequality.
		
		There is a level-independent transversality constant
		\begin{equation}\label{eq3.uniform-transversality}
			\mathscr G(v)\ge\mu_{n,*}
			\quad\hbox{whenever}\quad
			v\in W_{a,n},\qquad \mathscr F_n(v)=c_{n-1}.
		\end{equation}
		Indeed, if this failed for a sequence of admissible regular levels and boundary points \(v_k\), Proposition~\ref{Prop3.24} would give, after a subsequence,
		\[
		f(v_k)\to b_r
		\]
		for an integer \(r\).  The boundary identity and \(0\le\chi_{n,\mathrm{cut}}\zeta\le2\vartheta_{n,*}\) exclude \(r\ge n+1\). If \(r\le n-1\), then
		\[
		\mathscr F_n(v_k)\to b_r
		\le b_{n-1}
		<b_n-\vartheta_{n,*}<c_{n-1},
		\]
		whereas if \(r=n\), the cutoffs are fully active and
		\[
		\mathscr F_n(v_k)\to b_n-2\vartheta_{n,*}
		<b_n-\vartheta_{n,*}<c_{n-1}.
		\]
		Both alternatives contradict the boundary identity.  This proves \eqref{eq3.uniform-transversality}, uniformly over the stated level window.
		
		Every trajectory from \(W_{a,n}\) reaches \(\mathcal F_{a,n}\) in finite time. Otherwise, since \(\mathscr F_n\) is decreasing and bounded below, one could choose \(t_k\to\infty\) with
		\[
		\mathscr G(\varphi_{t_k}(v))\to0.
		\]
		The quotient levels remain in \((c_{n-1},c_n]\), so Corollary~\ref{Cor3.25} gives
		\[
		f(\varphi_{t_k}(v))\to b_n.
		\]
		But then
		\[
		\mathscr F_n(\varphi_{t_k}(v))
		\to b_n-2\vartheta_{n,*}<c_{n-1},
		\]
		a contradiction.
		
		For completeness, define the first hitting time on all of \(W_{a,n}\) by
		\[
		\tau_n(v)
		=
		\inf\{t\ge0:\mathscr F_n(\varphi_t(v))\le c_{n-1}\},
		\qquad
		\tau_n(v)=0\quad(v\in\mathcal F_{a,n}).
		\]
		It is finite by the preceding paragraph.  It is continuous, including on the target.  At an exterior point this follows from continuous dependence of the locally Lipschitz flow and the strict crossing.  At a boundary point, \eqref{eq3.uniform-transversality} and the preceding differential inequality give, in a neighbourhood of that point,
		\[
		\tau_n(w)
		\le
		\frac{4}{\mu_{n,*}}
		\bigl(\mathscr F_n(w)-c_{n-1}\bigr)^+,
		\]
		after shrinking the neighbourhood; the right-hand side tends to zero. At an interior target point the hitting time is identically zero nearby. Consequently
		\begin{equation}\label{eq3.stopped-deformation}
			H_n:[0,1]\times W_{a,n}\to W_{a,n},
			\qquad
			H_n(s,v)=\varphi_{s\tau_n(v)}(v),
		\end{equation}
		is continuous, \(H_n(0,\cdot)=\operatorname{id}\), \(H_n(1,W_{a,n})\subset\mathcal F_{a,n}\), and \(H_n(s,v)=v\) for \(v\in\mathcal F_{a,n}\).  Since every quotient sublevel is flow invariant and \(W_{a,n-1}\subset\mathcal F_{a,n}\), the same formula is a map of pairs for every \(s\).  The semigroup property and the definition of first hitting give
		\[
		\tau_n(\varphi_t(v))=\tau_n(v)-t
		\qquad(0\le t\le\tau_n(v)),
		\]
		so \(H_n(1,H_n(s,v))=H_n(1,v)\) and the following map-of-pairs diagram commutes:
		\[
		\begin{array}{ccc}
			(W_{a,n},W_{a,n-1})
			&\xrightarrow{\ H_n(s,\cdot)\ }&
			(W_{a,n},W_{a,n-1})\\[0.3em]
			{\scriptstyle H_n(1,\cdot)}\downarrow\phantom{\scriptstyle H_n(1,\cdot)}
			&&
			\phantom{\scriptstyle H_n(1,\cdot)}\downarrow{\scriptstyle H_n(1,\cdot)}
			\\[0.3em]
			(\mathcal F_{a,n},W_{a,n-1})
			&\xrightarrow{\ \operatorname{id}\ }&
			(\mathcal F_{a,n},W_{a,n-1}).
		\end{array}
		\]
		This proves the asserted strong deformation equivalence of pairs, with identity on every relative subspace already contained in the target. This is the \(L_a\)-analogue of the modified-functional construction in \cite[Proposition~6, pp.~256-258]{BahriCoron1988}.
		
		For later use, the scale-adapted cutoff and \(L_a\)-Riesz estimates give constants \(\delta_0,c_0,C_0>0\), depending only on \(n\) and \(\Omega\), such that
		\[
		c_0\le\|P_a\delta_{\lambda_i,x_i}\|_a\le C_0
		\]
		for every tuple in \(\mathcal P_n^{\le}(\delta)\) with \(0<\delta\le\delta_0\).  Indeed, the upper bound follows by testing the \(L_a\)-projection equation and using \(0\le P_a\delta_{\lambda,x}\le\delta_{\lambda,x}\).  For the lower bound, put \(d_x=\operatorname{dist}(x,\partial\Omega)\) and choose \(\chi_x\in C_c^\infty(B(x,3d_x/4))\) such that \(\chi_x=1\) on \(B(x,d_x/2)\) and \(|\nabla\chi_x|\le C/d_x\).  With \(\phi_{\lambda,x}=\chi_x\delta_{\lambda,x}\), the projection equation gives
		\[
		\|P_a\delta_{\lambda,x}\|_a
		\ge
		\frac{\langle P_a\delta_{\lambda,x},\phi_{\lambda,x}\rangle_a}
		{\|\phi_{\lambda,x}\|_a}
		=
		\frac{\sigma_N\int_\Omega
			\chi_x\delta_{\lambda,x}^{2^*}\,dz}
		{\|\phi_{\lambda,x}\|_a}.
		\]
		The Talenti tail and the cutoff bound yield, uniformly in \(x\),
		\[
		\sigma_N\int_\Omega
		\chi_x\delta_{\lambda,x}^{2^*}\,dz
		\ge 1-C(\lambda d_x)^{-N},
		\qquad
		\|\phi_{\lambda,x}\|_a^2
		\le C_\Omega\bigl(1+(\lambda d_x)^{-(N-2)}\bigr).
		\]
		Every tuple in \(\mathcal P_n^{\le}(\delta)\) satisfies \(\lambda d_x\ge\delta^{-1}\).  Decreasing \(\delta_0\) makes the numerator at least \(1/2\), and the two displayed inequalities give a uniform constant \(c_0>0\). Set
		\[
		c_{\rm prof}=\frac{(1-\delta_0)c_0}{nC_0}>0.
		\]
		Finally, fix the outer width \(\varepsilon\), and choose
		\[
		0<\delta<\min\{\varepsilon/4,\delta_0,c_{\rm prof}/2\}
		\]
		small enough for the modified-functional construction above, with its associated constants chosen accordingly.  We verify the required sequential closure statement directly for the weak inner parameter inequalities.  Suppose
		\[
		v_k\in\mathcal T_{a,n}^{\le}(\delta),
		\qquad
		v_k\to v
		\quad\hbox{strongly in }H^1_0(\Omega),
		\]
		and choose an inner-tube parameter tuple for each \(v_k\).  For fixed \(n\) and sufficiently small \(\delta\), we show directly that, after permutation and passage to a subsequence, these tuples are precompact.  If they were not, then, because \(\Omega\) is bounded, the weights remain in a compact positive simplex, and \(\lambda_i\ge\delta^{-1}\), some scale would satisfy \(\lambda_{i,k}\to\infty\).  Put
		\[
		B_{i,k}
		=\frac{P_a\delta_{\lambda_{i,k},x_{i,k}}}
		{\|P_a\delta_{\lambda_{i,k},x_{i,k}}\|_a}.
		\]
		The scale-boundary bound implies \(B_{i,k}\rightharpoonup0\) in \(H^1_0(\Omega)\).  To see this directly, for every smooth \(\phi\),
		\[
		\langle P_a\delta_{\lambda_{i,k},x_{i,k}},\phi\rangle_a
		=\sigma_N\int_\Omega
		\delta_{\lambda_{i,k},x_{i,k}}^{2^*-1}\phi
		=O(\lambda_{i,k}^{-(N-2)/2})\|\phi\|_\infty,
		\]
		and density, together with the uniform norm bounds, gives the claim. Positivity, balanced weights, and the preceding diagonal norm bounds give, on writing \(Q_{j,k}=P_a\delta_{\lambda_{j,k},x_{j,k}}\),
		\[
		\begin{aligned}
			\bigl\langle\Phi_{a,n}(q_k),B_{i,k}\bigr\rangle_a
			&=
			\frac{\sum_j\alpha_{j,k}\langle Q_{j,k},B_{i,k}\rangle_a}
			{\|\sum_j\alpha_{j,k}Q_{j,k}\|_a}\\
			&\ge
			\frac{\alpha_{i,k}\|Q_{i,k}\|_a}
			{\sum_j\alpha_{j,k}\|Q_{j,k}\|_a}
			\ge c_{\rm prof}.
		\end{aligned}
		\]
		Here \(\langle Q_{j,k},Q_{i,k}\rangle_a\ge0\) follows from the positive Green representation of \(L_a^{-1}\). Since \(\|v_k-\Phi_{a,n}(q_k)\|_a<\delta\),
		\[
		\langle v_k,B_{i,k}\rangle_a\ge c_{\rm prof}-\delta.
		\]
		On the other hand, strong convergence of \(v_k\) and weak convergence of \(B_{i,k}\) make the left-hand side tend to zero, a contradiction.  Hence all scales are bounded above.  The scale-boundary inequality then keeps all centers in a fixed compact subset of \(\Omega\), and the remaining parameters are precompact modulo permutation.  At a limiting tuple, the inner weak inequalities remain
		\[
		|n\alpha_i-1|\le\delta,\qquad
		\lambda_i\ge\delta^{-1},\qquad
		\lambda_i\operatorname{dist}(x_i,\partial\Omega)\ge\delta^{-1},
		\qquad
		\varepsilon_{ij}\le\delta,
		\]
		with \(x_i\in\Omega\), positive coefficients summing to one.  The limiting residual has norm at most \(\delta\).  Since \(\delta<\varepsilon\), this limiting tuple belongs to \(\mathcal P_n^{<}(\varepsilon)\), and its residual is strictly smaller than \(\varepsilon\).  Hence, also using \eqref{eq3.strict-closed-sandwich},
		\[
		v\in\mathcal T_{a,n}^{<}(\varepsilon)
		\subset\mathcal T_{a,n}^{\le}(\varepsilon),
		\]
		which proves \eqref{eq3.tube-nesting} using only the preceding localized compactness.
	\end{proof}
	
	\begin{proposition}
		\label{Prop3.29}
		Assume that the \(L_a\)-Dirichlet problem has no positive solution and fix \(n\ge1\).  Let \(\mathcal F_{a,n}\) be the modified-functional core from Lemma~\ref{Lem3.28}.  For every sufficiently small \(\varepsilon>0\), the inner width \(\delta\) in that lemma has the following property.  If \(\mathcal O_{a,n}\subset\Sigma_{a,+}\) is any open set satisfying
		\[
		\overline{\mathcal T_{a,n}^{\le}(\delta)}
		\subset\mathcal O_{a,n},
		\]
		then the inclusion
		\begin{equation}\label{eq3.correct-tube-inclusion}
			\begin{split}
				\iota_n:\bigl(
				&\mathcal F_{a,n}\cap\mathcal O_{a,n},\\
				&W_{a,n-1}\cap\mathcal O_{a,n}
				\bigr)
				\hookrightarrow
				(W_{a,n},W_{a,n-1})
			\end{split}
		\end{equation}
		induces an isomorphism in homology with \(\mathbb F_2\)-coefficients. In particular one may take \(\mathcal O_{a,n}=\mathcal T_{a,n}^{\le}(\varepsilon)\).
	\end{proposition}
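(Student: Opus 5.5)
The plan is to factor \(\iota_n\) through the core pair and then apply excision:
\[
(\mathcal F_{a,n}\cap\mathcal O_{a,n},\,W_{a,n-1}\cap\mathcal O_{a,n})
\hookrightarrow
(\mathcal F_{a,n},\,W_{a,n-1})
\hookrightarrow
(W_{a,n},\,W_{a,n-1}).
\]
By Lemma~\ref{Lem3.28}(2), the second inclusion is a strong deformation equivalence of pairs. It therefore induces an isomorphism on \(\mathbb F_2\)-homology, and it suffices to show that the first inclusion does too.

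For the first inclusion we use excision. Set
\[
Z=\mathcal F_{a,n}\setminus\mathcal O_{a,n}.
\]
The first inclusion is exactly the excision of \(Z\) from the pair \((\mathcal F_{a,n},W_{a,n-1})\), because
\[
\mathcal F_{a,n}\setminus Z=\mathcal F_{a,n}\cap\mathcal O_{a,n},
\qquad
W_{a,n-1}\setminus Z=W_{a,n-1}\cap\mathcal O_{a,n}.
\]
Singular excision needs \(\overline Z\subset\operatorname{int}W_{a,n-1}\), with closure and interior taken relative to \(\mathcal F_{a,n}\). We check this in two steps.

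\emph{Step 1: \(Z\subset W_{a,n-1}\).} By Lemma~\ref{Lem3.28}(3),
\[
\mathcal F_{a,n}\setminus W_{a,n-1}\subset\mathcal T_{a,n}^{\le}(\delta)\subset\mathcal O_{a,n},
\]
so any point of \(\mathcal F_{a,n}\) outside \(\mathcal O_{a,n}\) lies in \(W_{a,n-1}\). Moreover \(Z\) is relatively closed in \(\mathcal F_{a,n}\), since \(\mathcal O_{a,n}\) is open. Hence \(\overline Z=Z\).

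\emph{Step 2: every point of \(Z\) is interior to \(W_{a,n-1}\) relative to \(\mathcal F_{a,n}\).} Fix \(z\in Z\); this is the main obstacle. The key observation is that \(z\) lies outside \(\overline{\mathcal T_{a,n}^{\le}(\delta)}\), because that closure is contained in \(\mathcal O_{a,n}\) while \(z\notin\mathcal O_{a,n}\). Hence there is a strong neighbourhood \(B\) of \(z\) that misses \(\mathcal T_{a,n}^{\le}(\delta)\). Applying Lemma~\ref{Lem3.28}(3) again,
\[
B\cap\mathcal F_{a,n}\subset W_{a,n-1}.
\]
So \(z\) is a relative interior point of \(W_{a,n-1}\). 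This is exactly where the hypothesis is used in its stated form: a closure of the inner tube inside \(\mathcal O_{a,n}\), rather than mere containment of the tube, is what produces the neighbourhood \(B\).

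With Steps~1 and~2, excision gives
\[
H_*(\mathcal F_{a,n}\cap\mathcal O_{a,n},\,W_{a,n-1}\cap\mathcal O_{a,n};\mathbb F_2)
\cong
H_*(\mathcal F_{a,n},\,W_{a,n-1};\mathbb F_2).
\]
Composing with the isomorphism from the deformation equivalence shows that \(\iota_n\) induces an isomorphism in \(\mathbb F_2\)-homology. The final claim, that one may take \(\mathcal O_{a,n}=\mathcal T_{a,n}^{\le}(\varepsilon)\), follows from the nesting \eqref{eq3.tube-nesting} together with the fact that the tubes are open in \(\Sigma_{a,+}\).
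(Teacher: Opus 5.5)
Your proposal is correct and follows essentially the same route as the paper. Both arguments show that \(\mathcal F_{a,n}\setminus\mathcal O_{a,n}\) is a relatively closed subset of \(W_{a,n-1}\) via Lemma~\ref{Lem3.28}(3), and place it in the relative interior of \(W_{a,n-1}\) using the open set \(\mathcal F_{a,n}\setminus\overline{\mathcal T_{a,n}^{\le}(\delta)}\); then both excise and compose with the strong deformation equivalence of Lemma~\ref{Lem3.28}(2).
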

	
	\begin{proof}
		Choose \(\delta<\varepsilon\) as in Lemma~\ref{Lem3.28}, and put
		\[
		E=\mathcal F_{a,n}
		\setminus\mathcal O_{a,n}.
		\]
		Since
		\[
		\mathcal F_{a,n}\setminus W_{a,n-1}
		\subset\mathcal T_{a,n}^{\le}(\delta)
		\subset\mathcal O_{a,n},
		\]
		one has \(E\subset W_{a,n-1}\).  Moreover, \(\mathcal O_{a,n}\) is open in the strong topology, so \(E\) is closed in \(\mathcal F_{a,n}\).  By the closure nesting,
		\[
		\mathcal F_{a,n}\setminus
		\overline{\mathcal T_{a,n}^{\le}(\delta)}
		\]
		is an open neighbourhood of \(E\) in \(\mathcal F_{a,n}\), and it is contained in \(W_{a,n-1}\).  Therefore
		\[
		\overline E^{\,\mathcal F_{a,n}}=E
		\subset
		\operatorname{int}_{\mathcal F_{a,n}}(W_{a,n-1}).
		\]
		Excision gives
		\[
		\begin{split}
			H_*\bigl(
			\mathcal F_{a,n}\cap\mathcal O_{a,n},
			W_{a,n-1}\cap\mathcal O_{a,n}
			\bigr)
			\cong
			H_*(\mathcal F_{a,n},W_{a,n-1}).
		\end{split}
		\]
		The strong deformation in Lemma~\ref{Lem3.28} gives
		\[
		H_*(\mathcal F_{a,n},W_{a,n-1})
		\cong
		H_*(W_{a,n},W_{a,n-1}).
		\]
		The composition is precisely the homomorphism induced by \eqref{eq3.correct-tube-inclusion}.
	\end{proof}
	
	\section{Barycenter test maps and energy estimates}\label{sec:test}

	For a compact metric space \(K\), let
	\[
	B_n(K)=\left\{\sum_{i=1}^{n}t_i\delta_{x_i}:x_i\in K,
	\ t_i\ge0,
	\sum_{i=1}^{n}t_i=1\right\}.
	\]
	The subset \(B_{n-1}(K)\subset B_n(K)\) consists of barycenters with support of cardinality at most \(n-1\).  We use the convention
	\[
	B_0(K)=\varnothing.
	\]
	Throughout this section \(K\Subset\Omega\) is a fixed compact set contained in the interior of a smooth compact core of \(\Omega\).  In Section~\ref{sec:topology-module} it will be chosen to contain the image of a closed manifold which realizes the prescribed homology class.
	
	Let \(\delta_{\lambda,x}\) be the normalized Talenti bubble introduced in \eqref{eq2.9}. Thus
	\[
	\int_{\mathbb R^N}|\nabla\delta_{\lambda,x}|^2\,dx=1,
	\qquad
	-\Delta\delta_{\lambda,x}=\sigma_N\delta_{\lambda,x}^{2^*-1},
	\qquad
	\sigma_N=\left(\int_{\mathbb R^N}\delta_{1,0}^{2^*}\,dx\right)^{-1}.
	\]
	Set
	\[
	p=2^*=\frac{2N}{N-2}.
	\]
	For \(\mu=\sum_i t_i\delta_{x_i}\in B_n(K)\) we put
	\begin{equation*}
		S_{a,\lambda,n}(\mu)=
		\sum_{i=1}^{n}t_iP_a\delta_{\lambda,x_i},
	\end{equation*}
	and define the quotient test map
	\begin{equation*}
		g_{a,\lambda,n}(\mu)=
		\frac{S_{a,\lambda,n}(\mu)}{\|S_{a,\lambda,n}(\mu)\|_a}
		\in\Sigma_{a,+}.
	\end{equation*}
	The map is continuous on \(B_n(K)\) and sends \(B_{n-1}(K)\) into the corresponding lower-particle family.  Moreover,
	\begin{equation*}
		\mathcal J_a(g_{a,\lambda,n}(\mu))
		=
		\mathfrak q_a(S_{a,\lambda,n}(\mu)),
	\end{equation*}
	where
	\[
	\mathfrak q_a(v)=
	\frac{\|v\|_a^{\frac{2N}{N-2}}}{\int_\Omega v^{2^*}\,dx}.
	\]
	After a positive solution \(v\) of the Euclidean equation has been obtained, \(u=\phi^{-1}v\) gives the corresponding hyperbolic solution.  Therefore all estimates below are written in the Euclidean conformal variables.

	\subsection{Projected \texorpdfstring{\(L_a\)}{La}-bubble interactions}

	The estimates in this subsection incorporate the positive potential directly into the Hilbert structure.  Let \(G_a\) be the Dirichlet Green kernel of \(L_a=-\Delta+a\) and write
	\[
	L_aG_a(\cdot,y)=\delta_y,
	\qquad
	G_a(\cdot,y)=0\quad\hbox{on }\partial\Omega.
	\]
	Since \(a\ge0\) and \(K\Subset\Omega\), the Green kernel satisfies, by \eqref{eq2.7},
	\begin{equation*}
		0<g_K\le G_a(x,y)\quad \text{for }x,y\in K,\ x\ne y,
	\end{equation*}
	with the usual convention that \(G_a(x,y)\to+\infty\) as \(x\to y\).
	
	\begin{lemma}\label{Lem4.1}
		Let \(N\ge3\) and \(K\Subset\Omega\).  Uniformly for \(x\in K\) and \(\lambda\ge2\),
		\begin{equation}\label{eq4.1}
			\|P_a\delta_{\lambda,x}\|_a^2
			=1+O(\varpi_N(\lambda)),
			\qquad
			\int_\Omega (P_a\delta_{\lambda,x})^{2^*}\,dx
			=\sigma_N^{-1}+O(\varpi_N(\lambda)).
		\end{equation}
	\end{lemma}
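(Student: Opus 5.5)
We reduce both identities in \eqref{eq4.1} to the ordinary Dirichlet projection $P\delta_{\lambda,x}=S^{-N/4}PU_{\lambda,x}$. There Lemma~\ref{Lem2.6} and Lemma~\ref{Lem2.7} already give the expansions. The correction $R_{\lambda,x}=P_a\delta_{\lambda,x}-P\delta_{\lambda,x}$ is then controlled by the equation $L_aR_{\lambda,x}=-aP\delta_{\lambda,x}$.

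\textbf{Norm identity.} Testing \eqref{eq2.9} with $P_a\delta_{\lambda,x}$ gives
\[
\|P_a\delta_{\lambda,x}\|_a^2=\sigma_N\int_\Omega\delta_{\lambda,x}^{2^*-1}P_a\delta_{\lambda,x}.
\]
We split $P_a\delta=\delta-(\delta-P\delta)+R$ inside this integral.

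First, $\sigma_N\int_{\mathbb R^N}\delta^{2^*}=1$, and the part over $\mathbb R^N\setminus\Omega$ is $O(\lambda^{-N})$ uniformly on $K$.

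Second, by the maximum-principle bound $0\le\delta-P\delta\le C\lambda^{-(N-2)/2}$ from Lemma~\ref{Lem2.6}, together with $\int\delta^{2^*-1}=O(\lambda^{-(N-2)/2})$, the term involving $\delta-P\delta$ is $O(\lambda^{-(N-2)})$. This is $\le C\varpi_N(\lambda)$ for $N=3,4$. For $N\ge5$ it is $O(\lambda^{-3})$ or smaller, hence also $O(\lambda^{-2})$.

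Third, for the $R$ term we use symmetry:
\[
\sigma_N\int\delta^{2^*-1}R=\langle P\delta,R\rangle_{\nabla}=-\int a\,(P\delta)\,P_a\delta .
\]
Here we test $-\Delta P\delta=\sigma_N\delta^{2^*-1}$ against $R$, and then use $L_aR=-aP\delta$ tested against $P_a\delta$. Since $0\le P_a\delta\le P\delta\le\delta$ by the Green comparison $G_a\le G_\Omega$, this term is bounded by $\|a\|_\infty\int_\Omega\delta_{\lambda,x}^2$. That integral is exactly of order $\varpi_N(\lambda)$: it is $\lambda^{-1}$, $\lambda^{-2}\log\lambda$, or $\lambda^{-2}$, by the same scaling computation as in Lemma~\ref{Lem2.7} (with uniform $O$ rather than the sharp constant). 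This proves the first identity.

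\textbf{$L^{2^*}$ identity.} Write $P_a\delta=\delta-h$ with $0\le h\le\delta$. The pointwise inequality $|(\delta-h)^{p}-\delta^{p}+p\delta^{p-1}h|\le Ch^2\delta^{p-2}$ and the first-order term reduce the claim to bounding $\int\delta^{2^*-1}h$ and $\int\delta^{2^*-2}h^2\le\int\delta^{2^*-1}h$. But
\[
\sigma_N\int\delta^{2^*-1}h=\sigma_N\int\delta^{2^*}-\|P_a\delta\|_a^2+O(\lambda^{-N}),
\]
which we have just shown is $O(\varpi_N(\lambda))$. Hence $\int_\Omega(P_a\delta)^{2^*}=\sigma_N^{-1}+O(\varpi_N(\lambda))$, using $\sigma_N\int_{\mathbb R^N}\delta^{2^*}=1$ once more.

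\textbf{Main obstacle.} The delicate step is the third term of the norm expansion: checking that the potential correction is exactly of size $\varpi_N$ in every dimension, uniformly for $x\in K$. It is tempting to use the cruder $H^{-1}$ estimate $\|R\|_a\le C\|aP\delta\|_{H^{-1}}$, but this is too weak in $N=3$. There $\|\delta\|_{L^{6/5}}\sim\lambda^{-1/2}$, so the crude bound only gives $\lambda^{-1/2}$ for a linear term. The symmetric identity above avoids this loss, because it turns the linear $R$-contribution into the quadratic quantity $\int a\,P\delta\,P_a\delta\lesssim\int\delta^2$. That quantity sits exactly at the $\varpi_N$ scale in every dimension $N\ge3$.
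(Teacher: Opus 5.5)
Your proof is correct, but it is organized differently from the paper's.

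The paper treats the full defect $\delta_{\lambda,x}-P_a\delta_{\lambda,x}$ in one step. It writes the defect through the Green representation as $\sigma_N\int_\Omega(\Gamma-G_a)\,\delta_{\lambda,x}^{2^*-1}$ plus an exterior-source term. It then bounds the resulting double integral of $|\Gamma-G_a|$ against $\delta_{\lambda,x}^{2^*-1}\otimes\delta_{\lambda,x}^{2^*-1}$ by the kernel comparison \eqref{eq2.6}, $|G_a-\Gamma|\le C_K\Psi_N$, after rescaling. The three rates in $\varpi_N$ come from the three forms of $\Psi_N$.

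You instead pass through the ordinary projection and split the defect into two pieces. The Robin piece $\delta-P\delta$ is controlled by the maximum principle at order $\lambda^{-(N-2)}$. The potential piece is controlled by the exact sign-definite identity $\sigma_N\int_\Omega\delta^{2^*-1}(P_a\delta-P\delta)=-\int_\Omega a\,P\delta\,P_a\delta$, which bounds it by $\|a\|_\infty\int_\Omega\delta_{\lambda,x}^2$.

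Your route avoids \eqref{eq2.6} and its convolution estimate altogether. It uses only $0\le P_a\delta\le P\delta\le\delta$ and the $L^2$-scaling of the bubble. It also makes transparent that the $\varpi_N$ scale is exactly the potential energy of Lemma~\ref{Lem2.7}. The paper's kernel-level representation of $\delta-P_a\delta$, on the other hand, is the form it reuses for the sharp $R_a$-expansion in Lemma~\ref{Lem5.2}.

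The $L^{2^*}$ step is essentially the same in both: your second-order Taylor bound versus the paper's $0\le s^p-t^p\le p\,s^{p-1}(s-t)$.

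Two small points. First, the two testings you describe produce the pairings $\langle P\delta,R\rangle_\nabla$ and $\langle R,P_a\delta\rangle_a$. These coincide because both equal $\sigma_N\int\delta^{2^*-1}R$; more directly, test $-\Delta R=-aP_a\delta$ against $P\delta$. Second, the Cauchy--Schwarz route through Lemma~\ref{Lem4.2} is too weak not only for $N=3$ but for every $3\le N\le6$, since there $\eta_N(\lambda)\gg\varpi_N(\lambda)$.
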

	
	\begin{proof}
		For the diagonal terms, compare \(P_a\delta_{\lambda,x}\) with the whole space bubble through the Green representation.  Since
		\[
		\delta_{\lambda,x}(z)
		=
		\sigma_N\int_{\mathbb R^N}\Gamma(z-\zeta)\delta_{\lambda,x}(\zeta)^{2^*-1}\,d\zeta,
		\]
		we have
		\[
		\delta_{\lambda,x}(z)-P_a\delta_{\lambda,x}(z)
		=
		\sigma_N\int_\Omega \bigl(\Gamma(z-\zeta)-G_a(z,\zeta)\bigr)
		\delta_{\lambda,x}(\zeta)^{2^*-1}\,d\zeta
		+
		\sigma_N\int_{\mathbb R^N\setminus\Omega}\Gamma(z-\zeta)
		\delta_{\lambda,x}(\zeta)^{2^*-1}\,d\zeta.
		\]
		For \(x\in K\), the boundary term is \(O(\lambda^{-(N+2)/2})\) uniformly on a fixed neighbourhood of \(K\).  Choose \(K'\) such that \(K\Subset K'\Subset\Omega\).  Since \(0\le\Gamma-G_a\le\Gamma\), the Talenti decay and the whole space Green representation show that the part with \(z\notin K'\) or \(\zeta\notin K'\) is \(O(\lambda^{-N})\).  On \(K'\times K'\), \eqref{eq2.6} and the change of variables \(z=x+\xi/\lambda\), \(\zeta=x+\eta/\lambda\) give
		\[
		\begin{aligned}
			&\iint_{K'\times K'}
			|\Gamma(z-\zeta)-G_a(z,\zeta)|
			\delta_{\lambda,x}(z)^{2^*-1}
			\delta_{\lambda,x}(\zeta)^{2^*-1}\,dz\,d\zeta\\
			&\quad\le
			C\lambda^{2-N}
			\iint_{\mathbb R^N\times\mathbb R^N}
			\Psi_N\!\left(\frac{|\xi-\eta|}{\lambda}\right)
			(1+|\xi|^2)^{-\frac{N+2}{2}}
			(1+|\eta|^2)^{-\frac{N+2}{2}}
			\,d\xi\,d\eta.
		\end{aligned}
		\]
		The last expression is \(O(\lambda^{-1})\) for \(N=3\), \(O(\lambda^{-2}\log\lambda)\) for \(N=4\), and \(O(\lambda^{-2})\) for \(N\ge5\).  Hence
		\[
		\iint_{\Omega\times\Omega}
		|\Gamma(z-\zeta)-G_a(z,\zeta)|
		\delta_{\lambda,x}(z)^{2^*-1}
		\delta_{\lambda,x}(\zeta)^{2^*-1}\,dz\,d\zeta
		=O(\varpi_N(\lambda)).
		\]
		The exterior-source contribution is covered explicitly by the same tail estimate: Tonelli's theorem and the whole space representation give
		\[
		\begin{aligned}
			&\int_\Omega \delta_{\lambda,x}(z)^{2^*-1}
			\int_{\mathbb R^N\setminus\Omega}\Gamma(z-\zeta)
			\delta_{\lambda,x}(\zeta)^{2^*-1}\,d\zeta\,dz\\
			&\qquad\le
			\sigma_N^{-1}\int_{\mathbb R^N\setminus\Omega}
			\delta_{\lambda,x}(\zeta)^{2^*}\,d\zeta
			=O_K(\lambda^{-N}).
		\end{aligned}
		\]
		Therefore
		\[
		\int_\Omega \delta_{\lambda,x}^{2^*-1}
		\bigl(\delta_{\lambda,x}-P_a\delta_{\lambda,x}\bigr)\,dz
		=
		O(\varpi_N(\lambda)).
		\]
		Since
		\[
		L_a(\delta_{\lambda,x}-P_a\delta_{\lambda,x})=a\,\delta_{\lambda,x}\ge0
		\quad\hbox{in }\Omega,
		\qquad
		\delta_{\lambda,x}-P_a\delta_{\lambda,x}=\delta_{\lambda,x}>0
		\quad\hbox{on }\partial\Omega,
		\]
		the maximum principle gives \(0\le P_a\delta_{\lambda,x}\le\delta_{\lambda,x}\). Moreover, uniformly for \(x\in K\),
		\[
		\int_{\mathbb R^N\setminus\Omega}\delta_{\lambda,x}^{2^*}\,dz
		=O_K(\lambda^{-N})=O_K(\varpi_N(\lambda)).
		\]
		Using \(\sigma_N\int_{\mathbb R^N}\delta_{\lambda,x}^{2^*}=1\), we obtain
		\[
		\begin{aligned}
			\|P_a\delta_{\lambda,x}\|_a^2
			&=
			\sigma_N\int_\Omega \delta_{\lambda,x}^{2^*-1}P_a\delta_{\lambda,x}\,dz
			\\
			&=1-
			\sigma_N\int_{\mathbb R^N\setminus\Omega}\delta_{\lambda,x}^{2^*}\,dz
			-\sigma_N\int_\Omega\delta_{\lambda,x}^{2^*-1}
			\bigl(\delta_{\lambda,x}-P_a\delta_{\lambda,x}\bigr)\,dz
			\\
			&=1+O(\varpi_N(\lambda)).
		\end{aligned}
		\]
		Using
		\[
		0\le s^{2^*}-t^{2^*}\le 2^*s^{2^*-1}(s-t)
		\qquad (0\le t\le s),
		\]
		we also obtain
		\[
		\begin{aligned}
			0\le
			\sigma_N^{-1}-\int_\Omega (P_a\delta_{\lambda,x})^{2^*}\,dz
			&=
			\int_{\mathbb R^N\setminus\Omega}\delta_{\lambda,x}^{2^*}\,dz
			+\int_\Omega\left(\delta_{\lambda,x}^{2^*}
			-(P_a\delta_{\lambda,x})^{2^*}\right)\,dz
			\\
			&\le
			\int_{\mathbb R^N\setminus\Omega}\delta_{\lambda,x}^{2^*}\,dz
			+2^*\int_\Omega \delta_{\lambda,x}^{2^*-1}
			\bigl(\delta_{\lambda,x}-P_a\delta_{\lambda,x}\bigr)\,dz
			\\
			&=O(\varpi_N(\lambda)).
		\end{aligned}
		\]
		This proves \eqref{eq4.1}.
	\end{proof}
	
	For \(L>1\), set
	\[
	\mathscr I_L
	=
	\{(\lambda,x)\in(0,\infty)\times\Omega:\lambda\ge L,\
	\lambda\operatorname{dist}(x,\partial\Omega)\ge L\},
	\]
	and introduce the normalized parameter fields
	\[
	X_0=\lambda\partial_\lambda,
	\qquad
	X_\ell=\lambda^{-1}\partial_{x_\ell},
	\quad 1\le\ell\le N.
	\]
	
	For \(N\ge3\), define
	\[
	\eta_N(\lambda)=
	\begin{cases}
		\lambda^{-(N-2)/2},&3\le N\le5,\\[0.2em]
		\lambda^{-2}(\log\lambda)^{2/3},&N=6,\\[0.2em]
		\lambda^{-2},&N\ge7.
	\end{cases}
	\]
	
	\begin{lemma}\label{Lem4.2}
		\begin{equation}\label{eq4.4}
			\sup_{(\lambda,x)\in\mathscr I_L}
			\left\{
			\begin{aligned}
				&\|P_a\delta_{\lambda,x}-P\delta_{\lambda,x}\|_a
				+\lambda\|\partial_\lambda(P_a\delta_{\lambda,x}
				-P\delta_{\lambda,x})\|_a\\
				&\quad+\lambda^{-1}\sum_{\ell=1}^N
				\|\partial_{x_\ell}(P_a\delta_{\lambda,x}
				-P\delta_{\lambda,x})\|_a
			\end{aligned}
			\right\}\to0
			\quad\hbox{as }L\to\infty.
		\end{equation}
		Uniformly for \((\lambda,x)\in\mathscr I_L\), the expression in braces is \(O_\Omega(\eta_N(\lambda))\).  Moreover,
		\begin{equation}\label{eq4.5}
			\sup_{(\lambda,x)\in\mathscr I_L}
			\sum_{\alpha,\beta=0}^N
			\left\|
			X_\alpha X_\beta
			\bigl(P_a\delta_{\lambda,x}-P\delta_{\lambda,x}\bigr)
			\right\|_a
			\to0
			\quad\hbox{as }L\to\infty.
		\end{equation}
		Uniformly for \((\lambda,x)\in\mathscr I_L\), the sum in \eqref{eq4.5} is \(O_\Omega(\eta_N(\lambda))\).
	\end{lemma}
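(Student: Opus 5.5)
The plan is to write the difference $e_{\lambda,x}=P_a\delta_{\lambda,x}-P\delta_{\lambda,x}$ as the solution of a Dirichlet problem with an explicit source, and to estimate that source in $H^{-1}(\Omega)$. Since $L_aP_a\delta=\sigma_N\delta^{2^*-1}=-\Delta P\delta$, we have
\[
L_ae_{\lambda,x}=-a\,P\delta_{\lambda,x}\quad\text{in }\Omega,\qquad e_{\lambda,x}=0\ \text{on }\partial\Omega .
\]
Coercivity of $\mathfrak a_a$ then gives $\|e_{\lambda,x}\|_a\le C\|aP\delta_{\lambda,x}\|_{H^{-1}(\Omega)}$. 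The parameter derivatives commute with $L_a$, because $a$ does not depend on $(\lambda,x)$. Hence $X_\alpha e$ and $X_\alpha X_\beta e$ solve the same problem with sources $-a\,X_\alpha P\delta$ and $-a\,X_\alpha X_\beta P\delta$. The whole lemma therefore reduces to one uniform dual-norm bound:
\[
\sup_{|\gamma|\le2}\|a\,X^\gamma P\delta_{\lambda,x}\|_{H^{-1}(\Omega)}=O(\eta_N(\lambda)),\qquad(\lambda,x)\in\mathscr I_L .
\]
Since $\eta_N(\lambda)\le\eta_N(L)\to0$ uniformly on $\mathscr I_L$, this gives both \eqref{eq4.4} and \eqref{eq4.5}.

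\textbf{Reduction to bubble moments.} First, $X^\gamma P\delta$ is the Dirichlet projection of $X^\gamma\delta$, and the normalized fields satisfy $|X^\gamma\delta_{\lambda,x}|\le C\delta_{\lambda,x}$ pointwise, by the explicit Talenti formula. The harmonic corrections $X^\gamma(\delta-P\delta)$ satisfy the same bound on $\partial\Omega$, so the maximum principle gives $|X^\gamma P\delta|\le C\delta_{\lambda,x}$ in $\Omega$. Next, bound the $H^{-1}$ norm by the Sobolev dual $L^{(2^*)'}$ norm, giving $\|a\,X^\gamma P\delta\|_{H^{-1}}\le C\|\delta_{\lambda,x}\|_{L^{2N/(N+2)}(\Omega)}$. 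A direct scaling computation in $z=\lambda(y-x)$ gives
\[
\|\delta_{\lambda,x}\|_{L^{q}(\Omega)}^q=\lambda^{\frac{N-2}{2}q-N}\int_{|z|\lesssim\lambda}(1+|z|^2)^{-\frac{(N-2)q}{2}}\,dz,\qquad q=\tfrac{2N}{N+2}.
\]
Here $(N-2)q<N$ for $N\le5$, since $q(N-2)=2N(N-2)/(N+2)$, so the integral grows like $\lambda^{N-(N-2)q}$. The result is $\|\delta\|_{q}\simeq\lambda^{-\frac{N-2}{2}}\lambda^{\frac Nq-\frac{N}{1}\cdot 0}\dots$, which works out to $\lambda^{-(N-2)/2+N/q-(N-2)}=\lambda^{-(N-2)/2}$. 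This is exactly $\eta_N$ for $3\le N\le5$. For $N\ge7$ the integral converges, giving $\lambda^{\frac{N-2}{2}-\frac Nq}=\lambda^{-2}$. For $N=6$ it diverges logarithmically, giving $\lambda^{-2}(\log\lambda)^{1/q}=\lambda^{-2}(\log\lambda)^{2/3}$ with $q=3/2$. So the dual-Sobolev route reproduces $\eta_N$ exactly in every dimension, and the constants depend only on $\Omega$ because $a\in L^\infty$.

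\textbf{Main obstacle.} The hardest step is the uniform pointwise domination $|X^\gamma P\delta_{\lambda,x}|\le C\delta_{\lambda,x}$ over all of $\mathscr I_L$, including centers near $\partial\Omega$ with only $\lambda d_x\ge L$. For $X_0$ and $X_0^2$ this is immediate. For the spatial fields, $\lambda^{-1}\partial_x\delta$ and $\lambda^{-2}\partial_x^2\delta$ are bounded by $C\delta$ thanks to the factor $\lambda(1+\lambda^2|y-x|^2)^{-1/2}$. On $\partial\Omega$ one only needs $|X^\gamma\delta|\le C\delta$, which holds pointwise everywhere, so the harmonic correction is dominated by $C(\delta-P\delta)$ after comparison. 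I would verify this by writing $X^\gamma\delta=\delta\cdot r_\gamma(\lambda(y-x))$ with $r_\gamma$ bounded, and applying the maximum principle to $C\delta\mp X^\gamma P\delta$. That function is superharmonic-compatible, because $-\Delta(X^\gamma P\delta)=\sigma_N X^\gamma(\delta^{2^*-1})$ is also bounded by $C\delta^{2^*-1}$. If the near-boundary regime causes trouble, I would fall back on the cruder bound $\|X^\gamma P\delta\|_{L^q}\le C\|X^\gamma\delta\|_{L^q(\mathbb R^N)}$ via the Dirichlet projection's $L^q$ stability.
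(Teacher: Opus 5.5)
Your proposal is correct and is essentially the paper's proof. The shared steps are:
\begin{itemize}
\item the equation $L_a(P_a\delta_{\lambda,x}-P\delta_{\lambda,x})=-aP\delta_{\lambda,x}$;
\item commutation of the parameter fields with $L_a$;
\item coercivity combined with the dual Sobolev bound by $\|\delta_{\lambda,x}\|_{L^{2N/(N+2)}(\Omega)}$;
\item the pointwise domination $|X^\gamma P\delta_{\lambda,x}|\le C\delta_{\lambda,x}$.
\end{itemize}
The paper gets the last bound by differentiating the Green representation and using $0<G_\Omega\le\Gamma$. This is equivalent to your maximum-principle comparison and is uniform in $x\in\Omega$, so the near-boundary regime is not an obstacle and no fallback is needed.

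One slip: your intermediate exponent for $N\le5$ is garbled. As written, $-\tfrac{N-2}{2}+\tfrac Nq-(N-2)=4-N$. The clean line is $\|\delta_{\lambda,x}\|_{L^q(\Omega)}^q\le C\lambda^{\frac{(N-2)q}{2}-N}\cdot\lambda^{N-(N-2)q}=C\lambda^{-\frac{(N-2)q}{2}}$, which gives the stated $\lambda^{-(N-2)/2}$.
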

	
	\begin{proof}
		Let \(R_{\lambda,x}=P_a\delta_{\lambda,x}-P\delta_{\lambda,x}\).  Then
		\[
		L_aR_{\lambda,x}=-aP\delta_{\lambda,x},
		\qquad R_{\lambda,x}=0\quad\hbox{on }\partial\Omega.
		\]
		By coercivity of \(L_a\),
		\[
		\|R_{\lambda,x}\|_a
		\le C\|aP\delta_{\lambda,x}\|_{H^{-1}(\Omega)}.
		\]
		The coefficient \(a\) is bounded on \(\overline\Omega\), and the Sobolev-H\"older estimate
		\[
		\|aP\delta_{\lambda,x}\|_{H^{-1}}
		\le C\|P\delta_{\lambda,x}\|_{L^{2N/(N+2)}(\Omega)}
		\le C\|\delta_{\lambda,x}\|_{L^{2N/(N+2)}(\Omega)}
		=o_\lambda(1)
		\]
		holds uniformly in \(x\).  Indeed, with \(q=2N/(N+2)\), scaling and radial integration give
		\[
		\begin{aligned}
			\|\delta_{\lambda,x}\|_{L^q(\Omega)}^q
			&\le
			C\lambda^{\frac{N-2}{2}q-N}
			\int_0^{C_\Omega\lambda}
			\frac{r^{N-1}}{(1+r^2)^{\frac{N-2}{2}q}}\,dr\\
			&\le C\eta_N(\lambda)^q.
		\end{aligned}
		\]
		This proves the zeroth-order assertion with the stated quantitative bound. Differentiating the identity for \(R_{\lambda,x}\) gives
		\[
		L_a\partial_\lambda R_{\lambda,x}=-a\partial_\lambda P\delta_{\lambda,x},
		\qquad
		L_a\partial_{x_\ell} R_{\lambda,x}=-a\partial_{x_\ell}P\delta_{\lambda,x}.
		\]
		Put
		\[
		Z_{\lambda,x,0}=\lambda\partial_\lambda P\delta_{\lambda,x},
		\qquad
		Z_{\lambda,x,\ell}
		=\lambda^{-1}\partial_{x_\ell}P\delta_{\lambda,x}.
		\]
		The corresponding normalized whole space tangents satisfy
		\[
		|\lambda\partial_\lambda\delta_{\lambda,x}|
		+\lambda^{-1}|\nabla_x\delta_{\lambda,x}|
		\le C\delta_{\lambda,x}.
		\]
		Differentiate the Green representation of \(P\delta_{\lambda,x}\). Since \(0<G_\Omega\le\Gamma\), the preceding bound and the whole space representation of \(\delta_{\lambda,x}\) give, for \(\alpha=0,\ldots,N\),
		\[
		|Z_{\lambda,x,\alpha}(y)|
		\le
		C\int_\Omega\Gamma(y-z)\delta_{\lambda,x}(z)^{2^*-1}\,dz
		\le C\delta_{\lambda,x}(y).
		\]
		Thus their \(L^{2N/(N+2)}(\Omega)\)-norms have the same uniform vanishing bound as the bubble itself.  Multiplication by the bounded coefficient \(a\), followed by coercivity in the differentiated equations, proves \eqref{eq4.4} with the bound \(O_\Omega(\eta_N(\lambda))\).
		
		For the second derivatives, the normalized whole space bubble satisfies
		\[
		\left|
		X_\alpha X_\beta
		\delta_{\lambda,x}(y)
		\right|
		\le C\delta_{\lambda,x}(y),
		\qquad 0\le\alpha,\beta\le N.
		\]
		The same Green-representation argument gives
		\[
		\left|
		X_\alpha X_\beta P\delta_{\lambda,x}(y)
		\right|
		\le
		C\int_\Omega
		\Gamma(y-z)\delta_{\lambda,x}(z)^{2^*-1}\,dz
		\le C\delta_{\lambda,x}(y).
		\]
		The fields \(X_\alpha\) act only on the parameters and therefore commute with \(L_a\).  Applying \(X_\alpha X_\beta\) to the equation for \(R_{\lambda,x}\) yields
		\[
		L_aX_\alpha X_\beta R_{\lambda,x}
		=
		-aX_\alpha X_\beta P\delta_{\lambda,x}.
		\]
		Coercivity and the preceding pointwise estimate prove \eqref{eq4.5}. The bound \(O_\Omega(\eta_N(\lambda))\) follows from the same \(L^{2N/(N+2)}\)-estimate as before.
	\end{proof}
	
	For two bubble parameters put
	\[
	\zeta_{ik}
	=
	\left(
	\frac{\lambda_i}{\lambda_k}
	+\frac{\lambda_k}{\lambda_i}
	+\lambda_i\lambda_k|x_i-x_k|^2
	\right)^{-\frac{N-2}{2}} .
	\]
	On the \(i\)-th parameter factor we use
	\[
	X_{i,0}=\lambda_i\partial_{\lambda_i},
	\qquad
	X_{i,\ell}=\lambda_i^{-1}\partial_{x_{i,\ell}},
	\quad 1\le\ell\le N .
	\]
	Fix \(m_0\in\mathbb N\).  For \(\zeta_0>0\) and \(1\le m\le m_0\), define
	\[
	\mathscr S_m(L,\zeta_0)
	=
	\left\{
	(\lambda_i,x_i)_{i=1}^m:
	(\lambda_i,x_i)\in\mathscr I_L,\quad
	\max_{i\ne k}\zeta_{ik}\le\zeta_0
	\right\}.
	\]
	If
	\[
	S_P(\alpha,q)=\sum_{i=1}^m\alpha_iP\delta_{\lambda_i,x_i},
	\qquad
	S_a(\alpha,q)=\sum_{i=1}^m\alpha_iP_a\delta_{\lambda_i,x_i},
	\qquad
	\sum_i\alpha_i^2=1,
	\]
	set
	\[
	\Phi_P=\frac{S_P}{\|S_P\|_a},
	\qquad
	\Phi_a=\frac{S_a}{\|S_a\|_a},
	\qquad
	\Phi_P^0=\frac{S_P}{\|\nabla S_P\|_2},
	\]
	\[
	Q_0(w)=
	\frac{\|\nabla w\|_2^{2^*}}
	{\displaystyle\int_\Omega(w^+)^{2^*}}.
	\]
	Here \(\mathscr D^r\) denotes any composition of \(r\) of the fields \(X_{i,\ell}\) and ambient coefficient derivatives of a fixed smooth extension from \(\{\sum\alpha_i^2=1\}\subset\mathbb R^m\).  These derivatives remain well defined on zero-coefficient faces.
	
	\begin{lemma}\label{Lem4.3}
		There is \(\zeta_0=\zeta_0(m_0)>0\) such that, uniformly for \(q\in\mathscr S_m(L,\zeta_0)\) and \(\alpha_i\ge0\),
		\begin{equation}\label{eq4.6}
			\min\{\|S_P(\alpha,q)\|_a,\|S_a(\alpha,q)\|_a\}
			\ge\frac12
		\end{equation}
		for all sufficiently large \(L\).  Moreover, with the notation above, one has the quantitative \(C^2\)-estimate
		\begin{equation}\label{eq4.7}
			\max_{0\le r\le2}
			\sup_{\mathscr S_m(L,\zeta_0)}
			\left\|\mathscr D^r(\Phi_a-\Phi_P)\right\|_a
			\le C_{m_0,\Omega}\eta_N(L)
			\to0.
		\end{equation}
		On the same set,
		\begin{equation}\label{eq4.7-dirichlet}
			\max_{0\le r\le2}
			\sup_{\mathscr S_m(L,\zeta_0)}
			\left\{
			\|\mathscr D^r(\Phi_P-\Phi_P^0)\|_a
			+
			\left|
			\mathscr D^r\bigl(\mathfrak q_a(S_P)-Q_0(S_P)\bigr)
			\right|
			\right\}
			\to0.
		\end{equation}
		Thus both the projection and the choice of normalization are controlled.
		
		The same convergence holds for the scalar quotient through order two:
		\begin{equation}\label{eq4.8}
			\max_{0\le r\le2}
			\sup_{\mathscr S_m(L,\zeta_0)}
			\left|
			\mathscr D^r\left[
			\mathcal J_a(\Phi_a)-\mathcal J_a(\Phi_P)
			\right]\right|
			\to0.
		\end{equation}
		After identifying the two tangent hyperplanes by the \(a\)-orthogonal projections, it also holds for the constrained gradient and Hessian:
		\begin{equation}\label{eq4.9}
			\begin{aligned}
				\sup_{\mathscr S_m(L,\zeta_0)}
				\bigl\|
				\nabla_{\Sigma_a}\mathcal J_a(\Phi_a)
				-
				\nabla_{\Sigma_a}\mathcal J_a(\Phi_P)
				\bigr\|_a&\to0,\\
				\sup_{\mathscr S_m(L,\zeta_0)}
				\bigl\|
				\operatorname{Hess}_{\Sigma_a}\mathcal J_a(\Phi_a)
				-
				\operatorname{Hess}_{\Sigma_a}\mathcal J_a(\Phi_P)
				\bigr\|_{\mathcal L(H^1_0,H^1_0{}^*)}
				&\to0.
			\end{aligned}
		\end{equation}
		Thus the \(P_a\)-family is a uniform \(C^2\)-small perturbation of the \(P\)-family across the whole retained separated parameter set.
	\end{lemma}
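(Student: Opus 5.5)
The plan is to reduce everything to the single-bubble perturbation estimates of Lemma~\ref{Lem4.2}, combined with linear algebra on the finite sum and with the smoothness of the normalization and quotient maps away from zero. Write \(R_i=P_a\delta_{\lambda_i,x_i}-P\delta_{\lambda_i,x_i}\), so that \(S_a-S_P=\sum_i\alpha_iR_i\).

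First I prove the lower bound \eqref{eq4.6}. Expand
\[
\|S_P\|_a^2=\sum_i\alpha_i^2\|P\delta_i\|_a^2+\sum_{i\ne k}\alpha_i\alpha_k\langle P\delta_i,P\delta_k\rangle_a .
\]
By Lemma~\ref{Lem4.1} (with Lemma~\ref{Lem4.2} to pass between \(P\) and \(P_a\)), each diagonal norm is \(1+o_L(1)\). The cross terms are nonnegative because the Green kernels are positive and \(\alpha_i\ge0\). Hence \(\|S_P\|_a^2\ge(1-o_L(1))\sum\alpha_i^2\ge 1/4\) for large \(L\). The same argument applies to \(S_a\), since \(\langle P_a\delta_i,P_a\delta_k\rangle_a=\sigma_N\int\delta_i^{2^*-1}P_a\delta_k\ge0\). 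With this, \(\zeta_0\) is needed only for the upper bounds. I take \(\zeta_0\) small enough that the standard estimate \(\langle P\delta_i,P\delta_k\rangle_a\le C\zeta_{ik}+o_L(1)\) gives \(\|S_P\|_a,\|S_a\|_a\le C_{m_0}\), uniformly with their \(\mathscr D^r\)-derivatives for \(r\le2\). For the derivative bounds I use the pointwise bounds \(|X_\alpha X_\beta P\delta|\le C\delta\) from the proof of Lemma~\ref{Lem4.2}.

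Next comes \eqref{eq4.7}. The map \(N(v)=v/\|v\|_a\) is smooth on \(\{\|v\|_a\ge1/2\}\), and its derivatives of order at most \(3\) are bounded there. I write \(\Phi_a-\Phi_P=N(S_a)-N(S_P)\) and apply the chain rule up to second order. Each term becomes a product of three kinds of factor:
\begin{itemize}
\item bounded derivatives of \(N\), evaluated along the segment between \(S_P\) and \(S_a\);
\item derivatives \(\mathscr D^{r'}S_P\) and \(\mathscr D^{r'}S_a\), which are bounded by the previous paragraph;
\item at least one factor \(\mathscr D^{r''}(S_a-S_P)=\sum\mathscr D^{r''}(\alpha_iR_i)\).
\end{itemize}
Coefficient derivatives of the smooth extension \(\alpha\) are bounded, and the parameter derivatives \(X_{i,\ell}\) fall only on the \(i\)-th summand. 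Lemma~\ref{Lem4.2} (both \eqref{eq4.4} and \eqref{eq4.5}) therefore bounds every such factor by \(C m_0\,\eta_N(\lambda_i)\le Cm_0\eta_N(L)\). Here I use that \(\eta_N\) is eventually decreasing, so \(\eta_N(\lambda_i)\le C\eta_N(L)\) for \(\lambda_i\ge L\). This gives \eqref{eq4.7}.

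For \eqref{eq4.7-dirichlet}, note that \(\Phi_P-\Phi_P^0=S_P\bigl(\|S_P\|_a^{-1}-\|\nabla S_P\|_2^{-1}\bigr)\), and that \(\|S_P\|_a^2-\|\nabla S_P\|_2^2=\int a S_P^2\). The \(L^2\)-norms of the \(P\delta_i\), and of their normalized \(X\)-derivatives, vanish as \(L\to\infty\), by the pointwise bound \(\le C\delta_{\lambda,x}\) and Lemma~\ref{Lem2.7}. The same smooth-function argument applied to \(t\mapsto t^{-1/2}\) and to the quotients \(\mathfrak q_a\) and \(Q_0\), which differ only through \(\|\cdot\|_a\) versus \(\|\nabla\cdot\|_2\), then gives the claim. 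For this I need the denominators \(\int(S_P^+)^{2^*}=\int S_P^{2^*}\) to be bounded below. That holds because \(S_P\ge\alpha_iP\delta_i\), so \(\int S_P^{2^*}\ge\max_i\alpha_i^{2^*}\sigma_N^{-1}(1-o(1))\ge c_{m_0}>0\).

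Finally, \eqref{eq4.8} and \eqref{eq4.9} follow by composing with \(\mathcal J_a\). On the unit sphere, the constrained gradient and Hessian of \(\mathcal J_a\) are \(C^1\) functions of the point in \(H^1_0\), uniformly on sublevels where \(\int(v^+)^{2^*}\) is bounded below; this is Lemma~\ref{Lem3.26} together with one more derivative of the Nemytskii map. Here I must be careful, because \(s\mapsto(s^+)^{2^*}\) is only \(C^2\) when \(2^*<3\), that is, for \(N\ge7\). For \(N\le5\) we have \(2^*\ge10/3>3\), so the Hessian is Lipschitz, and continuity of \(\operatorname{Hess}\) in \(v\) suffices for \eqref{eq4.9}. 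For \eqref{eq4.8} I apply the chain rule with second derivatives of \(\mathcal J_a\), applied to the already-controlled differences \(\mathscr D^r(\Phi_a-\Phi_P)\). The identification of tangent spaces by \(a\)-orthogonal projection costs only \(O(\|\Phi_a-\Phi_P\|_a)\).

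The main obstacle I expect is bookkeeping uniformity in \(m\le m_0\) and across zero-coefficient faces, where some \(\alpha_i=0\) and derivatives in \(x_i,\lambda_i\) still act. The point that makes this work is that every \(X_{i,\ell}\)-derivative of \(\alpha_iR_i\) carries the factor \(\alpha_i\) or \(\partial\alpha_i\), and both are bounded. The lower bound on \(\|S\|_a\) comes from positivity rather than from \(\alpha_i>0\), so it survives on these faces.
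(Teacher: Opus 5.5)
Your proof is correct, and for \eqref{eq4.7}, \eqref{eq4.7-dirichlet}, \eqref{eq4.8} and \eqref{eq4.9} it follows the paper's argument. The ingredients are the same:
\begin{itemize}
\item Lemma~\ref{Lem4.2} bounds every $\mathscr D^r(S_a-S_P)$ by $C_{m_0}\eta_N(L)$.
\item The normalization map has bounded derivatives on $\{\|v\|_a\ge c\}$.
\item $\int S_P^{2^*}$ is bounded below because one coefficient is at least $m_0^{-1/2}$ and all profiles are nonnegative.
\item The constrained Hessian is uniformly continuous where $\int(v^+)^{2^*}$ is bounded below.
\end{itemize}

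The one genuinely different step is \eqref{eq4.6}. The paper shows that the projected Gram matrix is close to the identity and then applies Gershgorin. The diagonal entries tend to one by the Dirichlet-orthogonality cutoff comparison, and the off-diagonal entries are $O(\zeta_{ik})$ plus projection errors; this is where the smallness of $\zeta_0$ enters. You instead discard the cross terms, using $\langle P\delta_i,P\delta_k\rangle_a=\sigma_N\int_\Omega\delta_i^{2^*-1}P\delta_k+\int_\Omega aP\delta_iP\delta_k\ge0$, its $P_a$ analogue, and $\alpha_i\ge0$.

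Your route needs no interaction estimate at all. In fact the upper bounds for which you still invoke $\zeta_0$ follow from the triangle inequality, so $\zeta_0$ is arbitrary in your proof. The cost is that it only controls the quadratic form on the nonnegative cone. The paper's Gershgorin bound is a lower eigenvalue bound for arbitrary real coefficient vectors. That is the kind of Gram nondegeneracy needed later when differences of coefficient vectors must be controlled, as in Lemma~\ref{Lem5.15}.

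Two justifications should be repaired.
\begin{itemize}
\item Lemmas~\ref{Lem4.1} and \ref{Lem2.7} are uniform only for centres in a fixed compact $K$, whereas $\mathscr S_m(L,\zeta_0)$ only imposes $\lambda_i\operatorname{dist}(x_i,\partial\Omega)\ge L$. The uniform facts you need on $\mathscr I_L$ are $\|\nabla P\delta_{\lambda,x}\|_2\to1$ and $\|\delta_{\lambda,x}\|_{L^2(\Omega)}\to0$. The first comes from the cutoff comparison (as in Lemma~\ref{Lem3.11} or the paper's first paragraph); the second comes from direct radial integration.
\item The pointwise bounds $|\mathscr XP\delta_{\lambda,x}|\le C\delta_{\lambda,x}$ control only Lebesgue norms, not $\|\mathscr XP\delta_{\lambda,x}\|_a$. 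Here $\mathscr X$ is a composition of at most two normalized fields. Add that $\mathscr XP\delta_{\lambda,x}$ is the Dirichlet projection of $\mathscr X\delta_{\lambda,x}$, whose Dirichlet norm is scale invariant. Alternatively, test $-\Delta(\mathscr XP\delta)=\sigma_N\mathscr X(\delta^{2^*-1})$ against $\mathscr XP\delta$.
\end{itemize}
Your Lipschitz claim for the Hessian covers only $N\le6$. For larger $N$, the H\"older modulus with exponent $2^*-2$ is what \eqref{eq4.8}--\eqref{eq4.9} actually use.
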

	
	\begin{proof}
		We first make the scale-interior uniformity explicit.  If \(d=\operatorname{dist}(x,\partial\Omega)\), choose a cutoff which is one on \(B(x,d/2)\), vanishes outside \(B(x,3d/4)\), and has gradient bounded by \(C/d\).  The Talenti tail gives
		\[
		\|\nabla[(1-\chi)\delta_{\lambda,x}]\|_2
		+
		\|\delta_{\lambda,x}\nabla\chi\|_2
		\to0
		\]
		uniformly as \(\lambda d\ge L\to\infty\).  Since \(P\delta_{\lambda,x}\) is the Dirichlet-orthogonal projection of \(\delta_{\lambda,x}|_\Omega\), its projection error is bounded by the error of this cutoff.  Hence its diagonal Dirichlet norm tends uniformly to one. The whole space two-bubble overlap is \(O(\zeta_{ik})\); the same cutoff argument transfers that bound to the projected bubbles.  Together with \(\int_\Omega a(P\delta_{\lambda,x})^2=o(1)\), this gives
		\[
		\max_i\left|
		\langle P\delta_i,P\delta_i\rangle_a-1
		\right|
		+
		\max_{i\ne k}
		\left|
		\langle P\delta_i,P\delta_k\rangle_a
		\right|
		\le \omega(L,\zeta_0),
		\]
		where \(\omega(L,\zeta_0)\to0\) first as \(\zeta_0\to0\) and then as \(L\to\infty\), uniformly for \(m\le m_0\).  Choose \(\zeta_0\) and then \(L\) so that \(\omega(L,\zeta_0)\le(4m_0)^{-1}\).  Gershgorin's theorem then gives
		\[
		\sum_{i,k}\alpha_i\alpha_k
		\langle P\delta_i,P\delta_k\rangle_a
		\ge\frac12\sum_i\alpha_i^2.
		\]
		Lemma~\ref{Lem4.2} transfers the same estimate, with a harmless further decrease of the constant, to the \(P_a\)-Gram matrix.  This proves \eqref{eq4.6}.
		
		For \(r=0,1,2\), Lemma~\ref{Lem4.2} and \(\sum_i|\alpha_i|\le m_0^{1/2}\) give
		\begin{equation}\label{eq4.10}
			\left\|
			\mathscr D^r(S_a-S_P)
			\right\|_a
			\le C_{m_0,\Omega}\eta_N(L).
		\end{equation}
		The corresponding derivatives of \(S_a\) and \(S_P\) are uniformly bounded.  To pass from \eqref{eq4.10} to normalized sums, we record the normalization derivatives rather than invoking them implicitly.  If \(\mathcal N(u)=u/\|u\|_a\), \(A=\|u\|_a\), then
		\[
		D\mathcal N(u)[h]
		=
		\frac hA-\frac{u\langle u,h\rangle_a}{A^3}
		\]
		and
		\[
		\begin{aligned}
			D^2\mathcal N(u)[h,k]
			&=
			-\frac{
				h\langle u,k\rangle_a+
				k\langle u,h\rangle_a+
				u\langle h,k\rangle_a}{A^3}\\
			&\quad+
			3\frac{
				u\langle u,h\rangle_a\langle u,k\rangle_a}{A^5}.
		\end{aligned}
		\]
		The lower bound \eqref{eq4.6}, these formulas, and \eqref{eq4.10} prove \eqref{eq4.7}.
		
		The Green-representation estimates in the proof of Lemma~\ref{Lem4.2} give, for \(0\le r\le2\),
		\[
		|\mathscr D^rP\delta_{\lambda,x}(y)|
		\le C\delta_{\lambda,x}(y).
		\]
		Radial integration on the bounded domain gives
		\[
		\|\delta_{\lambda,x}\|_2^2
		\le C
		\begin{cases}
			\lambda^{-1},&N=3,\\
			\lambda^{-2}\log\lambda,&N=4,\\
			\lambda^{-2},&N\ge5,
		\end{cases}
		\]
		uniformly in \(x\).  Cauchy-Schwarz, applied after differentiating the finite sum, therefore gives
		\[
		\max_{0\le r\le2}
		\left|
		\mathscr D^r\int_\Omega aS_P^2
		\right|
		\to0
		\]
		uniformly on \(\mathscr S_m(L,\zeta_0)\).  Since
		\[
		\|S_P\|_a^2
		=
		\|\nabla S_P\|_2^2+\int_\Omega aS_P^2
		\]
		and both leading norms have the Gram lower bound used in \eqref{eq4.6}, the displayed normalization formulas prove the first part of \eqref{eq4.7-dirichlet}.  Expanding
		\[
		\mathfrak q_a(S_P)
		=
		\frac{
			\bigl(\|\nabla S_P\|_2^2+\int_\Omega aS_P^2\bigr)^{2^*/2}}
		{\int_\Omega S_P^{2^*}}
		\]
		and differentiating twice proves its second part.
		
		On this family the two quantities
		\[
		\int_\Omega\Phi_P^{{2^*}},
		\qquad
		\int_\Omega\Phi_a^{{2^*}}
		\]
		are uniformly separated from zero.  Indeed, one coefficient has size at least \(m_0^{-1/2}\), all bubbles are nonnegative, and the diagonal \(L^{2^*}\)-norm tends uniformly to \(\sigma_N^{-1}\).  The maps
		\[
		v\longmapsto
		\left(\int_\Omega(v^+)^{2^*}\right)^{-1},
		\qquad
		v\longmapsto\nabla_{\Sigma_a}\mathcal J_a(v),
		\qquad
		v\longmapsto\operatorname{Hess}_{\Sigma_a}\mathcal J_a(v)
		\]
		have the following regularity on the resulting bounded subset: the first is uniformly \(C^2\), the second is uniformly \(C^1\), and the third is uniformly continuous.  The last assertion follows from the Nemytskii formulas in the proof of Lemma~\ref{Lem3.26} (Lipschitz when \(2^*\ge3\), and H\"older when \(2<2^*<3\)).  More explicitly, on every bounded set on which \(\int_\Omega(v^+)^{2^*}\) is separated from zero, after identifying tangent spaces by the \(a\)-orthogonal projections,
		\[
		\bigl\|
		\operatorname{Hess}_{\Sigma_a}\mathcal J_a(v)
		-\operatorname{Hess}_{\Sigma_a}\mathcal J_a(w)
		\bigr\|_{\mathcal L(H^1_0,H^1_0{}^*)}
		\le C\|v-w\|_a^{\vartheta},
		\qquad
		\vartheta=\min\{1,2^*-2\}.
		\]
		Applying their derivative formulas to \eqref{eq4.7} gives \eqref{eq4.8}-\eqref{eq4.9}.  The orthogonal projections identifying the tangent hyperplanes differ by \(O(\|\Phi_a-\Phi_P\|_a)\), so they do preserve the conclusion.
	\end{proof}
	
	\section{Dimension-dependent energy-drop mechanisms}\label{sec:matched}

	Throughout this section
	\[
	p=2^*=\frac{2N}{N-2},\qquad N\in\{3,4,5\},
	\]
	and \(K\Subset\Omega\) is fixed.  We use the common projected-bubble notation
	\[
	Q_{\lambda,x}=P_a\delta_{\lambda,x},
	\]
	whereas \(P\delta_{\lambda,x}\) continues to denote the ordinary Dirichlet projection.  For \( \mu=\sum_{i=1}^j t_i\delta_{x_i}\in B_j(K) \) we write
	\[
	S_{a,\lambda,j}(\mu)=\sum_{i=1}^j t_iQ_{\lambda,x_i},
	\qquad
	g_{a,\lambda,j}(\mu)
	=\frac{S_{a,\lambda,j}(\mu)}{\|S_{a,\lambda,j}(\mu)\|_a}.
	\]
	Terms of zero weight are omitted.  Thus
	\[
	\mathcal J_a(g_{a,\lambda,j}(\mu))
	=\mathfrak q_a(S_{a,\lambda,j}(\mu)),
	\qquad
	b_j=\sigma_Nj^{\frac{2}{N-2}}.
	\]
	The proof now separates into two genuinely different energy-drop mechanisms and then returns to a common fixed-stratum analytic interface.
	
	\subsection{The fixed-multiplicity mechanism in dimension three}
	
	In this subsection
	\[
	N=3,\qquad p=6,
	\]
	and the number of bubbles is fixed.  Every compactness constant, collision collar, and parameter-tube width below may therefore depend on that fixed number.  We also set
	\[
	A_2(t)=\sum_i t_i^2,\qquad
	B_6(t)=\sum_i t_i^6,\qquad
	\mathcal A_j(t)=\sigma_3\frac{A_2(t)^3}{B_6(t)},
	\qquad b_j=\sigma_3j^2.
	\]
	The power-mean inequality gives \( \mathcal A_j(t)\le b_j \), with equality precisely at \(t_i=1/j\) for every \(i\).
	
	\subsubsection{The \texorpdfstring{\(L_a\)}{La}-Green and Robin coefficients}
	
	In dimension three,
	\[
	\Gamma(z)=\frac1{4\pi|z|}.
	\]
	Define the regular part of the \(L_a\)-Green kernel and its Robin function by
	\[
	H_a(x,y)=\Gamma(x-y)-G_a(x,y),
	\qquad
	R_a(x)=H_a(x,x).
	\]
	The resolvent identity
	\[
	G_\Omega(x,y)-G_a(x,y)
	=
	\int_\Omega G_a(x,z)a(z)G_\Omega(z,y)\,dz
	\]
	and the three-dimensional convolution estimate show that \(H_a\) extends continuously across the diagonal on every compact subset of \(\Omega\times\Omega\).  In particular,
	\[
	M_K=\max_{x\in K}R_a(x)<\infty.
	\]
	The same identity and \(G_a\le G_\Omega\) give
	\[
	H_a(x,y)
	=
	H_\Omega(x,y)+G_\Omega(x,y)-G_a(x,y)
	\ge H_\Omega(x,y).
	\]
	Consequently
	\begin{equation}\label{eq5.robin-boundary}
		R_a(x)\ge R_\Omega(x)
		\ge\frac1{12\pi\operatorname{dist}(x,\partial\Omega)}
	\end{equation}
	whenever \(\operatorname{dist}(x,\partial\Omega)\le r_{\rm e}\), where \(r_{\rm e}\) is the exterior-sphere radius in Lemma~\ref{Lem2.2}.  This is the only boundary Robin estimate used below; it follows from an actual exterior-domain barrier, whose fixed radius gives uniformity in the pole. Moreover, by positivity of \(G_a\) and its positive diagonal singularity,
	\[
	g_K
	=
	\inf\{G_a(x,y):x,y\in K,\ x\ne y\}>0.
	\]
	We shall use the dimensional constant
	\[
	\mathfrak m_3
	=
	\sigma_3\int_{\mathbb R^3}\delta_{1,0}^{5}\,dz>0.
	\]
	The scaling of the normalized Talenti bubble gives
	\begin{equation}\label{eq5.mass}
		\sigma_3\int_{\mathbb R^3}\delta_{\lambda,x}^{5}\,dz
		=
		\mathfrak m_3\lambda^{-1/2}.
	\end{equation}
	
	The next result sharpens Lemma~\ref{Lem4.1} in dimension three. For \(x,y\in K\), set
	\[
	\varepsilon_\lambda(x,y)
	=
	(1+\lambda^2|x-y|^2)^{-1/2}.
	\]
	
	We first isolate the two-core estimates and fix the normalized parameter fields used below.  Put
	\[
	d\nu_{\lambda,x}(z)
	=\sigma_3\delta_{\lambda,x}(z)^5\,dz.
	\]
	For a family of centers \(x_1,\ldots,x_j\), put \(D_{ik}=\lambda|x_i-x_k|\), and let
	\[
	X_0=\lambda\partial_\lambda,
	\qquad
	X_{i,\ell}=\lambda^{-1}\partial_{x_{i,\ell}}
	\quad(1\le i\le j,\ 1\le\ell\le3).
	\]
	
	\begin{lemma}\label{Lem5.1}
		There are \(D_*>1\) and \(C_K>0\) such that, whenever \(\lambda\ge1\), \(x,y\in K\), and \(D=\lambda|x-y|\ge D_*\),
		\begin{align}
			\left|
			\iint_{\mathbb R^3\times\mathbb R^3}\Gamma(z-\zeta)\,
			d\nu_{\lambda,x}(z)d\nu_{\lambda,y}(\zeta)
			-\frac{\mathfrak m_3^2}{\lambda}\Gamma(x-y)
			\right|
			&\le C_K(1+D^2)^{-1},
			\label{eq5.two-core-Gamma}\\
			\lambda^{-1/2}
			\int_{\mathbb R^3}\delta_{\lambda,x}^4\delta_{\lambda,y}\,dz
			&\le C_K\bigl(\lambda^{-2}+(1+D^2)^{-1}\bigr),
			\label{eq5.two-core-41}\\
			\int_{\mathbb R^3}\delta_{\lambda,x}^4\delta_{\lambda,y}^2\,dz
			&\le C_K(1+D^2)^{-1}.
			\label{eq5.two-core-42}
		\end{align}
		
		If \(\mathscr X\) is any composition of at most two of these fields, then, in the density sense and pointwise in \(z\),
		\begin{equation}\label{eq5.normalized-source-derivatives}
			|\mathscr X(d\nu_{\lambda,x_i})|
			\le C\,d\nu_{\lambda,x_i},
			\qquad
			|\mathscr X Q_{\lambda,x_i}(z)|
			\le C_K Q_{\lambda,x_i}(z)
			\le C_K\delta_{\lambda,x_i}(z),
		\end{equation}
		where a field belonging to another center acts trivially.  Consequently, for arbitrary centers, every degree-six mixed monomial which involves at least two centers and is not of type \((5,1)\), as well as any of its derivatives \(\mathscr X\) of order at most two, has integral bounded by
		\begin{equation}\label{eq5.two-core-monomials}
			C_{j,K}\sum_{i<k}(1+D_{ik}^2)^{-1}.
		\end{equation}
		The estimate is uniform for weights \(0\le t_i\le1\).
	\end{lemma}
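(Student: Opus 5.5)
The plan is to work throughout in the blow-up variables \(z=x+\xi/\lambda\), \(\zeta=y+\eta/\lambda\). There
\[
d\nu_{\lambda,x}=\lambda^{-1/2}\sigma_3\,\delta_{1,0}(\xi)^5\,d\xi ,
\]
so every estimate reduces to an explicit integral of the decaying profile \((1+|\xi|^2)^{-5/2}\) against kernels evaluated at the separation \(D=\lambda|x-y|\).

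For \eqref{eq5.two-core-Gamma}, I would write
\[
\iint\Gamma(z-\zeta)\,d\nu_{\lambda,x}\,d\nu_{\lambda,y}
=\lambda^{-1}\cdot\lambda\iint\Gamma\bigl((x-y)+(\xi-\eta)/\lambda\bigr)\,\sigma_3^2\delta_{1,0}(\xi)^5\delta_{1,0}(\eta)^5\,d\xi\,d\eta .
\]
Homogeneity of \(\Gamma\) turns the integrand into \(\Gamma(e D+\xi-\eta)\), with \(e\) a unit vector, and the expression becomes \(\sigma_3^2\iint\Gamma(eD+\xi-\eta)\delta^5\delta^5\). The key observation is that \(\delta_{1,0}^5\) is radial, so by Newton's theorem its potential \(\int\Gamma(w-\eta)\sigma_3\delta^5(\eta)\,d\eta\) equals \(\mathfrak m_3\Gamma(w)\) minus a correction supported in effect where the mass outside \(|\eta|<|w|\) lives. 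That correction is bounded by \(C\int_{|\eta|>|w|}|\eta|^{-1}\delta^5\le C(1+|w|)^{-3}\). Applying Newton's theorem twice gives the exact leading term \(\mathfrak m_3^2\Gamma(eD)=\mathfrak m_3^2\lambda^{-1}\Gamma(x-y)\). The remainders are of order \(\int(1+|eD+\xi|)^{-3}\delta^5(\xi)\,d\xi\lesssim D^{-3}+D^{-2}\), where the second term comes from the region \(|\xi|\sim D\). Both are \(\le C(1+D^2)^{-1}\) for \(D\ge D_*\). In fact the bound comes out uniform in \(\lambda\), and \(C_K\) only needs \(D\ge D_*\).

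For \eqref{eq5.two-core-41} and \eqref{eq5.two-core-42}, I would rescale around \(x\). This gives \(\int\delta^4_{\lambda,x}\delta_{\lambda,y}\,dz=\lambda^{-1/2}\int(1+|\xi|^2)^{-2}(1+|\xi-eD|^2)^{-1/2}\,d\xi\), up to constants. I would split into \(|\xi|<D/2\), \(|\xi-eD|<D/2\), and the rest. The first region gives \(O(D^{-1})\) times a convergent integral, so the total is \(\lambda^{-1/2}\cdot\lambda^{-1/2}O(D^{-1})\). The stated bound \(\lambda^{-2}+D^{-2}\) then seems to need a different bookkeeping. So I would recheck the normalization: in \eqref{eq5.two-core-41} the integral is over \(\mathbb R^3\) with \(\delta\) normalized in \(D^{1,2}\), which gives \(\lambda^{-1/2}\cdot D^{-1}\cdot\lambda^{-1/2}\)? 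I expect the main obstacle to be exactly this exponent count. My first attempt would use Young's inequality \(\lambda^{-1}D^{-1}\le\frac12(\lambda^{-2}+D^{-2})\), which produces precisely the stated \(\lambda^{-2}+(1+D^2)^{-1}\) form. \eqref{eq5.two-core-42} is scale-invariant: \(\int(1+|\xi|^2)^{-2}(1+|\xi-eD|^2)^{-1}\,d\xi\). The near-\(x\) region contributes \(D^{-2}\), the near-\(y\) region contributes \(D^{-4}\cdot D\), and the tail contributes \(D^{-3}\), so the total is \(\lesssim(1+D^2)^{-1}\).

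For the derivative bounds \eqref{eq5.normalized-source-derivatives}, I would differentiate the explicit profile. Each of \(\lambda\partial_\lambda\) and \(\lambda^{-1}\partial_x\) applied to \((\lambda/(1+\lambda^2|z-x|^2))^{k/2}\) produces a bounded multiple of the same function, and iterating twice stays bounded. For \(Q=P_a\delta\), the parameter fields commute with \(L_a\), so \(\mathscr XQ=\sigma_3\int G_a\,\mathscr X(\delta^5)\). Then \(|\mathscr XQ|\le C\sigma_3\int G_a\delta^5=CQ\), using \(G_a>0\). Finally, for \eqref{eq5.two-core-monomials}, a mixed degree-six monomial \(\prod_i Q_i^{k_i}\) with \(\sum k_i=6\), not of type \((5,1)\), and involving at least two centers, has a pair \(i\neq k\) with exponents \((k_i,k_k)\) such that Hölder or pointwise bounds \(Q\le\delta\) reduce it to \(\int\delta_i^{a}\delta_k^{b}\) with \(a+b=6\) and \(\min(a,b)\ge2\). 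By the symmetric splitting, each such integral is \(\lesssim(1+D_{ik}^2)^{-1}\), and derivatives are absorbed by \eqref{eq5.normalized-source-derivatives}. Weights in \([0,1]\) only multiply by bounded constants, and summing over pairs gives \(C_{j,K}\).
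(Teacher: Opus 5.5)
Your treatment of \eqref{eq5.two-core-41}, \eqref{eq5.two-core-42} and \eqref{eq5.normalized-source-derivatives} matches the paper. It uses the same rescaling to $\lambda^{-1}F_{41}(D)$ and $F_{42}(D)$, the same three-region split, and the same use of $G_a>0$ to get $|\mathscr XQ|\le CQ$. Your worry about the exponent count in \eqref{eq5.two-core-41} is unfounded. The bound $F_{41}(D)\le CD^{-1}$ followed by $\lambda^{-1}D^{-1}\le\frac12(\lambda^{-2}+D^{-2})$ is exactly the paper's bookkeeping. You still have to check that the region near $y$ and the far region contribute $O(D^{-2})$ to $F_{41}$.

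For \eqref{eq5.two-core-Gamma} you take a genuinely different route. The paper uses the whole-space representation once, $I_{xy}=\int\delta_{\lambda,y}\,d\nu_{\lambda,x}$. It then compares this with $\mathfrak m_3\lambda^{-1/2}\delta_{\lambda,y}(x)$ by a three-region split, using a Lipschitz bound for $\delta_{\lambda,y}$ on $B(x,|x-y|/4)$ and first-moment and tail bounds for $\widehat\nu_{\lambda,x}$. No symmetry is used, the error is $O(D^{-2})$, and $(1+D^2)^{-1/2}-D^{-1}=O(D^{-3})$ finishes the argument. You instead exploit radial symmetry. After the inner integral gives $\delta_{1,0}(eD+\xi)$, you write $\delta_{1,0}=\mathfrak m_3\Gamma-E$ and apply Newton's theorem to the outer radial measure, which yields $\mathfrak m_3^2\Gamma(eD)$ exactly. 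Because symmetry kills the first-order term, your remainder is in fact $O(D^{-3})$, sharper than required.

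One correction is needed. Put $\mu=\sigma_3\delta_{1,0}^5\,d\eta$. The Newton correction $E(w)=\int_{|\eta|>|w|}(\Gamma(w)-\Gamma(\eta))\,d\mu(\eta)$ is not $O((1+|w|)^{-3})$ near $w=0$. You dropped the term $\Gamma(w)\mu(\{|\eta|>|w|\})$, and in fact $E(w)\sim\mathfrak m_3\Gamma(w)$ as $w\to0$. The correct bound is $0\le E(w)\le C|w|^{-1}(1+|w|)^{-2}$. Its singularity at $\xi=-eD$ is integrable and contributes only $O(D^{-5})$, so your conclusion is unaffected.

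The step that is not yet a proof is \eqref{eq5.two-core-monomials}. The pointwise bound $Q\le\delta$ does not remove extra centres. Monomials of type $(4,1,1)$, $(3,1,1,1)$ or $(2,2,2)$ do not reduce to a single integral $\int\delta_i^a\delta_k^b$ with $\min\{a,b\}\ge2$ by the tools you name. Moreover, a naive H\"older step loses a logarithm. For example,
\[
\int\delta_1^2\delta_2^2\delta_3^2\le\|\delta_1^2\delta_2^2\|_{3/2}\|\delta_3^2\|_3\lesssim(D_{12}^{-3}\log D_{12})^{2/3},
\]
which is not $O(D_{12}^{-2})$.

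The missing device is the paper's pointwise splitting. On the set where $Q_i$ is the largest active profile, consider any monomial that involves at least two centres and is not of type $(5,1)$. It has at least two factors foreign to $i$, since otherwise it would be $Q_i^6$ or $Q_i^5Q_k$. Hence it is at most $C_jQ_i^4Q_aQ_b\le C_j\sum_{k\ne i}\delta_i^4\delta_k^2$. Integrating and summing over $i$ reduces everything to \eqref{eq5.two-core-42}. Derivatives are first absorbed pointwise via \eqref{eq5.normalized-source-derivatives}, as you say. A case-by-case Cauchy--Schwarz pairing would also work, for instance
\[
\int\delta_1^2\delta_2^2\delta_3^2\le\Bigl(\int\delta_1^4\delta_2^2\Bigr)^{1/2}\Bigl(\int\delta_2^2\delta_3^4\Bigr)^{1/2}.
\]
Finally, since \eqref{eq5.two-core-monomials} is asserted for arbitrary centres, you must also treat $D_{ik}<D_*$. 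There $\int\delta_i^4\delta_k^2\le C$ by H\"older and scale invariance, while $(1+D_{ik}^2)^{-1}\ge(1+D_*^2)^{-1}$.
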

	
	\begin{proof}
		Write \(r=|x-y|\), choose \(e\in\mathbb S^2\) with \(y-x=re\), and use \(u=\lambda(z-x)\).  The normalized probability measure
		\[
		d\widehat\nu_{\lambda,x}
		=\frac{\lambda^{1/2}}{\mathfrak m_3}
		d\nu_{\lambda,x}
		\]
		satisfies
		\begin{equation}\label{eq5.two-core-moments}
			\int|z-x|\,d\widehat\nu_{\lambda,x}(z)\le\frac C\lambda,
			\qquad
			\widehat\nu_{\lambda,x}
			\bigl(\mathbb R^3\setminus B(x,s)\bigr)
			\le C(\lambda s)^{-2}.
		\end{equation}
		Both inequalities follow directly from the radial density \(C(1+|u|^2)^{-5/2}\,du\).
		
		The whole space Green representation gives
		\[
		I_{xy}:=
		\iint\Gamma(z-\zeta)\,
		d\nu_{\lambda,x}(z)d\nu_{\lambda,y}(\zeta)
		=\int\delta_{\lambda,y}(z)\,d\nu_{\lambda,x}(z).
		\]
		Put \(M_\lambda=\mathfrak m_3\lambda^{-1/2}\), \(A_x=B(x,r/4)\), and \(A_y=B(y,r/4)\).  On \(A_x\),
		\[
		\sup_{A_x}|\nabla\delta_{\lambda,y}|
		\le C\lambda^{3/2}D^{-2},
		\]
		whereas \eqref{eq5.two-core-moments} gives
		\[
		\int |z-x|\,d\widehat\nu_{\lambda,x}(z)\le C\lambda^{-1},
		\qquad
		\widehat\nu_{\lambda,x}(A_x^c)
		+\widehat\nu_{\lambda,x}(A_y)
		\le CD^{-2}.
		\]
		Moreover,
		\[
		\sup_{A_y}\delta_{\lambda,y}\le C\lambda^{1/2},
		\qquad
		\sup_{(A_x\cup A_y)^c}\delta_{\lambda,y}
		+\delta_{\lambda,y}(x)
		\le C\lambda^{1/2}D^{-1}.
		\]
		Consequently
		\[
		\begin{aligned}
			|I_{xy}-M_\lambda\delta_{\lambda,y}(x)|
			&\le M_\lambda
			\int_{A_x}|\delta_{\lambda,y}(z)-\delta_{\lambda,y}(x)|
			\,d\widehat\nu_{\lambda,x}(z)\\
			&\quad+M_\lambda\delta_{\lambda,y}(x)
			\widehat\nu_{\lambda,x}(A_x^c)
			+M_\lambda\int_{A_y}\delta_{\lambda,y}
			\,d\widehat\nu_{\lambda,x}\\
			&\quad+M_\lambda\int_{(A_x\cup A_y)^c}
			\delta_{\lambda,y}\,d\widehat\nu_{\lambda,x}
			\le CD^{-2}.
		\end{aligned}
		\]
		If \(c_3=\delta_{1,0}(0)\), the far-field identity in the whole space Green representation is \(\mathfrak m_3=4\pi c_3\).  Hence
		\[
		M_\lambda\delta_{\lambda,y}(x)
		=\mathfrak m_3c_3(1+D^2)^{-1/2}
		=\frac{\mathfrak m_3^2}{\lambda}\Gamma(x-y)
		+O(D^{-3}).
		\]
		This proves \eqref{eq5.two-core-Gamma}.
		
		The same change of variables gives, up to dimensional constants,
		\begin{align*}
			\lambda^{-1/2}\int
			\delta_{\lambda,x}^4\delta_{\lambda,y}
			&=\lambda^{-1}F_{41}(D),\\
			\int\delta_{\lambda,x}^4\delta_{\lambda,y}^2
			&=F_{42}(D),
		\end{align*}
		where
		\begin{align*}
			F_{41}(D)
			&=\int_{\mathbb R^3}
			(1+|u|^2)^{-2}(1+|u-De|^2)^{-1/2}\,du,\\
			F_{42}(D)
			&=\int_{\mathbb R^3}
			(1+|u|^2)^{-2}(1+|u-De|^2)^{-1}\,du.
		\end{align*}
		Split each integral into
		\[
		|u|\le D/4,\qquad |u-De|\le D/4,
		\qquad\text{and the complement}.
		\]
		On the first region, integrability of \((1+|u|^2)^{-2}\) gives \(CD^{-1}\) for \(F_{41}\) and \(CD^{-2}\) for \(F_{42}\).  On the second, integrate the other factor radially; the bounds are respectively \(CD^{-2}\) and \(CD^{-3}\).  On the complement, first intersect with \(\{|u|\le2D\}\) and then use \(|u-De|\ge|u|/2\) on \(\{|u|>2D\}\); radial integration gives \(CD^{-2}\) and \(CD^{-3}\), respectively.  Hence
		\[
		F_{41}(D)\le CD^{-1},
		\qquad
		F_{42}(D)\le CD^{-2}.
		\]
		Since \(\lambda^{-1}D^{-1}\le(\lambda^{-2}+D^{-2})/2\), \eqref{eq5.two-core-41}-\eqref{eq5.two-core-42} follow.
		
		For the derivative statement, direct differentiation of
		\[
		\delta_{\lambda,x}(z)^5
		=C\lambda^{5/2}
		(1+\lambda^2|z-x|^2)^{-5/2}
		\]
		shows that the density divided by itself remains bounded after every composition of at most two normalized fields.  Since \(G_a>0\), its Green representation then gives
		\[
		|\mathscr XQ_{\lambda,x}(z)|
		\le C\int_\Omega G_a(z,\zeta)\,d\nu_{\lambda,x}(\zeta)
		=CQ_{\lambda,x}(z),
		\]
		which proves \eqref{eq5.normalized-source-derivatives}; the final inequality is the maximum-principle bound.  After differentiating a mixed monomial, each resulting factor is therefore controlled by its original \(Q_i\). On the region where \(Q_i\) is the largest active bubble, a monomial not of type \((5,1)\) contains at least two foreign factors and is bounded by
		\[
		C_jQ_i^4\left(\sum_{k\ne i}Q_k\right)^2
		\le C_j\sum_{k\ne i}
		\delta_{\lambda,x_i}^4\delta_{\lambda,x_k}^2.
		\]
		If \(D_{ik}<D_*\), H\"older's inequality and scale invariance give \(\int\delta_{\lambda,x_i}^4\delta_{\lambda,x_k}^2\le C\), while \((1+D_{ik}^2)^{-1}\ge(1+D_*^2)^{-1}\).  Thus the same bound holds in the complementary, nonseparated regime.  Integration, \eqref{eq5.two-core-42}, and summation over the finitely many regions, monomials, and derivative terms prove \eqref{eq5.two-core-monomials}.  The weights are harmless because they lie in \([0,1]\).
	\end{proof}
	
	\begin{lemma}\label{Lem5.2}
		Uniformly for \(x\in K\), as \(\lambda\to\infty\),
		\begin{align}
			\|Q_{\lambda,x}\|_a^2
			&=
			1-\frac{\mathfrak m_3^2}{\lambda}R_a(x)
			+o_K(\lambda^{-1}),
			\label{eq5.diagN}\\
			\int_\Omega Q_{\lambda,x}^{6}\,dz
			&=
			\sigma_3^{-1}
			\left(
			1-\frac{6\mathfrak m_3^2}{\lambda}R_a(x)
			\right)
			+o_K(\lambda^{-1}).
			\label{eq5.diagD}
		\end{align}
		
		There are \(\varepsilon_*>0\), \(C_K>0\), and a function \(\rho_K:[1,\infty)\to[0,\infty)\), with \(\rho_K(\lambda)\to0\), such that for \(x,y\in K\), \(x\ne y\), whenever \(\varepsilon_\lambda(x,y)\le\varepsilon_*\),
		\begin{align}
			\langle Q_{\lambda,x},Q_{\lambda,y}\rangle_a
			&=
			\frac{\mathfrak m_3^2}{\lambda}G_a(x,y)
			+\mathcal E_\lambda(x,y),
			\label{eq5.mixedN}\\
			\int_\Omega Q_{\lambda,x}^{5}Q_{\lambda,y}\,dz
			&=
			\frac{\mathfrak m_3^2}{\sigma_3\lambda}G_a(x,y)
			+\mathcal N_\lambda(x,y),
			\label{eq5.mixedD}
		\end{align}
		where
		\begin{equation}\label{eq5.pairrem}
			|\mathcal E_\lambda(x,y)|
			+
			\sigma_3|\mathcal N_\lambda(x,y)|
			\le
			\frac{\rho_K(\lambda)}{\lambda}
			+C_K\varepsilon_\lambda(x,y)^2.
		\end{equation}
		For each fixed \(j\), after expanding \(\int_\Omega(\sum_i t_iQ_{\lambda,x_i})^6\), the sum of all terms not among the displayed diagonal and one-foreign-bubble terms is bounded by
		\begin{equation}\label{eq5.multirem}
			C_{j,K}\sum_{i<k}\varepsilon_\lambda(x_i,x_k)^2.
		\end{equation}
		The same bound holds after any composition of at most two of the normalized fields
		\[
		\lambda\partial_\lambda,
		\qquad
		\lambda^{-1}\partial_{x_{i,\ell}},
		\quad 1\le i\le j,\quad1\le\ell\le3,
		\]
		is applied to this mixed-monomial sum.  The constants are independent of the centers and weights.  Thus the derivative estimate is uniform in the normalized parameter geometry fixed above.
	\end{lemma}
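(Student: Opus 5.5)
The plan is to work throughout with the Green representation \(Q_{\lambda,x}(z)=\int_\Omega G_a(z,\zeta)\,d\nu_{\lambda,x}(\zeta)\), where \(d\nu_{\lambda,x}=\sigma_3\delta_{\lambda,x}^5\,dz\), and to split \(G_a=\Gamma-H_a\). Here \(H_a\) is continuous on compact subsets of \(\Omega\times\Omega\), with \(R_a(x)=H_a(x,x)\).

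\textbf{Diagonal norm.} Testing the projection equation gives
\[
\|Q_{\lambda,x}\|_a^2=\iint G_a\,d\nu_{\lambda,x}\,d\nu_{\lambda,x}.
\]
The \(\Gamma\)-part equals \(\sigma_3\int_\Omega\delta_{\lambda,x}^6\) up to an exterior-source error of order \(O(\lambda^{-2})\), and \(\sigma_3\int_\Omega\delta_{\lambda,x}^6=1-O(\lambda^{-3})\). The \(H_a\)-part is a smooth kernel integrated against two measures, each of total mass \(\mathfrak m_3\lambda^{-1/2}\) by \eqref{eq5.mass}, which concentrate at \(x\) by the moment bounds \eqref{eq5.two-core-moments}. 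Since \(H_a\) is merely continuous (uniformly on \(K'\times K'\)), this part equals \(\mathfrak m_3^2\lambda^{-1}R_a(x)+o(\lambda^{-1})\) uniformly, with the far tails (mass \(O(\lambda^{-2})\) relative) controlled by boundedness of \(H_a\) away from the boundary together with \(0\le H_a\le\Gamma\). This proves \eqref{eq5.diagN}.

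\textbf{Diagonal \(L^6\) norm.} Write \(Q=\delta-\varphi\) with \(\varphi=\int H_a(\cdot,\zeta)\,d\nu\ge0\). On the core \(|z-x|\lesssim\) fixed, \(\varphi=\mathfrak m_3\lambda^{-1/2}R_a(x)+o(\lambda^{-1/2})\). Expand \(Q^6=\delta^6-6\delta^5\varphi+O(\delta^4\varphi^2+\varphi^6)\). The linear term gives \(6\sigma_3^{-1}\mathfrak m_3^2\lambda^{-1}R_a(x)\). The quadratic remainder is \(\lambda^{-1}\int\delta^4=O(\lambda^{-2})\), and \(\int\varphi^6=O(\lambda^{-3})\). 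This yields \eqref{eq5.diagD}.

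\textbf{Mixed terms.} Similarly \(\langle Q_x,Q_y\rangle_a=\iint G_a\,d\nu_x\,d\nu_y\). Lemma~\ref{Lem5.1}, estimate \eqref{eq5.two-core-Gamma}, handles the \(\Gamma\)-part as \(\mathfrak m_3^2\lambda^{-1}\Gamma(x-y)+O((1+D^2)^{-1})\), and note \((1+D^2)^{-1}=\varepsilon_\lambda^2\). The \(H_a\)-part gives \(\mathfrak m_3^2\lambda^{-1}H_a(x,y)+\rho_K(\lambda)/\lambda\), where \(\rho_K\) comes from the uniform continuity modulus of \(H_a\) on \(K'\times K'\) evaluated at scale \(\lambda^{-1/2}\), say, plus the tail masses. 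Together these give \eqref{eq5.mixedN}.

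For \eqref{eq5.mixedD}, write
\[
\int Q_x^5Q_y=\int\delta_x^5Q_y+\int(Q_x^5-\delta_x^5)Q_y.
\]
The first term is \(\sigma_3^{-1}\int Q_y\,d\nu_x\), which is the same double integral as above. For the second, use \(|Q_x^5-\delta_x^5|\le5\delta_x^4\varphi_x\) with \(\varphi_x\le C\lambda^{-1/2}\) near \(x\); this is bounded by \(C\lambda^{-1/2}\int\delta_x^4\delta_y\), which by \eqref{eq5.two-core-41} is \(O(\lambda^{-2}+\varepsilon_\lambda^2)\). This gives \eqref{eq5.pairrem}.

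\textbf{Multi-bubble remainder.} Finally, \eqref{eq5.multirem} and its derivative version follow directly from \eqref{eq5.two-core-monomials} together with \eqref{eq5.normalized-source-derivatives}, since the non-displayed monomials are exactly the mixed ones of type other than \((6,0)\) and \((5,1)\).

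The main obstacle is uniformity. The \(o_K(\lambda^{-1})\) error requires the continuity of \(H_a\) up to the diagonal, supplied by the resolvent identity and the three-dimensional convolution bound \(\Psi_3=1\). What I would do first is fix \(K\Subset K'\Subset\Omega\), split each measure into the part inside \(B(x,\lambda^{-1/2})\) and its complement (of relative mass \(O(\lambda^{-1})\)), and use the uniform modulus of continuity of \(H_a\) on \(K'\times K'\).
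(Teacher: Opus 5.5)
Your proposal is correct and follows the paper's proof essentially step for step: Green representation with \(G_a=\Gamma-H_a\), localization at scale \(\lambda^{-1/2}\) via the uniform continuity of \(H_a\), Lemma~\ref{Lem5.1} for the \(\Gamma\)-part and the \((4,1)\) overlap, and \eqref{eq5.two-core-monomials} for the remaining monomials. One point needs tightening: your quadratic and \((5,1)\)-correction bounds require \(0\le\delta_{\lambda,x}-Q_{\lambda,x}\le C_K\lambda^{-1/2}\) on all of \(\Omega\), not only near \(x\), and this difference also contains the small exterior-source term you dropped; the paper gets the global bound from the \(L_a\)-harmonic extension, and you can get it from your \(H_a\)-representation on a neighbourhood of \(K\) plus \(\delta_{\lambda,x}-Q_{\lambda,x}\le\delta_{\lambda,x}\le C\lambda^{-1/2}\) away from \(K\).
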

	
	\begin{proof}
		With the measure defined above,
		\[
		\nu_{\lambda,x}(\mathbb R^3)
		=\mathfrak m_3\lambda^{-1/2},
		\qquad
		Q_{\lambda,x}(z)
		=\int_\Omega G_a(z,\zeta)\,d\nu_{\lambda,x}(\zeta).
		\]
		Put \(d_0=\frac14\operatorname{dist}(K,\partial\Omega)\), and let \(K^+=\{z:\operatorname{dist}(z,K)\le2d_0\}\Subset\Omega\). Let \(\omega_K\) be a modulus of continuity of \(H_a\) on \(K^+\times K^+\).  In the scaled variables the probability measure
		\[
		\widehat\nu_{\lambda,x}
		=\frac{\lambda^{1/2}}{\mathfrak m_3}\nu_{\lambda,x}
		\]
		has density
		\[
		h(u)=\frac{\sigma_3}{\mathfrak m_3}\delta_{1,0}(u)^5,
		\qquad h(u)\le C(1+|u|)^{-5}.
		\]
		Besides the tail bound in \eqref{eq5.two-core-moments}, the elementary three-region split relative to \(0\) and \(b\) gives, for \(L\ge2\),
		\begin{equation}\label{eq5.normalized-tail-convolution}
			\sup_{b\in\mathbb R^3}
			\int_{|u|>L}\frac{h(u)}{1+|u-b|}\,du
			\le CL^{-3}.
		\end{equation}
		Indeed, if \(|b|\le L/2\), then \(|u-b|\ge|u|/2\); if \(|b|>L/2\), split further into \(|u-b|\le|b|/2\) and its complement and integrate radially.
		
		Choose \(L=\lambda^{1/2}\) and \(s_\lambda=L/\lambda=\lambda^{-1/2}\). For large \(\lambda\), the balls \(B(x,s_\lambda)\) and \(B(y,s_\lambda)\) lie in \(K^+\).  On their product the oscillation of \(H_a\) is at most \(\omega_K(2s_\lambda)\).  On the complementary part use \(0\le H_a\le\Gamma\) and
		\[
		\int_{\mathbb R^3}\Gamma(z-\zeta)\,
		d\widehat\nu_{\lambda,y}(\zeta)
		=\frac{\lambda^{1/2}}{\mathfrak m_3}
		\delta_{\lambda,y}(z)
		\le\frac{C\lambda}{1+\lambda|z-y|}.
		\]
		Equations \eqref{eq5.two-core-moments} and \eqref{eq5.normalized-tail-convolution} therefore yield, uniformly for \(x,y\in K\), including \(x=y\),
		\[
		1-\widehat\nu_{\lambda,x}(\Omega)
		\le C_K\lambda^{-2};
		\]
		the same holds with \(y\) in place of \(x\), and these mass losses are included in the following estimate:
		\[
		\left|
		\iint_{\Omega\times\Omega}H_a(z,\zeta)\,
		d\widehat\nu_{\lambda,x}(z)d\widehat\nu_{\lambda,y}(\zeta)
		-H_a(x,y)
		\right|
		\le
		\omega_K(2\lambda^{-1/2})+C_K\lambda^{-1/2}.
		\]
		Consequently
		\begin{equation}\label{eq5.Haverage}
			\iint_{\Omega\times\Omega}H_a(z,\zeta)\,
			d\nu_{\lambda,x}(z)d\nu_{\lambda,y}(\zeta)
			=
			\frac{\mathfrak m_3^2}{\lambda}H_a(x,y)
			+\mathcal R_{K,H}(\lambda;x,y),
		\end{equation}
		where
		\[
		\sup_{x,y\in K}|\mathcal R_{K,H}(\lambda;x,y)|
		\le\frac{\eta_{K,H}(\lambda)}{\lambda},
		\qquad
		\eta_{K,H}(\lambda)
		\le C_K\bigl(\omega_K(2\lambda^{-1/2})+\lambda^{-1/2}\bigr)
		\to0.
		\]
		The same tail convolution estimate, now with \(L=\lambda\operatorname{dist}(K,\partial\Omega)\), also gives
		\begin{equation}\label{eq5.exterior-source-tail}
			0\le
			\iint_{(\mathbb R^3\times\mathbb R^3)\setminus(\Omega\times\Omega)}
			\Gamma(z-\zeta)\,
			d\nu_{\lambda,x}(z)d\nu_{\lambda,y}(\zeta)
			\le C_K\lambda^{-3}.
		\end{equation}
		Thus both the regular-kernel average and the source lost outside \(\Omega\) are uniform in every joint near/far regime of the two centers.
		
		Set \(\theta_{\lambda,x}=\delta_{\lambda,x}-Q_{\lambda,x}\). Green representation and the whole space representation of \(\delta_{\lambda,x}\) give
		\begin{align*}
			\delta_{\lambda,x}(z)-Q_{\lambda,x}(z)
			&=
			\sigma_3\int_\Omega
			H_a(z,\zeta)\delta_{\lambda,x}(\zeta)^5\,d\zeta\\
			&\quad+
			\sigma_3\int_{\mathbb R^3\setminus\Omega}
			\Gamma(z-\zeta)\delta_{\lambda,x}(\zeta)^5\,d\zeta.
		\end{align*}
		Equivalently,
		\[
		L_a\theta_{\lambda,x}=a\delta_{\lambda,x}\quad\hbox{in }\Omega,
		\qquad
		\theta_{\lambda,x}=\delta_{\lambda,x}\quad\hbox{on }\partial\Omega.
		\]
		Let \(h_{\lambda,x}\) be the \(L_a\)-harmonic extension of the boundary value.  Since \(x\in K\Subset\Omega\), the maximum principle gives \(0\le h_{\lambda,x}\le C_K\lambda^{-1/2}\).  Green representation, \(G_a(z,\zeta)\le C|z-\zeta|^{-1}\), and the change of variables \(q=\lambda(z-x)\), \(s=\lambda(\zeta-x)\) give
		\[
		\begin{aligned}
			0\le\theta_{\lambda,x}(z)
			&\le C_K\lambda^{-1/2}
			+C\lambda^{-3/2}
			\int_{|s|\le C_\Omega\lambda}
			|q-s|^{-1}(1+|s|^2)^{-1/2}\,ds.
		\end{aligned}
		\]
		A split into \(|s|\le|q|/2\), \(|q-s|\le|q|/2\), and the complement shows, uniformly for \(|q|\le C_\Omega\lambda\),
		\[
		\int_{|s|\le C_\Omega\lambda}
		|q-s|^{-1}(1+|s|^2)^{-1/2}\,ds
		\le C_\Omega\lambda.
		\]
		Hence
		\begin{equation}\label{eq5.global-projection-defect}
			0\le\theta_{\lambda,x}(z)\le C_K\lambda^{-1/2}
			\qquad(z\in\Omega,\ x\in K).
		\end{equation}
		Taking \(y=x\) in \eqref{eq5.Haverage} and using \eqref{eq5.exterior-source-tail} gives the literal defect identity
		\[
		\begin{aligned}
			\sigma_3\int_\Omega\delta_{\lambda,x}^5
			\theta_{\lambda,x}\,dz
			&=
			\iint_{\Omega\times\Omega}H_a(z,\zeta)\,
			d\nu_{\lambda,x}(z)d\nu_{\lambda,x}(\zeta)\\
			&\quad+
			\iint_{\Omega\times(\mathbb R^3\setminus\Omega)}
			\Gamma(z-\zeta)\,
			d\nu_{\lambda,x}(z)d\nu_{\lambda,x}(\zeta)\\
			&=\frac{\mathfrak m_3^2}{\lambda}R_a(x)
			+o_K(\lambda^{-1}).
		\end{aligned}
		\]
		The Talenti tail also gives
		\[
		\sigma_3\int_\Omega\delta_{\lambda,x}^6\,dz
		=1+O_K(\lambda^{-3}).
		\]
		Therefore
		\[
		\|Q_{\lambda,x}\|_a^2
		=
		\sigma_3\int_\Omega
		\delta_{\lambda,x}^{5}Q_{\lambda,x}\,dz
		=
		1-\frac{\mathfrak m_3^2}{\lambda}R_a(x)
		+o_K(\lambda^{-1}),
		\]
		which proves \eqref{eq5.diagN}.  Expanding \((\delta_{\lambda,x}-(\delta_{\lambda,x}-Q_{\lambda,x}))^6\), the linear term is
		\[
		-6\int_\Omega
		\delta_{\lambda,x}^{5}
		(\delta_{\lambda,x}-Q_{\lambda,x})\,dz
		=
		-\frac{6\mathfrak m_3^2}{\sigma_3\lambda}R_a(x)
		+o_K(\lambda^{-1}).
		\]
		Since \(0\le\theta_{\lambda,x}\le\delta_{\lambda,x}\), \eqref{eq5.global-projection-defect} and scaling give
		\[
		\sum_{k=2}^{6}\int_\Omega
		\delta_{\lambda,x}^{6-k}\theta_{\lambda,x}^{k}
		\le C\|\theta_{\lambda,x}\|_\infty^2
		\int_{\mathbb R^3}\delta_{\lambda,x}^{4}
		\le C_K\lambda^{-2}=o_K(\lambda^{-1}).
		\]
		Thus all terms at least quadratic in the defect are \(o_K(\lambda^{-1})\). This proves \eqref{eq5.diagD}.
		
		The \(O((\lambda|x-y|)^{-2})\) interaction scale is consistent with the estimates in the proof of \cite[Proposition~B.5]{BahriCoron1988}; the precise identity needed here is \eqref{eq5.two-core-Gamma}.  Decrease \(\varepsilon_*\), if necessary, so that \(\varepsilon_\lambda(x,y)\le\varepsilon_*\) implies \(\lambda|x-y|\ge D_*\).  Uniformly on this region,
		\begin{equation}\label{eq5.Gammaaverage}
			\iint_{\mathbb R^3\times\mathbb R^3}\Gamma(z-\zeta)\,
			d\nu_{\lambda,x}(z)d\nu_{\lambda,y}(\zeta)
			=
			\frac{\mathfrak m_3^2}{\lambda}\Gamma(x-y)
			+O\!\left(\varepsilon_\lambda(x,y)^2\right).
		\end{equation}
		The complete three-region proof is contained in Lemma~\ref{Lem5.1}. Uniform continuity of \(H_a\) suffices for \eqref{eq5.Haverage}.
		
		Since \(G_a=\Gamma-H_a\),
		\[
		\langle Q_{\lambda,x},Q_{\lambda,y}\rangle_a
		=
		\iint_{\Omega\times\Omega}G_a(z,\zeta)\,
		d\nu_{\lambda,x}(z)d\nu_{\lambda,y}(\zeta).
		\]
		Combining \eqref{eq5.Haverage}, \eqref{eq5.Gammaaverage}, and \eqref{eq5.exterior-source-tail} proves \eqref{eq5.mixedN} and its remainder estimate, initially with
		\[
		\rho_K(\lambda)
		=C_K\bigl(\eta_{K,H}(\lambda)+\lambda^{-2}\bigr)
		\to0.
		\]
		
		Moreover,
		\[
		\begin{aligned}
			\sigma_3\left|
			\int_\Omega(Q_{\lambda,x}^5-\delta_{\lambda,x}^5)
			Q_{\lambda,y}
			\right|
			&\le
			C_K\lambda^{-1/2}
			\int_\Omega\delta_{\lambda,x}^4\delta_{\lambda,y}\\
			&\le
			C_K\left(
			\lambda^{-2}+\varepsilon_\lambda(x,y)^2
			\right).
		\end{aligned}
		\]
		Here the last inequality is \eqref{eq5.two-core-41}. Enlarging \(\rho_K(\lambda)\) by \(C_K/\lambda\), which still tends to zero, absorbs the term \(C_K\lambda^{-2}\) into \(\rho_K(\lambda)/\lambda\). Together with \eqref{eq5.mixedN}, this proves \eqref{eq5.mixedD} and \eqref{eq5.pairrem}.
		
		Finally, \eqref{eq5.two-core-monomials} gives the undifferentiated and differentiated mixed-monomial bounds, because \((1+D_{ik}^2)^{-1}=\varepsilon_\lambda(x_i,x_k)^2\).  This proves \eqref{eq5.multirem} and its normalized first- and second-derivative version.
	\end{proof}
	
	\begin{remark}\label{Rem5.3}
		The two parts of \eqref{eq5.pairrem} have different origins and play complementary roles.  The term \(\rho_K(\lambda)/\lambda\) collects the uniform-continuity error of the regular Green kernel and the vanishing exterior and projection-tail errors.  If
		\[
		r=r_\lambda\to0,
		\qquad
		\lambda r_\lambda\to\infty,
		\]
		then
		\[
		\frac{G_a(x,y)}{\lambda}
		\asymp\frac1{\lambda r_\lambda},
		\qquad
		\varepsilon_\lambda(x,y)^2
		=O((\lambda r_\lambda)^{-2}).
		\]
		Thus the quadratic interaction error is absorbed by the singular negative pair term, whereas the center-independent \(\rho_K(\lambda)/\lambda\) is absorbed by the fixed Robin-Green gap.
		
		The Robin-Green formulas \eqref{eq5.diagN}-\eqref{eq5.mixedD} are used later only in the scalar \(C^0\)-norm stated in Lemma~\ref{Lem5.2}. The differentiated input for modulation is instead the uniform \(C^2\)-comparison \eqref{eq4.7}-\eqref{eq4.9} on the retained separated set.  This distinction is essential: an ordinary physical center derivative of an interaction at distance \(d/\lambda\) carries a factor \(\lambda\), whereas the normalized field \(\lambda^{-1}\partial_x\) remains uniformly controlled.
	\end{remark}
	
	\subsubsection{The fixed-dimensional three-regime estimate}
	
	For \(j\) fixed, define
	\[
	\varepsilon_{ik}
	=
	(1+\lambda^2|x_i-x_k|^2)^{-1/2},
	\qquad
	\mathcal E_{\lambda,j}(t,x)
	=
	\frac1{A_2(t)}
	\sum_{i<k}t_it_k\varepsilon_{ik}.
	\]
	When all \(\varepsilon_{ik}\) are small, Lemma~\ref{Lem5.2} gives
	\begin{align}
		\|S_{a,\lambda,j}(\mu)\|_a^2
		&=
		A_2(t)
		+\frac{\mathfrak m_3^2}{\lambda}
		\left[
		-\sum_i t_i^2R_a(x_i)
		+2\sum_{i<k}t_it_kG_a(x_i,x_k)
		\right]
		+\mathcal R_N,
		\label{eq5.sumN}\\
		\sigma_3\int_\Omega
		S_{a,\lambda,j}(\mu)^6\,dz
		&=
		B_6(t)
		+\frac{6\mathfrak m_3^2}{\lambda}
		\left[
		-\sum_i t_i^6R_a(x_i)
		+\sum_{i\ne k}t_i^5t_kG_a(x_i,x_k)
		\right]
		+\mathcal R_D,
		\label{eq5.sumD}
	\end{align}
	where, for a function \(\rho_{j,K}(\lambda)\to0\) independent of \((t,x)\),
	\begin{equation}\label{eq5.sumrem}
		|\mathcal R_N|+|\mathcal R_D|
		\le
		\frac{\rho_{j,K}(\lambda)}{\lambda}
		+
		C_{j,K}\sum_{i<k}\varepsilon_{ik}^2.
	\end{equation}
	
	Taylor expansion of the quotient in the weak-interaction region gives
	\begin{equation}\label{eq5.quotexp}
		\begin{aligned}
			\mathfrak q_a(S_{a,\lambda,j}(\mu))
			&=
			\mathcal A_j(t)
			\bigg[
			1+\frac{\mathfrak m_3^2}{\lambda}\mathscr L_j(t,x)
			+\mathcal R_{\lambda,j}(t,x)
			\bigg],
			\\
			\mathscr L_j(t,x)
			&=
			3\,
			\frac{-\sum_i t_i^2R_a(x_i)
				+2\sum_{i<k}t_it_kG_a(x_i,x_k)}
			{A_2(t)}
			\\
			&\quad
			-6\,
			\frac{-\sum_i t_i^6R_a(x_i)
				+\sum_{i\ne k}t_i^5t_kG_a(x_i,x_k)}
			{B_6(t)}.
		\end{aligned}
	\end{equation}
	where, for every fixed lower weight bound \(\theta>0\), uniformly when \(t_i\ge\theta\) and \(\max_{i<k}\varepsilon_{ik}\le\varepsilon_*\),
	\[
	\bigl|\mathcal R_{\lambda,j}(t,x)\bigr|
	\le
	\frac{\widetilde\rho_{j,K}(\lambda)}{\lambda}
	+C_{j,K}\sum_{i<k}\varepsilon_{ik}^2,
	\qquad
	\widetilde\rho_{j,K}(\lambda)\to0.
	\]
	Indeed \(A_2(t)\ge j^{-1}\) and \(B_6(t)\ge j^{-5}\).  The square of the first-order correction produced by Taylor's formula is bounded, for fixed \(j\), by
	\[
	C_{j,K}\left(\lambda^{-2}
	+\sum_{i<k}\varepsilon_{ik}^2\right)
	\]
	and is therefore included in the displayed remainder. At the balanced weight \(e_j=(1/j,\ldots,1/j)\), this reduces to the sharp Robin-Green formula
	\begin{equation}\label{eq5.equalexp}
		\begin{aligned}
			\mathfrak q_a
			\left(
			\frac1j\sum_{i=1}^jQ_{\lambda,x_i}
			\right)
			&=
			b_j\bigg[
			1+\frac{3\mathfrak m_3^2}{j\lambda}
			\left(
			\sum_{i=1}^jR_a(x_i)
			-2\sum_{i<k}G_a(x_i,x_k)
			\right)
			\\
			&\hspace{2.8cm}
			+\mathcal R_{\lambda,j}(e_j,x)
			\bigg].
		\end{aligned}
	\end{equation}
	
	\begin{lemma}\label{Lem5.4}
		Fix \(j\ge2\) and \(K\Subset\Omega\).
		
		\begin{enumerate}
			\item[\rm (a)] If \(F\subset\Delta_{j-1}\) is closed and \(e_j=(1/j,\ldots,1/j)\notin F\), then there are \(\eta_F>0\) and \(\Lambda_F>1\) such that
			\begin{equation}\label{eq5.weight-rigidity}
				\mathfrak q_a(S_{a,\lambda,j}(t,x))
				\le b_j(1-\eta_F)
			\end{equation}
			for \(\lambda\ge\Lambda_F\), \(t\in F\), and \(x\in K^j\), including coincident centers.
			
			\item[\rm (b)] For every \(\theta>0\) and \(R<\infty\), there are \(\eta_{\rm c}=\eta_{\rm c}(j,K,\theta,R)>0\) and \(\Lambda_{\rm c}>1\) such that
			\begin{equation}\label{eq5.collision-rigidity}
				\mathfrak q_a(S_{a,\lambda,j}(t,x))
				\le b_j(1-\eta_{\rm c})
			\end{equation}
			whenever \(\lambda\ge\Lambda_{\rm c}\), \(\min_i t_i\ge\theta\), and
			\[
			\min_{r<s}\lambda|x_r-x_s|\le R.
			\]
		\end{enumerate}
		Both estimates remain valid on arbitrary intersections of the indicated weight and collision sets, with the smaller of the corresponding gaps.
	\end{lemma}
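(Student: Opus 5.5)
We argue by contradiction in both parts, extracting a limit configuration by compactness of $K^j$ and of the simplex (or of $F$), and in the limit bound the quotient by a purely Euclidean profile computation. The only inputs we need are positivity, $0\le Q_{\lambda,x}\le\delta_{\lambda,x}$, the normalizations of Lemma~\ref{Lem4.1}, and the Gram structure of Lemma~\ref{Lem4.3}. We do not need the sharp $\lambda^{-1}$ expansions, since the claimed gaps are of order one.

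\textbf{Part (a).} We use $\mathfrak q_a(S)\le Q_0(S)$ up to the nonnegative potential term. Since $\int a S^2\ge0$ increases the numerator, this inequality points the wrong way. We therefore keep the numerator as $\|S\|_a^2$ and instead bound it below and the denominator above:
\[
\|S\|_a^2=\sum_{i,k}t_it_k\langle Q_i,Q_k\rangle_a\ge\sum_i t_i^2\|Q_i\|_a^2 ,
\]
because the cross terms are nonnegative by the positive Green representation. Together with Lemma~\ref{Lem4.1} this gives $\|S\|_a^2\ge A_2(t)(1-o(1))$.

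For the denominator, we group the centres into clusters whose mutual distances are of size $O(\lambda^{-1})$ along a subsequence; if needed we also rescale about each cluster. Across well-separated clusters the $L^6$ norm splits up to $o(1)$. Within a cluster, the rescaled functions converge to $\sum_{i\in C}t_i\delta_{1,y_i}$. Minkowski's inequality together with the sharp Sobolev inequality for this limiting sum gives
\[
\sigma_3\int\Bigl(\sum_{i\in C}t_i\delta_{y_i}\Bigr)^6\le\Bigl\|\sum_{i\in C}t_i\delta_{y_i}\Bigr\|_{D^{1,2}}^6 .
\]
Using positive overlaps again, the resulting quotient in the limit is at most $\sigma_3\,A_2(t)^3/B_6'(t)$ with $B_6'\le$ ``$B_6$ with coalesced clusters''. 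This route gets messy, so the cleaner plan is the following. We show
\[
\limsup_{\lambda\to\infty}\sup_{x\in K^j}\mathfrak q_a(S_{a,\lambda,j}(t,x))\le\max_{\pi}\sigma_3\frac{\bigl(\sum_{C\in\pi}\tau_C^2\bigr)^3}{\sum_{C\in\pi}\tau_C^6}\cdot(\text{factor}\le1)\le b_{|\pi|},
\]
where $\pi$ runs over partitions of $\{1,\dots,j\}$ into clusters with $\tau_C=\sum_{i\in C}t_i$, and each cluster behaves at best like a single bubble, by Sobolev. If some cluster has two elements, then $|\pi|\le j-1$ and we get the bound $b_{j-1}<b_j$. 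Otherwise all clusters are singletons, and we get $\mathcal A_j(t)$, which is bounded by $b_j(1-2\eta_F)$ on the closed set $F\not\ni e_j$ by strictness in the power-mean inequality. Uniformity in $t\in F$ and $x$ then follows from a contradiction sequence and compactness.

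\textbf{Part (b)} is exactly the first alternative above. The condition $\min_{r<s}\lambda|x_r-x_s|\le R$ forces at least one pair $r,s$ to remain at rescaled distance $\le R$. The bound $t_i\ge\theta$ keeps both weights nondegenerate. In the limit we obtain the profile $t_r\delta_{1,0}+t_s\delta_{1,y}$ with $|y|\le R$ (plus other bubbles), whose Sobolev quotient is at most that of a single merged bubble of energy one. Hence the total quotient is at most $\sigma_3\bigl(\|\cdot\|^2\text{ sum}\bigr)^3/(\cdots)$, which we compare with $b_{j-1}$, or more carefully with $b_j(1-\eta)$ using the strict Minkowski/Sobolev gap for two bubbles at bounded distance.

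\textbf{Main obstacle.} The hardest step is making the cluster comparison rigorous: within a cluster, the quotient of a sum of $k$ bubbles at bounded mutual distances must be bounded strictly below $b_k$. This would rule out a cluster mimicking $k$ orthogonal bubbles. We expect this to follow from the fact that bounded distance gives a positive overlap $\langle\delta_i,\delta_k\rangle\ge c(R)>0$, which increases the denominator only through $\int\delta_i^5\delta_k$ while the numerator grows by $2t_it_k\langle\cdot\rangle$. Concretely, we would verify it by the sharp inequality $\int(\sum t_i\delta_i)^6\le S^{-3}\|\sum t_i\delta_i\|^6$ combined with $\|\sum t_i\delta_i\|^2\le(\sum t_i)^2$, using $\langle\delta_i,\delta_k\rangle\le1$. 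This gives a quotient at most $b_1\cdot(\text{ratio})$, which is less than $b_j$ for $j\ge2$, and so yields a uniform positive gap.

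Finally, on intersections of the weight and collision sets, the smaller of the two gaps works trivially.
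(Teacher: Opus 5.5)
There is a genuine gap: the central inequality in your argument points the wrong way. To bound $\mathfrak q=\|S\|^{6}/\int S^{6}$ from above you need an upper bound on the numerator and a lower bound on the denominator. You prove $\|S\|_a^2\ge\sum_it_i^2\|Q_i\|_a^2$ and $\sigma_3\int(\sum_it_i\delta_{y_i})^6\le\|\sum_it_i\delta_{y_i}\|^6$, and both of these bound $\mathfrak q$ from \emph{below}. The sharp Sobolev inequality says exactly that every nonzero function has quotient at least $\sigma_3=b_1$; a single bubble is the minimizer, not the maximizer. So Sobolev can never give ``a quotient at most $b_1\cdot(\text{ratio})$''. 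Pairing it with the upper bound $\|\sum_it_i\delta_i\|^2\le(\sum_it_i)^2$ on the numerator, as in your last paragraph, controls nothing.

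The cluster claim is also false in substance: distinct profiles at bounded rescaled distance do not behave like one merged bubble. For $\tfrac12(\delta_{1,0}+\delta_{1,y})$ with $|y|=R$ fixed, the quotient is $b_2\bigl(1-3\epsilon_R+O(\epsilon_R^2)\bigr)$, where $\epsilon_R=\langle\delta_{1,0},\delta_{1,y}\rangle_{\dot H^1}\to0$ as $R\to\infty$. This is arbitrarily close to $b_2=4\sigma_3$ and far above $b_1$. So a collision never drops the configuration to $b_{|\pi|}\le b_{j-1}$. It only produces a strict gap that degenerates as $R\to\infty$, which is why $\eta_{\rm c}$ must depend on $R$. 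Only limiting profiles that coincide exactly merge (their coefficients add) and reduce the count.

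The missing ingredient is a reverse, convexity-type inequality that uses the bubble equation. The paper proceeds in three steps:
\begin{itemize}
\item It transfers $\mathfrak q_a(S_{a,\lambda,j})$ uniformly to the Dirichlet quotient of $\sum_it_iP\delta_{\lambda,x_i}$. In dimension three $\int aS^2=O(\lambda^{-1})$ uniformly, so there is no need to fight the sign of the potential.
\item It invokes the Bahri--Coron finite-bubble convexity bound (their Lemma~B.2 and Corollary~B.3).
\item It runs essentially your cluster argument, but with strict convexity inside every nonsingleton cluster and the power-mean inequality when there are $j$ singletons.
\end{itemize}
A self-contained substitute is Lemma~\ref{Lem5.7}, whose proof uses only $p>2$ and therefore applies for $p=6$. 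Take $z_i=\delta_{1,y_i}$ and $Z=\sum_{i=1}^k\beta_iz_i$, normalized by $\|Z\|^2=k$. Integrating $D_\beta\ge0$ and using $-\Delta z_i=\sigma_Nz_i^{p-1}$ gives
\[
\frac{\|Z\|^{p}}{\int Z^{p}}=\frac{b_k}{1+k^{-1}\sigma_N\int D_\beta}\le b_k .
\]
This yields each case of the lemma:
\begin{itemize}
\item With $t_i\ge\theta$ one has $\beta_i\ge\sqrt k\,\theta$. Lemma~\ref{Lem5.7}(i), together with $\sigma_N\int\delta_{1,0}^{p-1}\delta_{1,y}\ge c(1+|y|)^{-(N-2)}$, then gives the $R$-dependent gap in (b).
\item Fewer than $j$ positive-weight profiles give at most $b_{j-1}$.
\item The all-singleton limit gives $\mathcal A_j(t)$, which is uniformly below $b_j$ on $F$.
\end{itemize}
With this inequality in place of Sobolev, your compactness-and-contradiction skeleton goes through.
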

	
	\begin{proof}
		We first isolate the transfer from the ordinary Dirichlet family.  Put
		\[
		T_{\lambda,j}(t,x)=\sum_i t_iP\delta_{\lambda,x_i}.
		\]
		For \(w^+\not\equiv0\), define the ordinary Dirichlet quotient by
		\[
		\mathfrak q_0(w)
		=\frac{\|\nabla w\|_2^6}
		{\displaystyle\int_\Omega(w^+)^6\,dx}.
		\]
		For some \(i_0\), \(t_{i_0}\ge1/j\).  Positivity and the diagonal estimates give, uniformly on the entire compactified common-scale parameter space,
		\[
		\begin{aligned}
			\|S_{a,\lambda,j}\|_a^2&\ge\frac1{2j},&
			\int_\Omega S_{a,\lambda,j}^6&\ge c_j>0,\\
			\|\nabla T_{\lambda,j}\|_2^2&\ge\frac1{2j},&
			\int_\Omega T_{\lambda,j}^6&\ge c_j>0
		\end{aligned}
		\]
		for all sufficiently large \(\lambda\).  Lemma~\ref{Lem4.2} and the three-dimensional \(L^2\)-scaling yield
		\[
		\sup_{t\in\Delta_{j-1},\,x\in K^j}
		\left\{
		\|S_{a,\lambda,j}-T_{\lambda,j}\|_a
		+
		\int_\Omega aT_{\lambda,j}^2
		\right\}\to0.
		\]
		Consequently
		\begin{equation}\label{eq5.global-transfer}
			\sup_{t\in\Delta_{j-1},\,x\in K^j}
			\left|
			\mathfrak q_a(S_{a,\lambda,j})
			-
			\mathfrak q_0(T_{\lambda,j})
			\right|\to0.
		\end{equation}
		This \(C^0\)-transfer is uniform over all separations and all nonnegative weights, including zero weights.
		
		We recall the ordinary equality mechanism and include the stronger compactness argument needed here.  We use the common-scale specialization of Appendix~B of Bahri-Coron: in their notation,
		\[
		n=j,\qquad \alpha=t,\qquad
		\widetilde\psi(t,x,\lambda)=\mathfrak q_0(T_{\lambda,j}(t,x)).
		\]
		Lemma~B.2 gives the finite-bubble convexity bound, Lemma~B.4 treats a small coefficient, and Lemma~B.7 gives a threshold bound, under a uniform positive lower bound on the coefficients, when the physical separation \(\min_{r<s}|x_r-x_s|\) is sufficiently small \cite{BahriCoron1988}.  Thus bounded \(\lambda_n\min_{r<s}|x_{r,n}-x_{s,n}|\) is covered along common-scale sequences with \(\lambda_n\to\infty\).  Corollary~B.3 supplies
		\[
		\sup_{t\in\Delta_{j-1},\,x\in K^j}
		\mathfrak q_0(T_{\lambda,j}(t,x))
		\le b_j+o_j(1).
		\]
		We now prove the stronger sequential rigidity, including convergence to equal weights and a uniform loss on closed degeneration strata:
		\begin{equation}\label{eq5.ordinary-rigidity}
			\begin{split}
				\lambda_n\to\infty,\qquad
				\mathfrak q_0(T_{\lambda_n,j}(t_n,x_n))\to b_j
				\quad\to\quad
				t_n\to e_j,\qquad
				\min_{r<s}\lambda_n|x_{r,n}-x_{s,n}|\to\infty .
			\end{split}
		\end{equation}
		The following cluster argument proves this conclusion. After taking \(t_n\to t\), partition the labels into clusters according to boundedness of \(\lambda_n|x_{r,n}-x_{s,n}|\).  Translate each cluster by one of its centers and pass to limits of all bounded relative centers.  Projection errors vanish because \(K\Subset\Omega\), interactions between distinct clusters vanish, and the numerator and denominator split into the finite sum of their whole space cluster contributions.  A zero-weight label is deleted.  Within each cluster, first merge labels whose limiting Talenti profiles coincide; the corresponding positive coefficients add and the limiting sum is unchanged.  If this leaves fewer than \(j\) effective positive profiles, the finite-bubble convexity bound and the power-mean inequality give at most \(b_{j-1}<b_j\).  Otherwise, every nonsingleton cluster contains two distinct nonproportional limiting profiles, and the strict-convexity step in Lemma~B.2 is strict.  Thus equality at \(b_j\) requires exactly \(j\) singleton clusters; the last finite-dimensional power-mean equality then forces \(t=e_j\).  This proves \eqref{eq5.ordinary-rigidity} and also proves that the loss is uniform on every closed set which excludes one of these equality conditions.
		
		If (a) failed, a sequence with \(\lambda_n\to\infty\), \(t_n\in F\), and quotient tending to \(b_j\) would, by \eqref{eq5.global-transfer} and \eqref{eq5.ordinary-rigidity}, satisfy \(t_n\to e_j\), contradicting the closedness of \(F\).  If (b) failed, the same argument would force every rescaled pair distance to infinity, contradicting the displayed collision bound.  Passing to the minimum of finitely many positive gaps proves the assertion about intersections.
	\end{proof}
	
	For later collar constructions it is useful to record the overlapping parameter regions before selecting their widths.  Given \(\theta,\tau>0\) and \(R>1\), set
	\begin{align*}
		\mathscr Z_j(\theta)
		&=
		\{(t,x):\min_i t_i\le\theta\},\\
		\mathscr U_j(\theta,\tau/2)
		&=
		\{(t,x):\min_i t_i\ge\theta,\
		\max_i|jt_i-1|\ge\tau/2\},\\
		\mathscr C_j(\theta,\tau,2R;\lambda)
		&=
		\{(t,x):\min_i t_i\ge\theta,\
		\max_i|jt_i-1|\le\tau,\
		\min_{r<s}\lambda|x_r-x_s|\le2R\},\\
		\mathscr A_j(\theta,\tau,R;\lambda)
		&=
		\{(t,x):\min_i t_i\ge\theta,\
		\max_i|jt_i-1|\le\tau,\
		\min_{r<s}\lambda|x_r-x_s|\ge R\},\\
		\mathscr W_j(\theta,\tau/2,2R;\lambda)
		&=
		\{(t,x):\min_i t_i>\theta,\
		\max_i|jt_i-1|<\tau/2,\
		\min_{r<s}\lambda|x_r-x_s|>2R\}.
	\end{align*}
	Thus the weight interface is covered twice on \(\tau/2\le\max_i|jt_i-1|\le\tau\), and the collision interface is covered twice on \(R\le\min_{r<s}\lambda|x_r-x_s|\le2R\), by \(\mathscr C_j\cap\mathscr A_j\).  The zero-weight and positive pieces meet on \(\min_i t_i=\theta\).  These wider analytic overlaps are exactly the ones used by the artificial interfaces in Section~\ref{sec:topology-module}; the strict fine stratum \(\mathscr W_j\) is disjoint from all three degeneration collars.
	
	\begin{theorem}\label{Thm5.5}
		Fix \(j\ge2\) and \(K\Subset\Omega\).  There are explicitly ordered constants
		\[
		0<\theta_j<\frac1{4j},\qquad
		0<\delta_j<\frac12,\qquad
		R_j>2,\qquad
		\lambda_j>1,
		\]
		and positive gaps \(\eta_{j,\rm z},\eta_{j,\rm u},\eta_{j,\rm c}\) such that, for every \(\lambda\ge\lambda_j\), the following assertions hold.
		
		\begin{enumerate}
			\item[\rm (i)] On the vanishing-weight collar \(\mathscr Z_j(\theta_j)\),
			\begin{equation}\label{eq5.zero-gap}
				\mathfrak q_a(S_{a,\lambda,j})\le
				b_j(1-\eta_{j,\rm z}).
			\end{equation}
			
			\item[\rm (ii)] On the positive unbalanced region \(\mathscr U_j(\theta_j,\delta_j/2)\),
			\begin{equation}\label{eq5.unbalanced}
				\mathfrak q_a(S_{a,\lambda,j})\le
				b_j(1-\eta_{j,\rm u}).
			\end{equation}
			
			\item[\rm (iii)] On the balanced collision collar \(\mathscr C_j(\theta_j,\delta_j,2R_j;\lambda)\),
			\begin{equation}\label{eq5.strong}
				\mathfrak q_a(S_{a,\lambda,j})\le
				b_j(1-\eta_{j,\rm c}).
			\end{equation}
			
			\item[\rm (iv)] On the strict balanced weak-interaction common-scale stratum \(\mathscr W_j(\theta_j,\delta_j/2,2R_j;\lambda)\), \eqref{eq5.sumN}-\eqref{eq5.quotexp} hold with the uniform scalar remainder
			\begin{equation}\label{eq5.stratified-remainder}
				|\mathcal R_N|+|\mathcal R_D|
				+|\mathcal R_{\lambda,j}|
				\le
				\frac{o_{j,K}(1)}{\lambda}
				+C_{j,K}\sum_{r<s}\varepsilon_{rs}^2.
			\end{equation}
			All degree-six terms involving three or more centers are included in this estimate by \eqref{eq5.multirem}.
		\end{enumerate}
		
		The three degeneration collars together with the strict fine stratum cover \(\Delta_{j-1}\times K^j\).  All estimates applicable on an overlap hold simultaneously, so every degeneration intersection carries the minimum of its applicable positive gaps.  The fixed-gap estimates apply on the degeneration strata, while the Robin--Green expansion applies on the strict fine stratum.
		
		The constants can also be chosen so that, on the \(\delta_j/2\)-balanced strict fine stratum
		\[
		\mathscr W_j(\theta_j,\delta_j/2,2R_j;\lambda),
		\]
		every Green pair coefficient in \(\mathscr L_j\) is at most \(-c_{j,K}<0\),
		\[
		\frac{G_a(x_r,x_s)}{\lambda}
		\ge\gamma_K\varepsilon_{rs},
		\]
		and the quadratic pair remainder is at most one half of the absolute first-order pair contribution.  Finally, for every prescribed \(\bar\delta,\bar\varepsilon\in(0,1)\), the fixed-gap conclusions hold, with new positive gaps and a new lower scale, on
		\[
		\max_i|jt_i-1|\ge\bar\delta
		\quad\hbox{or}\quad
		\left[
		\max_i|jt_i-1|<\bar\delta,\quad
		\max_{r<s}\varepsilon_{rs}\ge\bar\varepsilon
		\right].
		\]
	\end{theorem}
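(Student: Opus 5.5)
The constants are fixed in the order \(\theta_j\), \(\delta_j\), \(R_j\), and then \(\lambda_j\). Each degeneration collar is then obtained from Lemma~\ref{Lem5.4} by a compactness argument, and the fine stratum is handled by the expansion \eqref{eq5.sumN}--\eqref{eq5.quotexp}.

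\emph{Step 1 (vanishing weights).} Apply Lemma~\ref{Lem5.4}(a) with the closed set \(F_{\rm z}=\{t\in\Delta_{j-1}:\min_it_i\le\frac1{8j}\}\), which does not contain \(e_j\). This gives a gap \(\eta_{\rm z}\) and a threshold \(\Lambda_{\rm z}\). Any \(\theta_j\le\frac1{8j}<\frac1{4j}\) then yields (i).

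\emph{Step 2 (unbalanced weights).} For a tentative \(\delta_j\), apply Lemma~\ref{Lem5.4}(a) to \(F_{\rm u}=\{\max_i|jt_i-1|\ge\delta_j/2\}\). This set is closed and excludes \(e_j\), so (ii) follows.

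\emph{Step 3 (collisions).} The choice of \(\delta_j\) must be made before \(R_j\), because the fine-stratum coefficients depend on it. Take \(\delta_j\) small enough that on \(\max_i|jt_i-1|<\delta_j\) every weight satisfies \(t_i\ge\frac1{2j}\ge\theta_j\). For any fixed \(R\), Lemma~\ref{Lem5.4}(b) with \(\theta=\theta_j\) gives (iii) on \(\mathscr C_j(\theta_j,\delta_j,2R;\lambda)\). The only issue is that \(R_j\) is determined later by the fine-stratum requirements, so \(\eta_{j,\rm c}\) and the corresponding \(\Lambda\) are read off only after \(R_j\) is fixed. Covering is elementary: a point outside \(\mathscr Z_j\cup\mathscr U_j\cup\mathscr C_j\) has \(\min t_i>\theta_j\), \(\max|jt_i-1|<\delta_j/2\), and \(\min\lambda|x_r-x_s|>2R_j\), so it lies in \(\mathscr W_j\). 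The last statement of the theorem about \(\bar\delta,\bar\varepsilon\) also follows from Lemma~\ref{Lem5.4}. The set \(\{\max|jt_i-1|\ge\bar\delta\}\) is closed and excludes \(e_j\). On the set \(\max_{r<s}\varepsilon_{rs}\ge\bar\varepsilon\), a pair satisfies \(\lambda|x_r-x_s|\le R(\bar\varepsilon)\) with \(R(\bar\varepsilon)=(\bar\varepsilon^{-2}-1)^{1/2}\); there, either some weight is \(\le\theta\), which is covered by part (a), or part (b) applies with that \(R\).

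\emph{Step 4 (fine stratum).} This is the main step. On \(\mathscr W_j\), choose \(R_j\) large enough that \(\varepsilon_{rs}\le(1+4R_j^2)^{-1/2}\le\varepsilon_*\). Then Lemma~\ref{Lem5.2} applies to every pair, and summation gives \eqref{eq5.sumN}--\eqref{eq5.sumD} with remainder \eqref{eq5.sumrem}. The terms involving three or more centers are controlled by \eqref{eq5.multirem}. The Taylor expansion of \(\mathfrak q_a\) uses \(A_2\ge j^{-1}\), \(B_6\ge j^{-5}\), and \(t_i\ge\theta_j\), which yields \eqref{eq5.stratified-remainder}.

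For the sign of the pair coefficients, the coefficient of \(G_a(x_r,x_s)\) in \(\mathscr L_j\) is
\[
\frac{6t_rt_s}{A_2}-\frac{6(t_r^5t_s+t_s^5t_r)}{B_6}.
\]
At \(e_j\) this equals \(6/j-12/j=-6/j\). By continuity, shrinking \(\delta_j\) keeps it at most \(-c_{j,K}=-3/j\).

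For the comparison \(G_a(x_r,x_s)/\lambda\ge\gamma_K\varepsilon_{rs}\), use \eqref{eq2.6} near the diagonal and \eqref{eq2.7} away from it. Near the diagonal, \(G_a\ge c|x_r-x_s|^{-1}\); since \(\lambda|x_r-x_s|\ge 2R_j>1\), this gives \(G_a/\lambda\ge c/(\lambda|x_r-x_s|)\ge c'\varepsilon_{rs}\). Away from the diagonal, \(G_a\ge g_K\) and \(\varepsilon_{rs}\le 1/(\lambda|x_r-x_s|)\le C/\lambda\).

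The quadratic pair remainder \(C_{j,K}\varepsilon_{rs}^2\) must be at most half of \(\frac{\mathfrak m_3^2c_{j,K}}{\lambda}G_a\ge \mathfrak m_3^2c_{j,K}\gamma_K\varepsilon_{rs}\). Since \(\varepsilon_{rs}\le(2R_j)^{-1}\), this requires only \(R_j\ge C_{j,K}/(\mathfrak m_3^2c_{j,K}\gamma_K)\). Finally set \(\lambda_j\) to be the maximum of all thresholds from Steps 1--4 and from Lemma~\ref{Lem5.2}.

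The obstacle I expect is the circularity among \(\delta_j\), \(R_j\), and the gaps. The ordering \(\theta_j\to\delta_j\to R_j\to\) (gaps, \(\lambda_j\)) resolves it. This works because Lemma~\ref{Lem5.4}(b) holds for arbitrary fixed \(R\), so \(R_j\) can be chosen from the fine-stratum requirements alone. A second point to check is that \(\gamma_K\) does not depend on \(R_j\); this holds because both bounds on \(G_a\) are intrinsic to \(K\).
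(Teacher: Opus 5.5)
Your proposal is correct and follows essentially the paper's route. Like the paper, you use Lemma~\ref{Lem5.4}(a),(b) for the three collars, with the gaps read off only after the final \(\delta_j,R_j\) are fixed, and Lemma~\ref{Lem5.2} plus the uniform Taylor expansion on the fine stratum; on that stratum you also use continuity of the Green pair coefficient near its value \(-6/j\) at \(e_j\), the near/far-diagonal bound \(G_a/\lambda\ge\gamma_K\varepsilon_{rs}\), and an enlargement of \(R_j\) to absorb \(C_{j,K}\varepsilon_{rs}^2\) pair by pair. The only difference is cosmetic: you fix \(\theta_j\le 1/(8j)\) first, independently of \(\delta_j\), using that \(\delta_j<1/2\) forces balanced weights \(\ge 1/(2j)\), which spares you the paper's later re-decrease of \(\theta_j\).
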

	
	\begin{proof}
		Choose any \(0<\delta_j<1/2\).  Since
		\[
		\max_i|jt_i-1|<\delta_j
		\quad\to\quad
		t_i>\frac{1-\delta_j}{j},
		\]
		choose
		\[
		0<\theta_j<
		\min\left\{\frac1{4j},\frac{1-\delta_j}{4j}\right\}.
		\]
		The two closed weight sets
		\[
		F_{\rm z}=\{\min_i t_i\le\theta_j\},
		\qquad
		F_{\rm u}=\{\min_i t_i\ge\theta_j,\
		\max_i|jt_i-1|\ge\delta_j/2\}
		\]
		are disjoint from \(e_j\).  Lemma~\ref{Lem5.4}(a) gives their separate gaps.  Choose \(R_j>2\) so large that
		\[
		(1+R_j^2)^{-1/2}<\varepsilon_*
		\]
		and provisionally fix it; it may be enlarged below. Lemma~\ref{Lem5.4}(b), with \(2R_j\), supplies the collision gap on the smaller balanced collision set. Taking the largest of the three lower scales proves (i)-(iii), including all their intersections.
		
		On \(\mathscr W_j(\theta_j,\delta_j/2,2R_j;\lambda)\), Lemma~\ref{Lem5.2} gives \eqref{eq5.sumN}-\eqref{eq5.sumrem}; the lower weight bound makes the Taylor expansion uniform and gives \eqref{eq5.quotexp}.  The diagonal and one-foreign-bubble errors contribute \(o_{j,K}(1)/\lambda\), while \eqref{eq5.multirem} accounts for every other monomial.  This proves \eqref{eq5.stratified-remainder}.
		
		At \(e_j\), every Green pair coefficient in \(\mathscr L_j\) equals \(-6/j\).  By decreasing \(\delta_j\), and then decreasing \(\theta_j\) if necessary, all those coefficients remain below a fixed negative number.  For every fixed \(R_j>1\),
		\[
		\inf_{\substack{\lambda\ge1,\ x,y\in K,\ x\ne y\\
				\lambda|x-y|\ge R_j}}
		\frac{G_a(x,y)}
		{\lambda(1+\lambda^2|x-y|^2)^{-1/2}}>0.
		\]
		Near the diagonal this follows from \(|x-y|G_a(x,y)\to(4\pi)^{-1}\), uniformly on \(K\), and away from it from positivity and compactness.  Hence \(G_a/\lambda\ge\gamma_K\varepsilon\).  Since
		\[
		\sum_{r<s}\varepsilon_{rs}^2
		\le
		\left(\max_{r<s}\varepsilon_{rs}\right)
		\sum_{r<s}\varepsilon_{rs},
		\]
		enlarging \(R_j\) absorbs the quadratic remainder pair by pair.  The center-independent \(o_{j,K}(1)/\lambda\) term is deliberately retained in \eqref{eq5.stratified-remainder}; it is absorbed only after the multiplicity \(m\) has been fixed.
		
		The decreases of \(\delta_j,\theta_j\) and the enlargement of \(R_j\) change the closed collars.  Reapply Lemma~\ref{Lem5.4} to these final weight sets and to the collision bound \(2R_j\), enlarge \(\lambda_j\), and rename the three positive gaps.  This proves {\rm(i)}-{\rm(iii)} for the final constants, including every collar intersection.  Apply the preceding expansion argument with the final positive lower weight and separation bounds to obtain {\rm(iv)} on its final strict fine stratum.
		
		For prescribed \(\bar\delta,\bar\varepsilon\), apply Lemma~\ref{Lem5.4}(a) to the closed unbalanced weight set.  On its complement all weights are bounded below, and \(\varepsilon_{rs}\ge\bar\varepsilon\) is equivalent to
		\[
		\lambda|x_r-x_s|
		\le(\bar\varepsilon^{-2}-1)^{1/2}.
		\]
		Lemma~\ref{Lem5.4}(b) then gives the other fixed gap.
	\end{proof}
	
	\subsubsection{Choice of one multiplicity and the energy drop}
	
	Choose an integer \(m\ge2\) such that
	\begin{equation}\label{eq5.mchoice}
		D_{m,K}:=(m-1)g_K-M_K>0.
	\end{equation}
	This is possible because \(g_K>0\) and \(M_K<\infty\).  At equal weights, \eqref{eq5.equalexp} gives
	\[
	\frac{3\mathfrak m_3^2}{m}
	\left(
	\sum_iR_a(x_i)-2\sum_{i<k}G_a(x_i,x_k)
	\right)
	\le
	-3\mathfrak m_3^2D_{m,K}.
	\]
	For later use, we record why this remains true near equal weights.  The coefficient of \(R_a(x_i)\) in \(\mathscr L_m\) is
	\[
	-3\frac{t_i^2}{A_2(t)}
	+6\frac{t_i^6}{B_6(t)},
	\]
	whereas the coefficient of \(G_a(x_i,x_k)\) is
	\[
	6t_it_k
	\left[
	\frac1{A_2(t)}
	-\frac{t_i^4+t_k^4}{B_6(t)}
	\right].
	\]
	At \(e_m\) these coefficients are \(3/m\) and \(-6/m\), respectively. Since \(m\) is now fixed and \eqref{eq5.mchoice} is strict, we may decrease \(\delta_m\) in Theorem~\ref{Thm5.5} so that throughout the \(\delta_m/2\)-balanced region,
	\begin{equation}\label{eq5.coeffnegative}
		\mathscr L_m(t,x)
		\le
		-\frac32D_{m,K}
		-c_{\rm pair}\sum_{i<k}
		\bigl(G_a(x_i,x_k)-g_K\bigr),
	\end{equation}
	for some \(c_{\rm pair}>0\).  The last nonpositive term records the part of a singular Green interaction which is stronger than the uniform lower bound \(g_K\). Indeed, the Robin contribution is at most \((3+o_{\delta_m}(1))M_K\), while every pair coefficient remains negative and the total pair contribution is at most \(-(3-o_{\delta_m}(1))(m-1)g_K\). Enlarge \(R_m\), if necessary, so that the pairwise absorption in the proof of Theorem~\ref{Thm5.5} also holds with \(c_{\rm pair}\) from \eqref{eq5.coeffnegative}.  After this final decrease of \(\delta_m\) and enlargement of \(R_m\), reapply the fixed-\(m\) rigidity lemma and rename the resulting collar gaps and lower scale.  Put
	\[
	\eta_m
	=
	\min\{\eta_{m,\rm z},\eta_{m,\rm u},\eta_{m,\rm c}\}>0.
	\]
	Thus all four strata below use the same final thresholds.
	
	\begin{proposition}\label{Prop5.6}
		Assume \(N=3\), and let \(K\Subset\Omega\).  Choose \(m\) by \eqref{eq5.mchoice}.  There are \(\lambda_0>1\) and \(\kappa_{m,K}>0\) such that, for every \(\lambda\ge\lambda_0\),
		\begin{equation}\label{eq5.globaldrop}
			\sup_{\mu\in B_m(K)}
			\mathcal J_a(g_{a,\lambda,m}(\mu))
			=
			\sup_{\mu\in B_m(K)}
			\mathfrak q_a(S_{a,\lambda,m}(\mu))
			\le
			b_m\left(1-\frac{\kappa_{m,K}}{\lambda}\right)
			<b_m.
		\end{equation}
	\end{proposition}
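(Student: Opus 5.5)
The plan is to split \(B_m(K)\), identified with \(\Delta_{m-1}\times K^m\) modulo permutations, into the three degeneration collars and the strict fine stratum of Theorem~\ref{Thm5.5}, all taken with the final thresholds \(\theta_m,\delta_m,R_m\) fixed after \eqref{eq5.mchoice} and \eqref{eq5.coeffnegative}. The first equality in \eqref{eq5.globaldrop} is the identity \(\mathcal J_a(g_{a,\lambda,m}(\mu))=\mathfrak q_a(S_{a,\lambda,m}(\mu))\). A point of \(B_m(K)\) whose support has fewer than \(m\) points is represented by a tuple with a zero weight or with coincident centres. Such tuples lie in \(\mathscr Z_m(\theta_m)\) or in the collision collar, so they need no separate treatment.

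On the union of the three collars, Theorem~\ref{Thm5.5}(i)--(iii) gives \(\mathfrak q_a\le b_m(1-\eta_m)\) for \(\lambda\ge\lambda_m\). Since \(\eta_m\) does not depend on \(\lambda\), this is at most \(b_m(1-\kappa/\lambda)\) as soon as \(\kappa\le\eta_m\) and \(\lambda\ge1\). The covering statement of Theorem~\ref{Thm5.5} leaves only the strict fine stratum \(\mathscr W_m(\theta_m,\delta_m/2,2R_m;\lambda)\) to handle.

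On \(\mathscr W_m\) I will use the expansion \eqref{eq5.quotexp} together with \(\mathcal A_m(t)\le b_m\), which is the power-mean inequality. Then \eqref{eq5.coeffnegative} and \eqref{eq5.stratified-remainder} give
\[
\frac{\mathfrak q_a(S_{a,\lambda,m})}{b_m}\le 1-\frac{3\mathfrak m_3^2D_{m,K}}{2\lambda}-\frac{c_{\rm pair}\mathfrak m_3^2}{\lambda}\sum_{i<k}\bigl(G_a(x_i,x_k)-g_K\bigr)+\frac{o_{m,K}(1)}{\lambda}+C_{m,K}\sum_{i<k}\varepsilon_{ik}^2 .
\]
The quadratic term \(C\sum\varepsilon_{ik}^2\) has to be absorbed pair by pair into the negative Green terms. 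By the choice of \(R_m\) in Theorem~\ref{Thm5.5}, on \(\mathscr W_m\) we have \(G_a(x_i,x_k)/\lambda\ge\gamma_K\varepsilon_{ik}\) and \(\varepsilon_{ik}\le(1+4R_m^2)^{-1/2}\), so \(C\varepsilon_{ik}^2\) is at most \(\tfrac12 c_{\rm pair}\mathfrak m_3^2 G_a(x_i,x_k)/\lambda\).

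This absorption is the step I expect to be the main obstacle. The Green term in \eqref{eq5.coeffnegative} appears only as the excess \(G_a-g_K\), not as \(G_a\) itself. For singular pairs this causes no trouble, because \(G_a\) is large compared with \(g_K\). For pairs at macroscopic distance \(\varepsilon_{ik}\lesssim\lambda^{-1}\), so \(\varepsilon_{ik}^2=O(\lambda^{-2})=o(\lambda^{-1})\), and these terms join the \(o(1)/\lambda\) remainder.

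I will handle the intermediate regime by splitting each pair at a fixed threshold \(G_a\ge 2g_K\). Above the threshold the excess is at least \(G_a/2\), and pairwise absorption applies. Below it, \(|x_i-x_k|\ge c_K>0\), hence \(\varepsilon_{ik}^2\le C\lambda^{-2}\). Using the full pair coefficient \(-(3-o(1))G_a\) rather than \eqref{eq5.coeffnegative} would give the same conclusion more directly, so I will choose whichever bookkeeping matches the constants already fixed.

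After absorption the bound becomes \(1-\bigl(\tfrac32\mathfrak m_3^2D_{m,K}-o_{m,K}(1)-C\lambda^{-1}\bigr)/\lambda\). I then choose \(\lambda_0\ge\lambda_m\) so large that the bracket is at least \(\mathfrak m_3^2D_{m,K}\). Setting \(\kappa_{m,K}=\min\{\eta_m,\mathfrak m_3^2D_{m,K}\}\) and taking the supremum over the four pieces proves \eqref{eq5.globaldrop}. Strictness is immediate because \(\kappa_{m,K}>0\).
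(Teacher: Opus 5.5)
Your proposal is correct and follows essentially the same route as the paper. The common steps are the four-region cover of Theorem~\ref{Thm5.5} with the fixed gap $\eta_m$ on the collars; on the strict fine stratum, the expansion \eqref{eq5.quotexp} combined with \eqref{eq5.coeffnegative} and $\mathcal A_m(t)\le b_m$; far pairs contributing only $O(\lambda^{-2})$; and near pairs absorbed one by one into the Green excess $G_a-g_K$. Your split at the level set $G_a=2g_K$, using $G_a/\lambda\ge\gamma_K\varepsilon_{ik}$, is only a reparametrization of the paper's split at a fixed distance $r_0$, where the paper compares $G_a-g_K\ge(16\pi|x-y|)^{-1}$ directly with $C_{m,K}(\lambda|x-y|)^{-2}$ using $\lambda|x-y|>2R_m$.
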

	
	\begin{proof}
		Apply the four-region cover in Theorem~\ref{Thm5.5}.  The three degeneration collars carry the fixed gap \(\eta_mb_m\), by \eqref{eq5.zero-gap}, \eqref{eq5.unbalanced}, and \eqref{eq5.strong}, respectively.  This remains true on all collar intersections.  After increasing \(\lambda_0\), it is stronger than the right-hand side of \eqref{eq5.globaldrop}.
		
		It remains only the strict balanced weak-interaction stratum.  By \eqref{eq5.quotexp}, \eqref{eq5.coeffnegative}, and \(\mathcal A_m(t)\le b_m\), the displayed first-order term is bounded by
		\[
		-\frac{3\mathfrak m_3^2D_{m,K}}{2\lambda}
		-\frac{\mathfrak m_3^2c_{\rm pair}}{\lambda}
		\sum_{i<k}\bigl(G_a(x_i,x_k)-g_K\bigr).
		\]
		The absolute value of the remainder is bounded by
		\[
		\frac{\widetilde\rho_{m,K}(\lambda)}{\lambda}
		+C_{m,K}\sum_{i<k}\varepsilon_{ik}^2,
		\qquad
		\widetilde\rho_{m,K}(\lambda)\to0.
		\]
		
		Choose \(r_0>0\), depending only on \(K\), so small that
		\[
		G_a(x,y)-g_K\ge\frac1{16\pi|x-y|}
		\qquad
		(x,y\in K,\ 0<|x-y|<r_0).
		\]
		This follows from \(|x-y|G_a(x,y)\to(4\pi)^{-1}\), uniformly for \(x,y\in K\), after also requiring \(r_0\le(16\pi g_K)^{-1}\).
		
		For the finitely many far pairs, \(|x_i-x_k|\ge r_0\), one has
		\[
		\varepsilon_{ik}^2
		\le\frac1{\lambda^2r_0^2}.
		\]
		Consequently their total quadratic remainder is \(O_{m,K,r_0}(\lambda^{-2})\).  Together with \(\widetilde\rho_{m,K}(\lambda)/\lambda\), it is bounded by
		\[
		\frac{3\mathfrak m_3^2D_{m,K}}{8\lambda}
		\]
		after increasing \(\lambda_0\).
		
		For a near pair, \(0<|x_i-x_k|<r_0\), the weak-interaction condition gives \(\lambda|x_i-x_k|>2R_m\).  Its negative singular contribution is at least
		\[
		\frac{\mathfrak m_3^2c_{\rm pair}}
		{16\pi\lambda|x_i-x_k|},
		\]
		whereas
		\[
		C_{m,K}\varepsilon_{ik}^2
		\le
		\frac{C_{m,K}}
		{\lambda^2|x_i-x_k|^2}.
		\]
		The final choice of \(R_m\) above was made so large that
		\[
		\frac{C_{m,K}}{2R_m}
		\le \frac{\mathfrak m_3^2c_{\rm pair}}{32\pi}.
		\]
		Then, pair by pair, one half of the negative singular contribution absorbs the quadratic remainder.  The other half remains nonpositive and may be dropped.  Thus, after one final increase of \(\lambda_0\),
		\[
		\mathfrak q_a(S_{a,\lambda,m}(\mu))
		\le
		b_m\left(
		1-\frac{9\mathfrak m_3^2D_{m,K}}{8\lambda}
		\right),
		\]
		and hence, a fortiori,
		\[
		\mathfrak q_a(S_{a,\lambda,m}(\mu))
		\le
		b_m\left(
		1-\frac{3\mathfrak m_3^2D_{m,K}}{4\lambda}
		\right).
		\]
		After the final value of \(\lambda_0\) has been fixed, take
		\[
		\kappa_{m,K}
		=
		\min\left\{
		\frac{3\mathfrak m_3^2D_{m,K}}4,\,
		\lambda_0\eta_m
		\right\}>0.
		\]
		The two fixed-gap branches satisfy the resulting estimate for every \(\lambda\ge\lambda_0\), and \eqref{eq5.globaldrop} follows.
	\end{proof}
	
	\subsection{The matched-scale mechanism in dimensions four and five}
	
	Throughout this subsection
	\[
	N\in\{4,5\},\qquad
	\gamma=\frac p2.
	\]
	The elementary inequality below is the source of uniformity in the number of bubbles.  Its pointwise application to projected bubbles yields this uniformity directly.  For nonnegative numbers \(z_i\) and coefficients \(\beta_i\ge0\), put
	\[
	Z=\sum_i\beta_i z_i,
	\qquad
	D_\beta(z)=
	Z^p-\gamma Z\sum_i\beta_i z_i^{p-1}
	+(\gamma-1)\sum_i z_i^p.
	\]
	
	\begin{lemma}\label{Lem5.7}
		The defect \(D_\beta\) is nonnegative.  Moreover:
		\begin{enumerate}
			\item[\rm (i)] If \(\beta_i\ge a>0\) for every \(i\), then
			\[
			D_\beta(z)
			\ge c_{p,a}\sum_{i\ne j}z_i^{p-1}z_j,
			\]
			where the constant is independent of \(n\).
			\item[\rm (ii)] For every \(0<a<1<b\), there is \(c_{p,a,b}>0\) such that, if \(\beta_\ell\notin[a,b]\), then
			\[
			D_\beta(z)\ge c_{p,a,b}z_\ell^p.
			\]
			The other coefficients are only required to be nonnegative.
		\end{enumerate}
	\end{lemma}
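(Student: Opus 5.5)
The plan is to reduce everything to a scalar inequality, one label at a time. Write \(W_i=\sum_{j\ne i}\beta_jz_j\ge0\), so that \(Z=\beta_iz_i+W_i\) for every \(i\). Using \(Z^p=\sum_i\beta_iz_iZ^{p-1}\), the defect splits as
\[
D_\beta(z)=\sum_i g_{\beta_i,z_i}(Z),
\qquad
g_{\beta,z}(Y)=\beta zY^{p-1}-\gamma\beta z^{p-1}Y+(\gamma-1)z^p,
\]
with each \(g_{\beta_i,z_i}\) evaluated at a point \(Y=Z\ge\beta_iz_i\). It therefore suffices to find lower bounds for \(g_{\beta,z}\) on the half-line \(Y\ge\beta z\) that are uniform in \(\beta\); this is what makes the constants independent of \(n\). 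By homogeneity, set \(\psi(\beta,s)=z^{-p}g_{\beta,z}(\beta z+sz)\) for \(s\ge0\).

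First I show \(g_{\beta,z}\ge0\) on \(Y\ge\beta z\). Since \(p>2\), the function \(g\) is convex in \(Y\). Its unconstrained minimizer is \(Y_*=cz\), where \(c^{p-2}=\gamma/(p-1)\). There are two cases.
\begin{itemize}
\item If \(\beta\ge c\), the constrained minimum is attained at \(Y=\beta z\). There it equals \(z^pf(\beta)\), with \(f(\beta)=\beta^p-\gamma\beta^2+\gamma-1\). Because \(f'(\beta)=p(\beta^{p-1}-\beta)\) and \(f(1)=0\), we get \(f\ge0\), with equality only at \(\beta=1\).
\item If \(\beta<c\), the minimum is at least \(z^p[(\gamma-1)-(p-2)\beta c^{p-1}]\ge z^p[(\gamma-1)-(p-2)c^p]\). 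With \(\gamma=p/2\), this bracket is positive for \(p=4\) (it equals \(1/9\)) and for \(p=10/3\) (it is about \(0.09\)). I would verify these two explicit numbers.
\end{itemize}
This proves \(D_\beta\ge0\). It also proves (ii): the \(\ell\)-th summand alone is at least \(z_\ell^p\min\{\inf_{\beta\notin[a,b]}f,\ (\gamma-1)-(p-2)c^p\}\), and the first infimum is positive because \(f\) vanishes only at \(1\) and \(f(\beta)\to\infty\) as \(\beta\to\infty\). All other summands are nonnegative and can be dropped.

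For (i), I aim for the bound \(g_{\beta_i,z_i}(Z)\ge c'z_i^{p-1}W_i\). Combined with \(W_i\ge a\sum_{j\ne i}z_j\) and summation over \(i\), this gives the claim. I split according to the size of \(s=W_i/z_i\).
\begin{itemize}
\item \emph{Large ratio}, \(s\ge K\) with \(K^{p-2}\ge2\gamma\). Then \(Y=Z\ge W_i\ge Kz_i\), so \(Y^{p-2}\ge2\gamma z_i^{p-2}\) and hence \(g\ge\tfrac12\beta_iz_iZ^{p-1}\ge\tfrac a2z_iW_i^{p-1}\ge c\,z_i^{p-1}W_i\).
\item \emph{Large coefficient}, \(\beta_i\ge B\), for a suitable \(B\). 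Then \(\partial_Yg\) is already positive at \(Y=\beta_iz_i\), and convexity gives a slope lower bound of order \(z_i^{p-1}\).
\item \emph{Compact case}, \((\beta,s)\in[a,B]\times[0,K]\). Here I use compactness of \(\psi(\beta,s)/s\). I need \(\psi>0\) for \(s>0\), which follows from the convexity and minimum analysis above, since equality can occur only at \(Y=\beta z\) with \(\beta=1\). Near \(s=0\), either \(\psi(\beta,0)=f(\beta)>0\) for \(\beta\) away from \(1\), or \(\partial_s\psi(1,0)=(p-1)-\gamma=p/2-1>0\).
\end{itemize}

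The main obstacle is this compactness step near \((\beta,s)=(1,0)\), where \(\psi\) vanishes. There I would write \(\psi(\beta,s)=f(\beta)+s\,\partial_s\psi(\beta,0)+O(s^2)\). Since \(\partial_s\psi\) is continuous and positive at \((1,0)\), this gives \(\psi\ge cs\) on a neighbourhood of \((1,0)\). Away from that neighbourhood I use \(\psi\ge c>0\) together with \(s\le K\).
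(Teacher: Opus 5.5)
Your proof is correct, but it takes a different route from the paper. The paper normalizes \(\max_i z_i=z_k=1\) and works with the exact identity
\[
D_\beta(y)=h_p(r)+(\gamma-1)\sum_{j\ne k}y_j^p+\gamma r\sum_{j\ne k}\beta_jy_j\bigl(1-y_j^{p-2}\bigr),
\qquad r=\sum_i\beta_iy_i,\quad h_p(r)=r^p-\gamma r^2+\gamma-1,
\]
in which every term is visibly nonnegative because \(y_j\le1\); your \(f\) is the paper's \(h_p\).

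\textbf{Paper's route.} Part (i) comes from a bound linear in \(u=\sum_{j\ne k}y_j\) when \(u\) is bounded, and from the growth of \(h_p\) at \(r\ge a(1+u)\) when \(u\) is large. Part (ii) needs a case split on whether \(z_\ell\) is maximal and on the size of \(r\).

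\textbf{Your route.} Your splitting \(D_\beta=\sum_i g_{\beta_i,z_i}(Z)\), with each summand taken on the half-line \(Z\ge\beta_iz_i\), reduces everything to one convex scalar function. Nonnegativity is then a one-variable convexity fact for each summand. Part (ii) follows from the \(\ell\)-th summand alone, with the explicit constant \(\min\{f(a),f(b),f(c)\}\). Part (i) comes out in the slightly stronger form \(D_\beta\ge c\sum_i z_i^{p-1}\sum_{j\ne i}\beta_jz_j\). The price is the two-parameter analysis of \(\psi\) near its unique zero \((1,0)\), which you handle correctly through \(\partial_s\psi(1,0)=p/2-1>0\).

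\textbf{Two simplifications.}

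First, the numerical check is unnecessary. The relation \(\gamma c^2=(p-1)c^p\) gives \((\gamma-1)-(p-2)c^p=f(c)\). This is positive because \(c^{p-2}=\gamma/(p-1)<1\), so your argument holds for every \(p>2\).

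Second, the compactness step can be avoided. Fix any \(\bar c\in(c,1)\).
\begin{itemize}
\item For \(\beta\ge\bar c\), convexity gives \(g(Y)\ge\bigl((p-1)\bar c^{\,p-2}-\gamma\bigr)\beta z^{p-1}(Y-\beta z)\), and the factor is positive.
\item For \(a\le\beta\le\bar c\), monotonicity of \(f\) on \((0,1)\) gives \(\psi\ge f(\bar c)>0\).
\end{itemize}
Together with your large-ratio case, these two bounds already yield \(\psi\ge cs\).
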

	
	\begin{proof}
		Let \(m=\max_i z_i\).  The assertion is immediate when \(m=0\); by homogeneity we may otherwise assume \(m=1\).  Choose \(k\) with \(z_k=1\), write \(y_i=z_i\), and set
		\[
		r=\sum_i\beta_i y_i,
		\qquad
		h_p(r)=r^p-\gamma r^2+\gamma-1.
		\]
		Direct expansion gives
		\begin{equation}\label{eq5.defect-decomposition}
			\begin{aligned}
				D_\beta(y)
				={}&h_p(r)
				+(\gamma-1)\sum_{j\ne k}y_j^p\\
				&+\gamma r\sum_{j\ne k}
				\beta_jy_j\bigl(1-y_j^{p-2}\bigr).
			\end{aligned}
		\end{equation}
		Since
		\[
		h_p'(r)=pr(r^{p-2}-1),\qquad h_p(1)=0,
		\]
		every term in \eqref{eq5.defect-decomposition} is nonnegative.
		
		Assume first that \(\beta_i\ge a>0\), and put
		\[
		u=\sum_{j\ne k}y_j.
		\]
		The last two terms in \eqref{eq5.defect-decomposition} give
		\[
		\begin{aligned}
			D_\beta(y)
			&\ge h_p(r)
			+\sum_{j\ne k}y_j
			\left[
			\gamma a^2(1-y_j^{p-2})
			+(\gamma-1)y_j^{p-1}
			\right]\\
			&\ge h_p(r)+c_{p,a}u.
		\end{aligned}
		\]
		On the other hand,
		\[
		\sum_{i\ne j}y_i^{p-1}y_j\le2u+u^2.
		\]
		If \(u\) is bounded, the linear term controls the last expression.  If \(u\) is large, then \(r\ge a(1+u)\), and hence
		\[
		h_p(r)\ge c_pr^p\ge c_{p,a}u^p\ge c_{p,a}u^2.
		\]
		This proves (i).
		
		For (ii), first suppose \(z_\ell<m\).  Choosing a maximal index different from \(\ell\) in \eqref{eq5.defect-decomposition} gives
		\[
		D_\beta(z)\ge(\gamma-1)z_\ell^p.
		\]
		It remains to consider \(z_\ell=m\); normalize \(m=1\) and choose \(k=\ell\).  If \(\beta_\ell\ge b\), then \(r\ge b>1\) and
		\[
		D_\beta\ge h_p(r)\ge h_p(b)>0.
		\]
		Suppose \(\beta_\ell\le a\), and fix \(\delta=\min\{1-a,b-1\}\).  If \(|r-1|\ge\delta/2\), the first term of \eqref{eq5.defect-decomposition} has a fixed positive lower bound.  Otherwise
		\[
		\sum_{j\ne\ell}\beta_jy_j\ge\delta/2.
		\]
		If at least half of this sum comes from \(y_j\le1/2\), the last term of \eqref{eq5.defect-decomposition} has a fixed positive lower bound.  In the opposite case at least one \(y_j>1/2\), and the middle term has the lower bound \((\gamma-1)2^{-p}\).  Rescaling by \(m^p\) proves (ii).
	\end{proof}
	
	\subsubsection{The all-pairs projected source estimate}
	
	For \(x,y\in K\), abbreviate
	\[
	\begin{aligned}
		A_{\lambda}(x,y)
		&=\int_\Omega
		\delta_{\lambda,x}^{p-1}Q_{\lambda,y}\,dz,\\
		B_{\lambda}(x,y)
		&=\int_\Omega
		Q_{\lambda,x}^{p-1}Q_{\lambda,y}\,dz.
	\end{aligned}
	\]
	We use the notation
	\[
	\log^+s=\max\{\log s,0\}\qquad(s>0).
	\]
	Set
	\[
	\varepsilon_N(\lambda)=
	\begin{cases}
		\lambda^{-2}(1+\log\lambda)^2,&N=4,\\[0.2em]
		\lambda^{-2}(1+\log\lambda),&N=5.
	\end{cases}
	\]
	
	\begin{lemma}\label{Lem5.8}
		There are \(C>0\) and \(\lambda_0>1\), depending only on \(\Omega\), \(a\), \(K\), and \(N\), such that
		\begin{equation}\label{eq5.source-transfer}
			0\le
			A_{\lambda}(x,y)-B_{\lambda}(x,y)
			\le
			C\varepsilon_N(\lambda)A_{\lambda}(x,y)
		\end{equation}
		for all \(x,y\in K\) and \(\lambda\ge\lambda_0\).  The estimate includes \(x=y\).  In addition,
		\begin{equation}\label{eq5.pair-lower}
			\sigma_N\int_\Omega
			Q_{\lambda,x}^{p-1}Q_{\lambda,y}\,dz
			\ge
			c_K(1+\lambda|x-y|)^{-(N-2)}
			\ge c_K'\lambda^{-(N-2)}.
		\end{equation}
	\end{lemma}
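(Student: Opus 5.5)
The plan is to write the difference as a single integral against the projection defect and to bound that defect pointwise by a multiple of the bubble itself. Put \(\theta_{\lambda,x}=\delta_{\lambda,x}-Q_{\lambda,x}\). The maximum principle argument of Lemma~\ref{Lem4.1} gives \(0\le\theta_{\lambda,x}\le\delta_{\lambda,x}\). Since \(p-1>1\), the map \(s\mapsto s^{p-1}\) is increasing, and \(Q_{\lambda,y}\ge0\). Hence
\[
A_\lambda(x,y)-B_\lambda(x,y)=\int_\Omega\bigl(\delta_{\lambda,x}^{p-1}-Q_{\lambda,x}^{p-1}\bigr)Q_{\lambda,y}\,dz\ge0,
\]
which is the lower inequality in \eqref{eq5.source-transfer}. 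For the upper inequality, the mean value bound \(s^{p-1}-t^{p-1}\le(p-1)s^{p-2}(s-t)\) for \(0\le t\le s\) reduces everything to the estimate
\[
\int_\Omega\delta_{\lambda,x}^{p-2}\,\theta_{\lambda,x}\,Q_{\lambda,y}\,dz\le C\varepsilon_N(\lambda)\int_\Omega\delta_{\lambda,x}^{p-1}Q_{\lambda,y}\,dz .
\]
It therefore suffices to prove a pointwise relative defect bound of the form \(\theta_{\lambda,x}(z)\le C\varepsilon_N(\lambda)\delta_{\lambda,x}(z)\), at least on the region carrying most of the mass of \(\delta_{\lambda,x}^{p-1}\). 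The tail region will be handled separately. This reduction is uniform in \(y\), including \(y=x\).

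Next I would establish the relative defect bound from the Green representation used in Lemma~\ref{Lem4.1}:
\[
\theta_{\lambda,x}(z)=\sigma_N\int_\Omega H_a(z,\zeta)\,\delta_{\lambda,x}(\zeta)^{p-1}\,d\zeta+\sigma_N\int_{\mathbb R^N\setminus\Omega}\Gamma(z-\zeta)\,\delta_{\lambda,x}(\zeta)^{p-1}\,d\zeta,
\]
where \(H_a=\Gamma-G_a\). The regular part \(H_\Omega\) is bounded on compact sets, and the exterior term is uniformly bounded. Together these contribute \(O(\lambda^{-(N-2)/2})\) to \(\theta_{\lambda,x}\). Since \(\delta_{\lambda,x}(z)\ge c\lambda^{-(N-2)/2}\) on \(\Omega\), this part is at most \(C\lambda^{-(N-2)}\delta_{\lambda,x}\le C\varepsilon_N(\lambda)\delta_{\lambda,x}\) for \(N=4,5\). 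The potential part \(G_\Omega-G_a\) is controlled by \eqref{eq2.6} through \(\Psi_N\), giving a convolution
\[
\lambda^{(N+2)/2}\int_\Omega\Psi_N(|z-\zeta|)(1+\lambda^2|\zeta-x|^2)^{-(N+2)/2}\,d\zeta .
\]
I would split this at \(|z-\zeta|\) comparable to \(|z-x|\) and compute in the scaled variable \(q=\lambda(z-x)\). For \(N=5\) this should give \(C\lambda^{-2}(1+|q|)^{-1}\lambda^{3/2}\) with at most a logarithm, which is \(\lesssim\lambda^{-2}(1+\log\lambda)\,\delta_{\lambda,x}\) when \(|q|\le C\lambda\). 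For \(N=4\) the logarithmic kernel yields \(\lambda^{-2}(1+\log\lambda)\) times a further logarithm relative to \(\delta_{\lambda,x}\). In both cases the outcome is \(C\varepsilon_N(\lambda)\delta_{\lambda,x}\).

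I expect the main obstacle to be exactly this ratio estimate. It must hold uniformly in \(z\in\Omega\), including the region where \(\delta_{\lambda,x}\) is at its minimum, roughly \(\lambda^{-(N-2)/2}\), and it may require the second logarithm in \(N=4\). If it fails near \(\partial\Omega\), I would instead compare with the \(L_a\)-harmonic boundary part using the maximum principle, as in \eqref{eq5.global-projection-defect}. Failing that, I would accept a weaker ratio there and absorb it using \(Q_{\lambda,y}\le\delta_{\lambda,y}\) together with the smallness of \(\delta_{\lambda,x}^{p-1}\) far from \(x\).

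For \eqref{eq5.pair-lower}, I would use \(Q\ge\delta-\theta\ge(1-C\varepsilon_N(\lambda))\delta\) on \(B(x,d_0)\cap B(y,d_0)\)-type cores, restricting to the core \(|z-x|\le1/\lambda\). There \(Q_{\lambda,x}^{p-1}\ge c\lambda^{(N+2)/2}\). Also \(Q_{\lambda,y}(z)\ge c\,\delta_{\lambda,y}(z)\) by the relative bound, and \(\delta_{\lambda,y}(z)\ge c\lambda^{(N-2)/2}(1+\lambda|x-y|)^{-(N-2)}\). Integrating over a ball of volume \(\lambda^{-N}\) gives \(c_K(1+\lambda|x-y|)^{-(N-2)}\). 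Finally \(\lambda|x-y|\le\lambda\operatorname{diam}K\) yields the second inequality.
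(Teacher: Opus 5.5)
There is a genuine gap. Your reduction (monotonicity for \(A-B\ge0\), then \(s^{p-1}-t^{p-1}\le(p-1)s^{p-2}(s-t)\)) matches the paper. But both the upper bound and your proof of \eqref{eq5.pair-lower} rest on the pointwise relative bound \(\theta_{\lambda,x}\le C\varepsilon_N(\lambda)\delta_{\lambda,x}\), and that bound is false.

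\textbf{The relative bound fails at microscopic distances.} At any fixed \(z\ne x\) in \(\Omega\) one has \(\theta_{\lambda,x}(z)/\delta_{\lambda,x}(z)\to H_a(z,x)/\Gamma(z-x)>0\), independently of \(\lambda\). On the intermediate annulus, the potential term alone gives a ratio of order \(\lambda^{-2}(1+\lambda|z-x|)^2\), with an extra logarithm when \(N=4\). Your own bound \(\theta\lesssim\lambda^{-1/2}(1+|q|)^{-1}\) for \(N=5\) must be compared with \(\delta_{\lambda,x}\asymp\lambda^{3/2}(1+|q|)^{-3}\), not with \(\lambda^{3/2}\). Likewise, \(O(\lambda^{-(N-2)/2})\) divided by \(\delta_{\lambda,x}\ge c\lambda^{-(N-2)/2}\) gives only \(O(1)\), not \(O(\lambda^{-(N-2)})\). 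So the ratio exceeds \(\varepsilon_N(\lambda)\) once \(\lambda|z-x|\) is a large multiple of \(\sqrt{1+\log\lambda}\). This is a microscopic distance, not a neighbourhood of \(\partial\Omega\).

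\textbf{The ``tail'' is the main term.} When \(\lambda|x-y|\) is large, \(A_\lambda(x,y)\asymp(1+\lambda|x-y|)^{-(N-2)}\) is tiny. The logarithms in \(\varepsilon_N\) come precisely from integrating the growing ratio against \(\delta_{\lambda,x}^{p-1}Q_{\lambda,y}\) over \(1\le|q|\le C\lambda\), i.e.\ from the region you planned to set aside. The paper keeps this growth. It proves \(\theta_{\lambda,x}/\delta_{\lambda,x}\le C\lambda^{-2}(1+|q|^2)L_N(|q|)\), so \(\delta_{\lambda,x}^{p-2}\theta_{\lambda,x}\lesssim\lambda^{(N-2)/2}(1+|q|)^{-N}L_N(|q|)\). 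With the envelope \(Q_{\lambda,y}\le\delta_{\lambda,y}\), a three-region convolution estimate then gives the absolute bound \(A-B\le C\varepsilon_N(\lambda)(1+\lambda|x-y|)^{-(N-2)}\).

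\textbf{The missing lower bound on \(A\).} Converting that absolute bound into the relative form needs \(A_\lambda(x,y)\ge c(1+\lambda|x-y|)^{-(N-2)}\). This bound, and \eqref{eq5.pair-lower} itself, require \(Q_{\lambda,y}\ge c\,\delta_{\lambda,y}\) at points \(z\) near \(x\). For a far pair these points lie at order-one distance from \(y\), where \(Q_{\lambda,y}/\delta_{\lambda,y}\approx G_a(z,y)/\Gamma(z-y)\). That ratio is bounded below but not close to \(1\), so it cannot come from \(Q\ge(1-C\varepsilon_N)\delta\). Your pair-lower argument therefore only covers \(\lambda|x-y|=O(1)\). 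The far-pair bound \(c_K'\lambda^{-(N-2)}\) is exactly what Proposition~\ref{Prop5.10} needs for all \(n(n-1)\) pairs.

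\textbf{The fix.} Use the Green-kernel comparison. \(G_a/\Gamma\) extends positively and continuously on a compact neighbourhood of \(K\). Restricting the source of \(Q_{\lambda,y}\) to \(B(y,R/\lambda)\) gives \(Q_{\lambda,y}\ge c\,\delta_{\lambda,y}\) there. Integrating over \(B(x,\lambda^{-1})\) gives the lower bound on \(A\). Finally, \(B\ge A/2\) follows from the transfer estimate for large \(\lambda\).
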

	
	\begin{proof}
		Write \(\delta_x=\delta_{\lambda,x}\), \(Q_x=Q_{\lambda,x}\), and \(w_x=\delta_x-Q_x\).  The maximum principle gives
		\[
		0<Q_x\le\delta_x.
		\]
		Moreover,
		\[
		L_aw_x=a\delta_x\quad\hbox{in }\Omega,
		\qquad
		w_x=\delta_x\quad\hbox{on }\partial\Omega.
		\]
		Let \(H_x\) be the \(L_a\)-harmonic extension of \(\delta_x|_{\partial\Omega}\).  Since \(K\Subset\Omega\),
		\[
		0\le H_x(z)
		\le\|\delta_x\|_{L^\infty(\partial\Omega)}
		\le C\lambda^{-\frac{N-2}{2}}
		\]
		by the maximum principle.  Green representation now gives the exact splitting
		\[
		w_x(z)
		=H_x(z)+\int_\Omega G_a(z,\zeta)a(\zeta)\delta_x(\zeta)\,d\zeta .
		\]
		Using the boundedness of \(a\) and \(G_a(z,\zeta)\le C|z-\zeta|^{2-N}\), we obtain
		\[
		w_x(z)
		\le C\lambda^{-\frac{N-2}{2}}
		+C\int_\Omega |z-\zeta|^{2-N}\delta_x(\zeta)\,d\zeta.
		\]
		Put
		\[
		\mathbf q=\lambda(z-x),\qquad q=|\mathbf q|.
		\]
		After the change of variables \(\mathbf s=\lambda(\zeta-x)\), the integral on the right is bounded by
		\[
		C\lambda^{\frac{N-6}{2}}
		\int_{|\mathbf s|\le C\lambda}
		|\mathbf q-\mathbf s|^{2-N}
		(1+|\mathbf s|^2)^{-\frac{N-2}{2}}\,d\mathbf s.
		\]
		For \(q\le2\), the following bounds are immediate by radial integration.  For \(q>2\), split the integral into
		\[
		|\mathbf s|\le q/2,\qquad
		|\mathbf q-\mathbf s|\le q/2,\qquad
		|\mathbf s|,|\mathbf q-\mathbf s|\ge q/2.
		\]
		The two near regions and the remaining radial integral give, uniformly for \(0\le q\le C\lambda\),
		\[
		\begin{aligned}
			\int_{|\mathbf s|\le C\lambda}
			|\mathbf q-\mathbf s|^{-2}(1+|\mathbf s|^2)^{-1}\,d\mathbf s
			&\le
			C\left(1+\log^+\frac{\lambda}{1+q}\right)
			\quad(N=4),\\
			\int_{|\mathbf s|\le C\lambda}
			|\mathbf q-\mathbf s|^{-3}(1+|\mathbf s|^2)^{-3/2}\,d\mathbf s
			&\le C(1+q)^{-1}
			\quad(N=5).
		\end{aligned}
		\]
		Indeed, the far radial integrals are respectively \(\int_{1+q}^{C\lambda}r^{-1}\,dr\) and \(\int_{1+q}^{\infty}r^{-2}\,dr\). It follows that
		\[
		w_x(z)\le
		\begin{cases}
			C\lambda^{-1}
			\left(1+\log^+\dfrac{\lambda}{1+q}\right),&N=4,\\[1.1em]
			C\lambda^{-1/2}(1+q)^{-1},&N=5.
		\end{cases}
		\]
		Consequently,
		\[
		\frac{w_x(z)}{\delta_x(z)}
		\le
		C\lambda^{-2}(1+q^2)L_N(q),
		\quad
		L_4(q)=1+\log^+\frac{\lambda}{1+q},
		\quad
		L_5(q)=1.
		\]
		Since \(Q_y\le\delta_y\), we also have the global envelope
		\[
		Q_y(z)\le
		C\lambda^{\frac{N-2}{2}}
		(1+\lambda|z-y|)^{-(N-2)}.
		\]
		Since
		\[
		0\le \delta_x^{p-1}-Q_x^{p-1}
		\le C\delta_x^{p-2}w_x,
		\]
		put
		\[
		\mathbf d=\lambda(y-x),\qquad D=|\mathbf d|.
		\]
		The change of variables \(\mathbf q=\lambda(z-x)\) bounds the numerator in \eqref{eq5.source-transfer} by
		\[
		C\lambda^{-2}
		\int_{|\mathbf q|\le C\lambda}
		(1+|\mathbf q|)^{-N}L_N(|\mathbf q|)
		(1+|\mathbf q-\mathbf d|)^{-(N-2)}\,d\mathbf q.
		\]
		We claim that
		\begin{equation}\label{eq5.convolution-bound}
			\begin{aligned}
				&\int_{|\mathbf q|\le C\lambda}
				(1+|\mathbf q|)^{-N}L_N(|\mathbf q|)
				(1+|\mathbf q-\mathbf d|)^{-(N-2)}\,d\mathbf q\\
				&\qquad\le
				C(1+\log\lambda)^{1+\mathbf 1_{\{N=4\}}}
				(1+D)^{-(N-2)}.
			\end{aligned}
		\end{equation}
		If \(D\le4\), the tail of the product is integrable, and \eqref{eq5.convolution-bound} follows directly from \(L_4\le C(1+\log\lambda)\).  Suppose \(D>4\), and split the domain into
		\[
		E_1=\{|\mathbf q|\le D/2\},\qquad
		E_2=\{|\mathbf q-\mathbf d|\le D/2\},\qquad
		E_3=(E_1\cup E_2)^c.
		\]
		On \(E_1\cup E_3\), the second factor is bounded by \(C(1+D)^{-(N-2)}\).  Hence these two contributions are at most
		\[
		C(1+D)^{-(N-2)}
		\int_{|\mathbf q|\le C\lambda}
		(1+|\mathbf q|)^{-N}L_N(|\mathbf q|)\,d\mathbf q.
		\]
		The last integral is \(O(1+\log\lambda)\) for \(N=5\) and \(O((1+\log\lambda)^2)\) for \(N=4\).  On \(E_2\), \(|\mathbf q|\ge D/2\), so
		\[
		\begin{aligned}
			&\int_{E_2}
			(1+|\mathbf q|)^{-N}L_N(|\mathbf q|)
			(1+|\mathbf q-\mathbf d|)^{-(N-2)}\,d\mathbf q\\
			&\qquad\le
			C D^{-N}(1+\log\lambda)^{\mathbf 1_{\{N=4\}}}
			\int_0^{D/2}(1+r)^{-(N-2)}r^{N-1}\,dr\\
			&\qquad\le
			C(1+\log\lambda)^{\mathbf 1_{\{N=4\}}}D^{-(N-2)}.
		\end{aligned}
		\]
		This proves \eqref{eq5.convolution-bound}.  In particular, the same estimate covers
		\[
		D=0,\qquad 0<D=O(1),\qquad
		1\ll D\ll\lambda,\qquad D\asymp\lambda.
		\]
		We have therefore proved
		\[
		A_{\lambda}(x,y)-B_{\lambda}(x,y)
		\le
		C\varepsilon_N(\lambda)
		(1+\lambda|x-y|)^{-(N-2)}.
		\]
		
		It remains to compare the last factor with \(A_{\lambda}(x,y)\). Choose
		\[
		K\Subset K_0\Subset K_1\Subset\Omega.
		\]
		After assigning the diagonal value \(1\), the quotient \(G_a(z,\zeta)/\Gamma(z-\zeta)\) is positive and continuous on \(\overline K_1\times\overline K_0\).  Fix \(R>1\).  For all sufficiently large \(\lambda\), \(B(y,R/\lambda)\subset K_0\) uniformly for \(y\in K\), and the Green representation gives
		\[
		Q_y(z)\ge
		c\sigma_N\int_{B(y,R/\lambda)}
		\Gamma(z-\zeta)\delta_y(\zeta)^{p-1}\,d\zeta,
		\qquad z\in K_1.
		\]
		With \(\mathbf q=\lambda(z-y)\) and \(\boldsymbol\eta=\lambda(\zeta-y)\), the last expression is a positive dimensional constant times
		\[
		\lambda^{\frac{N-2}{2}}F_R(\mathbf q),
		\qquad
		F_R(\mathbf q)=
		\int_{|\boldsymbol\eta|\le R}
		|\mathbf q-\boldsymbol\eta|^{2-N}
		(1+|\boldsymbol\eta|^2)^{-\frac{N+2}{2}}
		\,d\boldsymbol\eta .
		\]
		There is \(c_R>0\) such that
		\[
		F_R(\mathbf q)\ge
		c_R(1+|\mathbf q|^2)^{-\frac{N-2}{2}}
		\qquad(\mathbf q\in\mathbb R^N).
		\]
		For \(|\mathbf q|\le2R\), this follows from positivity and compactness; for \(|\mathbf q|>2R\), integrate over \(|\boldsymbol\eta|\le R\) and use \(|\mathbf q-\boldsymbol\eta|\asymp|\mathbf q|\). Consequently, after increasing \(\lambda_0\),
		\[
		Q_y(z)\ge c\delta_y(z)
		\qquad(z\in K_1,\ y\in K,\ \lambda\ge\lambda_0).
		\]
		For \(z\in B(x,\lambda^{-1})\),
		\[
		\delta_x(z)^{p-1}\ge c\lambda^{\frac{N+2}{2}},
		\qquad
		\delta_y(z)\ge
		c\lambda^{\frac{N-2}{2}}(1+D)^{-(N-2)}.
		\]
		Integration over this ball now yields
		\[
		A_{\lambda}(x,y)
		\ge c(1+\lambda|x-y|)^{-(N-2)},
		\]
		which proves \eqref{eq5.source-transfer}.
		
		After enlarging \(\lambda_0\), \eqref{eq5.source-transfer} gives
		\[
		B_{\lambda}(x,y)\ge\frac12A_{\lambda}(x,y)
		\ge
		c(1+\lambda|x-y|)^{-(N-2)}.
		\]
		The second inequality in \eqref{eq5.pair-lower} follows from the boundedness of \(K\).  This proof is unchanged when the centers coincide.
	\end{proof}
	
	\begin{remark}
		The weighted second moment on the intermediate annulus produces the logarithms in \(\varepsilon_N\) when the second center remains at a fixed positive distance; Lemma~\ref{Lem4.1} gives the separate diagonal single-bubble rates.  The all-pairs form of \eqref{eq5.source-transfer}, rather than its pointwise sharpness for one pair, is what prevents an \(n^2\) loss below.
	\end{remark}
	
	\subsubsection{The matched-scale energy drop}
	
	\begin{proposition}\label{Prop5.10}
		Fix \(A>0\), and set
		\[
		\lambda_n=A n^{\frac{2}{N-2}}.
		\]
		There are \(n_A\ge2\) and \(\kappa_A>0\) such that, for every \(n\ge n_A\),
		\begin{equation}\label{eq5.matched-drop}
			\sup_{\mu\in B_n(K)}
			\mathcal J_a(g_{a,\lambda_n,n}(\mu))
			\le
			b_n\left(1-\frac{\kappa_A}{n}\right)
			<b_n.
		\end{equation}
		All constants are uniform over zero weights, repeated centers, and arbitrary collision patterns.
	\end{proposition}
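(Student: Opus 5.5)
Write \(S=\sum_i t_iQ_i\) with \(Q_i=Q_{\lambda_n,x_i}\), and put \(\beta_i=nt_i\), so that the balanced configuration is \(\beta_i=1\).

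\textbf{Step 1: normalization.} Since \(\mathfrak q_a\) is homogeneous of degree zero, replace \(S\) by \(\sum_i\beta_iQ_i\). Writing \(Z=\sum_i\beta_iQ_i\), the numerator is
\[
\|Z\|_a^2=\sigma_N\sum_{i}\beta_i\int_\Omega\delta_i^{p-1}Z
=\sigma_N\sum_{i,k}\beta_i\beta_kA_{\lambda}(x_i,x_k).
\]

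\textbf{Step 2: source transfer.} Lemma~\ref{Lem5.8} replaces each \(A_{\lambda}(x_i,x_k)\) by \(B_{\lambda}(x_i,x_k)\) at relative cost \(C\varepsilon_N(\lambda_n)\), uniformly over all pairs, including coincident centers. This gives
\[
\|Z\|_a^2\le\bigl(1+C\varepsilon_N(\lambda_n)\bigr)\,\sigma_N\int_\Omega Z\sum_i\beta_iQ_i^{p-1}.
\]

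\textbf{Step 3: the defect inequality.} Apply the pointwise defect identity of Lemma~\ref{Lem5.7} with \(z_i=Q_i\) and integrate:
\[
\gamma\int Z\sum_i\beta_iQ_i^{p-1}
=\int Z^p+(\gamma-1)\sum_i\int Q_i^p-\int D_\beta(Q).
\]
Put \(Y=\int Z^p\), \(M=\sum_i\int Q_i^p\approx n\sigma_N^{-1}\) (by Lemma~\ref{Lem4.1}, with relative error \(O(\varpi_N)\)), and \(\mathcal D=\int D_\beta(Q)\ge0\). Then the quotient is bounded by
\[
\mathfrak q_a\le(1+C\varepsilon_N)^{\gamma}\,\frac{\bigl(\sigma_N\gamma^{-1}(Y+(\gamma-1)M-\mathcal D)\bigr)^{\gamma}}{Y}.
\]
The function \(Y\mapsto (Y+(\gamma-1)M)^\gamma/Y\) is minimized at \(Y=M\), which is the wrong direction for an upper bound. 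So instead of optimizing in \(Y\), I would use the upper bound in the form \(\|Z\|_a^2\lesssim\frac{\sigma_N}{\gamma}\bigl(Y+(\gamma-1)M\bigr)\) together with Young's inequality
\[
\|Z\|_a^{p}\le\Bigl(\tfrac{\sigma_N}{\gamma}\Bigr)^{\gamma}\bigl(Y+(\gamma-1)M\bigr)^{\gamma},
\]
compared against \(Y\). \textbf{This is the step I expect to be the main obstacle.} The likely repair is to write \(\sigma_N\int Z\sum_i\beta_iQ_i^{p-1}\le \sigma_N Y^{1/\gamma'}\bigl(\sum_i\ldots\bigr)\) via Hölder, in the Bahri--Coron style, and to extract the defect as a strict subtraction of order \(\mathcal D/n\) relative to \(b_n\). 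Concretely, I would aim for
\[
\mathfrak q_a(Z)\le b_n\Bigl(1+C\varepsilon_N(\lambda_n)-c\,\frac{\mathcal D}{M}\Bigr).
\]

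\textbf{Step 4: lower bound on the defect, by cases.} Three regimes are possible.
\begin{itemize}
\item[(a)] If some \(\beta_\ell\notin[1/2,2]\), Lemma~\ref{Lem5.7}(ii) gives \(\mathcal D\ge c\int Q_\ell^p\ge c\). The power-mean/quantization structure then gives a drop of relative size at least \(c/n\); zero weights are handled either this way or by reduction to \(B_{n-1}\) with \(b_{n-1}\le b_n(1-c/n)\).
\item[(b)] If all \(\beta_i\in[1/2,2]\), Lemma~\ref{Lem5.7}(i) together with \eqref{eq5.pair-lower} gives
\[
\mathcal D\ge c\sum_{i\ne k}\int Q_i^{p-1}Q_k\ge c\,n(n-1)\lambda_n^{-(N-2)}=cA^{-(N-2)}\Bigl(1-\frac1n\Bigr),
\]
which is of order one.
\item[(c)] In both regimes the error term \(M\cdot C\varepsilon_N(\lambda_n)\approx Cn\varepsilon_N(\lambda_n)\to0\) at the matched scale, as computed in the introduction. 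Hence for \(n\ge n_A\) it is at most half of the defect, and \eqref{eq5.matched-drop} follows with \(\kappa_A\sim cA^{-(N-2)}\sigma_N\).
\end{itemize}
Uniformity over collisions and repeated centers is automatic, because no separation assumption was used anywhere in Lemmas~\ref{Lem5.7}--\ref{Lem5.8}.
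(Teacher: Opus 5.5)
There is a genuine gap, and it is the step you flagged: Step 3 never turns the integrated defect identity into an upper bound for the quotient. With your unnormalized coefficients \(\beta_i=nt_i\) it cannot be done. The numerator \(\|Z\|_a^2\) is not fixed, so the identity only gives
\[
\|Z\|_a^2\lesssim\tfrac{\sigma_N}{\gamma}\bigl(Y+(\gamma-1)M-\mathcal D\bigr),
\]
and \((Y+(\gamma-1)M)^\gamma/Y\) is unbounded above in \(Y\).

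Your target \(\mathfrak q_a(Z)\le b_n(1+C\varepsilon_N-c\,\mathcal D/M)\) is actually false in this normalization. For \(\mu=\delta_x\) you have \(\beta_1=n\) and \(Z=nQ\), so \(\mathcal D\approx n^p\sigma_N^{-1}\) and \(M\approx\sigma_N^{-1}\). The right-hand side is then negative, while the quotient is about \(\sigma_N\). Because \(D_\beta\) is not homogeneous in \(\beta\), the choice of scaling matters. A H\"older or Bahri--Coron convexity argument is not what is missing.

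The missing idea is to use the degree-zero homogeneity to fix the numerator exactly. Put \(V=\sqrt n\,S/\|S\|_a=\sum_i\beta_iQ_i\) with \(\beta_i=\sqrt n\,t_i/\|S\|_a\), so that \(\|V\|_a^2=n\). The projection equation then gives \(n=\sigma_N\int_\Omega V\sum_i\beta_i\delta_i^{p-1}\), and integrating \(D_\beta\) yields the linear identity
\[
\sigma_N\int_\Omega V^p-n=\mathcal R-\gamma\mathcal E+\mathcal Q .
\]
Here \(\mathcal R=\sigma_N\int D_\beta(Q)\ge0\), \(\mathcal E=\sigma_N\int V\sum_i\beta_i(\delta_i^{p-1}-Q_i^{p-1})\), and \(\mathcal Q=(\gamma-1)\bigl(n-\sigma_N\sum_i\int Q_i^p\bigr)\).

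Lemma~\ref{Lem5.8} together with the exact relation \(\sigma_N\sum_{i,j}\beta_i\beta_jA_{ij}=n\) gives \(0\le\mathcal E\le Cn\varepsilon_N(\lambda_n)\). This is your Step 2, used additively. Since \(\|V\|_a^p=n^{p/2}\) is fixed, you get the exact formula \(\mathfrak q_a(S)/b_n=n/(\sigma_N\int_\Omega V^p)\), so only a lower bound on the denominator is needed.

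Your Step 4 case analysis then works, provided it is run on these normalized \(\beta_i\). Zero weights need no reduction to \(B_{n-1}\); that reduction would itself require a uniform \((n-1)\)-level bound with \(o(1/n)\) relative error. Instead, \(Q_i\le\delta_i\) gives \(\sigma_N\int Q_i^p\le1\), hence \(\mathcal Q\ge(\gamma-1)(n-r)\) when the support has cardinality \(r\). Your two-sided claim \(M\approx n/\sigma_N\) fails when \(r<n\), but only this one-sided bound is needed. Then \(\mathcal R+\mathcal Q\ge c_A\) and \(n\varepsilon_N(\lambda_n)\to0\) give \(\sigma_N\int_\Omega V^p\ge n+c_A/2\), which yields the factor \(1-\kappa_A/n\).
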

	
	\begin{proof}
		Fix \(\mu\in B_n(K)\), let \(r\le n\) be its support cardinality, and write
		\[
		\mu=\sum_{i=1}^r t_i\delta_{x_i},
		\qquad t_i>0.
		\]
		Put
		\[
		S=\sum_i t_iQ_{\lambda_n,x_i},
		\qquad
		V=\frac{\sqrt n\,S}{\|S\|_a}
		=\sum_i\beta_iQ_i.
		\]
		Then
		\[
		\beta_i=\frac{\sqrt n\,t_i}{\|S\|_a}.
		\]
		In particular, \(\|V\|_a^2=n\).  Define
		\[
		\mathcal R
		=
		\sigma_N\int_\Omega
		D_\beta(Q_1,\ldots,Q_r)\,dz.
		\]
		
		The equation defining \(Q_i\) gives the exact norm identity
		\[
		n
		=
		\sigma_N\int_\Omega
		V\sum_i\beta_i\delta_i^{p-1}\,dz.
		\]
		Let
		\[
		\mathcal E
		=
		\sigma_N\int_\Omega
		V\sum_i\beta_i
		\left(\delta_i^{p-1}-Q_i^{p-1}\right)\,dz.
		\]
		Expanding the definition of \(\mathcal R\) yields
		\begin{equation}\label{eq5.energy-identity}
			\sigma_N\int_\Omega V^p\,dz-n
			=
			\mathcal R-\gamma\mathcal E
			+(\gamma-1)
			\left(
			n-\sigma_N\sum_{i=1}^r\int_\Omega Q_i^p\,dz
			\right).
		\end{equation}
		Set
		\[
		\mathcal Q
		=
		(\gamma-1)
		\left(
		n-\sigma_N\sum_{i=1}^r\int_\Omega Q_i^p\,dz
		\right).
		\]
		Since \(Q_i\le\delta_i\) and \(\sigma_N\int_{\mathbb R^N}\delta_i^p=1\),
		\[
		\mathcal Q\ge(\gamma-1)(n-r).
		\]
		In particular, \(\mathcal Q\ge\gamma-1\) when \(r<n\).  Suppose now that \(r=n\).  If some \(\beta_i\notin[1/2,2]\), Lemma~\ref{Lem5.7}(ii) and the single-core estimate
		\[
		\sigma_N\int_\Omega Q_i^p\,dz\ge c_K
		\]
		give \(\mathcal R\ge c_K\).  If all coefficients belong to \([1/2,2]\), Lemma~\ref{Lem5.7}(i) and \eqref{eq5.pair-lower} give
		\[
		\mathcal R
		\ge
		c_K\sum_{i\ne j}
		\sigma_N\int_\Omega Q_i^{p-1}Q_j\,dz
		\ge
		c_Kn(n-1)\lambda_n^{-(N-2)}.
		\]
		Consequently,
		\begin{equation}\label{eq5.total-gap}
			\mathcal R+\mathcal Q
			\ge
			c_K\min\left\{
			1,n(n-1)\lambda_n^{-(N-2)}
			\right\}
			\ge c_A>0
		\end{equation}
		for all sufficiently large \(n\), including the case \(r<n\). Here the matched scale gives the exact relation
		\[
		n(n-1)\lambda_n^{-(N-2)}
		=A^{-(N-2)}\left(1-\frac1n\right).
		\]
		
		Writing \(A_{ij}=A_{\lambda_n}(x_i,x_j)\) and \(B_{ij}=B_{\lambda_n}(x_i,x_j)\), Lemma~\ref{Lem5.8} gives
		\[
		\begin{aligned}
			\sigma_N\sum_{i,j=1}^r\beta_i\beta_jA_{ij}
			&=
			\sigma_N\int_\Omega
			V\sum_{i=1}^r\beta_i\delta_i^{p-1}\,dz\\
			&=\|V\|_a^2=n.
		\end{aligned}
		\]
		This exact normalization, rather than a pairwise summation bound, removes any factor \(r^2\) or \(n^2\).  Consequently,
		\[
		\begin{aligned}
			0\le\mathcal E
			&=
			\sigma_N\sum_{i,j}\beta_i\beta_j(A_{ij}-B_{ij})\\
			&\le
			C\varepsilon_N(\lambda_n)
			\sigma_N\sum_{i,j}\beta_i\beta_jA_{ij}
			=Cn\varepsilon_N(\lambda_n).
		\end{aligned}
		\]
		At the matched scale,
		\[
		n\varepsilon_4(\lambda_n)
		=O_A\left(n^{-1}(1+\log n)^2\right),
		\qquad
		n\varepsilon_5(\lambda_n)
		=O_A\left(n^{-1/3}(1+\log n)\right).
		\]
		Both expressions tend to zero.  Equations \eqref{eq5.energy-identity} and \eqref{eq5.total-gap} therefore give
		\[
		\sigma_N\int_\Omega V^p\,dz\ge n+\frac12c_A
		\]
		for \(n\ge n_A\).  Finally,
		\[
		\frac{\mathfrak q_a(S)}{b_n}
		=
		\frac{\mathfrak q_a(V)}{b_n}
		=
		\frac{n}{\sigma_N\int_\Omega V^p\,dz}
		\le
		1-\frac{\kappa_A}{n}
		\]
		after decreasing \(\kappa_A\).  This proves \eqref{eq5.matched-drop}.
	\end{proof}
	
	\begin{corollary}\label{Cor5.11}
		Assume that the \(L_a\)-problem has no positive solution. Let \(A>0\).  For every sufficiently large \(m\ge n_A\), put \(\lambda_m=A m^{2/(N-2)}\).  For every \(\Lambda\ge\lambda_m\), the scale homotopies
		\[
		H_j(s,\mu)
		=g_{a,(1-s)\lambda_m+s\Lambda,j}(\mu),
		\qquad
		0\le s\le1,\quad 1\le j\le m,
		\]
		have the uniform upper bound
		\[
		\sup_{\substack{1\le j\le m\\0\le s\le1\\\mu\in B_j(K)}}
		\frac{\mathcal J_a(H_j(s,\mu))}{b_j}
		\le1+C\varepsilon_N(\lambda_m).
		\]
		Consequently, one can choose regular levels \(b_j<c_j<b_{j+1}\) so that every \(H_j\) is a homotopy in \(W_{a,j}\), and its restriction to \(B_{j-1}(K)\) is a homotopy in \(W_{a,j-1}\).  At the top level, \(c_{m-1}\) may simultaneously be chosen so that
		\[
		\sup_{\mu\in B_m(K)}
		\mathcal J_a(g_{a,\lambda_m,m}(\mu))
		<c_{m-1}<b_m.
		\]
	\end{corollary}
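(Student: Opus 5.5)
The plan is to bound $\mathcal J_a(H_j(s,\mu))/b_j$ uniformly by running the argument of Proposition~\ref{Prop5.10} in reverse: keep only the exact normalization and the source-transfer estimate of Lemma~\ref{Lem5.8}, and use the nonnegativity of the defect in place of the lower bound on it. Fix $1\le j\le m$, $s\in[0,1]$, put $\lambda=(1-s)\lambda_m+s\Lambda\ge\lambda_m$, and take $\mu=\sum_{i\le r}t_i\delta_{x_i}\in B_j(K)$ with $r\le j$ and $t_i>0$. Set $V=\sqrt j\,S/\|S\|_a=\sum_i\beta_iQ_{\lambda,x_i}$, so that $\|V\|_a^2=j$.

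The identity \eqref{eq5.energy-identity}, with $n$ replaced by $j$, gives
\[
\sigma_N\int_\Omega V^p\,dz-j=\mathcal R-\gamma\mathcal E+\mathcal Q .
\]
Lemma~\ref{Lem5.7} gives $\mathcal R\ge0$. The bound $Q_i\le\delta_i$ gives $\mathcal Q\ge(\gamma-1)(j-r)\ge0$. The exact normalization $\sigma_N\sum\beta_i\beta_kA_{ik}=j$ together with \eqref{eq5.source-transfer} gives $0\le\mathcal E\le Cj\varepsilon_N(\lambda)$. Hence
\[
\sigma_N\int_\Omega V^p\,dz\ge j\bigl(1-C\gamma\varepsilon_N(\lambda)\bigr),
\]
and therefore
\[
\frac{\mathfrak q_a(V)}{b_j}=\frac{j}{\sigma_N\int V^p}\le1+C'\varepsilon_N(\lambda).
\]
Since $\varepsilon_N$ is decreasing for large arguments and $\lambda\ge\lambda_m$, this yields $1+C'\varepsilon_N(\lambda_m)$. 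The constant $C$ of Lemma~\ref{Lem5.8} depends only on $\Omega,a,K,N$ and not on $j$ or $s$, so the bound is uniform provided $\lambda_m\ge\lambda_0$ from that lemma. This holds for large $m$.

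Next come the levels. Because $\varepsilon_N(\lambda_m)\to0$ and $b_{j+1}/b_j\ge b_{m}/b_{m-1}>1$ with gap of order $1/m$, I must compare $b_j C\varepsilon_N(\lambda_m)$ with $b_{j+1}-b_j$. Here $b_{j+1}/b_j-1\ge c/j\ge c/m$, whereas $\varepsilon_N(\lambda_m)$ is of order $m^{-4/(N-2)}$ up to logarithms, that is $m^{-2}\log^2 m$ for $N=4$ and $m^{-4/3}\log m$ for $N=5$. Both are $o(1/m)$. So for large $m$, and for every $j\le m-1$, the supremum lies strictly below $b_{j+1}$, and I choose regular values $c_j$ with
\[
\max\bigl\{b_j,\ b_j(1+C'\varepsilon_N(\lambda_m))\bigr\}<c_j<b_{j+1}.
\]
Such regular values exist under the nonexistence hypothesis, since by Proposition~\ref{Prop3.24} the only quantized critical levels are the $b_k$. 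For $j=m$, any $c_m\in(b_m,b_{m+1})$ with $c_m>b_m(1+C'\varepsilon_N(\lambda_m))$ works by the same comparison applied to $b_{m+1}$. Restrictions to $B_{j-1}(K)$ are the $(j-1)$-level maps, so they land in $W_{a,j-1}$ by the bound at level $j-1$; continuity in $s$ is clear from the construction.

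At the top level, Proposition~\ref{Prop5.10} gives the supremum at $\lambda_m$ as at most $b_m(1-\kappa_A/m)<b_m$. I also need $c_{m-1}>b_{m-1}(1+C'\varepsilon_N(\lambda_m))$. This is compatible because $b_{m-1}(1+o(1/m))<b_m(1-\kappa_A/m)$ for large $m$, since $b_m/b_{m-1}-1\asymp 1/m$ with constant $2/(N-2)$. That last check is the main obstacle: one needs $\kappa_A<2/(N-2)$, which I would arrange by shrinking $\kappa_A$ as allowed in Proposition~\ref{Prop5.10}. I would then take $c_{m-1}$ strictly between the maximum of these two quantities and $b_m$, within the window $b_m-\vartheta_{m,*}<c_{m-1}$ of Lemma~\ref{Lem3.28}. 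Because $\vartheta_{m,*}$ is fixed before the level is chosen, this window is compatible with the requirement at large $m$.
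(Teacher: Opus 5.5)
Your proposal is correct and follows the paper's proof essentially step for step. You drop the nonnegative terms $\mathcal R,\mathcal Q$ in \eqref{eq5.energy-identity}, bound $\mathcal E$ by $Cj\varepsilon_N(\lambda)$ through the exact normalization and Lemma~\ref{Lem5.8}, compare $\varepsilon_N(\lambda_m)=o(m^{-1})$ (using monotonicity of $\varepsilon_N$) with $b_{j+1}/b_j-1\ge c_N/j$, and invoke Proposition~\ref{Prop5.10} for the top level.

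The one superfluous step is your ``main obstacle''. Since $c_{m-1}$ is taken above the maximum of $b_{m-1}(1+C'\varepsilon_N(\lambda_m))$ and $b_m(1-\kappa_A/m)$, you only need each of these to lie below $b_m$, not any ordering between them. So no shrinking of $\kappa_A$ is needed.
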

	
	\begin{proof}
		Repeat the normalization used in the proof of Proposition~\ref{Prop5.10}, now with \(j\) in place of \(n\). Dropping the nonnegative defect from \eqref{eq5.energy-identity} and using \eqref{eq5.source-transfer} gives
		\[
		\mathcal J_a(g_{a,\lambda,j}(\mu))
		\le b_j\bigl(1+C\varepsilon_N(\lambda)\bigr)
		\]
		uniformly in \(j\), the weights, and the centers.  The function \(\varepsilon_N\) is decreasing for large \(\lambda\).  Moreover,
		\[
		\varepsilon_N(\lambda_m)=o(m^{-1}),
		\qquad
		\frac{b_{j+1}}{b_j}-1
		=\left(1+\frac1j\right)^{\frac{2}{N-2}}-1
		\ge\frac{c_N}{j}
		\]
		uniformly for \(1\le j\le m\).  Thus the displayed upper bound lies below \(b_{j+1}\), and the corresponding \((j-1)\)-particle bound lies below \(b_j\), for all sufficiently large \(m\).  Proposition \ref{Prop5.10} leaves a nonempty interval below \(b_m\) for the additional top-level choice.  Avoiding critical values completes the selection.
	\end{proof}
	
	\subsection{The common fixed-stratum interface}
	
	For fixed \(j\), we use the parameter chart and balanced tubes of Definition~\ref{Def3.21}.  The global modulation construction below refines them to independently chosen inner and outer retained sets inside the same chart \(\Phi_{a,j}\).  These descriptions consist of exactly \(j\) positive, balanced, mutually orthogonal bubbles modulo permutations.
	
	\begin{lemma}\label{Lem5.12}
		Fix \(J_{\max}\in\mathbb N\).  For every choice of regular levels
		\[
		c_0<b_1,
		\qquad
		b_j<c_j<b_{j+1},
		\qquad 1\le j\le J_{\max},
		\]
		with \(W_{a,0}=\varnothing\), there is
		\[
		\Lambda=\Lambda(J_{\max},K,c_0,\ldots,c_{J_{\max}})>0
		\]
		such that, for every \(\lambda\ge\Lambda\), the maps
		\[
		g_{a,\lambda,j}:
		(B_j(K),B_{j-1}(K))
		\to
		(W_{a,j},W_{a,j-1}),
		\qquad 1\le j\le J_{\max},
		\]
		are continuous maps of pairs.
	\end{lemma}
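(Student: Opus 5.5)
Since \(J_{\max}\) is fixed, the claim reduces to finitely many uniform estimates, one for each \(1\le j\le J_{\max}\). Continuity of \(g_{a,\lambda,j}\) into \(\Sigma_{a,+}\) comes first. The map \(\mu\mapsto S_{a,\lambda,j}(\mu)\) is continuous on \(B_j(K)\) for the weak topology of measures. Indeed, \(x\mapsto Q_{\lambda,x}\) is continuous into \(H^1_0(\Omega)\): by Lax--Milgram it is the image of the continuous map \(x\mapsto\sigma_N\delta_{\lambda,x}^{2^*-1}\) into \(H^{-1}\). A representation \(\sum t_i\delta_{x_i}\) is ambiguous only through zero weights or merged points, and in both cases the sum \(\sum t_iQ_{\lambda,x_i}\) is unchanged. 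Positivity of each \(Q_{\lambda,x}\) and the diagonal bound \(\|Q_{\lambda,x}\|_a\ge c>0\) (Lemma~\ref{Lem4.1}, or the lower bound in the proof of Lemma~\ref{Lem3.28}) give \(\|S_{a,\lambda,j}\|_a^2\ge\sum_i t_i^2\|Q_{\lambda,x_i}\|_a^2\ge c/j\). This holds because all cross terms \(\langle Q_x,Q_y\rangle_a\) are nonnegative by the positive Green representation. So the normalization is continuous and nonnegative, and \(g_{a,\lambda,j}\) maps into \(\Sigma_{a,+}\). Since \(B_{j-1}(K)\subset B_j(K)\) and \(g_{a,\lambda,j}\) restricted to it equals \(g_{a,\lambda,j-1}\), the pair statement reduces to the single level estimate
\[
\sup_{\mu\in B_j(K)}\mathcal J_a(g_{a,\lambda,j}(\mu))\le c_j\qquad(1\le j\le J_{\max},\ \lambda\ge\Lambda).
\]
Applied with \(j-1\), the same estimate sends \(B_{j-1}(K)\) into \(W_{a,j-1}\); for \(j=1\), \(B_0(K)=\varnothing=W_{a,0}\).

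To get the level estimate I would prove \(\limsup_{\lambda\to\infty}\sup_{B_j(K)}\mathfrak q_a(S_{a,\lambda,j})\le b_j\) uniformly over all weights and centres, coincident centres included. Because \(c_j>b_j\) is fixed, this sup converging to at most \(b_j\) suffices, and taking the maximum of finitely many thresholds gives \(\Lambda\). I would get it by transfer to the Dirichlet family followed by the Bahri--Coron bound. The first step is the uniform transfer \eqref{eq5.global-transfer}: \(\sup_{t,x}|\mathfrak q_a(S_{a,\lambda,j})-\mathfrak q_0(T_{\lambda,j})|\to0\). Its proof uses only Lemma~\ref{Lem4.2}, the vanishing \(\int_\Omega aT_{\lambda,j}^2\to0\), and the uniform lower bounds on the norm and on the \(L^{2^*}\) mass coming from a coefficient \(t_{i_0}\ge1/j\) and positivity. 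That argument is written for \(N=3\), but it works verbatim for \(N=4,5\) once the exponent \(6\) is replaced by \(2^*\) and the corresponding \(L^2\) scaling of \(\delta_{\lambda,x}\) is used (\(\lambda^{-2}\log\lambda\) and \(\lambda^{-2}\)). The second step is the common-scale specialization of Bahri--Coron's Corollary~B.3, valid in every dimension: \(\sup_{t,x}\mathfrak q_0(T_{\lambda,j})\le b_j+o_j(1)\).

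For \(N=4,5\) there is a more self-contained alternative. The proof of Corollary~\ref{Cor5.11} already gives \(\mathcal J_a(g_{a,\lambda,j}(\mu))\le b_j(1+C\varepsilon_N(\lambda))\) uniformly in \(j\), weights and centres. It does so by dropping the nonnegative defect \(\mathcal R\) and the term \(\mathcal Q\ge0\) in \eqref{eq5.energy-identity} and bounding \(\mathcal E\) by Lemma~\ref{Lem5.8}. Then choosing \(\lambda\) with \(C\varepsilon_N(\lambda)<\min_j(c_j/b_j-1)\) finishes. I would present this route for \(N=4,5\) and the transfer argument for \(N=3\).

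The main obstacle is uniformity near the degenerate strata: zero weights, where supports drop in cardinality, and coincident or colliding centres, where the weak-interaction expansions of Lemma~\ref{Lem5.2} fail. There the quotient must still be controlled by a bound that does not use separation. Either of the two inputs above handles this: the convexity/power-mean bound of Bahri--Coron Lemma~B.2 needs no separation, and neither does the defect identity. The point to check carefully is that the transfer error is uniform across these strata, which requires the mass lower bound \(\int_\Omega S^{2^*}\ge c_j\) to hold even when all centres coincide. This follows from \(S\ge t_{i_0}Q_{\lambda,x_{i_0}}\) and Lemma~\ref{Lem4.1}.
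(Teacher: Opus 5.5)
Your proof is correct. For \(N=3\) it is the paper's argument. The paper also combines the uniform \(C^0\) transfer from the \(P_a\)-family to the ordinary Dirichlet family with the common-scale Bahri--Coron Corollary~B.3. This gives \(\sup_{B_j(K)}\mathfrak q_a(S_{a,\lambda,j})\le b_j+o_j(1)\) for each of the finitely many \(j\), and \(B_{j-1}(K)\) is handled by the \((j-1)\)-atom version of the same bound.

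You depart from the paper for \(N\in\{4,5\}\). Instead of importing the Bahri--Coron convexity estimate, you drop \(\mathcal R\ge0\) and \(\mathcal Q\ge0\) in the exact identity \eqref{eq5.energy-identity}. You then bound \(\mathcal E\le Cj\varepsilon_N(\lambda)\) through Lemma~\ref{Lem5.8} together with the normalization \(\sigma_N\sum_{i,k}\beta_i\beta_kA_{ik}=\|V\|_a^2=j\). The nonexistence hypothesis in the statement of Corollary~\ref{Cor5.11} plays no role in that inequality, so the route is legitimate for Lemma~\ref{Lem5.12}, which carries no such hypothesis.

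This buys a self-contained, quantitative bound \(\mathcal J_a(g_{a,\lambda,j}(\mu))\le b_j(1+C\varepsilon_N(\lambda))\). It is uniform in \(j\), the weights and the centres, coincident ones included, so \(\Lambda\) is explicit in terms of \(\min_j(c_j/b_j-1)\). The paper's route is dimension-independent, one argument for \(N=3,4,5\), but gives only an unquantified \(o_j(1)\) and rests on an external result.

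Your continuity paragraph also spells out what the paper compresses into one sentence. You note that the nonnegative cross terms \(\langle Q_{\lambda,x},Q_{\lambda,y}\rangle_a\ge0\) give \(\|S_{a,\lambda,j}\|_a^2\ge c/j\), which justifies the normalization. You also note that \(S_{a,\lambda,j}(\mu)\) is independent of how \(\mu\) is represented, which makes the restriction identity \(g_{a,\lambda,j}|_{B_{j-1}(K)}=g_{a,\lambda,j-1}\) exact.
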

	
	\begin{proof}
		For each fixed \(j\), the Bahri-Coron convexity estimate gives
		\[
		\sup_{\mu\in B_j(K)}
		\mathfrak q_a(S_{a,\lambda,j}(\mu))
		\le b_j+o_j(1).
		\]
		Indeed, this is \cite[Corollary~B.3]{BahriCoron1988}, applied at the fixed stratum \(j\), together with the fixed-\(j\) \(P_a\)-projection estimates of Section~\ref{sec:test}.  On \(B_{j-1}(K)\), the same sum has at most \(j-1\) active atoms, and its quotient is at most \(b_{j-1}+o_j(1)\).  Because only finitely many strata \(1\le j\le J_{\max}\) occur, one common large \(\lambda\) gives the two required strict inequalities for every \(j\).  Continuity follows from the weak topology of finite atomic measures and the continuity of \(x\mapsto P_a\delta_{\lambda,x}\).
	\end{proof}
	
	\begin{lemma}
		\label{Lem5.13}
		Fix \(j\ge1\) and \(K\Subset\Omega\).  There are \(C_{j,K}>0\) and functions
		\[
		\varrho_{j,K}(R)\to0\quad(R\to\infty),
		\qquad
		r_{j,K}(\lambda)\to0\quad(\lambda\to\infty)
		\]
		such that the following holds.  If
		\[
		\mu=\sum_{i=1}^j t_i\delta_{z_i}\in B_j(K),\qquad
		\max_i|jt_i-1|\le\tau\le\frac12,
		\qquad
		\lambda|z_i-z_k|\ge R\quad(i\ne k),
		\]
		and \(v=g_{a,\lambda,j}(\mu)\), then
		\begin{equation}\label{eq5.full-gradient}
			\left|\mathcal J_a(v)-b_j\right|
			+
			\left\|\nabla_{\Sigma_a}\mathcal J_a(v)\right\|_a
			\le
			C_{j,K}\tau+
			\varrho_{j,K}(R)+r_{j,K}(\lambda).
		\end{equation}
	\end{lemma}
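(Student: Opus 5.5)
We reduce both quantities to the ordinary Dirichlet family and then to a finite sum of whole-space Talenti bubbles, where the value \(b_j\) and the vanishing gradient are exact.

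\emph{Step 1: transfer to the ordinary Dirichlet family.} Since \(t_i\ge(1-\tau)/j\ge1/(2j)\), we may pass to the coefficients \(\alpha=t/|t|\) with \(\sum_i\alpha_i^2=1\). The separation \(\lambda|z_i-z_k|\ge R\) gives \(\zeta_{ik}\le(2+R^2)^{-(N-2)/2}\). So once \(R\) is large the configuration lies in \(\mathscr S_j(L,\zeta_0)\) with \(L\) comparable to \(\lambda\); the boundary condition holds because \(K\Subset\Omega\). By \eqref{eq4.8}--\eqref{eq4.9} of Lemma~\ref{Lem4.3}, replacing \(\Phi_a\) by \(\Phi_P\) changes \(\mathcal J_a\) and \(\nabla_{\Sigma_a}\mathcal J_a\) by \(o(1)\) as \(\lambda\to\infty\). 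This error goes into \(r_{j,K}(\lambda)\). Next, \eqref{eq4.7-dirichlet} lets us discard the potential \(a\) in the normalization, because \(\int_\Omega aS_P^2=O(\varpi_N(\lambda))\); this also goes into \(r_{j,K}(\lambda)\). For the gradient we use the explicit formula
\[
-\nabla_{\Sigma_a}\mathcal J_a(v)=pG(v)^{-2}L_a^{-1}(v^{p-1})-pG(v)^{-1}v
\]
from the proof of Lemma~\ref{Lem3.28}. Since \(L_a^{-1}\) and \(-\Delta^{-1}\) on \(\Omega\) differ by \(o(1)\) in operator norm on the relevant concentrating sources, the same Lemma~\ref{Lem4.2}-type argument applies.

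\emph{Step 2: reduce to the equal-weight point.} The map \(t\mapsto g_{a,\lambda,j}\) is uniformly \(C^1\) in the weights on the separated set, again by the uniform \(C^2\) control of Lemma~\ref{Lem4.3} with coefficient derivatives. The composition of \(\mathcal J_a\) and its constrained gradient with this map is therefore Lipschitz in \(t\), with a constant depending only on \(j\) and \(K\). This bounds the difference between the configuration at \(t\) and the one at \(e_j\) by \(C_{j,K}\tau\), leaving only the equal-weight configuration \(v_0=j^{-1/2}\sum_i P\delta_{\lambda,z_i}\) up to normalization.

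\emph{Step 3: the equal-weight estimate.} Let \(w=\sum_i\delta_{\lambda,z_i}\). Projection errors are \(o(1)\) in \(H^1_0\) uniformly, which again feeds \(r_{j,K}(\lambda)\). The cross terms \(\int\nabla\delta_i\cdot\nabla\delta_k=\sigma_N\int\delta_i^{p-1}\delta_k\) are \(O(\zeta_{ik})\). The nonlinear interaction satisfies
\[
\Bigl\|w^{p-1}-\sum_i\delta_i^{p-1}\Bigr\|_{L^{p'}}\le C_j\max_{i\ne k}\Bigl(\int\delta_i^{p-1}\delta_k\Bigr)^{\theta}+\cdots,
\]
which is controlled by a power of \(\max\zeta_{ik}\). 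These bounds give \(\|w\|^2=j+o_R(1)\) and \(\int w^p=j\sigma_N^{-1}+o_R(1)\), so \(\mathcal J_a\to\sigma_Nj^{p/2-1}=b_j\). For the gradient, each \(\delta_i\) solves \(-\Delta\delta_i=\sigma_N\delta_i^{p-1}\), so
\[
-\Delta^{-1}(w^{p-1})=\sigma_N^{-1}w+(\text{interaction}).
\]
Substituting this into the gradient formula shows that the leading terms cancel. The residual is bounded by the interaction norm above, which gives the term \(\varrho_{j,K}(R)\).

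The main obstacle is Step 3. One must show the \(L^{p'}\) decoupling of \(w^{p-1}\) quantitatively, uniformly in the positions, using only the separation \(R\) and no lower bound on \(|z_i-z_k|\) beyond \(R/\lambda\). The plan is to work in the rescaled variables and use the elementary inequality \(|(\sum y_i)^{p-1}-\sum y_i^{p-1}|\le C_j\sum_{i\ne k}y_i^{p-2}y_k\) together with the two-bubble integral bounds, which are scale invariant and depend only on \(\lambda|z_i-z_k|\).
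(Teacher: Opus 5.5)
Your proof is correct, but it takes a longer route than the paper's.

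\textbf{The paper's route.} The paper never leaves the \(L_a\)-family. Because \(Q_i=P_a\delta_{\lambda,z_i}\) satisfies \(L_aQ_i=\sigma_N\delta_{\lambda,z_i}^{p-1}\) exactly, it splits the residual \(L_aS-qS^{p-1}\), with \(q=\|S\|_a^2/\int_\Omega S^p\), into three pieces: a coefficient-mismatch term \(\sum_i(\sigma_Nt_i-qt_i^{p-1})\delta_{\lambda,z_i}^{p-1}\); a source-defect term \(q\sum_it_i^{p-1}(\delta_{\lambda,z_i}^{p-1}-Q_i^{p-1})\), controlled only by the diagonal estimate of Lemma~\ref{Lem4.1} through \(\|\delta-Q\|_p^p\le\int(\delta^p-Q^p)\); and the nonlinear interaction term. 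It then applies \(\nabla_{\Sigma_a}\mathcal J_a(v)=p\mathcal J_a(v)L_a^{-1}(L_av-\mathcal J_a(v)v^{p-1})\). The \(C_{j,K}\tau\) term comes from the finite-dimensional bound on \(\sigma_Nt_i-qt_i^{p-1}\).

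\textbf{Your Step~1.} You pass to the ordinary projections through the heavier \(C^2\)-comparison of Lemma~\ref{Lem4.3}. You then have to bring the potential back, because the constrained gradient is taken in \(\langle\cdot,\cdot\rangle_a\). Replacing \(L_a^{-1}\) by \((-\Delta)^{-1}\) is legitimate on the sources \(0\le v^{p-1}\le C\sum_i\delta_{\lambda,z_i}^{p-1}\). It is not small in operator norm, only on these sources: their \((-\Delta)^{-1}\)-images are bounded by \(C\sum_i\delta_{\lambda,z_i}\), whose \(L^{2N/(N+2)}\)-norm vanishes. This works, but it undoes exactly what the \(L_a\)-projection provides for free.

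\textbf{Your Step~2.} This is a genuinely different and tidy device. Along the balanced segment to \(e_j\), you combine the uniform Hessian bound of Lemma~\ref{Lem3.26} on the sublevel with \(\|Q_i\|_a\le1\) and \(\|S\|_a\ge c\) (the cross terms are nonnegative). This makes the value and the constrained gradient Lipschitz in \(t\), uniformly in \(\lambda\) and the centres. It isolates the \(\tau\)-term cleanly and reduces everything to the equal-weight point.

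\textbf{Your Step~3.} This is essentially the paper's decoupling argument. One correction: \(w=\sum_i\delta_{\lambda,z_i}\) is not in \(H^1_0(\Omega)\), and \((-\Delta_\Omega)^{-1}\bigl(\sigma_N\sum_i\delta_{\lambda,z_i}^{p-1}\bigr)=\sum_iP\delta_{\lambda,z_i}\), not \(w\). So the cancellation should be carried out with \(\sum_iP\delta_{\lambda,z_i}\), and the projection defect \(\|(P\delta)^{p-1}-\delta^{p-1}\|_{L^{p/(p-1)}}\) should go into \(r_{j,K}(\lambda)\).
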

	
	\begin{proof}
		Put
		\[
		Q_i=P_a\delta_{\lambda,z_i},\qquad
		S=\sum_i t_iQ_i,\qquad
		A=\|S\|_a,\qquad
		D=\int_\Omega S^p\,dx,\qquad
		q=\frac{A^2}{D}.
		\]
		The fixed-\(j\) overlap estimates, \(0<Q_i\le\delta_{\lambda,z_i}\), and the diagonal estimates in Lemma~\ref{Lem4.1} give, uniformly in the displayed region,
		\begin{equation}\label{eq5.full-gradient-overlaps}
			\begin{aligned}
				\left|A^2-\sum_i t_i^2\right|
				+\left|\sigma_ND-\sum_i t_i^p\right|
				&\le
				\varrho_{j,K}(R)+r_{j,K}(\lambda),\\
				\left\|S^{p-1}-\sum_i t_i^{p-1}Q_i^{p-1}
				\right\|_{L^{p/(p-1)}}
				&\le
				\varrho_{j,K}(R),\\
				\max_i
				\left\|\delta_{\lambda,z_i}^{p-1}-Q_i^{p-1}
				\right\|_{L^{p/(p-1)}}
				&\le r_{j,K}(\lambda).
			\end{aligned}
		\end{equation}
		Indeed, for fixed \(j\), the elementary two-profile overlap estimate for translated Talenti functions tends uniformly to zero when
		\[
		\lambda|z_i-z_k|\ge R\to\infty.
		\]
		After the common-scale change of variables, this is the translation continuity statement
		\[
		\left\|
		\left(\delta_{1,0}+\delta_{1,y}\right)^{p-1}
		-\delta_{1,0}^{p-1}-\delta_{1,y}^{p-1}
		\right\|_{L^{p/(p-1)}(\mathbb R^N)}
		\to0
		\qquad(|y|\to\infty).
		\]
		It follows from a two-core split and the Talenti tail.  The inequality
		\[
		\left|
		\left(\sum_i s_i\right)^{p-1}-\sum_i s_i^{p-1}
		\right|
		\le C_j\sum_{i\ne k}
		\left(s_i^{p-2}s_k+s_i s_k^{p-2}\right)
		\]
		reduces the fixed finite sum to the two-profile statement.  The last line of \eqref{eq5.full-gradient-overlaps} follows from the following explicit \(L^p\)-projection defect.  Since \(0\le Q_i\le\delta_{\lambda,z_i}\),
		\[
		\begin{aligned}
			\|\delta_{\lambda,z_i}-Q_i\|_p^p
			&\le
			\int_\Omega
			\left(\delta_{\lambda,z_i}^p-Q_i^p\right)\,dx\\
			&=O_{K}(\varpi_N(\lambda)),
		\end{aligned}
		\]
		where Lemma~\ref{Lem4.1} is used for \(Q_i\), while the omitted Talenti tail outside \(\Omega\) is \(O_K(\lambda^{-N})\). Now apply
		\[
		\|s^{p-1}-t^{p-1}\|_{L^{p/(p-1)}}
		\le
		C\left(\|s\|_p^{p-2}+\|t\|_p^{p-2}\right)\|s-t\|_p .
		\]
		
		Because the weights are balanced, \eqref{eq5.full-gradient-overlaps} bounds \(A,D,q\), and their reciprocals where needed, by fixed positive constants.  Since
		\[
		\frac{A^2}{D}
		=
		\sigma_N\frac{\sum_i t_i^2}{\sum_i t_i^p}
		+O_{j,K}\!\left(
		\varrho_{j,K}(R)+r_{j,K}(\lambda)\right)
		\]
		and the right-hand rational function equals \(\sigma_Nj^{p-2}\) at \(t_i=1/j\), finite-dimensional smoothness gives
		\begin{equation}\label{eq5.full-gradient-coefficients}
			\max_i|\sigma_Nt_i-qt_i^{p-1}|
			\le
			C_{j,K}\tau+
			\varrho_{j,K}(R)+r_{j,K}(\lambda).
		\end{equation}
		The same two scalar expansions, now applied to \(\mathcal J_a(S/A)=A^p/D\), prove the first term in \eqref{eq5.full-gradient}.
		
		For the full Hilbert-sphere gradient, use
		\[
		L_aS=\sigma_N\sum_i t_i\delta_{\lambda,z_i}^{p-1}
		\]
		and decompose
		\[
		\begin{aligned}
			L_aS-qS^{p-1}
			&=
			\sum_i(\sigma_Nt_i-qt_i^{p-1})
			\delta_{\lambda,z_i}^{p-1}\\
			&\quad
			+q\sum_i t_i^{p-1}
			\left(\delta_{\lambda,z_i}^{p-1}-Q_i^{p-1}\right)\\
			&\quad
			-q\left(S^{p-1}-\sum_i t_i^{p-1}Q_i^{p-1}\right).
		\end{aligned}
		\]
		Equations \eqref{eq5.full-gradient-overlaps}-%
		\eqref{eq5.full-gradient-coefficients}, the uniform \(L^{p/(p-1)}\)-norm of \(\delta_{\lambda,z_i}^{p-1}\), and the Sobolev dual embedding yield
		\begin{equation}\label{eq5.full-residual}
			\|L_aS-qS^{p-1}\|_{H^{-1}}
			\le
			C_{j,K}\tau+
			\varrho_{j,K}(R)+r_{j,K}(\lambda).
		\end{equation}
		Finally, for \(v=S/A\),
		\[
		L_av-\mathcal J_a(v)v^{p-1}
		=A^{-1}(L_aS-qS^{p-1}),
		\]
		and direct differentiation of the quotient on the Hilbert sphere gives
		\[
		\nabla_{\Sigma_a}\mathcal J_a(v)
		=
		p\mathcal J_a(v)L_a^{-1}
		\bigl(L_av-\mathcal J_a(v)v^{p-1}\bigr).
		\]
		The bounded inverse \(L_a^{-1}:H^{-1}\to H^1_0\), together with \eqref{eq5.full-residual}, proves the gradient part of \eqref{eq5.full-gradient}.
	\end{proof}
	
	\begin{lemma}\label{Lem5.14}
		Assume \(N\in\{4,5\}\). Fix \(j\ge2\), \(K\Subset\Omega\), \(\tau>0\), and \(R>0\). There are \(\eta_{j,K,\tau,R}>0\) and \(\Lambda_{j,K,\tau,R}>0\) such that, for every \(\lambda\ge\Lambda_{j,K,\tau,R}\) and every labelled tuple
		\[
		(\mathbf t,\mathbf x)\in\Delta_{j-1}\times K^j
		\]
		satisfying
		\[
		\max_i|jt_i-1|\ge\tau
		\quad\hbox{or}\quad
		\lambda|x_r-x_s|\le R
		\quad\hbox{for some }r\ne s,
		\]
		one has
		\[
		\mathfrak q_a\!\left(
		\sum_{i=1}^j t_iQ_{\lambda,x_i}
		\right)
		\le b_j-\eta_{j,K,\tau,R}.
		\]
		The conclusion includes zero weights and coincident centers.
	\end{lemma}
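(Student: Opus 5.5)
The approach mirrors the proof of Lemma~\ref{Lem5.4}, now in dimensions four and five: first transfer to the ordinary Dirichlet family, then establish sequential rigidity for that family by a compactness and cluster argument, and finally derive the uniform gap by contradiction.

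The transfer step runs as follows. Put $T_{\lambda,j}(t,x)=\sum_i t_iP\delta_{\lambda,x_i}$. Since some $t_{i_0}\ge1/j$ and all bubbles are nonnegative, the diagonal bounds of Lemma~\ref{Lem4.1} give uniform lower bounds on $\|S_{a,\lambda,j}\|_a$ and $\int_\Omega S_{a,\lambda,j}^p$, and likewise for $T_{\lambda,j}$. These bounds hold on all of $\Delta_{j-1}\times K^j$, including zero weights and coincident centres. Lemma~\ref{Lem4.2} gives $\|S_{a,\lambda,j}-T_{\lambda,j}\|_a\le C_j\eta_N(\lambda)$. The $L^2$ scaling $\|\delta_{\lambda,x}\|_2^2=O(\varpi_N(\lambda))$ gives $\int_\Omega aT_{\lambda,j}^2\to0$. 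Together these yield the uniform $C^0$ transfer
\[
\sup_{\Delta_{j-1}\times K^j}\bigl|\mathfrak q_a(S_{a,\lambda,j})-\mathfrak q_0(T_{\lambda,j})\bigr|\to0,
\]
where $\mathfrak q_0(w)=\|\nabla w\|_2^p/\int_\Omega(w^+)^p$. The upper bound $\sup\mathfrak q_0(T_{\lambda,j})\le b_j+o_j(1)$ is Corollary~B.3 of \cite{BahriCoron1988}. Neither step depends on the dimension, so both apply verbatim for $N=4,5$.

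The heart of the proof is the rigidity statement \eqref{eq5.ordinary-rigidity} in dimension $N$. Suppose $\lambda_n\to\infty$ and $\mathfrak q_0(T_{\lambda_n,j}(t_n,x_n))\to b_j$. The claim is that $t_n\to e_j$ and $\min_{r<s}\lambda_n|x_{r,n}-x_{s,n}|\to\infty$. After extracting a subsequence with $t_n\to t$, group the labels into clusters according to whether the rescaled distances $\lambda_n|x_{r,n}-x_{s,n}|$ stay bounded.
\begin{itemize}
\item Since $K\Subset\Omega$, the projection errors vanish in $D^{1,2}$ (Lemma~\ref{Lem3.11}).
\item Cross-cluster interactions vanish by the orthogonality of Lemma~\ref{Lem3.8}.
\item Hence the numerator and denominator split into finite sums of whole-space cluster contributions $\|\nabla w_c\|_2^2$ and $\int w_c^p$, where $w_c=\sum_{r\in c}t_r\delta_{1,y_r}$.
\end{itemize}
Zero-weight labels are deleted and coinciding limit profiles are merged. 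If fewer than $j$ distinct positive profiles remain, the Sobolev inequality on each cluster and the power-mean inequality bound the limit by $b_{j-1}<b_j$. Otherwise some nonsingleton cluster contains two distinct translates $\delta_{1,y_r}\neq\delta_{1,y_s}$. For such a cluster $\int w_c^p<S^{-p/2}\|\nabla w_c\|_2^p$ strictly, because Sobolev extremals are single bubbles, so this loss is strict. With $j$ effective profiles, equality then forces all clusters to be singletons and, by strict power-mean equality, $t=e_j$.

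The conclusion is by contradiction. If the lemma failed for some fixed $(\tau,R)$, there would be $\lambda_n\to\infty$ and tuples in the closed degeneracy set with $\mathfrak q_a\to b_j$. By the transfer, $\mathfrak q_0(T_{\lambda_n,j})\to b_j$ as well. Rigidity then forces $\max_i|jt_{i,n}-1|\to0$ and $\lambda_n|x_r-x_s|\to\infty$ for all $r\ne s$. This contradicts $\max_i|jt_i-1|\ge\tau$, respectively $\lambda_n|x_r-x_s|\le R$. Because the degeneracy set is closed and the contradiction is sequential, the gap $\eta$ and the lower scale $\Lambda$ come out uniform.

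The main obstacle is the strict-loss step inside a nonsingleton cluster. In principle an infinite family of configurations could approach the single-bubble bound, so strictness has to be made quantitative, and it must be checked compatibly with merging of profiles. I would rely on the compactness of bounded relative positions and on CGS uniqueness of extremals: a finite sum of distinct translates with positive coefficients is never an extremal. This argument is dimension-free. Unlike the energy-drop proof, it does not need the matched-scale error estimates.
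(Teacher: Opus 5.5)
Your transfer step is fine: it is the $C^0$ comparison \eqref{eq5.global-transfer} from Lemma~\ref{Lem5.4}, and it is indeed dimension-free. The gap is in the rigidity step, which you rightly call the heart of the proof. There the key inequality points the wrong way.

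\textbf{Why the Sobolev argument fails.} The sharp Sobolev inequality gives $\int w_c^p\le\sigma_N^{-1}\|\nabla w_c\|_2^p$. This is an \emph{upper} bound on the denominator of $\mathfrak q_0$, so it yields only the lower bound $\mathfrak q_0\ge\sigma_N=b_1$. The cluster limit of the quotient is $(\sum_cA_c)^{p/2}/\sum_c\int w_c^p$ with $A_c=\|\nabla w_c\|_2^2$. An upper bound by $b_{j-1}$, or a strict drop below $b_j$, therefore needs \emph{lower} bounds on $\int w_c^p$. Consequently:
\begin{itemize}
\item ``Sobolev plus power mean'' gives $\le b_k$ only when every cluster is a single profile, where Sobolev holds with equality.
\item For a nonsingleton cluster, the strict inequality $\int w_c^p<\sigma_N^{-1}\|\nabla w_c\|_2^p$ makes the quotient \emph{larger} than the singleton expression, not smaller.
\end{itemize}
So the fact that a sum of distinct translates is not a Sobolev extremal is irrelevant here. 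For example, two equal-weight bubbles at large but finite rescaled distance form one cluster, yet their quotient is close to $b_2>b_1$. Neither your bound in the degenerate-profile case nor your ``strict loss'' for nonsingleton clusters follows from what you wrote.

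\textbf{What is missing.} You need a convexity \emph{lower} bound on $\int\bigl(\sum_i\beta_iz_i\bigr)^p$. This is the defect inequality $D_\beta\ge0$ of Lemma~\ref{Lem5.7}, i.e.\ Bahri--Coron's Lemma~B.2. Strictness comes from its quantitative form $D_\beta\ge c_{p,a}\sum_{i\ne k}z_i^{p-1}z_k$. That form turns a bounded rescaled distance, which forces a positive interaction $\int\delta_i^{p-1}\delta_k$, into a definite drop below $b_j$.

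\textbf{How the paper proceeds.} The paper uses exactly this, and directly at finite $\lambda$. In the exact identity \eqref{eq5.energy-identity} the source error satisfies $\mathcal E=O(\varepsilon_N(\lambda))$ by Lemma~\ref{Lem5.8}. Then it treats cases:
\begin{itemize}
\item a zero weight gives $\mathcal Q\ge\gamma-1$;
\item an unbalanced normalized coefficient gives $\mathcal R\ge c$ by Lemma~\ref{Lem5.7}(ii);
\item a collision with balanced coefficients gives $\mathcal R\ge c(1+R)^{-(N-2)}$ by Lemma~\ref{Lem5.7}(i) and \eqref{eq5.pair-lower}.
\end{itemize}
Only the balanced weight collar is left to a compactness and power-mean argument.

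If you replace ``Sobolev on each cluster'' by this convexity inequality, or cite the strict-convexity step of Bahri--Coron Lemma~B.2 as Lemma~\ref{Lem5.4} does, your route becomes a valid dimension-independent alternative. As written, the rigidity claim is not proved.
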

	
	\begin{proof}
		Normalize
		\[
		V=\frac{\sqrt j\sum_i t_iQ_{\lambda,x_i}}
		{\|\sum_i t_iQ_{\lambda,x_i}\|_a}
		=\sum_i\beta_iQ_i.
		\]
		The identity \eqref{eq5.energy-identity}, with \(j\) in place of \(n\), applies after zero weights are deleted.  Its source error satisfies
		\[
		0\le\mathcal E\le C_j\varepsilon_N(\lambda)
		\]
		by Lemma~\ref{Lem5.8}.  If fewer than \(j\) weights are positive, then
		\[
		\mathcal Q_j
		=
		(\gamma-1)\left(
		j-\sigma_N\sum_{t_i>0}\int_\Omega Q_i^p\,dz
		\right)
		\ge\gamma-1.
		\]
		If all weights are positive but some \(\beta_i\notin[1/2,2]\), Lemma~\ref{Lem5.7}(ii), together with the single-bubble mass lower bound in Lemma~\ref{Lem4.1}, gives \(\mathcal R\ge c_{j,K}>0\). If all \(\beta_i\in[1/2,2]\) and \(\lambda|x_r-x_s|\le R\) for some pair, then Lemma~\ref{Lem5.7}(i) and \eqref{eq5.pair-lower} give
		\[
		\mathcal R
		\ge c_{j,K}(1+R)^{-(N-2)}.
		\]
		In each of these cases, the exact identity gives a fixed decrease below \(b_j\) once \(\lambda\) is large.
		
		It remains to consider the weight collar when all coefficients stay in \([1/2,2]\).  If the asserted uniform decrease failed, the fixed-\(j\) upper estimate used in the proof of Lemma~\ref{Lem5.12} would give a sequence \(\lambda_k\to\infty\) of such tuples for which the quotient tends to \(b_j\).  The exact energy identity and \(\mathcal E=O(\varepsilon_N(\lambda_k))\) then imply \(\mathcal R_k+\mathcal Q_{j,k}\to0\). Lemma~\ref{Lem5.7}(i) and \eqref{eq5.pair-lower} force
		\[
		\lambda_k|x_{i,k}-x_{\ell,k}|\to\infty
		\qquad(i\ne\ell).
		\]
		After passing to a subsequence, \(\mathbf t_k\to\mathbf t\in\Delta_{j-1}\).  The fixed-\(j\) projection and overlap estimates then give
		\[
		\mathfrak q_a\!\left(\sum_i t_{i,k}Q_{\lambda_k,x_{i,k}}\right)
		\to
		\sigma_N
		\frac{\bigl(\sum_i t_i^2\bigr)^{p/2}}
		{\sum_i t_i^p}.
		\]
		The power-mean inequality shows that the expression on the right is at most \(b_j\), with equality only at \(\mathbf t=(1/j,\ldots,1/j)\).  The closed condition \(\max_i|jt_i-1|\ge\tau\) excludes that point, so compactness of the simplex gives a fixed strict gap.  This contradiction proves the lemma.
	\end{proof}
	
	\begin{lemma}\label{Lem5.15}
		Fix \(J_{\max}\in\mathbb N\).  There are
		\[
		0<2\rho_*<1,\qquad L_*>2,\qquad
		0<2\zeta_*<2^{-\frac{N-2}{2}},
		\]
		such that the following assertions hold simultaneously for \(1\le j\le J_{\max}\).  Let
		\[
		\mathscr Q
		=
		\overline{\mathscr Q}_j(\rho_*,L_*,\zeta_*).
		\]
		For a labelled tangent vector write
		\[
		h=(\dot\alpha_i,s_i,y_i)_{i=1}^j,\qquad
		\sum_i\dot\alpha_i=0,\qquad
		s_i=\frac{\dot\lambda_i}{\lambda_i},\qquad
		y_i=\lambda_i\dot x_i,
		\]
		and put
		\[
		|h|_*^2=\sum_i\bigl(\dot\alpha_i^2+s_i^2+|y_i|^2\bigr).
		\]
		For Taylor estimates at a base tuple \(q^0\), use the integrable coordinates
		\[
		\beta_i=\alpha_i-\alpha_i^0,\qquad
		\sigma_i=\log\frac{\lambda_i}{\lambda_i^0},\qquad
		\xi_i=\lambda_i^0(x_i-x_i^0),\qquad
		\sum_i\beta_i=0.
		\]
		On a fixed small chart their coordinate frame and its first derivative are uniformly equivalent to the moving normalized frame \((\dot\alpha_i,\lambda_i\partial_{\lambda_i}, \lambda_i^{-1}\partial_{x_i})\). There are \(c_*,C_*,r_*>0\), independent of \(q\in\mathscr Q\), such that
		\begin{equation}\label{eq5.modulation-Gram}
			c_*|h|_*^2
			\le
			\|D\Phi_{a,j}(q)h\|_a^2
			\le
			C_*|h|_*^2,
			\qquad
			\|D^2\Phi_{a,j}(q)[h,k]\|_a
			\le C_*|h|_*|k|_* .
		\end{equation}
		The second derivative is computed in these integrable base coordinates.
		
		For two labelled tuples define the parameter discrepancy
		\[
		d_*(q,q')
		=
		\max_i\left\{
		j|\alpha_i-\alpha_i'|,
		\left|\log\frac{\lambda_i}{\lambda_i'}\right|,
		\sqrt{\lambda_i\lambda_i'}\,|x_i-x_i'|
		\right\},
		\]
		and set
		\[
		d_{\mathfrak S}(q,q')
		=
		\min_{\pi\in\mathfrak S_j}d_*(q,\pi q').
		\]
		After decreasing \(r_*\), one has the local inverse estimate
		\begin{equation}\label{eq5.local-inverse}
			d_{\mathfrak S}(q,q')
			\le C_*\|\Phi_{a,j}(q)-\Phi_{a,j}(q')\|_a
			\quad\hbox{if}\quad
			d_{\mathfrak S}(q,q')<r_*.
		\end{equation}
		Moreover, if \(q_k,q_k'\in\mathscr Q\) and
		\[
		\|\Phi_{a,j}(q_k)-\Phi_{a,j}(q_k')\|_a\to0,
		\]
		then, after one subsequence and one permutation \(\pi\in\mathfrak S_j\),
		\begin{equation}\label{eq5.profile-matching}
			\max_i\left\{
			|\alpha_{i,k}-\alpha'_{\pi(i),k}|,
			\left|\log\frac{\lambda_{i,k}}
			{\lambda'_{\pi(i),k}}\right|,
			\sqrt{\lambda_{i,k}\lambda'_{\pi(i),k}}\,
			|x_{i,k}-x'_{\pi(i),k}|
			\right\}\to0.
		\end{equation}
		Consequently, for every \(0<r\le r_*\),
		\begin{equation}\label{eq5.off-orbit-gap}
			\kappa_j(r)
			:=
			\inf_{\substack{q,q'\in\mathscr Q\\
					d_{\mathfrak S}(q,q')\ge r}}
			\|\Phi_{a,j}(q)-\Phi_{a,j}(q')\|_a
			>0.
		\end{equation}
	\end{lemma}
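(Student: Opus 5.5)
The proof first reduces everything to the ordinary Dirichlet family $\Phi_{P}$ and then to the model Euclidean configuration of $j$ Talenti bubbles. By Lemma~\ref{Lem4.3}, on $\mathscr S_j(L_*,\zeta_*)$ the map $\Phi_{a,j}$ is a uniform $C^2$-small perturbation of $\Phi_P$, with error $C\eta_N(L_*)$ in all derivatives up to order two. By \eqref{eq4.7-dirichlet}, $\Phi_P$ is in turn $C^2$-close to the Dirichlet-normalized $\Phi_P^0$. All three statements (Gram bounds, local inverse, profile matching) are stable under such perturbations once $L_*$ is large and $\zeta_*$ small. For instance, the lower Gram bound for $\Phi_{a,j}$ follows from that of $\Phi_P^0$ minus $C\eta_N(L_*)|h|_*^2$. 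The simplex constraint $\sum\alpha_i=1$ only removes the radial direction, and we keep $\rho_*<1/2$ so that all weights are comparable to $1/j$.

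\emph{Gram estimate.} For $S=\sum\alpha_iP\delta_i$, the derivative is
\[
DS\,h=\sum_i\bigl(\dot\alpha_iP\delta_i+\alpha_i s_iX_{i,0}P\delta_i+\alpha_i y_i\cdot X_{i,x}P\delta_i\bigr).
\]
The family $\{P\delta_i,X_{i,0}P\delta_i,X_{i,\ell}P\delta_i\}$ has a Gram matrix (in $\langle\cdot,\cdot\rangle_a$) that is block diagonal up to errors $\omega(L,\zeta)\to0$. This uses the standard Bahri--Coron orthogonality: the diagonal blocks converge to the positive definite Euclidean Gram matrix of $\delta_{1,0}$, $\partial_\lambda\delta$, $\partial_x\delta$, and the cross terms are $O(\zeta_{ik})$ plus projection errors $o_L(1)$. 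Gershgorin with $j\le J_{\max}$ then gives two-sided bounds for $\|DS\,h\|^2$ in terms of $\sum(\dot\alpha_i^2+\alpha_i^2(s_i^2+|y_i|^2))$. Normalization removes only the component along $S$; since $\sum\dot\alpha_i=0$ and the weights are balanced, this projection kills at most a fixed fraction, so the lower bound survives. Applying the explicit formulas for $D\mathcal N$ and $D^2\mathcal N$ from Lemma~\ref{Lem4.3}, together with the pointwise bounds $|\mathscr D^rP\delta|\le C\delta$, gives the upper bound and the $D^2$ bound. The base coordinates $(\beta,\sigma,\xi)$ differ from the moving frame by factors $\lambda_i/\lambda_i^0=e^{\sigma_i}$, which are bounded on a small chart.

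\emph{Local inverse and matching.} For \eqref{eq5.local-inverse}, Taylor expand in the integrable coordinates at $q'$:
\[
\Phi(q)-\Phi(q')=D\Phi(q')h+O(|h|_*^2),
\]
with $|h|_*\asymp d_*(q,q')$. The lower Gram bound then gives $\|\Phi(q)-\Phi(q')\|_a\ge(\sqrt{c_*}-C r_*)\,d_*$, and we choose $r_*$ small. For the permutation version, apply this to the minimizing $\pi$.

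\emph{Profile matching.} For \eqref{eq5.profile-matching}, which I expect to be the main obstacle, the argument is a concentration/profile decomposition. Pair each bubble $i$ of $q_k$ with the bubble of $q'_k$ maximizing $\langle B_{i,k},B'_{\ell,k}\rangle_a$. The quantity $\langle\Phi(q_k),B_{i,k}\rangle_a$ is bounded below by $c_{\mathrm{prof}}$, as in the proof of Lemma~\ref{Lem3.28}. Since $\Phi(q_k')$ is $o(1)$-close to $\Phi(q_k)$, some $\ell$ has $\langle B'_{\ell,k},B_{i,k}\rangle\ge c$. By the interaction estimate, two normalized bubbles with non-vanishing overlap must have bounded $\zeta$-parameter, so after a subsequence the rescaled profiles of $B'_{\ell,k}$ around $(\lambda_{i,k},x_{i,k})$ converge to a nonzero Talenti profile $U_{\mu,z}$. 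The $\zeta_*$-separation within each tuple makes this assignment injective, hence a permutation $\pi$. Rescaling $\Phi(q_k)-\Phi(q_k')\to0$ around bubble $i$ then shows that $\alpha_i\delta_{1,0}-\alpha'_{\pi(i)}\delta_{\mu,z}$ has zero limit. Since the other bubbles vanish weakly by Lemma~\ref{Lem3.8}, and the normalization constants converge by the Gram estimates, this forces $\mu=1$, $z=0$, and equal weights, which is exactly \eqref{eq5.profile-matching}. The care needed is in handling both the $\mathfrak S_j$ ambiguity and possible towers at the same centre, which is where separation $\zeta_*<2^{-(N-2)/2}$ prevents two bubbles of one tuple from matching a single bubble of the other.

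\emph{Off-orbit gap.} Finally, \eqref{eq5.off-orbit-gap} follows by contradiction. If $\kappa_j(r)=0$, take $q_k,q_k'$ with $d_{\mathfrak S}\ge r$ and $\|\Phi(q_k)-\Phi(q_k')\|\to0$. Then \eqref{eq5.profile-matching} gives $d_*(q_k,\pi q'_k)\to0$ along a subsequence, contradicting $d_{\mathfrak S}\ge r$.
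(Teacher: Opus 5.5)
Your Gram estimate, local inverse estimate and off-orbit gap follow the paper's arguments. These are whole-space block orthogonality, Gershgorin, removal of the common scaling direction by $\sum_i\dot\alpha_i=0$, and Taylor expansion in the integrable chart. You should still check that the straight segment stays in a slightly relaxed parameter set on which the Gram bounds hold.

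The genuine gap is in the profile-matching step \eqref{eq5.profile-matching}. Your blow-up argument needs three things along the sequence: $\lambda_{i,k}\to\infty$, $\lambda_{i,k}\operatorname{dist}(x_{i,k},\partial\Omega)\to\infty$, and every intra-tuple interaction $\zeta_{i\ell}\to0$. None of these holds for general sequences in the fixed set $\mathscr Q=\overline{\mathscr Q}_j(\rho_*,L_*,\zeta_*)$. There the scales and boundary products are only $\ge L_*$, and the interactions are only $\le\zeta_*$.

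\begin{itemize}
\item If the scales stay bounded, or $\lambda_{i,k}\operatorname{dist}(x_{i,k},\partial\Omega)$ stays bounded, rescaling produces no Talenti profile. After a subsequence you only get $\Phi_{a,j}(q_\infty)=\Phi_{a,j}(q'_\infty)$ at finite parameters. Deducing $d_{\mathfrak S}(q_\infty,q'_\infty)=0$ from this is exactly the injectivity you are trying to prove.
\item If the scales blow up but some $\zeta_{i\ell,k}$ stays bounded below, Lemma~\ref{Lem3.8} does not apply. Several bubbles of each tuple then survive in the rescaled limit, and you would need a linear-independence statement for distinct Talenti bubbles, which you never invoke.
\item The condition $\zeta_*<2^{-\frac{N-2}{2}}$ alone does not make your maximal-overlap assignment injective at finite $k$. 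That requires $\zeta_*$ below a threshold depending on the overlap constant $c$.
\end{itemize}

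The paper avoids limits and argues quantitatively, uniformly on $\mathscr Q$. Two normalized whole-space bubbles have overlap at most $1-\nu(r)$ when either relative parameter is $\ge r$. This bound transfers to normalized $P_a$-profiles with an error that becomes small once $L_*$ is large. Choosing the overlap threshold $\eta$, then $\rho_*$ and $\zeta_*$, then $L_*$, makes the cross-Gram matrix a near-permutation matrix. The inequality $1-o(1)=\langle\Phi(q_k),\Phi(q_k')\rangle_a\le\sum_ic_id_{\pi(i)}\langle B_i,C_{\pi(i)}\rangle_a+C_j(\eta+\omega_*)$ then forces every matched scale-centre discrepancy below $r_*/4$. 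The coefficient band has already been shrunk to $2\rho_*<r_*/4$, so both tuples lie in one chart, and only then is \eqref{eq5.local-inverse} used to get convergence to zero.

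You could instead salvage your approach in two steps. First prove the coarse statement that $\|\Phi_{a,j}(q)-\Phi_{a,j}(q')\|_a\le\epsilon_0$ implies $d_{\mathfrak S}(q,q')<r_*$. Do this by contradiction along sequences with $L\to\infty$ and $\zeta\to0$, where concentration, boundary separation and Lemma~\ref{Lem3.8} are all legitimate. Here $r_*$ must be fixed beforehand from Gram constants that are uniform in $L\ge L_0$, $\zeta\le\zeta_0$. Then upgrade to convergence via \eqref{eq5.local-inverse}. As written, however, your limit argument is applied in a regime where it is not valid.
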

	
	\begin{proof}
		We give the tangent and matching arguments because their uniformity at the noncompact scale ends is what is needed below.  For a whole space normalized bubble \(B_{\lambda,x}\), put
		\[
		Z_0=\lambda\partial_\lambda B_{\lambda,x},
		\qquad
		Z_\ell=\lambda^{-1}\partial_{x_\ell}B_{\lambda,x}.
		\]
		Scale and translation invariance and parity give constants \(\kappa_0,\kappa_1>0\) such that
		\[
		\begin{gathered}
			\langle B,Z_\nu\rangle_{\dot H^1}=0,\qquad
			\langle Z_0,Z_\ell\rangle_{\dot H^1}=0,\\
			\langle Z_\ell,Z_m\rangle_{\dot H^1}
			=\kappa_1\delta_{\ell m},\qquad
			\|Z_0\|_{\dot H^1}^2=\kappa_0.
		\end{gathered}
		\]
		For mutually orthogonal bubbles this is the diagonal scale-centre block.  The coefficient block is the differential of
		\[
		(\alpha_i)\longmapsto
		\frac{\sum_i\alpha_iB_i}{\|\sum_i\alpha_iB_i\|_{\dot H^1}}.
		\]
		Its only kernel in \(\mathbb R^j\) is the common scaling direction \(\mathbb R(\alpha_i)\).  Since \(\sum_i\dot\alpha_i=0\), this kernel is removed, and since the retained weights lie in a compact subset of the positive simplex, the angle between \(\{\sum_i\dot\alpha_i=0\}\) and \(\mathbb R(\alpha_i)\) has a uniform positive lower bound.
		
		The differentiated two-bubble overlap estimates tend to zero with the interaction parameter.  The cutoff argument in the first paragraph of the proof of Lemma~\ref{Lem4.3}, applied also to \(Z_\nu\) and their normalized first derivatives, transfers the preceding block matrix from whole space bubbles to the ordinary Dirichlet projections when \(L_*\) is large. Lemma~\ref{Lem4.3} writes its coefficient chart on \(\sum_i\alpha_i^2=1\), whereas the present chart uses \(\sum_i\alpha_i=1\).  The two charts are compatible because \(\Phi_{a,j}\) is homogeneous of degree zero in the coefficients, and
		\[
		(\alpha_i)\longmapsto
		\frac{(\alpha_i)}{(\sum_i\alpha_i^2)^{1/2}},
		\qquad
		(\beta_i)\longmapsto
		\frac{(\beta_i)}{\sum_i\beta_i},
		\]
		are inverse \(C^2\) coordinate changes on the retained positive balanced bands, with both derivatives and inverse derivatives uniformly bounded. Thus the coefficient derivatives in the two slices are uniformly equivalent through order two. Equations \eqref{eq4.7} and \eqref{eq4.7-dirichlet} then transfer it, through order two and with the present \(a\)-norm and normalization, to the \(P_a\)-family.  Choose first \(\rho_*\) small enough to keep the coefficient angle, next \(\zeta_*\) small enough that the sum of all off-diagonal blocks is less than one quarter of the least diagonal eigenvalue, and finally \(L_*\) large enough that the two projection errors have the same property. Gershgorin's estimate gives the first inequality in \eqref{eq5.modulation-Gram}; the upper and second-derivative estimates follow from the same normalized derivative bounds.  These choices are simultaneous for the finite range \(j\le J_{\max}\).
		
		The normalized fields do not commute, which is why Taylor's formula is applied in the integrable base chart above.  There
		\[
		\partial_{\sigma_i}=\lambda_i\partial_{\lambda_i},
		\qquad
		\partial_{\xi_{i,\ell}}
		=
		\frac{\lambda_i}{\lambda_i^0}
		\lambda_i^{-1}\partial_{x_{i,\ell}}.
		\]
		On \(|\sigma_i|<r_*\), the factors \(\lambda_i/\lambda_i^0=e^{\sigma_i}\) and their first derivatives are uniformly bounded above and below.  Thus \eqref{eq5.modulation-Gram}, the bound for the compositions of normalized fields in Lemma~\ref{Lem4.3}, and the first-derivative bound imply the same Gram and second-derivative bounds in the base chart.  Taylor's formula there gives
		\[
		\|\Phi_{a,j}(q)-\Phi_{a,j}(q')\|_a
		\ge \sqrt{c_*}\,|h|_*-C_*|h|_*^2.
		\]
		If \(r_*\) is sufficiently small, the straight coordinate segment stays in the parameter set with the relaxed bounds \((2\rho_*,L_*/2,2\zeta_*)\), on which the same Gram estimates hold after one harmless change of constants.  This proves \eqref{eq5.local-inverse}. At this point shrink the retained coefficient width once more so that
		\[
		2\rho_*<\frac{r_*}{4}.
		\]
		The smaller retained set inherits all preceding constants.  Since \(|j\alpha_i-1|\le\rho_*\), any two retained coefficient vectors then have \(j|\alpha_i-\alpha_i'|<r_*/4\); thus the coarse profile matching below controls the full discrepancy \(d_*\), including the coefficient part.
		
		It remains to prove that two close images enter one of these local charts. For a single pair of normalized whole space bubbles, its scalar product is a continuous function of
		\[
		\left|\log\frac\lambda\mu\right|
		\quad\hbox{and}\quad
		\sqrt{\lambda\mu}\,|x-y|,
		\]
		equals one only at \((\lambda,x)=(\mu,y)\), and is bounded by \(1-\nu(r)\), with \(\nu(r)>0\), when either displayed parameter is at least \(r\).  This follows directly after the change of variables \(z=\lambda(\xi-x)\): equality in Cauchy-Schwarz forces the two explicit Talenti functions to agree, and compactness on bounded relative-parameter annuli, together with weak convergence at their ends, gives the uniform gap.  The same statement, with an arbitrarily small error, holds for normalized \(P_a\)-profiles by the two projection comparisons just used.
		
		Now expand the square of the difference of two normalized balanced sums. Write the sums in normalized profiles as
		\[
		\Phi(q)=\sum_i c_iB_i,\qquad
		\Phi(q')=\sum_\ell d_\ell C_\ell .
		\]
		The retained coefficient band and the profile Gram lower bound give
		\[
		c_i,d_\ell\ge a_j>0,
		\qquad
		\sum_i c_i^2,\ \sum_\ell d_\ell^2
		=1+O(\omega_*),
		\]
		where \(\omega_*\to0\) as \(\rho_*,\zeta_*\to0\) and \(L_*\to\infty\) in the stated order.  Fix a small overlap threshold \(\eta>0\).  If \(\langle B_i,C_\ell\rangle_a\ge\eta\), the one-bubble relative parameters remain in a bounded set depending only on \(\eta\).  Two such \(C_\ell\)'s for the same \(B_i\) would therefore have interaction bounded below by a positive number depending only on \(\eta\).  Choosing \(\zeta_*\) below that number, and \(L_*\) so that the projection error is smaller still, shows that each row and each column of the cross-Gram matrix has at most one entry larger than \(\eta\).
		
		Since \(\|\Phi(q_k)-\Phi(q_k')\|_a\to0\), all rows and columns must have such an entry: otherwise testing the difference against the unmatched profile leaves at least \(a_j-C_j(\eta+\omega_*)>0\).  They consequently define a permutation \(\pi_k\), and
		\begin{equation}\label{eq5.matching-matrix}
			1-o(1)
			=
			\langle\Phi(q_k),\Phi(q_k')\rangle_a
			\le
			\sum_i c_{i,k}d_{\pi_k(i),k}
			\langle B_{i,k},C_{\pi_k(i),k}\rangle_a
			+C_j(\eta+\omega_*).
		\end{equation}
		Cauchy-Schwarz bounds the coefficient sum by \(1+C_j\omega_*\). If one matched profile had parameter discrepancy at least \(r_*/4\), the one-bubble gap would lower the right-hand side of \eqref{eq5.matching-matrix} by at least \(a_j^2\nu(r_*/4)\).  Choose \(\eta,\rho_*,\zeta_*\), and then \(L_*\), so that \(C_j(\eta+\omega_*)<a_j^2\nu(r_*/4)/2\).  This is a contradiction. Thus the matched scale-centre discrepancies are below \(r_*/4\); the coefficient shrinkage above puts the full paired tuples in a common \(r_*\)-chart.  Since the permutation group is finite, pass to one subsequence on which \(\pi_k=\pi\) is constant.  Applying \eqref{eq5.local-inverse} now proves the logarithmic-scale and scale-normalized-centre convergence in \eqref{eq5.profile-matching}. For the coefficient conclusion, write
		\[
		B_{i,k}
		=
		\frac{P_a\delta_{\lambda_{i,k},x_{i,k}}}
		{\|P_a\delta_{\lambda_{i,k},x_{i,k}}\|_a},
		\qquad
		\Phi_{a,j}(q_k)=\sum_i c_{i,k}B_{i,k}.
		\]
		The matched profile convergence replaces \(B'_{\pi(i),k}\) by \(B_{i,k}\) with an \(o(1)\) error.  The profile Gram matrix has a uniform positive lower eigenvalue, and hence convergence of the two normalized sums gives
		\[
		c_{i,k}-c'_{\pi(i),k}\to0.
		\]
		If \(A_k,A'_k\) are the norms of the two unnormalized sums, the matched single-profile norms have ratio tending to one, so
		\[
		\frac{\alpha_{i,k}}{A_k}
		-
		\frac{\alpha'_{\pi(i),k}}{A'_k}
		\to0.
		\]
		Multiplication by \(A_k\), summation in \(i\), and \(\sum_i\alpha_{i,k}=\sum_i\alpha'_{i,k}=1\) give \(A_k/A'_k\to1\), and then \(\alpha_{i,k}-\alpha'_{\pi(i),k}\to0\).  This completes \eqref{eq5.profile-matching}.
		
		The argument also covers repeated physical centres.  If \(x_{i,k}=x_{\ell,k}\), the interaction cutoff separates their scales, and the matching pairs the two logarithmic scales individually.  Finally, if \eqref{eq5.off-orbit-gap} failed, its minimizing sequences would contradict \eqref{eq5.profile-matching}.  This proves the lemma.
	\end{proof}
	
	For each \(j\le J_{\max}\), choose the retained constants independently and in the order
	\begin{equation}\label{eq5.nested-modulation-widths}
		\begin{gathered}
			0<\tau_{j,\mathrm{in}}<\tau_{j,\mathrm{out}}
			<\min\{\tfrac12,\rho_*\},\\
			0<\zeta_{j,\mathrm{in}}<\zeta_{j,\mathrm{out}}<\zeta_*,
			\qquad
			L_*<L_{j,\mathrm{out}}<L_{j,\mathrm{in}} .
		\end{gathered}
	\end{equation}
	Put
	\[
	\mathcal Q_j^{\mathrm{in}}
	=
	\mathscr Q_j(\tau_{j,\mathrm{in}},
	L_{j,\mathrm{in}},\zeta_{j,\mathrm{in}}),
	\qquad
	\overline{\mathcal Q}_j^{\mathrm{out}}
	=
	\overline{\mathscr Q}_j(\tau_{j,\mathrm{out}},
	L_{j,\mathrm{out}},\zeta_{j,\mathrm{out}}).
	\]
	Thus the first is uniformly separated from every finite face of the second. For a residual radius \(\rho>0\), define the open modulation tube
	\begin{equation}\label{eq5.modulation-domain}
		\mathcal U_{a,j}(\rho)
		=
		\left\{
		v\in\Sigma_{a,+}:
		\inf_{q\in\mathcal Q_j^{\mathrm{in}}}
		\|v-\Phi_{a,j}(q)\|_a<\rho
		\right\}.
	\end{equation}
	For \(v\in\Sigma_{a,+}\) and \(q\in\overline{\mathcal Q}_j^{\mathrm{out}}\), set
	\[
	E_v(q)=\frac12\|v-\Phi_{a,j}(q)\|_a^2.
	\]
	A minimizing \(\mathfrak S_j\)-orbit on this closed outer parameter set is called the set of modulation parameters of \(v\).
	
	\begin{lemma}\label{Lem5.16}
		Fix \(J_{\max}\in\mathbb N\).  The constants in \eqref{eq5.nested-modulation-widths} admit radii \(\rho_{j,\mathrm{mod}}>0\) such that every \(v\in\mathcal U_{a,j}(\rho_{j,\mathrm{mod}})\) has exactly one minimizing modulation orbit.  The minimum is attained in the smooth interior \(\mathcal Q_j^{\mathrm{out}}\) of the closed outer set.  In the residual window used in its proof this is also the only critical orbit of \(E_v\). The labelled parameters depend \(C^1\)-smoothly on \(v\) in every quotient chart, and forgetting amplitudes and scales defines a continuous map
		\[
		\chi_{a,j}:
		\mathcal U_{a,j}(\rho_{j,\mathrm{mod}})
		\to
		\operatorname{SP}^j(\Omega)=\Omega^j/\mathfrak S_j .
		\]
	\end{lemma}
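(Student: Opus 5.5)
The plan is the standard Bahri--Coron modulation argument. It is made uniform by the two-sided Gram bound \eqref{eq5.modulation-Gram}, the local inverse estimate \eqref{eq5.local-inverse}, and the off-orbit gap \eqref{eq5.off-orbit-gap} of Lemma~\ref{Lem5.15}. Fix \(v\in\mathcal U_{a,j}(\rho)\) and choose \(q^0\in\mathcal Q_j^{\mathrm{in}}\) with \(\|v-\Phi_{a,j}(q^0)\|_a<\rho\).

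\emph{Existence and interior location.} Take a minimizing sequence \(q_k\in\overline{\mathcal Q}_j^{\mathrm{out}}\) for \(E_v\). Then \(\|\Phi_{a,j}(q_k)-\Phi_{a,j}(q^0)\|_a<2\rho\). If \(2\rho<\kappa_j(r)\) with \(r\le r_*\), then \eqref{eq5.off-orbit-gap} forces \(d_{\mathfrak S}(q_k,q^0)<r\). By \eqref{eq5.local-inverse}, after a permutation,
\[
d_*(q_k,q^0)\le 2C_*\rho .
\]
Because \(q^0\) lies in the inner set, and the inner set is separated from every finite face of the outer set by the strict ordering \eqref{eq5.nested-modulation-widths}, the following holds once \(2C_*\rho\) is smaller than that separation. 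All parameters \(q\) with \(d_*(q,q^0)\le 2C_*\rho\) lie in a compact subset of the smooth interior \(\mathcal Q_j^{\mathrm{out}}\). This works in normalized coordinates: log-scales, \(\lambda\)-rescaled centres, and weights. The lower-scale, scale-boundary and interaction inequalities are all controlled by \(d_*\), since each moves by a bounded factor. Compactness of this normalized neighbourhood, with the \(\lambda^0\) fixed, gives a minimizer \(\bar q\), and \(\bar q\) is interior.

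\emph{Uniqueness.} Let \(q\) be any critical point of \(E_v\) in the window \(d_*(q,q^0)\le 2C_*\rho\); a minimizer is one. In the integrable base coordinates of Lemma~\ref{Lem5.15} centred at \(q^0\), the Hessian of \(E_v\) is
\[
\langle D\Phi h,D\Phi h\rangle_a-\langle v-\Phi(q),D^2\Phi[h,h]\rangle_a
\ge \bigl(c_*-C_*\|v-\Phi(q)\|_a\bigr)|h|_*^2 .
\]
Here \(\|v-\Phi(q)\|_a\le\rho+C\,d_*(q,q^0)\le C'\rho\). Choosing \(\rho_{j,\mathrm{mod}}\) with \(C_*C'\rho<c_*/2\) makes \(E_v\) uniformly strictly convex on this convex coordinate ball. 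Hence \(E_v\) has at most one critical point there, which gives uniqueness of the critical and minimizing orbit up to the free \(\mathfrak S_j\)-action. Every minimizer lies in this window by the first step, so the minimizing orbit is unique.

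\emph{Smoothness and continuity of \(\chi_{a,j}\).} The critical point solves
\[
F(v,q):=\langle v-\Phi_{a,j}(q),D\Phi_{a,j}(q)\,\cdot\,\rangle_a=0 .
\]
The map \(F\) is \(C^1\) by the \(C^2\) regularity in Lemma~\ref{Lem4.3}, and \(D_qF\) is the Hessian just shown to be invertible. The implicit function theorem therefore gives \(C^1\) dependence \(v\mapsto\bar q(v)\) in each local quotient chart. Charts agree on overlaps up to a constant permutation, by uniqueness and the labelling convention. Projecting to the centres gives a continuous map into \(\Omega^j/\mathfrak S_j\). Uniformity over \(j\le J_{\max}\) follows by taking minima of the finitely many radii.

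\emph{Main obstacle.} The hardest point is uniformity at the noncompact scale ends: the radii must not depend on \(\lambda^0\). I expect to handle this by working entirely in the normalized coordinates \((\beta,\sigma,\xi)\), where the constants \(c_*,C_*,r_*\) of Lemma~\ref{Lem5.15} are independent of \(q\), and by checking that the coordinate ball stays inside the relaxed parameter set on which \eqref{eq5.modulation-Gram} holds.
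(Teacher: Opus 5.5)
Your proposal is correct and follows essentially the paper's route: a witness \(q^0\) and the off-orbit gap to localize minimizing sequences, the inner/outer parameter margin and compactness in normalized coordinates to get an interior minimizer, uniform strict convexity of \(E_v\) in the integrable base chart (from \eqref{eq5.modulation-Gram} and a small residual) for uniqueness, and the implicit function theorem with permutation-compatible charts for \(C^1\) dependence and continuity of \(\chi_{a,j}\). The differences are cosmetic: you shrink the localization window in proportion to \(\rho\) via \eqref{eq5.local-inverse}, whereas the paper fixes a margin \(r_{\mathrm{mar}}\) and uses only \eqref{eq5.off-orbit-gap}; and in the convexity step you should work on a max-norm cube \(\{\max_i(j|\beta_i|,|\sigma_i|,|\xi_i|)\le r\}\) containing your window, as the paper does with \(\mathbb B_0(4r_{\mathrm{mar}})\), because a \(d_*\)-ball is not convex in the \((\beta,\sigma,\xi)\) coordinates (its centre part is \(e^{\sigma_i/2}|\xi_i|\)).
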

	
	\begin{proof}
		Fix \(j\), and fix the constants \(c_*,C_*,r_*\) and the gap function \(\kappa_j\) supplied by Lemma~\ref{Lem5.15}.  Abbreviate the retained sets by \(\mathcal Q^{\rm in}\) and \(\overline{\mathcal Q}^{\rm out}\).  The fixed numerical gaps between their weight, scale, scale-boundary, and interaction inequalities give \(r_{\rm mar}>0\) such that
		\begin{equation}\label{eq5.parameter-margin}
			d_{\mathfrak S}(q,q')<8r_{\rm mar},\quad
			q\in\mathcal Q^{\rm in}
			\quad\to\quad
			q'\in\mathcal Q^{\rm out}.
		\end{equation}
		Indeed, \(d_*(q,q')<r\) gives
		\[
		e^{-r}<\frac{\lambda_i'}{\lambda_i}<e^r,
		\qquad
		\lambda_i'|x_i-x_i'|<e^{r/2}r.
		\]
		Consequently
		\[
		\begin{aligned}
			\lambda_i'&>e^{-r}L_{j,\mathrm{in}},\\
			\lambda_i'\operatorname{dist}(x_i',\partial\Omega)
			&>
			e^{-r}L_{j,\mathrm{in}}-e^{r/2}r .
		\end{aligned}
		\]
		The interaction expression has, in these same dimensionless coordinates, a uniform modulus \(\omega_{\rm int}(r)\to0\).  Thus \eqref{eq5.parameter-margin} follows by choosing \(8r_{\rm mar}\) so that
		\[
		\begin{gathered}
			\tau_{j,\mathrm{in}}+8r_{\rm mar}
			<\tau_{j,\mathrm{out}},\\
			e^{-8r_{\rm mar}}L_{j,\mathrm{in}}
			-8e^{4r_{\rm mar}}r_{\rm mar}
			>L_{j,\mathrm{out}},\\
			\zeta_{j,\mathrm{in}}+\omega_{\rm int}(8r_{\rm mar})
			<\zeta_{j,\mathrm{out}}.
		\end{gathered}
		\]
		Let \(C_{\rm loc}\ge1\) be the uniform local Lipschitz constant obtained from the upper Gram bound and the equivalence with the integrable base coordinates.  Take in addition
		\[
		r_{\rm mar}<
		\min\left\{\frac{r_*}{16},
		\frac{c_*}{32C_*C_{\rm loc}}\right\},
		\qquad
		\kappa_{\rm mar}=\kappa_j(r_{\rm mar})>0,
		\]
		decreasing \(r_{\rm mar}\) if necessary.  Finally choose \(\rho_{j,\mathrm{mod}}\) so small that
		\begin{equation}\label{eq5.modulation-smallness}
			4\rho_{j,\mathrm{mod}}<\kappa_{\rm mar},
			\qquad
			C_*\rho_{j,\mathrm{mod}}<\frac{c_*}{8}.
		\end{equation}
		The definition of \(C_{\rm loc}\) gives
		\begin{equation}\label{eq5.local-image-Lipschitz}
			\|\Phi_{a,j}(q)-\Phi_{a,j}(q^0)\|_a
			\le C_{\rm loc}|\theta(q)|_0,
		\end{equation}
		where
		\[
		|\theta(q)|_0
		=
		\max_i\{j|\beta_i|,|\sigma_i|,|\xi_i|\}.
		\]
		The formulas relating \(d_*\) and \(|\theta|_0\), followed by one further harmless decrease of \(r_{\rm mar}\), show that the full coordinate ball
		\[
		\mathbb B_0(4r_{\rm mar})
		=\{q:|\theta(q)|_0<4r_{\rm mar}\}
		\]
		has \(d_*(q,q^0)<8r_{\rm mar}\) and hence lies in the relaxed retained set \(\mathcal Q^{\rm out}\).  It is convex in the integrable coordinates, so the straight segment between any two of its points remains in the same retained ball. In particular the finite outer faces have the explicit image gap
		\begin{equation}\label{eq5.inner-face-gap}
			g_{j,\mathrm{face}}
			:=
			\inf_{\substack{q\in\mathcal Q^{\rm in}\\
					q'\in\partial\overline{\mathcal Q}^{\rm out}}}
			\|\Phi_{a,j}(q)-\Phi_{a,j}(q')\|_a
			\ge\kappa_j(8r_{\rm mar})>0.
		\end{equation}
		
		Let \(v\) belong to the inner tube and choose a witness \(q^0\in\mathcal Q^{\rm in}\) with
		\[
		\|v-\Phi_{a,j}(q^0)\|_a
		<\rho_{j,\mathrm{mod}}.
		\]
		If \((q^k)\subset\overline{\mathcal Q}^{\rm out}\) is a minimizing sequence, the witness gives, for large \(k\),
		\[
		\|v-\Phi_{a,j}(q^k)\|_a<2\rho_{j,\mathrm{mod}},
		\qquad
		\|\Phi_{a,j}(q^k)-\Phi_{a,j}(q^0)\|_a
		<3\rho_{j,\mathrm{mod}}.
		\]
		The off-orbit gap and \eqref{eq5.modulation-smallness} imply
		\[
		d_{\mathfrak S}(q^k,q^0)<r_{\rm mar}.
		\]
		Choose the unique matching labels.  For the fixed tuple \(q^0\), bounded coefficient, logarithmic-scale, and scale-normalized-centre coordinates form a compact finite-dimensional set.  Hence a subsequence of \(q^k\) converges, and \eqref{eq5.parameter-margin} puts its limit in the interior of \(\mathcal Q^{\rm out}\).  Thus \(E_v\) attains its global minimum.
		
		This argument treats every parameter end directly.  The weak outer inequalities control coefficient zero and \(\lambda\downarrow0\), while \eqref{eq5.parameter-margin} controls the scale-boundary and interaction-loss faces.  Matching to the fixed witness bounds \(\log(\lambda_i/\lambda_i^0)\), and the matching labels resolve the finite permutation ambiguity.  Repeated physical centres are handled by separate matching of their orthogonal scales through \eqref{eq5.profile-matching}.
		
		Every minimizer lies in \(\mathbb B_0(2r_{\rm mar})\), because
		\[
		|\xi_i|
		\le e^{r_{\rm mar}/2}
		\sqrt{\lambda_i\lambda_i^0}\,|x_i-x_i^0|
		<2r_{\rm mar}
		\]
		after the fixed decrease of \(r_{\rm mar}\); the coefficient and log-scale coordinates satisfy the same bound directly.  On the whole larger ball \(\mathbb B_0(4r_{\rm mar})\), \eqref{eq5.local-image-Lipschitz} gives the explicit residual estimate
		\begin{equation}\label{eq5.whole-chart-residual}
			\|v-\Phi_{a,j}(q)\|_a
			\le
			\rho_{j,\mathrm{mod}}+4C_{\rm loc}r_{\rm mar}.
		\end{equation}
		The choices
		\[
		C_*\rho_{j,\mathrm{mod}}<\frac{c_*}{8},
		\qquad
		4C_*C_{\rm loc}r_{\rm mar}<\frac{c_*}{8}
		\]
		therefore show, at every point of every straight segment joining two candidate minimizers, that
		\[
		D^2E_v(q)[h,h]
		=
		\|D\Phi_{a,j}(q)h\|_a^2
		-
		\langle v-\Phi_{a,j}(q),
		D^2\Phi_{a,j}(q)[h,h]\rangle_a
		\ge\frac{c_*}{2}|h|_*^2
		\]
		by \eqref{eq5.modulation-Gram} and \eqref{eq5.whole-chart-residual}.  Hence \(E_v\) is strictly convex on the whole segment.  Every global minimizer lies in that chart, so there is exactly one minimizing \(\mathfrak S_j\)-orbit.  More generally, choose \(r_{\rm crit}>0\) so small that
		\[
		\rho_{j,\mathrm{mod}}+r_{\rm crit}
		<\kappa_{\rm mar}.
		\]
		If \(\|v-\Phi_{a,j}(q)\|_a<r_{\rm crit}\), the inner witness and \eqref{eq5.off-orbit-gap} force \(d_{\mathfrak S}(q,q^0)<r_{\rm mar}\), hence \(q\in\mathbb B_0(2r_{\rm mar})\).  Thus every low-residual critical point and every minimizer lie in the same convex base chart, and strict convexity makes the minimizer the unique critical orbit in the low-residual window used below.
		
		At the critical point the derivative, in the \(q\)-variables, of the orthogonality system
		\[
		D_q\Phi_{a,j}(q)^*
		\bigl(v-\Phi_{a,j}(q)\bigr)=0
		\]
		is the negative of the uniformly positive Hessian just displayed, hence is uniformly negative definite and invertible.  The implicit function theorem gives \(C^1\) dependence of the labelled tuple on \(v\).  The construction is equivariant under \(\mathfrak S_j\).  On an overlap, the unique matching with the two base tuples differs by one constant permutation; uniqueness of the critical orbit makes the two \(C^1\) solutions identical after that permutation.  Thus the modulation orbit is independent of the auxiliary outer margins whenever the inner tube is fixed. Therefore
		\[
		\chi_{a,j}(v)=[x_1(v),\ldots,x_j(v)]
		\]
		is well defined and continuous into the full symmetric product.  In particular it accommodates coincident centres carried by orthogonal scales.
	\end{proof}
	
	Assume that the \(L_a\)-problem has no positive solution and fix all modulation constants from Lemma~\ref{Lem5.16}.  Choose
	\begin{equation}\label{eq5.excision-width}
		0<\varepsilon_{j,\mathrm{exc}}
		<
		\frac14\min\left\{
		\tau_{j,\mathrm{in}},\zeta_{j,\mathrm{in}},
		L_{j,\mathrm{in}}^{-1},\rho_{j,\mathrm{mod}}
		\right\}.
	\end{equation}
	Then
	\[
	\mathcal T_{a,j}(\varepsilon_{j,\mathrm{exc}})
	\subset
	\mathcal U_{a,j}(\rho_{j,\mathrm{mod}}).
	\]
	Indeed, the weak \(\varepsilon_{j,\mathrm{exc}}\)-bounds lie strictly inside the retained inner chart by the numerical margin in \eqref{eq5.excision-width}, while the residual is strictly smaller than \(\rho_{j,\mathrm{mod}}\). Apply Lemma~\ref{Lem3.28} once with this excision width.  Retain the resulting level-uniform inner width, cutoffs, and modified functional \(\mathscr F_j\), and denote the energy-window half-width, lower gradient threshold, and usable depth by
	\[
	\eta_{j,\mathrm{win}},\qquad s_{0,j},\qquad\vartheta_{j,*}.
	\]
	For an admissible regular pair \(c_{j-1}<b_j<c_j\), define
	\[
	\mathcal F_{a,j}
	=
	\{v\in W_{a,j}:\mathscr F_j(v)\le c_{j-1}\}.
	\]
	The next two lemmas use this same deformation data.  Set
	\[
	X_j=\mathcal F_{a,j}\cap
	\mathcal U_{a,j}(\rho_{j,\mathrm{mod}}),
	\qquad
	A_j=W_{a,j-1}\cap
	\mathcal U_{a,j}(\rho_{j,\mathrm{mod}}),
	\]
	and denote the inclusion of pairs by
	\begin{equation}\label{eq5.core-pair-new}
		\iota_j:(X_j,A_j)\to(W_{a,j},W_{a,j-1}).
	\end{equation}
	
	\begin{lemma}\label{Lem5.17}
		If \(c_{j-1}\) is sufficiently close to \(b_j\), then \(\iota_j\) induces an isomorphism in homology with \(\mathbb F_2\)-coefficients.
	\end{lemma}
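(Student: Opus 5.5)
The plan is to reduce Lemma~\ref{Lem5.17} to Proposition~\ref{Prop3.29}, applied with the open set \(\mathcal O_{a,j}=\mathcal U_{a,j}(\rho_{j,\mathrm{mod}})\). Lemma~\ref{Lem3.28} was invoked with the outer width \(\varepsilon=\varepsilon_{j,\mathrm{exc}}\). It therefore supplies an inner width \(\delta<\varepsilon_{j,\mathrm{exc}}\), the core \(\mathcal F_{a,j}\), and the level window \(b_j-\vartheta_{j,*}<c_{j-1}<b_j\). The hypothesis that \(c_{j-1}\) is sufficiently close to \(b_j\) means exactly that \(c_{j-1}\) lies in this window. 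Then the deformation data, the inclusion \(\mathcal F_{a,j}\setminus W_{a,j-1}\subset\mathcal T_{a,j}^{\le}(\delta)\), and the closure nesting \eqref{eq3.tube-nesting} all hold for this \(c_{j-1}\).

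The only point to check is the hypothesis of Proposition~\ref{Prop3.29}:
\[
\overline{\mathcal T_{a,j}^{\le}(\delta)}\subset\mathcal U_{a,j}(\rho_{j,\mathrm{mod}}).
\]
By \eqref{eq3.tube-nesting},
\[
\overline{\mathcal T_{a,j}^{\le}(\delta)}\subset\mathcal T_{a,j}^{\le}(\varepsilon_{j,\mathrm{exc}}).
\]
The inclusion \(\mathcal T_{a,j}(\varepsilon_{j,\mathrm{exc}})\subset\mathcal U_{a,j}(\rho_{j,\mathrm{mod}})\) was recorded right after \eqref{eq5.excision-width}, and the proof will verify it explicitly. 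Take a tuple \(q\in\mathcal P_j^{\le}(\varepsilon_{j,\mathrm{exc}})\). It satisfies
\[
|j\alpha_i-1|\le\varepsilon_{j,\mathrm{exc}}<\tau_{j,\mathrm{in}},\qquad
\lambda_i,\ \lambda_i\operatorname{dist}(x_i,\partial\Omega)\ge\varepsilon_{j,\mathrm{exc}}^{-1}>L_{j,\mathrm{in}},\qquad
\varepsilon_{ik}\le\varepsilon_{j,\mathrm{exc}}<\zeta_{j,\mathrm{in}}.
\]
These are the strict inequalities defining \(\mathcal Q_j^{\mathrm{in}}\), so \(q\in\mathcal Q_j^{\mathrm{in}}\). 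The residual \(\|v-\Phi_{a,j}(q)\|_a<\varepsilon_{j,\mathrm{exc}}<\rho_{j,\mathrm{mod}}\) then places \(v\) in \(\mathcal U_{a,j}(\rho_{j,\mathrm{mod}})\). This set is open in \(\Sigma_{a,+}\) because its defining inequality is strict.

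With these inclusions in hand, the excision argument of Proposition~\ref{Prop3.29} goes through. Put \(E=\mathcal F_{a,j}\setminus\mathcal U_{a,j}\). Then \(E\subset W_{a,j-1}\), and \(E\) is closed in \(\mathcal F_{a,j}\). It is also contained in the relatively open set
\[
\mathcal F_{a,j}\setminus\overline{\mathcal T_{a,j}^{\le}(\delta)}\subset W_{a,j-1}.
\]
Excision identifies \(H_*(X_j,A_j)\) with \(H_*(\mathcal F_{a,j},W_{a,j-1})\). The strong deformation equivalence of pairs from Lemma~\ref{Lem3.28} then identifies this with \(H_*(W_{a,j},W_{a,j-1})\). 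The composite of these two isomorphisms is the map induced by \(\iota_j\), which gives the claim with \(\mathbb F_2\)-coefficients.

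The main obstacle is bookkeeping: the same deformation data must be used throughout. The inner width \(\delta\), and the closure nesting that depends on it, must come from the single application of Lemma~\ref{Lem3.28} at \(\varepsilon_{j,\mathrm{exc}}\). The level-uniformity statement in that lemma is what ensures that moving \(c_{j-1}\) closer to \(b_j\) does not change \(\delta\) or the core construction. The compactness needed for closure nesting uses only fixed \(j\), and it is already proved there.
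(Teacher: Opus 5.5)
Your proposal is correct and is essentially the paper's proof. The paper also chains $\overline{\mathcal T_{a,j}(\delta_j)}\subset\mathcal T_{a,j}(\varepsilon_{j,\mathrm{exc}})\subset\mathcal U_{a,j}(\rho_{j,\mathrm{mod}})$ and then applies Proposition~\ref{Prop3.29} with $\mathcal O_{a,j}=\mathcal U_{a,j}(\rho_{j,\mathrm{mod}})$; your explicit check of the second inclusion from the margins in \eqref{eq5.excision-width}, and your rerun of the excision step, only spell out what the paper cites.
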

	
	\begin{proof}
		Use the common deformation data fixed above. Let \(\delta_j\) be the inner width supplied by Lemma~\ref{Lem3.28}.  Then
		\[
		\overline{\mathcal T_{a,j}(\delta_j)}
		\subset
		\mathcal T_{a,j}(\varepsilon_{j,\mathrm{exc}})
		\subset
		\mathcal U_{a,j}(\rho_{j,\mathrm{mod}}).
		\]
		Proposition~\ref{Prop3.29}, with \(\mathcal O_{a,j}=\mathcal U_{a,j}(\rho_{j,\mathrm{mod}})\), therefore gives \eqref{eq5.core-pair-new}.  The stopped deformation fixes \(W_{a,j-1}\), hence fixes \(A_j\), pointwise; this is the invariance property required here.
	\end{proof}
	
	To formulate the remaining analytic interface, let \(h:V\to K\) be a smooth map from a closed manifold.  For positive \(\tau_j\) and \(D_j\), call a labelled weighted tuple \((z,t)\in V^j\times\Delta_{j-1}\) a \emph{core tuple} if
	\[
	|jt_i-1|<\tau_j\ \text{for every }i,
	\qquad
	\lambda|h(z_r)-h(z_s)|>D_j\quad(r\ne s),
	\]
	and a \emph{collar tuple} if
	\[
	\max_i|jt_i-1|\ge\tau_j/2,
	\quad\text{or}\quad
	\lambda|h(z_r)-h(z_s)|\le2D_j\ \text{for some }r\ne s,
	\quad\text{or}\quad t_i=0.
	\]
	The same terminology is used on the labelled spherical Fulton-MacPherson resolution by replacing each \(z_i\) with the extended root evaluation \(\operatorname{ev}_i\).  These conditions are \(\mathfrak S_j\)-invariant and descend to the unlabelled quotient.  In particular, every zero-weight or intrinsic-collision face is a collar face. We write
	\[
	B_j(h)\left(\sum_i t_i\delta_{z_i}\right)
	=\sum_i t_i\delta_{h(z_i)},
	\qquad
	g_{a,\lambda,j,h}=g_{a,\lambda,j}\circ B_j(h).
	\]
	The map \(h\) may be noninjective; an equality \(h(z_r)=h(z_s)\) with \(z_r\ne z_s\) is then an analytic collision and belongs to the collar.
	
	\begin{lemma}\label{Lem5.18}
		Fix \(j\ge1\), assume that the \(L_a\)-problem has no positive solution, and fix the level-independent deformation data used in Lemma~\ref{Lem5.17}.  Let \(V\) be a closed manifold and let \(h:V\to K\Subset\Omega\) be smooth.  There is \(0<\tau_{j,*}<1/2\) such that, for every \(0<\tau_j\le\tau_{j,*}\), one can choose \(D_j>1\) and \(\Lambda_j>0\) so that, for every \(\lambda\ge\Lambda_j\), there is a number
		\[
		\max\{b_{j-1},b_j-\vartheta_{j,*}\}
		<\ell_j^{\mathrm{low}}(\lambda)<b_j
		\]
		with the following property.  For every pair of regular levels satisfying
		\[
		\ell_j^{\mathrm{low}}(\lambda)<c_{j-1}<b_j<c_j<b_{j+1},
		\]
		use the canonical pair
		\[
		X_j=\mathcal F_{a,j}\cap
		\mathcal U_{a,j}(\rho_{j,\mathrm{mod}}),
		\qquad
		A_j=W_{a,j-1}\cap
		\mathcal U_{a,j}(\rho_{j,\mathrm{mod}}).
		\]
		If
		\[
		g_{a,\lambda,j,h}(B_j(V))\subset W_{a,j},
		\]
		then the core part of the test family maps into \(X_j\), the collar part maps into \(W_{a,j-1}\), and the same statements hold for the lifted family on the labelled spherical Fulton-MacPherson resolution.  This includes all zero-weight and intrinsic-collision boundary faces; on a zero-weight face the restriction is exactly the corresponding \((j-1)\)-particle test family.  On the core
		\begin{equation}\label{eq5.exact-center-new}
			\chi_{a,j}\!\left(
			g_{a,\lambda,j,h}\!\left(\sum_i t_i\delta_{z_i}\right)
			\right)
			=[h(z_1),\ldots,h(z_j)]
			\in\operatorname{SP}^j(\Omega).
		\end{equation}
		The deformation, collar, and lower-level thresholds are independent of the particular regular levels.  For every fixed \(J_{\max}\), the choices can be made simultaneously for \(1\le j\le J_{\max}\).
	\end{lemma}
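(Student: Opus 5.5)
The proof splits the test family into its collar and core parts and uses the analytic estimates of Section~\ref{sec:matched} on each, with the constants chosen in a fixed order. First fix $\tau_{j,*}$ so small that $\tau_{j,*}<\tau_{j,\mathrm{in}}$ and so that Lemma~\ref{Lem5.13} applies with $C_{j,K}\tau_{j,*}$ below a fixed fraction of the gradient threshold $s_{0,j}^{1/2}$ and of $\eta_{j,\mathrm{win}}$. Given $\tau_j\le\tau_{j,*}$, choose $D_j$ so large that two conditions hold. First, $\varrho_{j,K}(D_j)$ is equally small. Second, a core tuple with common scale $\lambda\ge\Lambda_j$ lies in $\mathcal Q_j^{\mathrm{in}}$ after normalizing $\sum t_i=1$. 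This holds because $\lambda|h(z_r)-h(z_s)|>D_j$ gives interaction $\le(D_j^2)^{-(N-2)/2}<\zeta_{j,\mathrm{in}}$, and $K\Subset\Omega$ gives $\lambda\operatorname{dist}(h(z_i),\partial\Omega)>L_{j,\mathrm{in}}$ once $\Lambda_j$ is large.

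\emph{Collar part.} For $N=3$ use Theorem~\ref{Thm5.5}, including its final clause with $\bar\delta=\tau_j/2$ and $\bar\varepsilon$ corresponding to separation $2D_j$. For $N\in\{4,5\}$ use Lemma~\ref{Lem5.14} with $\tau=\tau_j/2$ and $R=2D_j$. Zero weights are covered in both cases: by Theorem~\ref{Thm5.5}(i) for $N=3$, and by the $(j-1)$-particle bound from Lemma~\ref{Lem5.12} or Lemma~\ref{Lem5.14} for $N\in\{4,5\}$. Together these give a gap $\eta>0$ with $\mathcal J_a\le b_j-\eta$ on every collar tuple, with $\eta$ independent of $\lambda\ge\Lambda_j$. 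Set $\ell_j^{\mathrm{low}}(\lambda)=\max\{b_{j-1},b_j-\vartheta_{j,*},b_j-\eta\}+\frac12\min\{\cdots\}$, a number strictly between that maximum and $b_j$. Then every collar value satisfies $\mathcal J_a\le b_j-\eta<c_{j-1}$ for every admissible level, so the collar maps into $W_{a,j-1}$.

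\emph{Core part.} By the choice of $D_j$, the image lies in the $\rho_{j,\mathrm{mod}}$-tube $\mathcal U_{a,j}$ with zero residual. By hypothesis it lies in $W_{a,j}$, so it remains to show $\mathscr F_j(v)\le c_{j-1}$. Lemma~\ref{Lem5.13} gives $|\mathcal J_a(v)-b_j|<\eta_{j,\mathrm{win}}$, so the cutoff $\chi_{j,\mathrm{cut}}$ equals $1$. It also gives $\mathscr G(v)<s_{0,j}$, so $\zeta=2\vartheta_{j,*}$. Hence $\mathscr F_j(v)\le\mathcal J_a(v)-2\vartheta_{j,*}$. On the core, $\mathcal J_a(v)\le b_j$ up to errors made small by Proposition~\ref{Prop5.6}, Proposition~\ref{Prop5.10}, or the Bahri--Coron bound in Lemma~\ref{Lem5.12}. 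Therefore $\mathscr F_j(v)<b_j-\vartheta_{j,*}<\ell^{\mathrm{low}}_j<c_{j-1}$. The core thus maps into $X_j$, and all thresholds involved are independent of the levels.

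\emph{Center identity \eqref{eq5.exact-center-new}.} The tuple $q=(t,h(z),\lambda)$ satisfies $\Phi_{a,j}(q)=v$, so $E_v(q)=0$. Hence $q$ is a minimizer, and Lemma~\ref{Lem5.16} says it is the unique minimizing orbit. Forgetting amplitudes and scales then gives $[h(z_1),\dots,h(z_j)]$.

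\emph{Fulton--MacPherson lift and remaining claims.} The lift factors through the root evaluations $\operatorname{ev}_i$, so the same pointwise estimates apply. Intrinsic-collision faces have $\operatorname{ev}_r=\operatorname{ev}_s$, which makes them collar tuples. On a zero-weight face the term is simply omitted, so the restriction is literally $g_{a,\lambda,j-1,h}$. Finally, for $j\le J_{\max}$ take the minima of the gaps and the maxima of the scales $\Lambda_j$ over the finitely many $j$.

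The step I expect to be the main obstacle is the core inequality $\mathscr F_j\le c_{j-1}$. It depends on ordering the choices correctly: $\tau_j$ first, then $D_j$, then $\Lambda_j$, and only then $\ell^{\mathrm{low}}_j$. This ordering is what makes the gradient smallness from Lemma~\ref{Lem5.13} fall below $s_{0,j}$ uniformly, while $\vartheta_{j,*}$ stays fixed from Lemma~\ref{Lem3.28}.
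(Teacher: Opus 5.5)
Your proposal is correct and follows the paper's proof. It uses the same ordering \(\tau_j\to D_j\to\Lambda_j\to\ell_j^{\mathrm{low}}\), Lemma~\ref{Lem5.13} to put the core in the cutoff window with \(\mathscr G<s_{0,j}\), Theorem~\ref{Thm5.5}/Lemma~\ref{Lem5.14} for a level-independent collar gap, and the uniqueness in Lemma~\ref{Lem5.16} (zero-residual witness in \(\mathcal Q_j^{\mathrm{in}}\)) for the centre identity. The only deviation is minor: you get \(\mathcal J_a<b_j+\vartheta_{j,*}\) on the core from the Bahri--Coron bound of Lemma~\ref{Lem5.12}, whereas the paper makes each term of \(e_{j,\mathrm{core}}(\lambda)\) smaller than \(\vartheta_{j,*}\) directly; Propositions~\ref{Prop5.6} and~\ref{Prop5.10} are not needed there, since they concern only the top multiplicity at special scales.
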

	
	\begin{proof}
		Use the deformation data and modified functional \(\mathscr F_j\) fixed above.  Let \(\ell_j^{\mathrm{exc}}<b_j\) be a level-independent lower threshold for the isomorphism in Lemma~\ref{Lem5.17}; it exists by the level-uniform window in Lemma~\ref{Lem3.28}.
		
		Consider first \(j=1\).  Take \(\tau_{1,*}=1/4\) and \(D_1=2\); the one-particle family has only the core stratum.  Put
		\[
		e_{1,\mathrm{core}}(\lambda)=r_{1,K}(\lambda).
		\]
		Increase \(\Lambda_1\) so that
		\[
		\lambda>L_{1,\mathrm{in}},\qquad
		\lambda\operatorname{dist}(K,\partial\Omega)>L_{1,\mathrm{in}},
		\]
		and so that Lemma~\ref{Lem5.13} gives
		\[
		e_{1,\mathrm{core}}(\lambda)
		<\min\{\eta_{1,\mathrm{win}},\sqrt{s_{0,1}},\vartheta_{1,*}\}.
		\]
		The one-bubble tuple then lies in \(\mathcal Q_1^{\mathrm{in}}\).  The cutoff in Lemma~\ref{Lem3.28} is fully active and
		\[
		\mathscr F_1(g_{a,\lambda,1,h}(\delta_z))
		\le b_1+e_{1,\mathrm{core}}(\lambda)-\vartheta_{1,*}.
		\]
		Choose \(\ell_1^{\mathrm{low}}(\lambda)<b_1\) above
		\[
		\max\{b_0,b_1-\vartheta_{1,*},\ell_1^{\mathrm{exc}},
		b_1+e_{1,\mathrm{core}}(\lambda)-\vartheta_{1,*}\}.
		\]
		The assumed upper inclusion then places the image in \(X_1\), and the uniqueness in Lemma~\ref{Lem5.16} gives the asserted centre identity.
		
		Suppose now that \(j\ge2\).  First choose
		\[
		0<\tau_{j,*}<\min\{1/2,\tau_{j,\mathrm{in}}\}
		\]
		so small that the \(C_{j,K}\tau_{j,*}\)-term in \eqref{eq5.full-gradient} is smaller than one fourth of each of \(\eta_{j,\mathrm{win}}\), \(\sqrt{s_{0,j}}\), and \(\vartheta_{j,*}\).  Fix \(0<\tau_j\le\tau_{j,*}\), and then choose \(D_j\) so large that the interaction term in \eqref{eq5.full-gradient} satisfies the same three bounds and
		\begin{equation}\label{eq5.core-retained-interaction}
			(2+D_j^2)^{-\frac{N-2}{2}}<\zeta_{j,\mathrm{in}}.
		\end{equation}
		Increase \(\Lambda_j\) so that
		\[
		\lambda>L_{j,\mathrm{in}},\qquad
		\lambda\operatorname{dist}(K,\partial\Omega)>L_{j,\mathrm{in}},
		\]
		and so that the scale remainder has the same three bounds.  Put
		\[
		e_{j,\mathrm{core}}(\lambda)
		=C_{j,K}\tau_j+\varrho_{j,K}(D_j)+r_{j,K}(\lambda).
		\]
		Then
		\[
		e_{j,\mathrm{core}}(\lambda)
		<\min\{\eta_{j,\mathrm{win}},\sqrt{s_{0,j}},\vartheta_{j,*}\},
		\]
		and the complete common-scale core lies in the energy window with
		\begin{equation}\label{eq5.core-full-gradient}
			\|\nabla_{\Sigma_a}\mathcal J_a\|_a^2<s_{0,j}.
		\end{equation}
		The weight, scale, boundary, and interaction inequalities, including \eqref{eq5.core-retained-interaction}, put every core tuple in \(\mathcal Q_j^{\mathrm{in}}\), hence its image lies in \(\mathcal U_{a,j}(\rho_{j,\mathrm{mod}})\) with zero residual.
		
		We now use the only dimension-dependent input in this common proof.  If \(N=3\), apply the final assertion of Theorem~\ref{Thm5.5} with weight threshold \(\tau_j/2\) and interaction threshold \((1+4D_j^2)^{-1/2}\).  If \(N\in\{4,5\}\), apply Lemma~\ref{Lem5.14} with \(\tau=\tau_j/2\) and \(R=2D_j\).  After increasing only \(\Lambda_j\), in either case there is \(\eta_{j,\mathrm{col}}>0\) such that
		\[
		\mathcal J_a(g_{a,\lambda,j,h}(\mu))
		\le b_j-2\eta_{j,\mathrm{col}}
		\]
		on every weight or physical-collision collar.  This includes off-diagonal self-collisions of \(h\).  On \(t_i=0\), the formula is exactly the \((j-1)\)-particle formula.  Set
		\[
		B_{j-1}^{\mathrm{bd}}(\lambda)
		=\sup_{\nu\in B_{j-1}(K)}
		\mathcal J_a(g_{a,\lambda,j-1}(\nu));
		\]
		the fixed-\((j-1)\) estimate gives \(B_{j-1}^{\mathrm{bd}}(\lambda)<b_j\) after a further increase of \(\Lambda_j\).
		
		Choose an equivariant subdivision subordinate to the overlapping constants \(\tau_j,\tau_j/2\) and \(D_j,2D_j\).  On each core simplex, global modulation uniqueness gives \eqref{eq5.exact-center-new}.  The full-gradient bound \eqref{eq5.core-full-gradient} makes the gradient cutoff attain depth \(\vartheta_{j,*}\).  Choose \(\ell_j^{\mathrm{low}}(\lambda)<b_j\) above
		\[
		\max\{b_{j-1},b_j-\vartheta_{j,*},\ell_j^{\mathrm{exc}},
		B_{j-1}^{\mathrm{bd}}(\lambda),b_j-\eta_{j,\mathrm{col}},
		b_j+e_{j,\mathrm{core}}(\lambda)-\vartheta_{j,*}\}.
		\]
		Every entry in this maximum is strictly below \(b_j\).  Hence every regular \(c_{j-1}\) in the stated interval has all required properties.  The central block lies in \(\mathcal F_{a,j}\) and hence in \(X_j\); its overlap with the collar lies in \(W_{a,j-1}\), and every collar simplex is zero in \(C_*(W_{a,j},W_{a,j-1})\).
	\end{proof}
	
	\subsection{Vanishing of the top test map at the topology scale}
	
	For \(j\ge1\), define
	\[
	T_j(\lambda)=
	\sup_{\mu\in B_j(K)}
	\mathcal J_a(g_{a,\lambda,j}(\mu)),
	\qquad T_0(\lambda)=-\infty,
	\]
	and, whenever \(\lambda_*\le\Lambda\), set
	\[
	\widehat T_j(\lambda_*,\Lambda)
	=\sup_{\lambda_*\le\lambda\le\Lambda}T_j(\lambda),
	\qquad \widehat T_0(\lambda_*,\Lambda)=-\infty.
	\]
	The scale \(\lambda_*\) is the branch-dependent drop scale, whereas \(\Lambda\) is the single scale at which the topological module will be applied.
	
	\begin{proposition}\label{Prop5.19}
		Let \(m\ge2\), \(\lambda_*\le\Lambda\), and assume that regular levels \(c_0,\ldots,c_m\) have been chosen so that, for \(1\le j\le m\),
		\[
		\widehat T_j(\lambda_*,\Lambda)<c_j,
		\qquad
		\widehat T_{j-1}(\lambda_*,\Lambda)<c_{j-1},
		\]
		and
		\[
		\max\{b_{m-1},T_m(\lambda_*)\}<c_{m-1}<b_m.
		\]
		Then
		\[
		H_j(s,\mu)
		=g_{a,(1-s)\lambda_*+s\Lambda,j}(\mu)
		\]
		is a homotopy of pairs
		\[
		(B_j(K),B_{j-1}(K))\to
		(W_{a,j},W_{a,j-1}).
		\]
		For every closed manifold \(V\) and every smooth map \(h:V\to K\),
		\[
		(g_{a,\Lambda,m,h})_*=0:
		H_*(B_m(V),B_{m-1}(V);\mathbb F_2)
		\to
		H_*(W_{a,m},W_{a,m-1};\mathbb F_2).
		\]
	\end{proposition}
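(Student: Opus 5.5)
The proof has two parts: first the homotopy-of-pairs assertion, then the vanishing of the induced map at the top level through a factorisation across the lower sublevel.

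For the first part, continuity of \((s,\mu)\mapsto g_{a,\lambda(s),j}(\mu)\) with \(\lambda(s)=(1-s)\lambda_*+s\Lambda\) follows from the continuity of \((\lambda,x)\mapsto P_a\delta_{\lambda,x}\) in \(H^1_0(\Omega)\). It also uses the weak topology of finite atomic measures, exactly as in Lemma~\ref{Lem5.12}. By the uniform lower bound \eqref{eq4.6}, or its simplex analogue \(\|S_{a,\lambda,j}\|_a^2\ge 1/(2j)\) from positivity and Lemma~\ref{Lem4.1}, the normalisation never degenerates on \([\lambda_*,\Lambda]\). Since every \(\lambda(s)\) lies in \([\lambda_*,\Lambda]\), we get \(\mathcal J_a(H_j(s,\mu))\le \widehat T_j(\lambda_*,\Lambda)<c_j\). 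Hence \(H_j\) maps into \(W_{a,j}\); positivity of each \(P_a\delta\) places the image in \(\Sigma_{a,+}\). For \(\mu\in B_{j-1}(K)\), zero weights are omitted, so \(H_j(s,\mu)=g_{a,\lambda(s),j-1}(\mu)\). This gives \(\mathcal J_a\le\widehat T_{j-1}(\lambda_*,\Lambda)<c_{j-1}\), so \(H_j\) preserves the subspace \(W_{a,j-1}\). When \(j=1\) the convention \(T_0=-\infty\), \(W_{a,0}=\varnothing\), \(B_0=\varnothing\) makes this vacuous. Thus \(H_j\) is a homotopy of pairs.

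For the vanishing, we first note that \(B_m(h)\colon(B_m(V),B_{m-1}(V))\to(B_m(K),B_{m-1}(K))\) is a continuous map of pairs, because it does not increase the support cardinality. Hence \(g_{a,\Lambda,m,h}=g_{a,\Lambda,m}\circ B_m(h)\), and it suffices to show that \((g_{a,\Lambda,m})_*=0\) on \(H_*(B_m(K),B_{m-1}(K);\mathbb F_2)\). By homotopy invariance applied to \(H_m\), \((g_{a,\Lambda,m})_*=(g_{a,\lambda_*,m})_*\) as maps of pairs into \((W_{a,m},W_{a,m-1})\). The top-level hypothesis \(T_m(\lambda_*)<c_{m-1}\) says that \(g_{a,\lambda_*,m}\) maps all of \(B_m(K)\) into \(W_{a,m-1}\). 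The map of pairs therefore factors as
\[
(B_m(K),B_{m-1}(K))\xrightarrow{g_{a,\lambda_*,m}}(W_{a,m-1},W_{a,m-1})\hookrightarrow(W_{a,m},W_{a,m-1}).
\]
Since \(H_*(W_{a,m-1},W_{a,m-1};\mathbb F_2)=0\), the composite vanishes in homology, which proves the claim.

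The only delicate point is bookkeeping rather than analysis: checking that the intermediate scales \(\lambda(s)\) keep every lower stratum inside the correct sublevel. The \(\widehat T_j\) hypotheses are designed for exactly this, and the zero-weight convention for \(B_{j-1}\subset B_j\) is what identifies the restriction of \(H_j\) with the \((j-1)\)-particle family. The inequality \(b_{m-1}<c_{m-1}<b_m\) is needed only so that the levels are of the admissible type used in Section~3; it plays no role in the vanishing itself.
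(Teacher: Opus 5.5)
Your proposal is correct and matches the paper's proof: the cylinder bounds on \(\widehat T_j\) and \(\widehat T_{j-1}\) make \(H_j\) a homotopy of pairs, and \(T_m(\lambda_*)<c_{m-1}\) forces \(g_{a,\lambda_*,m}(B_m(K))\subset W_{a,m-1}\), so the relative map vanishes at \(\lambda_*\). The homotopy transports this vanishing to \(\Lambda\), and precomposing with \(B_m(h)\) finishes; your extra checks (nondegenerate normalisation, which in fact holds for every \(\lambda>0\) since \(\langle P_a\delta_{\lambda,x},P_a\delta_{\lambda,y}\rangle_a\ge0\), and the identification of the restriction of \(H_j\) to \(B_{j-1}(K)\) with the \((j-1)\)-particle map) are the routine details the paper leaves implicit.
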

	
	\begin{proof}
		The two cylinder bounds make every \(H_j\) a homotopy of pairs.  At the drop scale, the last strict inequality gives
		\[
		g_{a,\lambda_*,m}(B_m(K))\subset W_{a,m-1}.
		\]
		Hence the induced relative map at \(\lambda_*\) is zero.  The homotopy of pairs transports this zero homomorphism to the scale \(\Lambda\). Composition with \(B_m(h)\) is legitimate because its image lies in \(B_m(K)\), and the asserted vanishing follows.
	\end{proof}
	
	The two branches supply the drop-scale and scale-cylinder estimates needed in the proposition.  If \(N=3\), choose \(m\) by \eqref{eq5.mchoice} and take \(\lambda_*=\Lambda\) sufficiently large for Proposition~\ref{Prop5.6}; the scale homotopy is constant.  If \(N\in\{4,5\}\), fix \(A>0\), choose \(m\) sufficiently large, take
	\[
	\lambda_*=\lambda_m=A m^{\frac{2}{N-2}},
	\]
	use Proposition~\ref{Prop5.10} at \(\lambda_m\), and use Corollary~\ref{Cor5.11} to reach the common topology scale \(\Lambda\).  In the latter case the strict energy drop is used at \(\lambda_m\), and the induced relative map is transported to \(\Lambda\). Section~\ref{sec:topology-module} makes the single regular-level choice only after the common topology scale \(\Lambda\) and all thresholds \(\ell_j^{\mathrm{low}}(\Lambda)\), \(1\le j\le m\), have been fixed.
	\section{Topological module argument and proof of the main theorem}
	\label{sec:topology-module}

	Throughout this section all homology and cohomology groups have coefficients in \(\mathbb F_2\).  The use of a field removes orientation and torsion issues.  We use the convention \(B_\ell(X)=\varnothing\) for every integer \(\ell\le0\).  The construction starts on the regular collision-free model and uses a resolved compactification that retains all labels and limiting evaluation points on its boundary strata. These data define the transfer and center maps throughout the filtration. After each stated triangulation, every displayed cellular chain is represented by the corresponding singular chain.  For every continuous map \(f\), the symbol \(f_\#\) denotes its induced singular-chain map; cellular representatives and cap cocycles are always placed on the common subdivisions specified below.
	
	\subsection{Thom realization and the barycenter class}
	
	\begin{lemma}\label{Lem6.1}
		Let \(\Omega\) be a connected open subset of a smooth \(N\)-dimensional manifold and let
		\[
		0\ne\alpha\in H_d(\Omega;\mathbb F_2),\qquad 1\le d\le N-1.
		\]
		There are a closed connected smooth \(d\)-manifold \(V\), a smooth map
		\[
		h:V\to\Omega,
		\]
		and a compact smooth subdomain \(K\Subset\Omega\) containing \(h(V)\), such that \(h_*[V]\ne0\).  Moreover, there is \(\omega\in H^d(\Omega;\mathbb F_2)\) such that
		\begin{equation}\label{eq6.pairing}
			\langle h^*\omega,[V]\rangle
			=\langle\omega,h_*[V]\rangle=1.
		\end{equation}
	\end{lemma}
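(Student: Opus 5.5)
We would prove Lemma~\ref{Lem6.1} by Thom's theorem on the realizability of $\mathbb F_2$-homology classes. Thom showed that every class in $H_d(X;\mathbb F_2)$ of a CW complex (or manifold) $X$ is of the form $f_*[V]$ for some closed smooth $d$-manifold $V$ and continuous map $f:V\to X$. This follows from the surjectivity of the unoriented bordism map $\mathfrak N_d(X)\to H_d(X;\mathbb F_2)$, which Thom proved via the splitting of $MO$ into Eilenberg--MacLane spectra. Since $\Omega$ is an open subset of a smooth manifold, it has the homotopy type of a CW complex, so we obtain a closed $d$-manifold $V$ and a continuous map $f:V\to\Omega$ with $f_*[V]=\alpha$.

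Next we pass to a smooth map with connected source. By Whitney approximation, $f$ is homotopic inside $\Omega$ to a smooth map $h$, and $h_*[V]=f_*[V]=\alpha\neq0$. If $V$ is disconnected, write $V=\bigsqcup_r V_r$; then $\alpha=\sum_r h_*[V_r]$, so some component satisfies $h_*[V_r]\ne0$. Replacing $V$ by that component yields a closed connected $V$ with $h_*[V]\ne0$. It gives a nonzero class, possibly different from $\alpha$, which is all the statement requires. Alternatively, one could connect the components by tubes, taking connected sums inside $\Omega$ (which is connected and has $d<N$), but the component choice is simpler.

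For $K$, note that $h(V)$ is compact in $\Omega$. Choose a smooth exhaustion function on $\Omega$ and a regular value $c$ exceeding its maximum on $h(V)$, using Sard's theorem. The sublevel set is a compact smooth subdomain $K\Subset\Omega$ whose interior contains $h(V)$.

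For the pairing, use the universal coefficient theorem over the field $\mathbb F_2$:
\[
H^d(\Omega;\mathbb F_2)\cong\operatorname{Hom}\bigl(H_d(\Omega;\mathbb F_2),\mathbb F_2\bigr).
\]
Since $h_*[V]\ne0$, there is a linear functional taking the value $1$ on it. This holds even when $H_d$ is infinite-dimensional, by extending a basis containing $h_*[V]$. The corresponding class $\omega$ satisfies $\langle\omega,h_*[V]\rangle=1$, and naturality of the Kronecker pairing gives $\langle h^*\omega,[V]\rangle=\langle\omega,h_*[V]\rangle$, which is \eqref{eq6.pairing}.

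The only nonelementary input is Thom's realization theorem; the remaining steps are routine. The one point that needs care is that $\Omega$ may be noncompact, which is why we apply the theorem to $\Omega$ as a space homotopy equivalent to a CW complex rather than to a compact manifold.
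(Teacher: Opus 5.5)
Your proposal is correct and follows essentially the paper's route: Thom's $\mathbb F_2$-realization, Whitney approximation, selection of a single component, and the universal coefficient theorem to produce $\omega$. The differences are cosmetic. The paper first pushes $\alpha$ into a finite polyhedron carrying a representing cycle, matching Thom's original formulation, and chooses $\omega$ before selecting the component with pairing one; you apply realization to $\Omega$ directly, pick $\omega$ after the component, and also make the construction of $K$ explicit via an exhaustion function and Sard's theorem.
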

	
	\begin{proof}
		Since a singular homology class is represented by a finite cycle, there are a finite polyhedron \(P\), a continuous map \(i:P\to\Omega\), and a class \(\bar\alpha\in H_d(P;\mathbb F_2)\) such that \(i_*\bar\alpha=\alpha\).  By Thom's realization theorem over \(\mathbb F_2\), there are a closed smooth \(d\)-manifold \(V\) and a continuous map \(f:V\to P\) such that \(f_*[V]=\bar\alpha\) \cite[Th\'eor\`eme~III.2]{Thom1954}.  Thus, for \(h_0=i\circ f\),
		\[
		(h_0)_*[V]=\alpha.
		\]
		By the universal coefficient theorem over \(\mathbb F_2\), choose \(\omega\in H^d(\Omega;\mathbb F_2)\) such that \(\langle\omega,\alpha\rangle=1\).  If \(V=\coprod_{\nu=1}^r V_\nu\), then
		\[
		1=\langle\omega,\alpha\rangle
		=\sum_{\nu=1}^r
		\langle(h_0|_{V_\nu})^*\omega,[V_\nu]\rangle.
		\]
		Hence some component \(V_{\nu_0}\) has pairing one.  Replace \(V\) by \(V_{\nu_0}\) and \(h_0\) by its restriction.  The selected component satisfies \(\langle h_0^*\omega,[V]\rangle=1\), which is the property used below.  Since \(\Omega\) is an open smooth manifold, the Whitney approximation theorem gives a smooth map \(h:V\to\Omega\), homotopic to \(h_0\) \cite[Theorem~6.26]{Lee2013}.  Therefore
		\[
		\langle h^*\omega,[V]\rangle
		=\langle h_0^*\omega,[V]\rangle=1,
		\]
		so \(h_*[V]\ne0\) and \eqref{eq6.pairing} holds.  Finally, the compact set \(h(V)\) is contained in the interior of a compact smooth subdomain \(K\Subset\Omega\).
	\end{proof}
	
	For a space \(X\), write
	\[
	\operatorname{Conf}_j(X)
	=\{(x_1,\ldots,x_j)\in X^j:x_r\ne x_s\text{ for }r\ne s\}.
	\]
	Throughout the remainder of this subsection, let \(d\ge1\), let \(V\) be a closed connected smooth \(d\)-manifold, let \(h:V\to\Omega\) be smooth, and let \(\omega\in H^d(\Omega;\mathbb F_2)\) satisfy
	\[
	\langle h^*\omega,[V]\rangle=1.
	\]
	These are precisely the properties of the data from Lemma~\ref{Lem6.1} used in the barycenter argument, which therefore applies to arbitrary smooth \(h\).
	
	\begin{lemma}\label{Lem6.2}
		Let \(d\ge1\).  For every \(\omega\in H^d(\Omega;\mathbb F_2)\) and every \(j\ge1\), there is a canonical class
		\[
		\Theta_j(\omega)\in H^d(\operatorname{SP}^j(\Omega);\mathbb F_2)
		\]
		such that, for the quotient \(\operatorname{sym}_j:\Omega^j\to\operatorname{SP}^j(\Omega)\),
		\begin{equation}\label{eq6.theta-pullback}
			\operatorname{sym}_j^*\Theta_j(\omega)
			=\sum_{i=1}^j\operatorname{pr}_i^*\omega.
		\end{equation}
		On \(\operatorname{Conf}_j(\Omega)/\mathfrak S_j\), this is the cohomological transfer of the pullback of \(\omega\) from the marked particle along the genuine \(j\)-sheeted cover
		\[
		\operatorname{Conf}_j(\Omega)/\mathfrak S_{j-1}
		\to
		\operatorname{Conf}_j(\Omega)/\mathfrak S_j.
		\]
		The same transfer description holds on every spherical Fulton-MacPherson resolution used below.
	\end{lemma}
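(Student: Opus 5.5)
The plan is to construct \(\Theta_j(\omega)\) as a transfer class and then verify \eqref{eq6.theta-pullback} by naturality. I would use the classical fact that the quotient map \(\operatorname{sym}_j:\Omega^j\to\operatorname{SP}^j(\Omega)\) is a ramified covering in the sense of Smith and Dold. Since \(\Omega\) is a manifold, it is a locally finite CW complex up to homotopy, so this covering has a cohomology transfer
\[
\operatorname{tr}:H^*(\Omega^j;\mathbb F_2)\to H^*(\operatorname{SP}^j(\Omega);\mathbb F_2)
\]
satisfying \(\operatorname{sym}_j^*\circ\operatorname{tr}=\sum_{\pi\in\mathfrak S_j}\pi^*\). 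This sum over all permutations has the wrong multiplicity, so I would instead transfer along the intermediate ramified cover \(\Omega^j/\mathfrak S_{j-1}\to\operatorname{SP}^j(\Omega)\), with fibre the \(j\) cosets, where \(\mathfrak S_{j-1}\) fixes the first, marked coordinate. Then I would define
\[
\Theta_j(\omega)=\operatorname{tr}_{j}\bigl(\overline{\operatorname{pr}}_1^*\omega\bigr),
\]
where \(\overline{\operatorname{pr}}_1:\Omega^j/\mathfrak S_{j-1}\to\Omega\) is the marked-particle projection. It is well defined because \(\operatorname{pr}_1\) is \(\mathfrak S_{j-1}\)-invariant.

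The pullback formula then follows from the double-coset formula for ramified transfers. Pulling back to \(\Omega^j\) converts the transfer along the \(j\)-sheeted quotient into a sum over the coset representatives, which are the transpositions \((1\,i)\). This gives \(\sum_i (1\,i)^*\operatorname{pr}_1^*\omega=\sum_i\operatorname{pr}_i^*\omega\). On \(\operatorname{Conf}_j(\Omega)/\mathfrak S_j\) the ramified cover restricts to a genuine \(j\)-sheeted covering, and the ramified transfer restricts to the ordinary covering transfer by naturality under open restriction. This identifies the class with the stated description. Canonicity, meaning independence of choices and naturality in \(\Omega\), follows because the transfer is functorial for equivariant maps.

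An alternative, if one wants to avoid Dold's ramified transfer, is to use the isomorphism \(H^*(\operatorname{SP}^j(\Omega);\mathbb F_2)\cong H^*(\Omega^j;\mathbb F_2)^{\mathfrak S_j}\). This fails over \(\mathbb F_2\) in general, so I would not rely on it. I would instead build the transfer at the chain level. For a simplicial structure on \(\Omega\) one can triangulate \(\operatorname{SP}^j\) and define a chain map \(C_*(\operatorname{SP}^j)\to C_*(\Omega^j/\mathfrak S_{j-1})\) by summing the preimage simplices without repetition. Because preimages are counted with multiplicity one in \(\mathbb F_2\) only away from the diagonals, this requires checking that multiplicities are compatible on fat diagonals.

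The main obstacle is the statement for the spherical Fulton–MacPherson resolutions. There the boundary strata remember labels, so the marked-particle cover extends to an honest unramified \(j\)-sheeted cover of the resolved quotient. I would define the transfer there directly as the covering transfer of the pullback of \(\omega\) through the extended root evaluation \(\operatorname{ev}_1\). Compatibility with \(\Theta_j(\omega)\) then requires the blow-down map to intertwine the two covers and their transfers. I expect to check this by restricting to the open dense configuration part and using that both classes are determined by their pullbacks there, via injectivity of restriction in degree \(d\) for the resolutions. Whether that injectivity actually holds is the step I would scrutinize most.
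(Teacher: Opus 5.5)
Your construction is correct but takes a different route from the paper. You define \(\Theta_j(\omega)\) as the Smith--Dold ramified transfer of \(\overline{\operatorname{pr}}_1^*\omega\) along \(\Omega^j/\mathfrak S_{j-1}\to\operatorname{SP}^j(\Omega)\), and you derive \eqref{eq6.theta-pullback} from a double-coset formula.

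The paper instead represents \(\omega\) by a map \(f\) into the topological abelian group \(|K(\mathbb F_2,d)|\) coming from the Dold--Kan construction, whose universal normalized cocycle \(\upsilon_d\) is strictly additive. It then sets \(\Theta_j(\omega)=S_j^*\upsilon_d\) with \(S_j[x_1,\ldots,x_j]=f(x_1)+\cdots+f(x_j)\).

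The two classes agree. \(S_j\) is exactly the Eilenberg--MacLane model of the ramified transfer for your cover, whose fibre over \([x_1,\ldots,x_j]\) is the set of marked points counted with multiplicity.

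What each route buys:
The paper's route makes \eqref{eq6.theta-pullback} immediate from strict additivity, with no ramified-covering theory or double-coset formula. It also gives an explicit normalized cocycle whose pullback to the labelled resolution is literally \(\sum_i(h\circ\operatorname{ev}_i)^*\mathbf a_\omega\), which is the cochain-level form used in \eqref{eq6.marked-transfer-cap}. Your route identifies \(\Theta_j\) as a standard transfer but works only with classes; that suffices after Lemma~\ref{Lem6.4}.

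The step you flagged holds. \(\widehat{\mathcal M}_j(V)\) is a compact manifold with corners whose interior is \(\mathcal U_j\simeq\operatorname{Conf}_j(V)/\mathfrak S_j\), so restriction to the open stratum is even an isomorphism.

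There is one subtlety you should handle. The resolutions used later resolve \(V\), not \(\Omega\), and their evaluations are composed with a possibly non-injective \(h:V\to\Omega\). So even the open stratum can land in the fat diagonal of \(\operatorname{SP}^j(\Omega)\), where the configuration-space statement for \(\Omega\) does not apply. You should first use naturality, \(\operatorname{SP}^j(h)^*\Theta_j(\omega)=\Theta_j(h^*\omega)\), and then apply the configuration-space statement on \(V\).

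Finally, in your chain-level alternative the transfer must sum over the \(j\) cosets, counting a fat-diagonal lift with its multiplicity. Summing distinct preimages is not a chain map; this already fails for \(j=2\), at an edge of \(\operatorname{SP}^2(\Omega)\) leaving the diagonal.
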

	
	\begin{proof}
		Use the Dold-Kan simplicial abelian group \(\mathcal K_d=K(\mathbb F_2,d)\) associated with the chain complex having \(\mathbb F_2\) in degree \(d\) and zero elsewhere \cite[Sections~22-24]{May1967}.  Write
		\[
		\upsilon_d\in Z^dN^*(\mathcal K_d;\mathbb F_2)
		\]
		for its universal normalized cocycle, and use the same symbol for the represented universal class in \(H^d(|\mathcal K_d|;\mathbb F_2)\).  Because addition in \(\mathcal K_d\) is simplicial and linear,
		\[
		+^*\upsilon_d
		=\operatorname{pr}_1^*\upsilon_d
		+\operatorname{pr}_2^*\upsilon_d
		\]
		as normalized cochains.  Its geometric realization \(|\mathcal K_d|\) is a topological abelian group.  Choose a continuous classifying map
		\[
		f:\Omega\to|\mathcal K_d|,
		\qquad f^*\upsilon_d=\omega,
		\]
		and define the continuous map
		\[
		S_j:\operatorname{SP}^j(\Omega)\to|\mathcal K_d|,
		\qquad
		S_j([x_1,\ldots,x_j])=f(x_1)+\cdots+f(x_j).
		\]
		Strict commutativity makes \(S_j\) well defined on the symmetric product. Set \(\Theta_j(\omega)=S_j^*\upsilon_d\).  A homotopy between two classifying maps adds to a homotopy after passage to the symmetric product, so the resulting cohomology class is independent of \(f\). Strict additivity gives \eqref{eq6.theta-pullback}.
		
		Separately choose a simplicial classifying representative \(F:\operatorname{Sing}\Omega\to\mathcal K_d\) corresponding to the homotopy class of \(f\), and put
		\[
		\mathbf a_\omega=F^*\upsilon_d
		\in Z^dN^*(\operatorname{Sing}\Omega;\mathbb F_2).
		\]
		On the labelled spherical resolution, the pulled-back normalized cocycle representing the sum class is literally
		\[
		\sum_{i=1}^j(h\circ\operatorname{ev}_i)^*\mathbf a_\omega.
		\]
		For an evenly covered singular simplex in the unmarked configuration quotient, the transfer cochain is the sum of the evaluations on its \(j\) marked lifts.  By the strict normalized-cochain identity above, this is exactly the displayed labelled-resolution sum.  The same statement remains literal on the spherical resolution because the symmetric-group action there is free and the root evaluations extend. The transfer identity applies directly to even-degree covers as well.
	\end{proof}
	
	Put
	\[
	q_j=j(d+1)-1,
	\qquad
	\mathcal U_j
	=\operatorname{Conf}_j(V)
	\mathop{\times}_{\mathfrak S_j}\operatorname{int}\Delta_{j-1}.
	\]
	The sum map
	\[
	\rho_j:\mathcal U_j\to B_j(V)\setminus B_{j-1}(V),
	\qquad
	[z,t]\longmapsto\sum_{i=1}^jt_i\delta_{z_i},
	\]
	is a homeomorphism.  We record the precise homology convention used below.
	
	Choose a finite triangulation of \(V\) and realize it as a compact semialgebraic polyhedron \(P\subset\mathbb R^M\).  For \(|\gamma|\le2j-1\), the moments
	\[
	\mu\longmapsto\left(\int_Px^\gamma\,d\mu(x)\right)_{|\gamma|\le2j-1}
	\]
	separate the elements of \(B_j(P)\): the difference of two such measures has at most \(2j\) support points, and a product of at most \(2j-1\) affine functions isolates any prescribed support point.  Tarski-Seidenberg and compatible semialgebraic triangulation \cite[Sections~2.2 and~9.2]{BochnakCosteRoy1998} therefore make \((B_j(V),B_{j-1}(V))\) a finite polyhedral pair.  In particular, \(B_{j-1}(V)\) is closed in the compact space \(B_j(V)\).  For \(j\ge2\), the quotient \(B_j(V)/B_{j-1}(V)\) is the one-point compactification of \(\mathcal U_j\); for \(j=1\), one uses directly \(B_1(V)=\mathcal U_1=V\) and \(B_0(V)=\varnothing\).  Thus in every case there is a canonical isomorphism
	\begin{equation}\label{eq6.regular-model}
		\kappa_j:
		H_*^{\mathrm{BM}}(\mathcal U_j;\mathbb F_2)
		\xrightarrow{\ \cong\ }
		H_*(B_j(V),B_{j-1}(V);\mathbb F_2).
	\end{equation}
	
	The space \(\mathcal U_j\) is a smooth manifold without boundary of dimension \(q_j\).  It is connected.  For \(d\ge2\), one moves points one at a time along paths avoiding the remaining finite set.  For \(d=1\), the connected closed manifold \(V\) is a circle; the components of the ordered configuration space are the cyclic orders and \(\mathfrak S_j\) acts transitively on them.  Let \([\mathcal U_j]_{\mathrm{BM}}\) be its mod-\(2\) Borel-Moore fundamental class \cite{BorelMoore1960} and define
	\[
	\beta_j
	=\kappa_j[\mathcal U_j]_{\mathrm{BM}}
	\in H_{q_j}(B_j(V),B_{j-1}(V);\mathbb F_2).
	\]
	
	Let
	\[
	\zeta_j:\mathcal U_j\to\operatorname{SP}^j(V),
	\qquad [z,t]\longmapsto[z_1,\ldots,z_j],
	\]
	and define
	\[
	\theta_{V,j}
	=\zeta_j^*\operatorname{SP}^j(h)^*\Theta_j(\omega)
	\in H^d(\mathcal U_j;\mathbb F_2).
	\]
	For \(z\in H_*(B_j(V),B_{j-1}(V))\), use the Borel-Moore cap product and the convention
	\begin{equation}\label{eq6.topological-cap}
		\theta_{V,j}\frown z
		=\kappa_j\bigl(\theta_{V,j}\frown\kappa_j^{-1}z\bigr).
	\end{equation}
	For \(j\ge2\), let \(\partial_{V,j}\) be the connecting homomorphism of the triple
	\[
	B_{j-2}(V)\subset B_{j-1}(V)\subset B_j(V),
	\]
	and let \(e_V\) be the generator of \(H_0(V;\mathbb F_2)\).
	
	\begin{lemma}\label{Lem6.3}
		For every finite \(J_{\max}\), and simultaneously for \(1\le j\le J_{\max}\), there are compact triangulated manifolds with corners
		\[
		\widehat{\mathcal M}^{\mathrm{lab}}_j(V)
		=\operatorname{FM}^{\mathrm{sph}}_j(V)\times\Delta_{j-1},
		\qquad
		\widehat{\mathcal M}_j(V)
		=\widehat{\mathcal M}^{\mathrm{lab}}_j(V)/\mathfrak S_j,
		\]
		maps
		\[
		\pi_j:\widehat{\mathcal M}_j(V)\to B_j(V),
		\qquad
		\widehat\zeta_j:\widehat{\mathcal M}_j(V)
		\to\operatorname{SP}^j(V),
		\]
		and mod-\(2\) quotient fundamental chains
		\[
		\mathfrak B_j\in
		C_{q_j}(\widehat{\mathcal M}_j(V);\mathbb F_2).
		\]
		The labelled resolution carries the simplex weights \(t_i\) and extended root evaluations \(\operatorname{ev}_i\), permuted by \(\mathfrak S_j\), and
		\[
		\pi_j[z,t]=\sum_i t_i\delta_{\operatorname{ev}_i(z)},
		\qquad
		\widehat\zeta_j[z,t]
		=[\operatorname{ev}_1(z),\ldots,\operatorname{ev}_j(z)].
		\]
		For \(j\ge2\), the closed face unions
		\[
		\widehat{\mathcal M}_{j,r}
		=\pi_j^{-1}(B_{j-r}(V)),\qquad r=1,2,
		\]
		satisfy
		\[
		\partial\mathfrak B_j
		\in C_{q_j-1}(\widehat{\mathcal M}_{j,1};\mathbb F_2),
		\]
		the resolution is the identity over \(\mathcal U_j\), and
		\[
		(\pi_j)_*[\mathfrak B_j]=\beta_j
		\quad\hbox{in }H_{q_j}(B_j(V),B_{j-1}(V)).
		\]
	\end{lemma}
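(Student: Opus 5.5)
The plan is to build the resolution in the standard Fulton--MacPherson fashion and read off the required properties from its stratification. First we construct the labelled spherical Fulton--MacPherson compactification \(\operatorname{FM}^{\mathrm{sph}}_j(V)\) of \(\operatorname{Conf}_j(V)\). Embed \(\operatorname{Conf}_j(V)\) into
\[
V^j\times\prod_{r<s}\mathbb S(TV)\times\prod_{r<s<u}[0,\infty]
\]
by recording the points, the unit directions of \(z_s-z_r\) (after fixing a Riemannian metric and its exponential charts), and the ratios of distances. We then take the closure. By the classical Axelrod--Singer/Sinha theory this closure is a compact smooth manifold with corners. Its interior is \(\operatorname{Conf}_j(V)\), and its boundary strata are indexed by nested trees of colliding subsets. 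The root evaluations \(\operatorname{ev}_i\) are the projections to \(V^j\); they are smooth, extend continuously to the boundary, and equal the common limiting point on each collision cluster.

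Second, we take the product with \(\Delta_{j-1}\) to obtain a compact manifold with corners. The diagonal action of \(\mathfrak S_j\) permutes labels, weights and evaluations. This action is free: already on the interior a nontrivial permutation fixing a configuration would identify two distinct points, and on a boundary stratum the spherical direction data, together with the tree labels, still distinguish the particles. So the quotient \(\widehat{\mathcal M}_j(V)\) is a compact manifold with corners of dimension \(jd+j-1=q_j\). Equivariant semialgebraic triangulation (after a semialgebraic model of \(V\), as in the polyhedral discussion preceding \eqref{eq6.regular-model}) gives compatible triangulations for all \(j\le J_{\max}\). The triangulations are chosen subordinate to the faces and compatible with the maps below.

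Third, we define \(\pi_j[z,t]=\sum_i t_i\delta_{\operatorname{ev}_i(z)}\) and \(\widehat\zeta_j\) as stated. Both are continuous and \(\mathfrak S_j\)-invariant. Over the open set \(\operatorname{Conf}_j(V)\times\operatorname{int}\Delta_{j-1}\), the map \(\pi_j\) is the homeomorphism \(\rho_j\) of \(\mathcal U_j\), so the resolution is the identity there. Every point of the topological boundary has either some \(t_i=0\) or a collision \(\operatorname{ev}_r=\operatorname{ev}_s\). Hence its image has support of cardinality at most \(j-1\), so
\[
\partial\widehat{\mathcal M}_j\subset\widehat{\mathcal M}_{j,1}=\pi_j^{-1}(B_{j-1}(V)).
\]
Both \(\widehat{\mathcal M}_{j,1}\) and \(\widehat{\mathcal M}_{j,2}\) are closed unions of faces, because the support-cardinality conditions are closed and are unions of strata.

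Fourth, let \(\mathfrak B_j\) be the sum of all top simplices of the triangulation. Over \(\mathbb F_2\) this is a relative cycle of the manifold with boundary, so \(\partial\mathfrak B_j\) is the sum of boundary \((q_j-1)\)-simplices and lies in \(\widehat{\mathcal M}_{j,1}\). For the identity \((\pi_j)_*[\mathfrak B_j]=\beta_j\) we compare through \eqref{eq6.regular-model}. Collapsing \(\widehat{\mathcal M}_{j,1}\) gives
\[
\widehat{\mathcal M}_j/\widehat{\mathcal M}_{j,1}\cong(\mathcal U_j)^+ .
\]
Under this identification, \([\mathfrak B_j]\) is the mod-\(2\) fundamental class of the manifold \(\operatorname{int}\widehat{\mathcal M}_j=\mathcal U_j\), that is, \([\mathcal U_j]_{\mathrm{BM}}\). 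The map \(\pi_j\) induces on quotients the map \((\mathcal U_j)^+\to B_j(V)/B_{j-1}(V)\), which is the identity under \(\kappa_j\). Naturality of the collapse isomorphism for the finite polyhedral pairs then yields \(\beta_j\). The case \(j=1\) is trivial, since \(\widehat{\mathcal M}_1=V\).

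We expect the main obstacle to be Step two, namely freeness of the \(\mathfrak S_j\)-action and the smooth-corner structure after quotienting at deep strata where \(\operatorname{ev}\) coincides. There we would rely on the fact that the screen data (normalized relative positions modulo translation and dilation) keep labelled points distinct. A secondary point needing care is that \(\pi_j\) is a cellular map for compatible triangulations. To handle it we would simply subdivide, and use singular chains as the section's convention allows.
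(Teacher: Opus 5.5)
Your proposal is correct and follows essentially the same route as the paper: the spherical Fulton--MacPherson compactification times $\Delta_{j-1}$ with its free $\mathfrak S_j$-action (the paper cites Sinha for the corner charts, root projections and freeness), the identification of the boundary with the support-loss locus $\pi_j^{-1}(B_{j-1}(V))$, $\mathfrak B_j$ as the sum of top simplices of a face-compatible triangulation, and the one-point-compactification comparison through $\kappa_j$. Two points are worth making explicit. First, define the direction data through an embedding $V\subset\mathbb R^M$, as Sinha does, since exponential charts only give it locally. Second, say that $\mathfrak B_j$ is summed on the quotient triangulation: pushing forward the labelled fundamental chain gives $j!$ times it, which vanishes mod $2$.
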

	
	\begin{proof}
		Use the spherical, or oriented, Fulton-MacPherson compactification \cite{FultonMacPherson1994}: a collision screen is taken modulo translation and positive dilation, with antipodal directions kept distinct.  The extended root projections, the manifold-with-corners charts, and the free symmetric-group action are given by \cite[Theorems~4.2, 4.4 and~4.10]{Sinha2004}.  In particular, the quotient is a compact manifold with corners of dimension \(jd+j-1=q_j\).  The use of the spherical compactification is essential when \(d=1\): the transposition on a binary \(S^0\)-screen exchanges its two points and remains free.
		
		On a boundary stratum, let \(\mathcal P\) be the partition of the labels into maximal blocks with common root evaluation and let \(Z=\{i:t_i=0\}\).  The support cardinality and support defect are
		\begin{equation}\label{eq6.support-defect}
			\begin{split}
				s(\mathcal P,Z)
				&=\#\{C\in\mathcal P:C\not\subset Z\},\\
				r(\mathcal P,Z)
				&=j-s(\mathcal P,Z)\\
				&=|Z|+
				\sum_{\substack{C\in\mathcal P\\C\not\subset Z}}
				\bigl(|C\setminus Z|-1\bigr).
			\end{split}
		\end{equation}
		This proves that \(\widehat{\mathcal M}_{j,r}\) is a closed union of faces.  It also shows that the filtration is indexed by support loss, independently of corner codimension.  In particular, a corner consisting of one zero-weight ghost colliding with one active particle has support defect one, whereas a three-particle collision divisor already has support defect two.
		
		Choose face-compatible triangulations of the finitely many quotients and let \(\mathfrak B_j\) be the sum of their top-dimensional simplices.  It must be defined downstairs: pushing the ordered fundamental chain to the quotient would multiply it by \(j!\), which vanishes in \(\mathbb F_2\) for \(j\ge2\).  Interior facets cancel in pairs and the remaining facets lie in \(\widehat{\mathcal M}_{j,1}\).  Finally, both
		\[
		\widehat{\mathcal M}_j/\widehat{\mathcal M}_{j,1}
		\quad\hbox{and}\quad
		B_j(V)/B_{j-1}(V)
		\]
		are the one-point compactification of \(\mathcal U_j\), and the map induced by \(\pi_j\) is the identity there.  It therefore sends the relative fundamental class to \(\beta_j\).
	\end{proof}
	
	Set
	\[
	u_j
	=\widehat\zeta_j^*\operatorname{SP}^j(h)^*\Theta_j(\omega)
	\in H^d(\widehat{\mathcal M}_j(V);\mathbb F_2).
	\]
	Using the strictly additive cocycle in the proof of Lemma~\ref{Lem6.2}, choose a normalized cocycle
	\[
	\mathbf u_j\in
	Z^dN^*(\operatorname{Sing}\widehat{\mathcal M}_j(V);\mathbb F_2)
	\]
	representing \(u_j\).  On the labelled resolution its pullback is literally
	\begin{equation}\label{eq6.strict-sum-cocycle}
		\sum_{i=1}^j(h\circ\operatorname{ev}_i)^*\mathbf a_\omega,
	\end{equation}
	where \(\mathbf a_\omega=F^*\upsilon_d\) is the normalized cocycle chosen in the proof of Lemma~\ref{Lem6.2}.
	
	For the chain calculations below, work over \(\mathbb F_2\) and use the Alexander-Whitney cap product on a common compatible subdivision.  If \(r:(\widetilde X,\widetilde A)\to(X,A)\) is a finite covering of triangulated pairs, let \(\operatorname{tr}_r c\) denote the sum of all lifts of \(c\).  For \(\widetilde a\in C^d(\widetilde X)\), define \(r_!\widetilde a\) on a simplex as the sum of the values of \(\widetilde a\) on all its lifts.
	
	\begin{lemma}\label{Lem6.4}
		If \(a\in Z^d(X)\), \(b\in C^{d-1}(X)\), and \(c\in C_q(X)\), then
		\begin{equation}\label{eq6.cap-boundary-bookkeeping}
			\partial(a\frown c)=a\frown\partial c,
			\qquad
			(a+\delta b)\frown c-a\frown c
			=\partial(b\frown c)+b\frown\partial c.
		\end{equation}
		Consequently, if \(c\) is a relative cycle modulo \(A\subset X\), then cohomologous cocycles give the same relative cap class modulo \(A\).
		
		For the covering \(r\), the projection formula holds literally:
		\begin{equation}\label{eq6.cap-transfer-bookkeeping}
			r_\#\bigl(\widetilde a\frown\operatorname{tr}_r c\bigr)
			=(r_!\widetilde a)\frown c.
		\end{equation}
		These statements remain valid for face-compatible subdivisions and relative pairs.  Finally, if \(M\) is a compact manifold with corners, \(e:M\to V\) is a submersion on every face, and \(x\in V\), then the cap of the relative fundamental chain of \(M\) by the pullback of the mod-\(2\) point-dual of \(x\) is represented by the relative fundamental chain of the neat submanifold \(e^{-1}(x)\).
	\end{lemma}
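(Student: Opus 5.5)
The plan is to verify the three assertions of Lemma~\ref{Lem6.4} separately, working directly with the Alexander--Whitney cap product over \(\mathbb F_2\). Signs are irrelevant in characteristic two, so every identity reduces to bookkeeping of front and back faces.

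\textbf{Boundary formulas.} For \eqref{eq6.cap-boundary-bookkeeping} I would start from the standard Leibniz rule for the cap product,
\[
\partial(a\frown c)=\pm(\delta a)\frown c\pm a\frown\partial c,
\]
which follows from the face identities of the front/back decomposition of a simplex. Over \(\mathbb F_2\) and with \(\delta a=0\), this gives the first identity. Applying the same rule to \(b\in C^{d-1}\) gives
\[
\partial(b\frown c)=\delta b\frown c+b\frown\partial c,
\]
and bilinearity then yields the second identity. For the relative statement, suppose \(\partial c\in C_*(A)\). Then \(b\frown\partial c\) lies in \(C_*(A)\), because a cap with a chain supported in \(A\) is supported in \(A\). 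Hence the two caps differ by a boundary plus a chain in \(A\), so they define the same relative class.

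\textbf{Projection formula.} For \eqref{eq6.cap-transfer-bookkeeping} I would check the identity on a single simplex \(\sigma\) of \(X\), then extend by linearity. Let \(\widetilde\sigma_1,\dots,\widetilde\sigma_k\) be the lifts of \(\sigma\). Then
\[
\widetilde a\frown\operatorname{tr}_r\sigma=\sum_\ell \widetilde a(\text{front }d\text{-face of }\widetilde\sigma_\ell)\,(\text{back face of }\widetilde\sigma_\ell).
\]
Under \(r_\#\), each back face of \(\widetilde\sigma_\ell\) maps to the back face of \(\sigma\). The coefficient therefore becomes the sum of \(\widetilde a\) over the lifts of the front face of \(\sigma\), which is exactly \((r_!\widetilde a)(\text{front face})\).

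The one point that needs care is that the lifts of the front face are in bijection with the lifts of \(\sigma\). This holds because a covering map has unique lifting of simplices: each lift of the front face extends uniquely to a lift of \(\sigma\). For face-compatible subdivisions and relative pairs, the same argument applies simplex by simplex, provided the subdivisions of \(\widetilde X\) are pulled back from those of \(X\) so that the covering remains simplicial.

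\textbf{Point-dual statement.} This is the main obstacle, because the cap must be realized at chain level on a manifold with corners. I would argue in three steps.

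1. Choose the triangulation of \(M\) transverse to the fibre \(F=e^{-1}(x)\), which is possible since \(e\) is a submersion on every face. Represent the point-dual of \(x\) by a cocycle \(\eta\) on \(V\) supported near \(x\), chosen as the Thom/Poincaré dual of \(x\) in a small disk \(D\). Its pullback \(e^*\eta\) is then the Thom class of the trivial disk bundle \(e^{-1}(D)\cong F\times D\), which exists because \(e\) is a submersion on each face.

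2. By the first part of the lemma, the relative cap class does not depend on the cocycle representative. We may therefore work inside the tube \(e^{-1}(D)\), using excision relative to \(\partial M\cup e^{-1}(V\setminus D)\).

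3. The Thom isomorphism for the product \(F\times D\) sends the relative fundamental class of \(F\times D\) to the relative fundamental class of \(F\). The neatness of \(F\), meaning it meets each face transversely, ensures that \(\partial F\subset\partial M\). Thus the identification holds in the correct relative group.

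The delicate point is keeping the corner strata relative throughout. I would handle this by applying the product structure face by face.
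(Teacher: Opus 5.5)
Your treatment of \eqref{eq6.cap-boundary-bookkeeping} and \eqref{eq6.cap-transfer-bookkeeping} matches the paper's. For the first, you use the Leibniz rule for the Alexander--Whitney cap product over \(\mathbb F_2\). For the second, you check one simplex, using that lifts of the front face correspond bijectively to lifts of the simplex. Your remark that the subdivision of the cover must be pulled back from the base, so that the covering stays simplicial, is a useful precision the paper leaves implicit.

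The last assertion is where you take a genuinely different route. The paper works at chain level. It triangulates \(M\) compatibly with all faces and with the transverse fibre, and asserts that the simplicial intersection chain defining the pulled-back point-dual is literally the fundamental chain of \(e^{-1}(x)\). This fits the literal-chain bookkeeping of Section~6, but the paper asserts the dual-cell identification rather than constructing it. You argue at the level of classes instead:
\begin{itemize}
\item compactness and submersivity on every face give, by Ehresmann's theorem for manifolds with corners, a face-preserving trivialization \(e^{-1}(D)\cong F\times D\) over a small disk;
\item representative-independence lets you take \(\eta\) supported in \(D\);
\item excision localizes the cap product to the tube;
\item the cross-product formula \(([F]\times[D])\frown(1\times u_D)=[F]\times([D]\frown u_D)=[F]\times[x]\) finishes the computation.
\end{itemize}
This gives only an identity in \(H_{q-d}(M,\partial M;\mathbb F_2)\), not a literal chain identity. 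That is exactly what Lemma~\ref{Lem6.5} uses (``the marked cap class is represented by its fundamental chain''), and every step rests on standard tools. In your route, Step~1 (the transverse triangulation) is superfluous.

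When you write it out, state explicitly which relative cap product you use:
\[
H^d\bigl(M,M\setminus e^{-1}(\operatorname{int}D)\bigr)\otimes H_q\bigl(M,\partial M\cup(M\setminus e^{-1}(\operatorname{int}D))\bigr)\to H_{q-d}(M,\partial M).
\]
Also state its compatibility with the absolute cap by the image of \(\eta\). That is where the localization to the tube actually takes place.
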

	
	\begin{proof}
		The two identities in \eqref{eq6.cap-boundary-bookkeeping} are the standard cap-boundary formula \cite[Section~3.3]{Hatcher02}; all signs disappear over \(\mathbb F_2\).  The second identity shows directly that the change of a capped relative cycle is an ordinary boundary plus a chain supported in \(A\).  Formula \eqref{eq6.cap-transfer-bookkeeping} is checked on one simplex: its lifted front faces are exactly the front faces on which \(r_!\widetilde a\) is evaluated.  Summing the simplex identities proves the formula.  Subdivision and simplicial approximation preserve these relative classes by the same cap-boundary identity and the standard subdivision chain homotopy.  For the last assertion, triangulate compatibly with all faces and with the transverse inverse image.  The simplicial intersection chain defining the pulled-back point-dual is the fundamental chain of \(e^{-1}(x)\); its boundary is its intersection with \(\partial M\).
	\end{proof}
	
	\begin{lemma}\label{Lem6.5}
		For every \(j\ge1\),
		\[
		\beta_j\ne0
		\quad\text{in}\quad
		H_{q_j}(B_j(V),B_{j-1}(V);\mathbb F_2).
		\]
		For \(j\ge2\),
		\begin{equation}\label{eq6.bc-identity}
			\partial_{V,j}\bigl(\theta_{V,j}\frown\beta_j\bigr)
			=\beta_{j-1},
		\end{equation}
		while
		\begin{equation}\label{eq6.base-cap}
			\theta_{V,1}\frown\beta_1=e_V.
		\end{equation}
	\end{lemma}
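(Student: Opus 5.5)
The plan is to compute everything on the resolved model of Lemma~\ref{Lem6.3}, where \(\beta_j\) is represented by the quotient fundamental chain \(\mathfrak B_j\) and \(\theta_{V,j}\) by the strict sum cocycle \eqref{eq6.strict-sum-cocycle}. The nonvanishing statement is then deduced from \eqref{eq6.bc-identity} and \eqref{eq6.base-cap} by a downward induction.

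\emph{Base case.} For \(j=1\) we have \(B_1(V)=\mathcal U_1=V\) and \(B_0(V)=\varnothing\), so \(\beta_1=[V]\). Also \(\theta_{V,1}=h^*\omega\), because \(\Theta_1(\omega)=\omega\) by \eqref{eq6.theta-pullback}. The cap \(h^*\omega\frown[V]\) is a class in \(H_0(V;\mathbb F_2)\cong\mathbb F_2\), and its augmentation is \(\langle h^*\omega,[V]\rangle=1\) by \eqref{eq6.pairing}. Since \(V\) is connected, this gives \eqref{eq6.base-cap}, and in particular \(\beta_1\neq0\).

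\emph{Nonvanishing from the identity.} Once \eqref{eq6.bc-identity} is known for all \(2\le i\le j\), suppose \(\beta_j=0\). Then \(\beta_{j-1}=\partial_{V,j}(\theta_{V,j}\frown\beta_j)=0\). Iterating down to \(i=2\) gives \(\beta_1=0\), which contradicts the base case. So the whole lemma reduces to \eqref{eq6.bc-identity}.

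\emph{Proof of \eqref{eq6.bc-identity}.}
\begin{enumerate}
\item Reduce to a point-dual cocycle. Since \(V\) is closed and connected and \(\langle h^*\omega,[V]\rangle=1\), the class \(h^*\omega\) equals the mod-\(2\) point-dual of a generic point \(x\in V\). Choose \(x\) so that every extended root evaluation \(\operatorname{ev}_i\) is a submersion on every face of \(\operatorname{FM}^{\mathrm{sph}}_j(V)\) (true for a generic \(x\) by Sard applied face by face). By the second identity in \eqref{eq6.cap-boundary-bookkeeping}, replacing \(h^*\mathbf a_\omega\) by the point-dual cocycle \(\mathbf x\) changes neither the relative cap class modulo \(\widehat{\mathcal M}_{j,1}\) nor, after applying \(\partial\), its class modulo \(\widehat{\mathcal M}_{j,2}\).
\item Compute the connecting map at chain level. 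By the cap-boundary formula, \(\partial(\mathbf u_j\frown\mathfrak B_j)=\mathbf u_j\frown\partial\mathfrak B_j\). By Lemma~\ref{Lem6.3}, \(\partial\mathfrak B_j\) is supported in \(\widehat{\mathcal M}_{j,1}\). Modulo \(\widehat{\mathcal M}_{j,2}\), only the facets of support defect exactly one contribute, by \eqref{eq6.support-defect}. These are of three kinds: a single zero weight \(t_i=0\) (a ghost); a binary collision of two active particles; and a ghost colliding with an active particle.
\item The ghost facet. Lift to the labelled resolution and use the transfer formula \eqref{eq6.cap-transfer-bookkeeping}. Modulo \(\mathfrak S_j\) there is a single ghost facet, of the form \(\bigl(\operatorname{FM}^{\mathrm{sph}}_j(V)\times\Delta_{j-2}\bigr)/\mathfrak S_{j-1}\) with a marked ghost label \(g\).
\begin{enumerate}
\item In the strict sum \(\sum_i\operatorname{ev}_i^*\mathbf x\), the ghost term \(\operatorname{ev}_g^*\mathbf x\) caps, by the last assertion of Lemma~\ref{Lem6.4}, to the fundamental chain of \(\{\operatorname{ev}_g=x\}\). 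Under \(\pi_j\), forgetting the ghost maps this chain onto \(\mathfrak B_{j-1}\) modulo \(B_{j-2}(V)\), and the map has degree one over \(\mathcal U_{j-1}\).
\item The active terms \(\operatorname{ev}_i^*\mathbf x\), \(i\ne g\), cap to chains that still carry the free \(d\)-dimensional ghost coordinate. Since \(\pi_j\) forgets the ghost, their images are degenerate (of dimension at most \(q_{j-1}-d\)) and vanish in the normalized relative chains.
\end{enumerate}
\item The collision facets. For a binary active collision, \(\pi_j\) collapses the screen \(S^{d-1}\) and merges the two weights. This is a positive-dimensional fibre, so the pushed-forward capped chain is again degenerate. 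The same holds for ghost–active corners. Alternatively, these contributions cancel in pairs mod \(2\) because the fibre factor has even cardinality.
\end{enumerate}
Summing the three contributions and invoking \((\pi_{j-1})_*[\mathfrak B_{j-1}]=\beta_{j-1}\) from Lemma~\ref{Lem6.3} gives \eqref{eq6.bc-identity}.

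\emph{Expected obstacle.} The delicate step is step~4 together with the bookkeeping at corners. One must check that the collision facets, the ghost–active corners, and the non-ghost summands in the ghost facet really map to chains that are degenerate or lie in \(B_{j-2}(V)\). This has to hold uniformly on face-compatible subdivisions, so that the dimension drop under \(\pi_j\) is genuine at the chain level rather than only homologically. I would first try to make \(\pi_j\) simplicial on a common subdivision, so that the relevant faces are visibly collapsed, and then fall back on the mod-\(2\) pairing cancellation if some fibre is finite.
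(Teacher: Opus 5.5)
Your base case is correct. Deriving \(\beta_j\neq0\) from \eqref{eq6.bc-identity} by descending to \(\beta_1\neq0\) is a valid alternative to the paper, which instead gets \(\beta_j\neq0\) directly from mod-\(2\) Borel--Moore Poincar\'e duality on the connected manifold \(\mathcal U_j\). Your version is the same induction as in Proposition~\ref{Prop6.9}. Your steps 1--4 take the boundary first and then cap, which is equivalent by the cap-boundary formula to the paper's cap-first computation on \(Z_j=\operatorname{ev}_1^{-1}(x)\) over the marked cover. Their geometric content is also right. After passing to the point-dual of \(x\), only the summand in which the zero-weight particle sits at \(x\) survives at a generic point, and every other summand lands in measures having \(x\) in their support.

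The gap is in how you discard everything else. You assert that the active summands on the ghost facet and all collision-facet summands push forward to degenerate chains that vanish in normalized relative chains. This is not available, for three reasons. A singular simplex with lower-dimensional image need not be degenerate in the simplicial-set sense, so it does not vanish. The Alexander--Whitney cap with a point-dual cocycle represents \(\operatorname{ev}_i^{-1}(x)\) only up to homology, so there is no chain-level identity to cancel against. And making \(\pi_j\) simplicial across the collapsed screens is not automatic. Your even-cardinality fallback fails for \(d\ge2\); the only genuine mod-\(2\) cancellation on a binary divisor \(D_{\{r,s\}}\) is between the two summands \(i=r,s\), which coincide there.

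For the same reason, step 3(a) is not correct modulo \(B_{j-2}(V)\). The ghost piece alone is not a relative cycle there: its boundary contains the locus where the ghost at \(x\) meets an active particle, and that locus has \(j-1\) support points.

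The missing idea is the paper's fixed-point star \(A_{x,j-1}=B_{j-2}(V)\cup\{\mu\in B_{j-1}(V):x\in\operatorname{supp}\mu\}\).
\begin{itemize}
\item It is a closed subpolyhedron of dimension at most \(q_{j-1}-d<q_{j-1}\), so \(H_{q_{j-1}}(A_{x,j-1},B_{j-2}(V))=0\).
\item The exact sequence of the triple \(B_{j-2}(V)\subset A_{x,j-1}\subset B_{j-1}(V)\) then makes \(H_{q_{j-1}}(B_{j-1}(V),B_{j-2}(V))\to H_{q_{j-1}}(B_{j-1}(V),A_{x,j-1})\) injective.
\item Every summand you want to discard is supported in \(A_{x,j-1}\), so it suffices to compare classes modulo \(A_{x,j-1}\). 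There the ghost term has local degree one on the complement, and no chain-level degeneracy is needed.
\end{itemize}
Equivalently, the whole connecting chain is a relative cycle modulo \(B_{j-2}(V)\). Since \(\mathcal U_{j-1}\) is connected, its class in \(H_{q_{j-1}}(B_{j-1}(V),B_{j-2}(V))\cong H^{\mathrm{BM}}_{q_{j-1}}(\mathcal U_{j-1};\mathbb F_2)\cong\mathbb F_2\) is detected by its local degree at a single point of \(\mathcal U_{j-1}\) outside \(A_{x,j-1}\).
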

	
	\begin{proof}
		Mod-\(2\) Borel-Moore Poincar\'e duality on the connected \(q_j\)-manifold \(\mathcal U_j\) gives
		\[
		H_{q_j}^{\mathrm{BM}}(\mathcal U_j;\mathbb F_2)
		\cong H^0(\mathcal U_j;\mathbb F_2)
		\cong\mathbb F_2.
		\]
		Thus \([\mathcal U_j]_{\mathrm{BM}}\ne0\), and the isomorphism \eqref{eq6.regular-model} gives \(\beta_j\ne0\).
		
		We prove the connecting formula directly on the marked resolution. Let
		\[
		\widehat{\mathcal M}^{\bullet}_j(V)
		=\widehat{\mathcal M}^{\mathrm{lab}}_j(V)/\mathfrak S_{j-1},
		\]
		where the first label is marked, and let
		\[
		r_j:\widehat{\mathcal M}^{\bullet}_j(V)
		\to\widehat{\mathcal M}_j(V)
		\]
		be the quotient map.  Freeness of the spherical action makes \(r_j\) a genuine \(j\)-sheeted cover on the whole resolution, including collision faces.  Let \(\mathfrak B_j^\bullet\) be the sum of all lifts of the top-dimensional quotient simplices.  For an evenly covered simplex, the transfer of \((h\circ\operatorname{ev}_1)^*\mathbf a_\omega\) is the sum over the \(j\) possible marked lifts.  By \eqref{eq6.strict-sum-cocycle} and the projection formula \eqref{eq6.cap-transfer-bookkeeping}, this gives the literal chain identity
		\begin{equation}\label{eq6.marked-transfer-cap}
			r_{j\#}\bigl(
			(h\circ\operatorname{ev}_1)^*\mathbf a_\omega\frown\mathfrak B_j^\bullet
			\bigr)
			=\mathbf u_j\frown\mathfrak B_j.
		\end{equation}
		The marked quotient therefore contributes coefficient one, while the uncapped pushforward of the marked fundamental chain would have coefficient \(j\).
		
		Since \(V\) is closed and connected and \(\langle h^*\omega,[V]\rangle=1\), the class \(h^*\omega\) is the nonzero element of \(H^d(V;\mathbb F_2)\).  Fix \(x\in V\) and represent this class by the mod-\(2\) Poincar\'e dual of \(x\).  Replacing \(h^*\mathbf a_\omega\) by that point-dual cocycle changes \eqref{eq6.marked-transfer-cap} by the boundary and resolved-boundary terms in \eqref{eq6.cap-boundary-bookkeeping}; hence the relative cap class and its connecting image remain invariant.
		
		The marked root evaluation
		\[
		\operatorname{ev}_1:\widehat{\mathcal M}^{\bullet}_j(V)\to V
		\]
		is a submersion on every face: in the Fulton-MacPherson local charts the common root position is an independent \(V\)-coordinate.  Consequently
		\[
		Z_j=\operatorname{ev}_1^{-1}(x)
		\]
		is a neat codimension-\(d\) submanifold with corners.  By Lemma~\ref{Lem6.4}, the marked cap class is represented by its fundamental chain.  Its dimension is
		\[
		q_j-d=q_{j-1}+1.
		\]
		
		Define the fixed-point star
		\begin{equation}\label{eq6.fixed-point-star}
			A_{x,j-1}
			=B_{j-2}(V)\cup
			\{\mu\in B_{j-1}(V):x\in\operatorname{supp}\mu\}.
		\end{equation}
		For \(j=2\), this is the single point \(\delta_x\).  For \(j\ge3\), it is the image of
		\[
		[0,1]\times B_{j-2}(V)\to B_{j-1}(V),
		\qquad
		(s,\nu)\longmapsto s\delta_x+(1-s)\nu.
		\]
		It is a closed subpolyhedron after a compatible subdivision, and
		\begin{equation}\label{eq6.fixed-star-dimension}
			\dim A_{x,j-1}
			\le q_{j-2}+1=q_{j-1}-d<q_{j-1}.
		\end{equation}
		Therefore
		\begin{equation}\label{eq6.fixed-star-vanishing}
			H_{q_{j-1}}(A_{x,j-1},B_{j-2}(V);\mathbb F_2)=0.
		\end{equation}
		
		We now inspect \(\partial Z_j\).  On the face \(t_1=0\), the marked point is forgotten.  Away from \(A_{x,j-1}\), the remaining \(j-1\) particles are distinct and avoid \(x\), and the face map has exactly one preimage on the marked quotient.  Thus it has local mod-\(2\) degree one onto the open \((j-1)\)-particle stratum away from the fixed-point star.
		
		Every other codimension-one face maps into \(A_{x,j-1}\).  Indeed, on an unmarked zero-weight face the marked point \(x\) remains active.  On an intrinsic collision face, either \(x\) remains in the support with at most \(j-2\) other support points, or the marked weight is zero and the image has at most \(j-2\) support points.  The same support alternative applies to all intersections and nested collision corners.  In particular, at a ghost-active corner with \(t_1=0\), the active particle colliding with the ghost is located at \(x\); the image can still have \(j-1\) support points, but the fixed point \(x\) places it in \(A_{x,j-1}\).
		
		The degree-one statement and the support description of all other faces show that the pushed boundary of the marked capped chain and \(\beta_{j-1}\) have the same image in
		\[
		H_{q_{j-1}}(B_{j-1}(V),A_{x,j-1};\mathbb F_2).
		\]
		Equivalently, choose the fundamental chain \([Z_j]\) on a subdivision compatible with Lemma~\ref{Lem6.3}, and put \(\varpi_j=\pi_j\circ r_j\).  By the definition of relative homology there are chains
		\[
		Q_j\in C_{q_{j-1}+1}(B_{j-1}(V);\mathbb F_2),
		\qquad
		R_j\in C_{q_{j-1}}(A_{x,j-1};\mathbb F_2)
		\]
		such that
		\begin{equation}\label{eq6.relative-degree-chain}
			(\varpi_j)_\#\partial[Z_j]
			= (\pi_{j-1})_\#\mathfrak B_{j-1}+\partial Q_j+R_j.
		\end{equation}
		The exact sequence of the triple
		\[
		B_{j-2}(V)\subset A_{x,j-1}\subset B_{j-1}(V)
		\]
		and \eqref{eq6.fixed-star-vanishing} show that the natural map
		\[
		H_{q_{j-1}}(B_{j-1}(V),B_{j-2}(V))
		\to
		H_{q_{j-1}}(B_{j-1}(V),A_{x,j-1})
		\]
		is injective.  Hence the two classes already agree modulo \(B_{j-2}(V)\), proving \eqref{eq6.bc-identity}.  This marked-point calculation is the dimension-independent mechanism underlying \cite[Appendix~C, Proposition~C.1]{BahriCoron1988}.
		
		The argument also covers codimension-one intrinsic collisions.  When \(d=1\), a binary collision has an \(S^0\)-screen and is a genuine boundary face; the same fixed-point-star inclusion and the strict inequality in \eqref{eq6.fixed-star-dimension} apply.
		
		For \(j=1\), \(\mathcal U_1=V\), \(q_1=d\), and
		\[
		\theta_{V,1}\frown\beta_1
		=(h^*\omega)\frown[V]
		=\langle h^*\omega,[V]\rangle e_V=e_V.
		\]
	\end{proof}
	
	\begin{lemma}\label{Lem6.6}
		The cocycle \(\mathbf u_j\) and the fundamental chain \(\mathfrak B_j\) may be placed on one common face-compatible subdivision.  For \(j\ge2\), there are singular chains
		\[
		\mathfrak Q_j\in
		C_{q_{j-1}+1}(B_{j-1}(V);\mathbb F_2),
		\qquad
		\mathfrak R_j\in
		C_{q_{j-1}}(B_{j-2}(V);\mathbb F_2)
		\]
		such that
		\begin{equation}\label{eq6.filtered-chain-identity}
			\pi_{j\#}\partial(\mathbf u_j\frown\mathfrak B_j)
			-\pi_{j-1,\#}\mathfrak B_{j-1}
			=\partial\mathfrak Q_j+\mathfrak R_j,
			\qquad
			\operatorname{supp}\mathfrak R_j\subset B_{j-2}(V).
		\end{equation}
		For \(j=1\), \(\mathbf u_1\frown\mathfrak B_1\) represents \(e_V\).
	\end{lemma}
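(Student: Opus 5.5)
The plan is to turn the homological identity of Lemma~\ref{Lem6.5} into a chain-level statement. Work on one common face-compatible subdivision of \(\widehat{\mathcal M}_j(V)\) that refines the triangulations of Lemma~\ref{Lem6.3}. We also make it compatible with the marked cover \(r_j\) and with the neat submanifold \(Z_j=\operatorname{ev}_1^{-1}(x)\). Subdivision preserves relative classes, by the chain homotopy recorded in Lemma~\ref{Lem6.4}. So we may take \(\mathfrak B_j\) to be the sum of top simplices, and \(\mathbf u_j\) to be the simplicial restriction of the strictly additive cocycle \eqref{eq6.strict-sum-cocycle}. This settles the first assertion.

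For \(j\ge2\), start from the literal transfer identity \eqref{eq6.marked-transfer-cap}. Then replace \((h\circ\operatorname{ev}_1)^*\mathbf a_\omega\) by \(\operatorname{ev}_1^*\) of a point-dual cocycle \(\mathbf a_\omega'+\delta b\) representing \(h^*\omega\). Apply \eqref{eq6.cap-boundary-bookkeeping} to \(\mathfrak B^\bullet_j\). The change in the capped chain is then \(\partial(b\frown\mathfrak B^\bullet_j)+b\frown\partial\mathfrak B^\bullet_j\), and the second term is supported in the marked preimage of \(\widehat{\mathcal M}_{j,1}\). Taking \(\partial\) and pushing forward by \(\varpi_j=\pi_j\circ r_j\), the first term disappears. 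The second term contributes \(\partial(b\frown\partial\mathfrak B^\bullet_j)\), which is a boundary of a chain supported in \(B_{j-1}(V)\). We therefore absorb it into \(\mathfrak Q_j\). By Lemma~\ref{Lem6.4}, the capped point-dual chain is \([Z_j]\).

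This gives
\[
\pi_{j\#}\partial(\mathbf u_j\frown\mathfrak B_j)=(\varpi_j)_\#\partial[Z_j]+\partial(\cdots).
\]
We then insert \eqref{eq6.relative-degree-chain}. What remains is the term \(R_j\), supported in the fixed-point star \(A_{x,j-1}\), and we must move it into \(B_{j-2}(V)\).

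This last step is the main obstacle. The chain \(R_j\) is a relative cycle in \((A_{x,j-1},B_{j-2}(V))\) of degree \(q_{j-1}\). This holds because the left side and \((\pi_{j-1})_\#\mathfrak B_{j-1}\) both have boundary supported in \(B_{j-2}(V)\), by Lemma~\ref{Lem6.3}. The vanishing \eqref{eq6.fixed-star-vanishing}, which comes from the dimension bound \eqref{eq6.fixed-star-dimension}, then gives \(R_j=\partial P_j+\mathfrak R_j\) with \(P_j\) a chain in \(A_{x,j-1}\) and \(\operatorname{supp}\mathfrak R_j\subset B_{j-2}(V)\). Setting \(\mathfrak Q_j=Q_j+P_j+(\text{boundary correction above})\), all supported in \(B_{j-1}(V)\), yields \eqref{eq6.filtered-chain-identity}.

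Care is needed at two points. First, \(P_j\) must be realized as a singular chain on the polyhedral pair, using the semialgebraic triangulation. Second, the boundary terms coming from \(\partial\mathfrak B^\bullet_j\) must really lie over \(\widehat{\mathcal M}_{j,1}\); this follows from Lemma~\ref{Lem6.3}. For \(j=1\), \(\mathfrak B_1=[V]\) and \(\mathbf u_1=h^*\mathbf a_\omega\), so \eqref{eq6.base-cap} gives the class \(e_V\).
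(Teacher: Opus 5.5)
Your argument is correct, but it is organized differently from the paper's. The paper uses Lemma~\ref{Lem6.5} as a black box. It first identifies \([\pi_{j\#}(\mathbf u_j\frown\mathfrak B_j)]\) with \(\theta_{V,j}\frown\beta_j\), using cap naturality and the one-point-compactification model. It then notes that the connecting homomorphism is represented by \(\pi_{j\#}\partial(\mathbf u_j\frown\mathfrak B_j)\). Finally it unpacks the equality \(\partial_{V,j}(\theta_{V,j}\frown\beta_j)=\beta_{j-1}\) in \(H_{q_{j-1}}(B_{j-1}(V),B_{j-2}(V))\) into chains \(\mathfrak Q_j,\mathfrak R_j\).

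You instead reopen the proof of Lemma~\ref{Lem6.5} and run its marked-point computation directly on chains. Your steps are the transfer identity \eqref{eq6.marked-transfer-cap}, then the point-dual replacement, whose correction \(\partial\varpi_{j\#}(\operatorname{ev}_1^*b\frown\partial\mathfrak B_j^\bullet)\) is a boundary of a chain over \(B_{j-1}(V)\), then the identity \eqref{eq6.relative-degree-chain}. Last, you apply the vanishing \eqref{eq6.fixed-star-vanishing} to the relative cycle \(R_j\) of \((A_{x,j-1},B_{j-2}(V))\); this is exactly the chain-level content of the injectivity step in Lemma~\ref{Lem6.5}.

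Your route never has to match the chain-level cap with the Borel--Moore cap defined through \(\kappa_j\) in \eqref{eq6.topological-cap}, which fits a statement that is itself chain-level. The paper's route is shorter and relies only on the statement of Lemma~\ref{Lem6.5}, not on identities internal to its proof.

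One small imprecision: Lemma~\ref{Lem6.4} only says the capped point-dual chain is \emph{represented} by \([Z_j]\). That is, it equals \([Z_j]\) modulo a boundary and a chain on the boundary of the marked resolution. After applying \(\partial\) and \(\varpi_{j\#}\), this discrepancy is again the boundary of a chain in \(B_{j-1}(V)\), so it is absorbed into \(\mathfrak Q_j\) exactly like your \(b\)-correction.
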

	
	\begin{proof}
		By \eqref{eq6.cap-boundary-bookkeeping}, \(\partial(\mathbf u_j\frown\mathfrak B_j) =\mathbf u_j\frown\partial\mathfrak B_j\), because \(\delta\mathbf u_j=0\).  The complete degree audit is
		\[
		\begin{array}{c|ccccc}
			&\mathfrak B_j&\mathbf u_j\frown\mathfrak B_j
			&\partial(\mathbf u_j\frown\mathfrak B_j)
			&\mathfrak Q_j&\mathfrak R_j\\ \hline
			\deg&q_j&q_j-d=q_{j-1}+1&q_{j-1}&q_{j-1}+1&q_{j-1}.
		\end{array}
		\]
		On the common open stratum, \(\mathbf u_j\) represents \(\theta_{V,j}\), while \(\mathfrak B_j\) represents \(\beta_j\). Cap naturality and the one-point-compactification identifications therefore give
		\[
		\left[\pi_{j\#}(\mathbf u_j\frown\mathfrak B_j)\right]
		=\theta_{V,j}\frown\beta_j
		\quad\text{in }H_{q_{j-1}+1}(B_j(V),B_{j-1}(V)).
		\]
		Its connecting boundary is represented by \(\pi_{j\#}\partial(\mathbf u_j\frown\mathfrak B_j)\).  By Lemma~\ref{Lem6.5}, this has the same relative class as \(\pi_{j-1,\#}\mathfrak B_{j-1}\) in
		\[
		H_{q_{j-1}}(B_{j-1}(V),B_{j-2}(V)).
		\]
		The definition of equality in relative homology now supplies chains \(\mathfrak Q_j\) and \(\mathfrak R_j\) of the stated degrees and gives \eqref{eq6.filtered-chain-identity} literally.
		
		Only a chain whose actual \(\pi_j\)-image lies in \(B_{j-2}(V)\) is put in \(\mathfrak R_j\).  A redundant zero-weight/collision corner may have geometric codimension two but support defect one; such a term belongs to the \(\partial\mathfrak Q_j\) bookkeeping.  This distinction is essential for the ghost-active corner and remains essential when \(d=1\).  For \(j=2\), the convention \(B_0(V)=\varnothing\) gives \(\mathfrak R_2=0\).
		
		For \(j=1\), the conclusion is exactly \eqref{eq6.base-cap} on the resolved model \(\widehat{\mathcal M}_1(V)=V\).
	\end{proof}
	
	\medskip
	\noindent We next account for off-diagonal self-collisions.  The preceding filtration is intrinsic to the barycenter space of \(V\). If
	\[
	z_r\ne z_s,
	\qquad
	h(z_r)=h(z_s),
	\]
	then the source barycenter still has \(j\) distinct support points and the point lies in the interior of the resolved source, outside \(\widehat{\mathcal M}_{j,1}\).  Consequently \eqref{eq6.filtered-chain-identity} remains unchanged.  After composition with \(h\), the corresponding analytic centers coincide.  In Lemma~\ref{Lem6.7} all such points are therefore assigned to the analytic collision collar \(\mathcal L_j\).  A PL cut may create an artificial interface, but that interface occurs in both cut boundaries and is absorbed by the ordinary lower-sublevel boundary \(\partial(\mathbf u_j\frown\mathfrak L_j)\).  Thus off-diagonal self-collisions affect the analytic collar, while the topological connecting boundary remains governed by intrinsic support loss.  This treatment applies to self-collision loci of arbitrary regularity and codimension.
	
	For the next result, fix \(j\ge2\) and assume that the \(L_a\)-problem has no positive solution. Let \(V\) be a closed manifold, let \(h:V\to K\Subset\Omega\) be smooth, and choose the modified core data \(0<\tau_j\le\tau_{j,*}\), \(D_j\), and \(\Lambda_j\) as in Lemma~\ref{Lem5.18}.  Fix \(\lambda\ge\Lambda_j\), choose regular levels satisfying
	\[
	\ell_j^{\mathrm{low}}(\lambda)<c_{j-1}<b_j<c_j<b_{j+1},
	\]
	and assume
	\[
	g_{a,\lambda,j}\bigl(B_j(h)(B_j(V))\bigr)\subset W_{a,j}.
	\]
	Let \(\widehat{\mathcal M}_j(V)\), \(\mathfrak B_j\), and \(\pi_j\) be as in Lemma~\ref{Lem6.3}, and set
	\[
	\widehat g_j=g_{a,\lambda,j,h}\circ\pi_j.
	\]
	On \(\widehat{\mathcal M}^{\mathrm{lab}}_j(V)\), write \(z_i=\operatorname{ev}_i\) for the evaluation points.  The invariant subsets
	\[
	\begin{aligned}
		\mathcal C_j
		&=\left\{
		\max_i|jt_i-1|<\tau_j,\quad
		\lambda|h(z_r)-h(z_s)|>D_j\ (r\ne s)
		\right\},\\
		\mathcal L_j
		&=\left\{
		\max_i|jt_i-1|>\tau_j/2
		\ \hbox{or}\
		\lambda|h(z_r)-h(z_s)|<2D_j\ \hbox{for some }r\ne s
		\right\}
	\end{aligned}
	\]
	descend to \(\widehat{\mathcal M}_j(V)\).
	
	\begin{lemma}\label{Lem6.7}
		After a finite cellular subdivision, there are chains \(\mathfrak P_j,\mathfrak L_j\in C_{q_j}(\widehat{\mathcal M}_j(V);\mathbb F_2)\) such that
		\begin{equation}\label{eq6.chain-cut}
			\mathfrak B_j=\mathfrak P_j+\mathfrak L_j,
		\end{equation}
		\[
		\operatorname{supp}\mathfrak P_j\subset\mathcal C_j,
		\qquad
		\operatorname{supp}\mathfrak L_j\subset\mathcal L_j,
		\]
		and a chain
		\[
		\Gamma_j\in
		C_{q_j-1}(\widehat{\mathcal M}_j(V);\mathbb F_2)
		\]
		such that
		\[
		\operatorname{supp}\Gamma_j
		\subset\mathcal C_j\cap\mathcal L_j,
		\qquad
		\partial\mathfrak P_j=\Gamma_j,
		\qquad
		\partial\mathfrak L_j=\partial\mathfrak B_j+\Gamma_j.
		\]
		These are identities of absolute mod-\(2\) chains.  Moreover,
		\[
		\widehat g_j(\operatorname{supp}\mathfrak P_j)\subset X_j,
		\qquad
		\widehat g_j(\operatorname{supp}\mathfrak L_j)\subset W_{a,j-1},
		\qquad
		\widehat g_j(\operatorname{supp}\Gamma_j)\subset A_j.
		\]
		On \(\operatorname{supp}\mathfrak P_j\),
		\begin{equation}\label{eq6.center-square}
			\chi_{a,j}\circ\widehat g_j
			=
			\operatorname{SP}^j(h)\circ\widehat\zeta_j.
		\end{equation}
	\end{lemma}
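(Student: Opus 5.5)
The plan is to build the cut from a continuous invariant partition function and then read off the three image inclusions from Lemma~\ref{Lem5.18}.

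\emph{Step 1: separating function.} On the labelled resolution \(\widehat{\mathcal M}^{\mathrm{lab}}_j(V)\) consider the \(\mathfrak S_j\)-invariant continuous function
\[
\phi(z,t)=\max\Bigl\{\max_i|jt_i-1|/\tau_j,\ \max_{r\ne s}\,D_j/\bigl(\lambda|h(z_r)-h(z_s)|\bigr)\Bigr\},
\]
with value \(+\infty\) allowed on collision loci. This function descends to \(\widehat{\mathcal M}_j(V)\). Then \(\{\phi<1\}\subset\mathcal C_j\). Also \(\{\phi>1/2\}\subset\mathcal L_j\) after trivially rescaling the thresholds, so the two open sets cover. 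The invariant sets are open in the triangulated pair, and their complements are the closed sets \(\{\phi\ge 1\}\) and \(\{\phi\le1/2\}\), which are disjoint. Choose a finite face-compatible subdivision of the triangulation from Lemma~\ref{Lem6.3} so fine that every closed simplex meeting \(\{\phi\le 1/2\}\) lies in \(\mathcal C_j\), and every simplex meeting \(\{\phi\ge1\}\) lies in \(\mathcal L_j\). Lebesgue numbers on the compact space give this. Subdivide \(\mathfrak B_j\) accordingly; the subdivision chain map preserves the relative class.

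\emph{Step 2: the chains.} Define \(\mathfrak P_j\) as the sum of top simplices of the subdivided \(\mathfrak B_j\) that meet \(\{\phi\le1/2\}\), and put \(\mathfrak L_j=\mathfrak B_j+\mathfrak P_j\). This is the sum of the remaining simplices, all of which lie in \(\{\phi>1/2\}\subset\mathcal L_j\) up to the subdivision choice. Setting \(\Gamma_j=\partial\mathfrak P_j\) gives \(\partial\mathfrak L_j=\partial\mathfrak B_j+\Gamma_j\) over \(\mathbb F_2\).

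The main check is \(\operatorname{supp}\Gamma_j\subset\mathcal C_j\cap\mathcal L_j\). A facet of \(\mathfrak P_j\) that is interior to \(\widehat{\mathcal M}_j\) and survives in \(\partial\mathfrak P_j\) is shared with a simplex of \(\mathfrak L_j\), so it lies in both sets. A facet lying on \(\partial\widehat{\mathcal M}_j\) would sit in \(\widehat{\mathcal M}_{j,1}\), where a zero weight or an intrinsic collision forces \(\phi=\infty\). Such a facet therefore cannot belong to a simplex meeting \(\{\phi\le 1/2\}\), provided the mesh is chosen so that \(\phi\) varies by less than \(1/4\) across these simplices. This uses uniform continuity of \(\min(\phi,2)\). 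Hence \(\mathfrak P_j\) has no resolved-boundary facets, and \(\Gamma_j\) consists only of interface facets. Off-diagonal self-collisions with \(h(z_r)=h(z_s)\) lie in the interior of the source but also have \(\phi=\infty\), so they land in \(\mathfrak L_j\), as the preceding remark anticipates.

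\emph{Step 3: images and centres.} Simplices of \(\mathfrak P_j\) lie in \(\mathcal C_j\), which consists of core tuples. Lemma~\ref{Lem5.18}, with the assumed inclusion into \(W_{a,j}\), gives \(\widehat g_j\in X_j\) there, together with the centre identity \eqref{eq5.exact-center-new}. Since \(\pi_j\) evaluates at the roots, this is exactly \eqref{eq6.center-square}. Simplices of \(\mathfrak L_j\) lie in \(\mathcal L_j\) or on zero-weight and collision faces, which are collar tuples, so Lemma~\ref{Lem5.18} places them in \(W_{a,j-1}\). Finally, \(\Gamma_j\) lies in both sets, so its image lies in \(X_j\cap W_{a,j-1}\subset A_j\). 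This uses \(X_j\subset\mathcal U_{a,j}(\rho_{j,\mathrm{mod}})\).

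I expect the only delicate point to be Step 2: ensuring that no resolved-boundary facet enters \(\Gamma_j\). As noted, I would handle it with the mesh condition on \(\min(\phi,2)\).
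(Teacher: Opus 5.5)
Your proof is correct and follows essentially the paper's plan. You cover the compact resolved model by the open sets $\mathcal C_j=\{\phi<1\}$ and $\mathcal L_j=\{\phi>1/2\}$, with the natural boundary on the collar side. You then cut the subdivided fundamental chain into a core part and a collar part whose interface lies in $\mathcal C_j\cap\mathcal L_j$, and read off the image inclusions and the centre identity from Lemma~\ref{Lem5.18}.

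The only difference is the cutting device. You select the top simplices meeting $\{\phi\le 1/2\}$ on a sufficiently fine mesh. The paper instead PL-approximates a partition of unity, perturbs vertex values off $1/2$, and subdivides along the $1/2$-level set. Your version is somewhat simpler and gives the same absolute chain identities.

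One small correction: on a zero-weight face $\phi\ge 1/\tau_j>2$, not $\phi=\infty$. This still suffices for your argument. In fact the point you flagged as delicate in Step~2 is automatic. Since $\operatorname{supp}\mathfrak P_j\subset\mathcal C_j$, and $\mathcal C_j$ (positive weights, distinct evaluations) is disjoint from the natural boundary, no simplex of $\mathfrak P_j$ can have a facet there.
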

	
	\begin{proof}
		The resolution retains all labels, weights, and limiting evaluation points.  Because \(\tau_{j,*}<1/2\), every zero-weight face satisfies \(\max_i|jt_i-1|>\tau_j/2\).  On every Fulton-MacPherson collision face, there are \(r\ne s\) with \(\operatorname{ev}_r=\operatorname{ev}_s\), and hence
		\[
		\lambda
		\left|h(\operatorname{ev}_r)-h(\operatorname{ev}_s)\right|
		=0<2D_j.
		\]
		Thus the entire natural boundary lies in the interior of \(\mathcal L_j\).  The same holds for every off-diagonal \(h\)-self-collision.  Here the two notions play different roles.  An intrinsic collision means \(z_r=z_s\); it is a Fulton-MacPherson boundary point and its barycenter has lower support, so it belongs to the face filtration of Lemma~\ref{Lem6.3}.  An off-diagonal \(h\)-self-collision means
		\[
		z_r\ne z_s,\qquad h(z_r)=h(z_s).
		\]
		It is generally an interior point of the resolved configuration space, and \(\pi_j\) retains \(j\)-point support there.  Its coincident analytic centers place it in \(\mathcal L_j\), while the support-loss filtration remains indexed by intrinsic collisions and zero weights.  The preceding boundary description includes nested collision faces and corners at which a zero weight is redundant with a collision.  Every point of \(\mathcal C_j\) has positive weights and pairwise distinct intrinsic evaluations.  The two descended sets form an open cover of the compact resolved model.
		
		Choose a partition of unity \(\rho_{\mathrm C}+\rho_{\mathrm L}=1\) subordinate to this cover such that
		\[
		\operatorname{supp}\rho_{\mathrm C}\subset\mathcal C_j,\qquad
		\operatorname{supp}\rho_{\mathrm L}\subset\mathcal L_j,
		\]
		and \(\rho_{\mathrm C}=0\) on a neighborhood of the natural boundary. After a preliminary subdivision, choose disjoint closed subpolyhedral neighborhoods \(P_0,P_1\) such that \(P_0\) contains both the natural boundary and the complement of \(\mathcal C_j\), while \(P_1\) contains the complement of \(\mathcal L_j\), and
		\[
		\rho_{\mathrm C}|_{P_0}=0,
		\qquad
		\rho_{\mathrm C}|_{P_1}=1.
		\]
		The relative PL approximation theorem gives a PL function \(\widetilde\rho_{\mathrm C}\), fixed at \(0\) on \(P_0\) and at \(1\) on \(P_1\), arbitrarily close to \(\rho_{\mathrm C}\).  Perturb its remaining vertex values, still relative to \(P_0\cup P_1\), choosing every vertex value different from \(1/2\).  Then
		\[
		\{\widetilde\rho_{\mathrm C}\ge1/2\}\subset\mathcal C_j,
		\qquad
		\{\widetilde\rho_{\mathrm C}\le1/2\}\subset\mathcal L_j,
		\qquad
		\{\widetilde\rho_{\mathrm C}=1/2\}
		\subset\mathcal C_j\cap\mathcal L_j,
		\]
		and the natural boundary lies strictly on the second side.  Replace \(\rho_{\mathrm C}\) by this PL function, reset \(\rho_{\mathrm L}=1-\rho_{\mathrm C}\), and subdivide along
		\[
		H_j=\{\rho_{\mathrm C}=1/2\}.
		\]
		Take the resulting triangulation to be a common refinement, for all the finitely many indices under consideration, of the face filtrations and the triangulations previously used.  On this refinement, continue to denote by \(\mathfrak B_j\) the sum of all \(q_j\)-simplices representing the relative fundamental class. Let \(\mathfrak P_j\) be the sum of the \(q_j\)-cells on the \(\rho_{\mathrm C}\ge1/2\) side, let \(\mathfrak L_j\) be the sum on the \(\rho_{\mathrm C}\le1/2\) side, and let \(\Gamma_j\) be the sum of the \((q_j-1)\)-cells in \(H_j\).  Interior faces cancel in pairs, the artificial interface occurs once in each cut boundary, and the natural boundary occurs only on the collar side.  Therefore
		\[
		\mathfrak B_j=\mathfrak P_j+\mathfrak L_j,\qquad
		\partial\mathfrak P_j=\Gamma_j,\qquad
		\partial\mathfrak L_j=\partial\mathfrak B_j+\Gamma_j.
		\]
		The support properties of the partition give the three asserted support inclusions.
		
		Lemma~\ref{Lem5.18} maps \(\mathcal C_j\) into \(X_j\), maps \(\mathcal L_j\) into \(W_{a,j-1}\), and gives the exact center identity on \(\mathcal C_j\).  On the interface, the image lies in
		\[
		X_j\cap W_{a,j-1}
		=W_{a,j-1}\cap
		\mathcal U_{a,j}(\rho_{j,\mathrm{mod}})
		=A_j.
		\]
		This proves the analytic support assertions.  On the common subdivision, choose anew a cellular cocycle \(\mathbf u_j\) representing \(u_j\). The subdivision map and a simplicial approximation to the identity are chain homotopic through homotopies preserving the filtration subcomplexes, and the Alexander-Whitney cap product is compatible with these maps only up to the corresponding chain homotopy.  Thus the new capped chain represents the same relative homology class, which is the identification used below.  Reapply the relative-homology argument of Lemma~\ref{Lem6.6} on this common subdivision and rechoose \(\mathfrak Q_j,\mathfrak R_j\).  Then \eqref{eq6.filtered-chain-identity}, with \(\operatorname{supp}\mathfrak R_j\subset B_{j-2}(V)\), and the literal absolute cut-chain identities above hold simultaneously for the same \(\mathfrak B_j\) and \(\mathbf u_j\).
	\end{proof}
	
	\subsection{The global analytic cap module}
	
	Let \((X_j,A_j)\) and \(\iota_j\) be as in \eqref{eq5.core-pair-new}.  Since \(\chi_{a,j}\) takes values in the full symmetric product, the class
	\[
	\xi_{a,j}=(\chi_{a,j}|_{X_j})^*\Theta_j(\omega)
	\in H^d(X_j;\mathbb F_2)
	\]
	is defined on the entire balanced core, including repeated centers. Transporting its ordinary cap product through \(\iota_j\) defines
	\[
	\theta_{a,j}\frown z
	=
	(\iota_j)_*\left(
	\xi_{a,j}\frown (\iota_j)_*^{-1}z
	\right),
	\qquad z\in H_*(W_{a,j},W_{a,j-1}).
	\]
	Thus the analytic operation is the cap product with an absolute cohomology class on the balanced core.
	
	For \(j\ge2\), let
	\[
	\partial_{a,j}:
	H_k(W_{a,j},W_{a,j-1})
	\to
	H_{k-1}(W_{a,j-1},W_{a,j-2})
	\]
	be the connecting homomorphism of the sublevel triple.
	
	\begin{proposition}
		\label{Prop6.8}
		Let \(V,h,K,\omega\) be as in Lemma~\ref{Lem6.1}, let \(\beta_j\) be the classes of Lemma~\ref{Lem6.5}, and let \((X_j,A_j)\), \(\iota_j\), and \(\theta_{a,j}\) be the analytic core data defined above.  Fix \(m\ge1\), one common scale \(\lambda\), and one simultaneous family of regular levels.  Assume that, for every \(1\le j\le m\), the canonical core pair is the pair \eqref{eq5.core-pair-new}, Lemmas~\ref{Lem5.17} and \ref{Lem5.18} apply at that scale and those levels, and
		\[
		g_{a,\lambda,j,h}:
		(B_j(V),B_{j-1}(V))\to
		(W_{a,j},W_{a,j-1})
		\]
		is a map of pairs.  Then, for \(2\le j\le m\),
		\begin{equation}\label{eq6.module-diagram}
			\partial_{a,j}\!\left(
			\theta_{a,j}\frown(g_{a,\lambda,j,h})_*\beta_j
			\right)
			=
			(g_{a,\lambda,j-1,h})_*\beta_{j-1}.
		\end{equation}
		At the bottom stratum,
		\begin{equation}\label{eq6.module-base}
			\theta_{a,1}\frown(g_{a,\lambda,1,h})_*\beta_1
			=
			(g_{a,\lambda,1,h})_*e_V.
		\end{equation}
	\end{proposition}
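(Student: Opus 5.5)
The plan is to transport the chain-level identity \eqref{eq6.filtered-chain-identity} of Lemma~\ref{Lem6.6} through the analytic test map, using the cut of Lemma~\ref{Lem6.7}. Two facts make this work. On the core, the capping cocycle can be matched with the source cocycle via the centre identity \eqref{eq6.center-square}. On the collar, the chains land in \(W_{a,j-1}\) and so vanish relatively.

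\textbf{Representing the analytic cap.} By Lemma~\ref{Lem5.17}, the class \((g_{a,\lambda,j,h})_*\beta_j\) has a unique preimage under \((\iota_j)_*\). I will represent this preimage by \(\widehat g_{j\#}\mathfrak P_j\), viewed as a relative cycle of \((X_j,A_j)\). This is legitimate for three reasons.
\begin{itemize}
\item \(\partial\mathfrak P_j=\Gamma_j\), and \(\widehat g_j(\operatorname{supp}\Gamma_j)\subset A_j\).
\item \(\iota_{j\#}\widehat g_{j\#}\mathfrak P_j\) differs from \(\widehat g_{j\#}\mathfrak B_j\) by \(\widehat g_{j\#}\mathfrak L_j\), which is supported in \(W_{a,j-1}\).
\item \(\widehat g_{j\#}\mathfrak B_j=g_{a,\lambda,j,h\#}\pi_{j\#}\mathfrak B_j\) represents \((g_{a,\lambda,j,h})_*\beta_j\) by Lemma~\ref{Lem6.3}.
\end{itemize}

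\textbf{Matching cocycles on the core.} Let \(\mathbf x_j\) be a cocycle on \(X_j\) representing \(\xi_{a,j}\). By \eqref{eq6.center-square}, the pullback \(\widehat g_j^{\,*}\mathbf x_j\) restricted to a neighbourhood of \(\operatorname{supp}\mathfrak P_j\) is cohomologous there to \(\mathbf u_j=\widehat\zeta_j^{\,*}\operatorname{SP}^j(h)^*\Theta_j(\omega)\). Naturality of cap products then gives
\[
\xi_{a,j}\frown[\widehat g_{j\#}\mathfrak P_j]=[\widehat g_{j\#}(\mathbf u_j\frown\mathfrak P_j)].
\]
The second identity in \eqref{eq6.cap-boundary-bookkeeping} bounds the error of changing cocycles by \(\partial(b\frown\mathfrak P_j)+b\frown\Gamma_j\). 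The second term is supported over \(A_j\). To get this, I first restrict to a subcomplex that deformation retracts onto a neighbourhood of \(\operatorname{supp}\mathfrak P_j\) inside \(\mathcal C_j\), where the identity \eqref{eq6.center-square} holds.

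\textbf{Applying the connecting map.} Next apply \(\iota_{j*}\) and \(\partial_{a,j}\). The result is represented by
\[
\widehat g_{j\#}\partial(\mathbf u_j\frown\mathfrak P_j)=\widehat g_{j\#}(\mathbf u_j\frown\Gamma_j),
\]
viewed in \((W_{a,j-1},W_{a,j-2})\). Now use \eqref{eq6.chain-cut}:
\[
\mathbf u_j\frown\Gamma_j=\mathbf u_j\frown\partial\mathfrak B_j+\mathbf u_j\frown\partial\mathfrak L_j=\partial(\mathbf u_j\frown\mathfrak B_j)+\partial(\mathbf u_j\frown\mathfrak L_j).
\]
The term \(\partial(\mathbf u_j\frown\mathfrak L_j)\) is a boundary of a chain whose image lies in \(W_{a,j-1}\), so it is zero in \(H_*(W_{a,j-1},W_{a,j-2})\). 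For the remaining term, push \eqref{eq6.filtered-chain-identity} forward by \(g_{a,\lambda,j-1,h}\):
\[
g_\#\pi_{j\#}\partial(\mathbf u_j\frown\mathfrak B_j)=g_{a,\lambda,j-1,h\#}\pi_{j-1\#}\mathfrak B_{j-1}+\partial g_\#\mathfrak Q_j+g_\#\mathfrak R_j.
\]
Two compatibility facts are needed here. First, \(g_{a,\lambda,j,h}\) restricted to \(B_{j-1}(V)\) equals \(g_{a,\lambda,j-1,h}\) (zero weights are omitted). Second, \(\mathfrak R_j\) maps into \(W_{a,j-2}\) because \(g_{a,\lambda,j-2,h}\) is a map of pairs. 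With these, we obtain \eqref{eq6.module-diagram}.

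\textbf{Base case.} For \(j=1\) there is no collar, so \(\mathfrak P_1=\mathfrak B_1\). Identity \eqref{eq6.module-base} then follows from \eqref{eq6.base-cap} together with the same centre identity.

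\textbf{Main obstacle.} The hardest step is the chain-level bookkeeping in the cocycle matching. The cocycle \(\mathbf x_j\) lives on \(X_j\), but \(\chi_{a,j}\circ\widehat g_j\) agrees with \(\operatorname{SP}^j(h)\circ\widehat\zeta_j\) only on \(\mathcal C_j\). The cap with \(\mathfrak P_j\) must therefore be computed using cocycles defined only near \(\operatorname{supp}\mathfrak P_j\), and every correction term must be shown to land in \(A_j\) or to be a boundary. I would handle this by working on the common subdivision of Lemma~\ref{Lem6.7} and using the strictly additive cocycle \eqref{eq6.strict-sum-cocycle} pulled back through \(\Theta_j\) on both sides. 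With that choice the two pullbacks coincide literally on \(\mathcal C_j\), and no correction is needed.
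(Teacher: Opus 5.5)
Your proposal is correct and follows the paper's proof essentially step for step. You represent the $(\iota_j)_*$-preimage by $\widehat g_{j\#}\mathfrak P_j$, exchange $\widehat g_j^{\,*}\boldsymbol\xi_{a,j}$ for $\mathbf u_j$ on the core via \eqref{eq6.center-square} and the cap-boundary formula (the error lands in $A_j$ through $\Gamma_j$), split $\Gamma_j=\partial\mathfrak B_j+\partial\mathfrak L_j$, discard the collar boundary in $W_{a,j-1}$, and push \eqref{eq6.filtered-chain-identity} forward using the restriction identity.

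Your variant of taking both cocycles as pullbacks of one singular cocycle representing $\Theta_j(\omega)$ is also fine, and strict additivity plays no role there, because Lemma~\ref{Lem6.7} re-derives \eqref{eq6.filtered-chain-identity} for whatever representative $\mathbf u_j$ is chosen. Separately, the chain $(g_{a,\lambda,j-1,h})_\#\mathfrak R_j$ lies in $W_{a,j-2}$ most directly because $g_{a,\lambda,j-1,h}$ itself is a map of pairs, not via $g_{a,\lambda,j-2,h}$.
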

	
	\begin{proof}
		For \(j\ge2\), use the chains \(\mathfrak P_j,\mathfrak L_j,\Gamma_j\) from Lemma~\ref{Lem6.7}.  For \(j=1\), put
		\[
		\mathfrak P_1=\mathfrak B_1,\qquad
		\mathfrak L_1=0.
		\]
		Write
		\[
		\widehat g_j=g_{a,\lambda,j,h}\circ\pi_j.
		\]
		The support identities imply that \(\widehat g_{j\#}\mathfrak P_j\) is a relative cycle in \((X_j,A_j)\), and
		\[
		(\iota_j)_*
		\left[\widehat g_{j\#}\mathfrak P_j\right]
		=
		(g_{a,\lambda,j,h})_*\beta_j
		\]
		in \(H_{q_j}(W_{a,j},W_{a,j-1})\).  Since \((\iota_j)_*\) is an isomorphism, this is exactly the inverse image used in the definition of \(\theta_{a,j}\).
		
		Let \(\boldsymbol\xi_{a,j}\) be a cocycle on \(X_j\) representing \(\xi_{a,j}\), and use the cocycle \(\mathbf u_j\) fixed in Lemma~\ref{Lem6.6}.  On the subcomplex supporting \(\mathfrak P_j\), the center identity \eqref{eq6.center-square} gives
		\[
		[\mathbf u_j]
		=
		[\widehat g_j^*\boldsymbol\xi_{a,j}].
		\]
		Cohomologous representatives differ by a coboundary.  Formula \eqref{eq6.cap-boundary-bookkeeping} shows that replacing one by the other changes the capped chain by an ordinary boundary plus a chain supported over \(\partial\mathfrak P_j=\Gamma_j\), whose image lies in \(A_j\). The relative-pair and subdivision assertions of Lemma~\ref{Lem6.4} therefore give
		\begin{equation}\label{eq6.core-cap-chain}
			\theta_{a,j}\frown(g_{a,\lambda,j,h})_*\beta_j
			=
			\left[
			\widehat g_{j\#}
			(\mathbf u_j\frown\mathfrak P_j)
			\right].
		\end{equation}
		This class has degree
		\[
		q_j-d=q_{j-1}+1.
		\]
		
		Its connecting boundary is represented by
		\[
		\partial\widehat g_{j\#}
		(\mathbf u_j\frown\mathfrak P_j)
		\in C_{q_{j-1}}(W_{a,j-1}),
		\]
		because \(\partial(\mathbf u_j\frown\mathfrak P_j) =\mathbf u_j\frown\Gamma_j\) and \(\widehat g_j(\Gamma_j)\subset A_j\). Since \(\mathfrak P_j=\mathfrak B_j+\mathfrak L_j\) over \(\mathbb F_2\),
		\begin{equation}\label{eq6.capped-collar-algebra}
			\begin{aligned}
				\partial\widehat g_{j\#}
				(\mathbf u_j\frown\mathfrak P_j)
				&=
				\partial\widehat g_{j\#}
				(\mathbf u_j\frown\mathfrak B_j)\\
				&\quad+
				\partial\widehat g_{j\#}
				(\mathbf u_j\frown\mathfrak L_j).
			\end{aligned}
		\end{equation}
		The second summand is the boundary of the explicitly defined chain \(\widehat g_{j\#}(\mathbf u_j\frown\mathfrak L_j)\), which is supported in \(W_{a,j-1}\).  It therefore represents zero in
		\[
		H_{q_{j-1}}(W_{a,j-1},W_{a,j-2}).
		\]
		This single boundary contains the artificial interface faces and all their nested intersections.
		
		Apply the test map to \eqref{eq6.filtered-chain-identity}.  Deleting a zero weight or merging equal centers gives the exact restrictions
		\[
		g_{a,\lambda,j,h}|_{B_{j-1}(V)}
		=g_{a,\lambda,j-1,h},
		\qquad
		g_{a,\lambda,j-1,h}(B_{j-2}(V))
		\subset W_{a,j-2}.
		\]
		Consequently,
		\[
		\begin{aligned}
			&\widehat g_{j\#}
			\partial(\mathbf u_j\frown\mathfrak B_j)
			+
			(g_{a,\lambda,j-1,h})_\#
			\pi_{j-1,\#}\mathfrak B_{j-1}\\
			&\qquad=
			\partial\left(
			(g_{a,\lambda,j-1,h})_\#\mathfrak Q_j
			\right)
			+
			(g_{a,\lambda,j-1,h})_\#\mathfrak R_j.
		\end{aligned}
		\]
		The first term on the right is an ordinary boundary in \(W_{a,j-1}\), while the second lies in \(W_{a,j-2}\).  Hence
		\[
		\left[
		\widehat g_{j\#}
		\partial(\mathbf u_j\frown\mathfrak B_j)
		\right]
		=
		(g_{a,\lambda,j-1,h})_*\beta_{j-1}
		\]
		in \(H_{q_{j-1}}(W_{a,j-1},W_{a,j-2})\). Together with \eqref{eq6.core-cap-chain} and \eqref{eq6.capped-collar-algebra}, this proves \eqref{eq6.module-diagram}.
		
		For \(j=1\), \(q_1=d\), the test family lies in \(X_1\), and Lemma~\ref{Lem6.6} gives
		\[
		[\mathbf u_1\frown\mathfrak B_1]=e_V.
		\]
		The same relative-homology cap naturality proves \eqref{eq6.module-base}.
	\end{proof}
	
	\begin{proposition}\label{Prop6.9}
		Fix \(m\ge1\) and \(\lambda>0\).  Suppose that, for every \(1\le j\le m\), the maps \(g_{a,\lambda,j,h}\) are maps of pairs
		\[
		(B_j(V),B_{j-1}(V))
		\to
		(W_{a,j},W_{a,j-1}),
		\]
		and that \eqref{eq6.module-diagram} and \eqref{eq6.module-base} hold.  Then
		\begin{equation}\label{eq6.nonzero}
			(g_{a,\lambda,j,h})_*(\beta_j)\ne0
			\quad\text{in}\quad
			H_{q_j}(W_{a,j},W_{a,j-1})
			\qquad(1\le j\le m).
		\end{equation}
	\end{proposition}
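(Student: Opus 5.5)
We argue by induction on \(j\), going downward through the filtration and using only the structural identities \eqref{eq6.module-diagram} and \eqref{eq6.module-base} together with the fact that \(e_V\) survives under the bottom test map. The base step is \(j=1\). By \eqref{eq6.module-base},
\[
\theta_{a,1}\frown(g_{a,\lambda,1,h})_*\beta_1=(g_{a,\lambda,1,h})_*e_V
\quad\text{in } H_0(W_{a,1},W_{a,0})=H_0(W_{a,1}),
\]
since \(W_{a,0}=\varnothing\). The right-hand side is the class of a single point of the nonempty space \(g_{a,\lambda,1,h}(V)\subset W_{a,1}\). Over \(\mathbb F_2\), the class of a point in \(H_0\) is nonzero: the augmentation \(H_0(W_{a,1})\to\mathbb F_2\) sends it to \(1\). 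Hence the left-hand side is nonzero. Because the cap product is linear in its homology argument, this forces \((g_{a,\lambda,1,h})_*\beta_1\ne0\).

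For the inductive step, suppose \(2\le j\le m\) and that \((g_{a,\lambda,j-1,h})_*\beta_{j-1}\ne0\) has already been shown. Identity \eqref{eq6.module-diagram} expresses this nonzero class as the image of \((g_{a,\lambda,j,h})_*\beta_j\) under the composite
\[
z\longmapsto\partial_{a,j}\bigl(\theta_{a,j}\frown z\bigr).
\]
This composite is \(\mathbb F_2\)-linear. Indeed, \(\theta_{a,j}\frown\) is conjugation of an ordinary cap product by the isomorphism \((\iota_j)_*\) from Lemma~\ref{Lem5.17}, and \(\partial_{a,j}\) is a connecting homomorphism. A linear map sends \(0\) to \(0\), so \((g_{a,\lambda,j,h})_*\beta_j=0\) would give \((g_{a,\lambda,j-1,h})_*\beta_{j-1}=0\), contradicting the induction hypothesis. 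Therefore \((g_{a,\lambda,j,h})_*\beta_j\ne0\), and induction up to \(j=m\) proves \eqref{eq6.nonzero}.

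The main step needing care is the base case. One must check that \((g_{a,\lambda,1,h})_*e_V\) is genuinely nonzero in absolute \(H_0(W_{a,1})\), which uses \(W_{a,0}=\varnothing\). One must also check that the cap product with \(\theta_{a,1}\) is well defined there, which uses that \(\iota_1\) is an isomorphism. Both are supplied by the hypotheses and by Lemma~\ref{Lem5.17}. Everything else is linearity.
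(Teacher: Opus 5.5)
Your proof is correct and follows the paper's argument exactly. The base case uses \eqref{eq6.module-base} and $W_{a,0}=\varnothing$ to see that $(g_{a,\lambda,1,h})_*e_V$ is a nonzero point class in $H_0(W_{a,1};\mathbb F_2)$, and the inductive step applies linearity of $z\mapsto\partial_{a,j}(\theta_{a,j}\frown z)$ together with \eqref{eq6.module-diagram}, just as the paper does. (One wording slip: the induction runs upward from $j=1$ to $j=m$, not ``downward through the filtration.'')
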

	
	\begin{proof}
		Since \(V\) is connected, \(e_V\) is the generator of \(H_0(V;\mathbb F_2)\).  The space \(W_{a,0}\) is empty, so the image of any point under \(g_{a,\lambda,1,h}\) is a nonzero class in
		\[
		H_0(W_{a,1},W_{a,0};\mathbb F_2)=H_0(W_{a,1};\mathbb F_2).
		\]
		Equation~\eqref{eq6.module-base} therefore implies \((g_{a,\lambda,1,h})_*\beta_1\ne0\).
		
		Assume inductively that \((g_{a,\lambda,j-1,h})_*\beta_{j-1}\ne0\).  If \((g_{a,\lambda,j,h})_*\beta_j=0\), then its cap product with \(\theta_{a,j}\) is zero and so is its connecting boundary.  This contradicts \eqref{eq6.module-diagram}, whose right-hand side is the nonzero inductive class.  Induction proves \eqref{eq6.nonzero} for every \(1\le j\le m\).
	\end{proof}
	
	We retain the notation from Section~\ref{sec:matched}
	\[
	T_j(\lambda)
	=
	\sup_{\mu\in B_j(K)}
	\mathcal J_a(g_{a,\lambda,j}(\mu)),
	\qquad T_0(\lambda)=-\infty,
	\]
	and, for \(\lambda_*\le\Lambda\),
	\[
	\widehat T_j(\lambda_*,\Lambda)
	=
	\sup_{\lambda_*\le\lambda\le\Lambda}T_j(\lambda),
	\qquad
	\widehat T_0(\lambda_*,\Lambda)=-\infty.
	\]
	The first scale is the branch-dependent energy-drop scale; the second is the common scale at which the topological cap module is run.
	
	\begin{lemma}\label{Lem6.10}
		Assume that the \(L_a\)-problem has no positive solution.  Fix \(m\ge2\), a compact smooth subdomain \(K\Subset\Omega\), and a smooth map \(h:V\to K\) as in Lemma~\ref{Lem6.1}.  Make, for \(1\le j\le m\), the level-independent modulation, excision, modified-core, weight-collar, and physical-collision-collar choices in Lemmas~\ref{Lem5.16}, \ref{Lem5.17}, and \ref{Lem5.18}.  There is a common threshold \(\Lambda_{\mathrm{top}}=\Lambda_{\mathrm{top}}(m,K,h)\) with the following property.
		
		Suppose that
		\[
		\lambda_*\le\Lambda,
		\qquad
		\Lambda\ge\Lambda_{\mathrm{top}},
		\]
		and
		\begin{equation}\label{eq6.scale-cylinder-bounds}
			\widehat T_j(\lambda_*,\Lambda)<b_{j+1}
			\quad(1\le j\le m),
			\qquad
			T_m(\lambda_*)<b_m.
		\end{equation}
		Then one can choose, once and simultaneously, regular levels \(c_0,\ldots,c_m\) such that all the hypotheses of Propositions~\ref{Prop6.8}, \ref{Prop6.9}, and \ref{Prop5.19} hold at the topology scale \(\Lambda\).  In particular,
		\begin{equation}\label{eq6.topology-scale-nonzero}
			(g_{a,\Lambda,j,h})_*(\beta_j)\ne0
			\quad\text{in}\quad
			H_{q_j}(W_{a,j},W_{a,j-1};\mathbb F_2)
			\qquad(1\le j\le m).
		\end{equation}
	\end{lemma}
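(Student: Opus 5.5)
The plan is to fix every level-independent threshold first and choose the regular levels last, in one pass, so that all earlier lemmas apply at the same scale \(\Lambda\). First I will apply Lemmas~\ref{Lem5.16}, \ref{Lem5.17} and \ref{Lem5.18} for each \(1\le j\le m\), using \(J_{\max}=m\). This fixes the modulation radii, excision widths, modified cores \(\mathscr F_j\), collar data \(\tau_j,D_j\), and scales \(\Lambda_j\). I will also apply Lemma~\ref{Lem5.12} with \(J_{\max}=m\), and then set
\[
\Lambda_{\mathrm{top}}=\max_{1\le j\le m}\Lambda_j .
\]
Lemma~\ref{Lem5.12} guarantees \(T_j(\Lambda)<b_{j+1}\) and \(T_{j-1}(\Lambda)<b_j\) for large \(\Lambda\), but its threshold depends on the levels. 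I therefore do not put it into \(\Lambda_{\mathrm{top}}\). Instead I take the needed strict bounds directly from hypothesis \eqref{eq6.scale-cylinder-bounds}, which already controls \(\widehat T_j\ge T_j(\Lambda)\). Once \(\Lambda\ge\Lambda_{\mathrm{top}}\) is fixed, Lemma~\ref{Lem5.18} yields numbers \(\ell_j^{\mathrm{low}}(\Lambda)<b_j\).

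Next I choose the levels simultaneously by induction on \(j\). Set \(c_0<b_1\), with \(W_{a,0}=\varnothing\). For \(1\le j\le m-1\), I need a regular value \(c_j\) satisfying three constraints:
\[
\max\bigl\{b_j,\ \widehat T_j(\lambda_*,\Lambda)\bigr\}<c_j,
\qquad
\ell_{j+1}^{\mathrm{low}}(\Lambda)<c_j<b_{j+1},
\qquad
b_{j+1}-\vartheta_{j+1,*}<c_j .
\]
The last constraint is implied by the second, since \(\ell^{\mathrm{low}}_{j+1}>b_{j+1}-\vartheta_{j+1,*}\). The interval defined by the first two is nonempty, because every lower bound is strictly below \(b_{j+1}\); in particular \(\widehat T_j<b_{j+1}\) by hypothesis. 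For the top level \(c_{m-1}\), I additionally require \(T_m(\lambda_*)<c_{m-1}\), which is possible since \(T_m(\lambda_*)<b_m\). Finally I take \(c_m\in(\max\{b_m,\widehat T_m\},b_{m+1})\). Regular values exist in each open interval because, under the no-solution hypothesis, Proposition~\ref{Prop3.24} places the critical values of \(\mathcal J_a\) among the \(b_k\). One point needs care here: \(\ell^{\mathrm{low}}_j\) and the windows in Lemma~\ref{Lem3.28} were stated as level-independent, so choosing the levels after \(\Lambda\) does not invalidate them.

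With these levels, the check is mechanical. Since \(T_j(\Lambda)\le\widehat T_j<c_j\), each \(g_{a,\Lambda,j}\) maps into \(W_{a,j}\); its restriction to \(B_{j-1}(K)\) equals \(g_{a,\Lambda,j-1}\) and lands in \(W_{a,j-1}\). Hence each \(g_{a,\Lambda,j,h}\) is a map of pairs. Lemmas~\ref{Lem5.17} and \ref{Lem5.18} apply because \(c_{j-1}>\ell_j^{\mathrm{low}}(\Lambda)\), which gives the hypotheses of Proposition~\ref{Prop6.8}. Proposition~\ref{Prop6.9} then yields \eqref{eq6.topology-scale-nonzero}. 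The cylinder bounds \(\widehat T_j<c_j\) and \(\widehat T_{j-1}<c_{j-1}\), together with \(\max\{b_{m-1},T_m(\lambda_*)\}<c_{m-1}<b_m\), are exactly the hypotheses of Proposition~\ref{Prop5.19}.

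I expect the main obstacle to be consistency: the quantities \(\ell_j^{\mathrm{low}}\), the deformation core and the excision isomorphism must all be valid for the single family of levels fixed at the end. The hardest point is verifying that Lemma~\ref{Lem5.17}'s requirement that \(c_{j-1}\) be "sufficiently close to \(b_j\)" is uniform. I would settle this by invoking the level-uniform window \(b_j-\vartheta_{j,*}<c_{j-1}\) from Lemma~\ref{Lem3.28}, which \(\ell^{\mathrm{low}}_j\) already dominates.
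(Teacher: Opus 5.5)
Your route is the paper's own. You fix the level-independent modulation, excision, modified-core and collar data for \(1\le j\le m\), and let \(\Lambda_{\mathrm{top}}\) dominate the thresholds \(\Lambda_j\) of Lemma~\ref{Lem5.18}, rightly leaving out the level-dependent threshold of Lemma~\ref{Lem5.12}. You read the cylinder bounds off \eqref{eq6.scale-cylinder-bounds}. You then choose all regular levels in a single pass above \(\max\{b_r,\widehat T_r(\lambda_*,\Lambda),\ell^{\mathrm{low}}_{r+1}(\Lambda)\}\), adding \(T_m(\lambda_*)\) for \(c_{m-1}\).

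There is, however, one concrete slip at the bottom level. Your selection rule imposes only \(c_0<b_1\). Your verification, though, asserts \(c_{j-1}>\ell_j^{\mathrm{low}}(\Lambda)\) for every \(j\), including \(j=1\), and nothing you chose guarantees \(c_0>\ell_1^{\mathrm{low}}(\Lambda)\).

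This is not cosmetic. For \(j=1\) the core is \(\mathcal F_{a,1}=\{v\in W_{a,1}:\mathscr F_1(v)\le c_0\}\). Two things require \(c_0\) close to \(b_1\):
the isomorphism of Lemma~\ref{Lem5.17}, which needs the window \(b_1-\vartheta_{1,*}<c_0\) of Lemma~\ref{Lem3.28}; and the inclusion of the one-bubble family in \(X_1\), which is the case \(j=1\) of Lemma~\ref{Lem5.18}.
Since \(a>0\), the Sobolev inequality gives \(\mathcal J_a>b_1\) on \(\Sigma_{a,+}\), hence \(\mathscr F_1>b_1-2\vartheta_{1,*}\). So, for instance, \(c_0\le b_1-2\vartheta_{1,*}\) gives \(X_1=\varnothing\). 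Then the base identity \eqref{eq6.module-base} fails, and the induction of Proposition~\ref{Prop6.9} has no starting point.

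The repair costs nothing: apply your rule also for \(r=0\), i.e.\ require \(\ell_1^{\mathrm{low}}(\Lambda)<c_0<b_1\), with \(b_0=0\) and \(\widehat T_0=-\infty\). The same Sobolev bound shows \(W_{a,0}=\varnothing\) for every \(c_0<b_1\), so nothing else is affected. The argument then coincides with the paper's, whose level choice imposes \(\ell^{\mathrm{low}}_{r+1}(\Lambda)<c_r\) for all \(0\le r\le m-1\).
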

	
	\begin{proof}
		The choices preceding the statement are finite and independent of the exact energy levels.  More explicitly, for each \(j\) choose the retained inner and outer parameter sets and \(\rho_{j,\mathrm{mod}}\), then the excision width, then the level-uniform modified-functional data, and finally the overlapping weight and physical-collision collars.  Let \(\Lambda_{\mathrm{top}}\) dominate the resulting finitely many core-collar thresholds.
		
		Fix scales satisfying \eqref{eq6.scale-cylinder-bounds}. After \(\lambda_*\) and \(\Lambda\) are fixed and before the final regular levels are chosen, take the finitely many common PL subdivisions required by Lemma~\ref{Lem6.7}.  For \(1\le j\le m\), Lemma~\ref{Lem5.18} at \(\Lambda\) supplies
		\[
		\max\{b_{j-1},b_j-\vartheta_{j,*}\}
		<
		\ell_j^{\mathrm{low}}(\Lambda)<b_j.
		\]
		Choose the levels in the following single operation:
		\begin{align}
			\max\{b_r,\widehat T_r(\lambda_*,\Lambda),
			\ell_{r+1}^{\mathrm{low}}(\Lambda)\}
			&<c_r<b_{r+1},
			&&0\le r\le m-2,\label{eq6.simultaneous-levels}\\
			\max\{b_{m-1},\widehat T_{m-1}(\lambda_*,\Lambda),
			\ell_m^{\mathrm{low}}(\Lambda),T_m(\lambda_*)\}
			&<c_{m-1}<b_m,\label{eq6.simultaneous-top-lower}\\
			\max\{b_m,\widehat T_m(\lambda_*,\Lambda)\}
			&<c_m<b_{m+1}.\label{eq6.simultaneous-top-upper}
		\end{align}
		Here \(\widehat T_0=-\infty\).  Every interval is nonempty by \eqref{eq6.scale-cylinder-bounds} and the strict core-collar bounds. The levels can be chosen regular inside these open intervals.
		
		Since
		\[
		T_j(\Lambda)\le\widehat T_j(\lambda_*,\Lambda)<c_j,
		\qquad
		T_{j-1}(\Lambda)
		\le\widehat T_{j-1}(\lambda_*,\Lambda)<c_{j-1},
		\]
		the scale-\(\Lambda\) test maps are maps of pairs, also after composition with \(B_j(h)\).  The inequalities \(c_{j-1}>\ell_j^{\mathrm{low}}(\Lambda)\) give the canonical \(\mathcal U_{a,j}(\rho_{j,\mathrm{mod}})\)-core pair and all collar support statements required by Lemma~\ref{Lem6.7}. Propositions~\ref{Prop6.8} and \ref{Prop6.9} therefore give \eqref{eq6.topology-scale-nonzero} for this one family of levels.
		
		Finally, \eqref{eq6.simultaneous-levels}-\eqref{eq6.simultaneous-top-upper} also give
		\[
		\widehat T_j(\lambda_*,\Lambda)<c_j,\qquad
		\widehat T_{j-1}(\lambda_*,\Lambda)<c_{j-1}
		\quad(1\le j\le m),
		\]
		and
		\[
		\max\{b_{m-1},T_m(\lambda_*)\}<c_{m-1}<b_m.
		\]
		These are exactly the hypotheses of Proposition~\ref{Prop5.19}.
	\end{proof}
	
	\begin{proposition}\label{Prop6.11}
		Assume \(N\in\{3,4,5\}\), \(H_d(\Omega;\mathbb F_2)\ne0\) for some \(1\le d\le N-1\), and that the \(L_a\)-problem has no positive solution.  Let \(V,h,K,\omega\) be supplied by Lemma~\ref{Lem6.1}.  There exist \(m\ge2\), scales
		\[
		\lambda_*\le\Lambda,
		\]
		and one simultaneous family of regular levels \(c_0,\ldots,c_m\) for which \eqref{eq6.topology-scale-nonzero} holds and all hypotheses of Proposition~\ref{Prop5.19} are satisfied.  The scales may be chosen as follows:
		\[
		\begin{array}{c|c|c}
			N&\lambda_*&\Lambda\\ \hline
			3&\Lambda&\text{one sufficiently large fixed-multiplicity scale},\\
			4,5&A\,m^{2/(N-2)}
			&\text{a possibly larger topology scale},
		\end{array}
		\]
		where \(A>0\) is arbitrary in the second line.  In dimensions four and five the strict top energy drop is used only at \(\lambda_*\).
	\end{proposition}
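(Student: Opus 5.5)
The plan is to reduce everything to Lemma~\ref{Lem6.10}: produce \(m\ge2\) and scales \(\lambda_*\le\Lambda\) with \(\Lambda\ge\Lambda_{\mathrm{top}}(m,K,h)\) satisfying the scale-cylinder bounds \eqref{eq6.scale-cylinder-bounds}. That lemma then supplies one simultaneous family of regular levels for which \eqref{eq6.topology-scale-nonzero} holds and all hypotheses of Proposition~\ref{Prop5.19} are met. The order of choices matters. First fix \(V,h,K,\omega\) from Lemma~\ref{Lem6.1}. Next fix \(m\) by the branch rule. Only then make the level-independent modulation, excision, core and collar choices for \(1\le j\le m\), which determine \(\Lambda_{\mathrm{top}}\). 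Finally choose the scales.

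In dimension three, choose \(m\) by \eqref{eq5.mchoice}; this depends only on \(K\) through \(g_K\) and \(M_K\). Put \(\lambda_*=\Lambda\), so that \(\widehat T_j(\Lambda,\Lambda)=T_j(\Lambda)\). Now take \(\Lambda\) larger than \(\Lambda_{\mathrm{top}}\), than the \(\lambda_0\) of Proposition~\ref{Prop5.6}, and than the threshold of Lemma~\ref{Lem5.12} with \(J_{\max}=m\). Proposition~\ref{Prop5.6} then gives \(T_m(\Lambda)<b_m\). For \(1\le j\le m\), the fixed-stratum Bahri--Coron bound \(T_j(\lambda)\le b_j+o_j(1)\) from the proof of Lemma~\ref{Lem5.12} gives \(T_j(\Lambda)<b_{j+1}\) once \(\Lambda\) is large. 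Since only finitely many \(j\) occur, these bounds hold simultaneously, and \eqref{eq6.scale-cylinder-bounds} follows.

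In dimensions four and five, fix \(A>0\) and take \(m\ge n_A\) large enough for both Proposition~\ref{Prop5.10} and Corollary~\ref{Cor5.11}. Set \(\lambda_*=\lambda_m=Am^{2/(N-2)}\). Proposition~\ref{Prop5.10} gives \(T_m(\lambda_*)\le b_m(1-\kappa_A/m)<b_m\). Then choose any \(\Lambda\ge\max\{\lambda_m,\Lambda_{\mathrm{top}}\}\).

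The cylinder bounds come from the uniform estimate in the proof of Corollary~\ref{Cor5.11}: \(T_j(\lambda)\le b_j(1+C\varepsilon_N(\lambda))\) for every \(\lambda\), uniformly in \(j\), weights and centers. Because \(\varepsilon_N\) is decreasing for large \(\lambda\), this yields
\[
\widehat T_j(\lambda_m,\Lambda)\le b_j\bigl(1+C\varepsilon_N(\lambda_m)\bigr).
\]
For large \(m\), \(\varepsilon_N(\lambda_m)=o(m^{-1})\) while \(b_{j+1}/b_j-1\ge c_N/j\ge c_N/m\) for \(1\le j\le m\). Hence \(\widehat T_j(\lambda_m,\Lambda)<b_{j+1}\) for all \(1\le j\le m\). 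The key point is that this does not depend on \(\Lambda\), so enlarging \(\Lambda\) to meet \(\Lambda_{\mathrm{top}}\) costs nothing. In particular, the strict top drop is used only at \(\lambda_*\), as the statement requires.

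I expect the main obstacle to be the dependency ordering in the four- and five-dimensional case. The constant \(\Lambda_{\mathrm{top}}\) depends on \(m\) through the collar and core thresholds of Lemma~\ref{Lem5.18}, and \(m\) must be fixed before those thresholds. One therefore has to check that the choice of \(m\) (from \(n_A\) and Corollary~\ref{Cor5.11}) does not itself require a bound on \(\Lambda\), and that the cylinder estimate is genuinely uniform on \([\lambda_m,\infty)\). I would verify this by noting that Corollary~\ref{Cor5.11} is stated for every \(\Lambda\ge\lambda_m\) with constants independent of \(\Lambda\). With these inputs, invoke Lemma~\ref{Lem6.10} to finish.
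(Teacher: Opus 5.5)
Your proposal is correct and follows the paper's proof. In both branches you fix \(m\) first, then the level-independent data of Lemma~\ref{Lem6.10} that determine \(\Lambda_{\mathrm{top}}\), and then the scales. You verify \eqref{eq6.scale-cylinder-bounds} for \(N=3\) via Proposition~\ref{Prop5.6} and the fixed-stratum bound behind Lemma~\ref{Lem5.12} at \(\lambda_*=\Lambda\), and for \(N\in\{4,5\}\) via Proposition~\ref{Prop5.10} at \(\lambda_m\) together with the \(\Lambda\)-independent cylinder estimate of Corollary~\ref{Cor5.11}. The one cosmetic difference is that the paper applies Lemma~\ref{Lem5.12} to auxiliary levels \(\bar c_r\), while you use the underlying estimate \(T_j(\lambda)\le b_j+o_j(1)\) directly; the two are equivalent.
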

	
	\begin{proof}
		First suppose \(N=3\).  Choose \(m\ge2\) by \eqref{eq5.mchoice}.  Make the finite, level-independent choices in Lemma~\ref{Lem6.10}, obtaining \(\Lambda_{\mathrm{top}}\). Choose auxiliary numbers
		\[
		b_r<\bar c_r<b_{r+1}\quad(0\le r\le m),
		\]
		with \(W_{a,0}=\varnothing\).  Lemma~\ref{Lem5.12}, applied only to these auxiliary levels, gives a scale beyond which
		\[
		T_r(\lambda)<\bar c_r<b_{r+1}
		\qquad(1\le r\le m).
		\]
		Now choose
		\[
		\Lambda\ge\Lambda_{\mathrm{top}}
		\]
		large enough for this estimate and for Proposition~\ref{Prop5.6}, and put \(\lambda_*=\Lambda\).  Then
		\[
		\widehat T_r(\lambda_*,\Lambda)
		=T_r(\Lambda)<b_{r+1},
		\qquad
		T_m(\lambda_*)<b_m.
		\]
		Lemma~\ref{Lem6.10} supplies the asserted single family of final levels. The scale cylinder is constant in this branch.
		
		Now let \(N\in\{4,5\}\) and fix \(A>0\).  Choose \(m\) sufficiently large for Proposition~\ref{Prop5.10} and Corollary~\ref{Cor5.11}, and put
		\[
		\lambda_*=A\,m^{\frac{2}{N-2}}.
		\]
		After \(m\) is fixed, make the finite, level-independent choices in Lemma~\ref{Lem6.10}.  Choose
		\[
		\Lambda\ge\max\{\lambda_*,\Lambda_{\mathrm{top}}\}.
		\]
		Proposition~\ref{Prop5.10} gives
		\[
		T_m(\lambda_*)<b_m,
		\]
		whereas Corollary~\ref{Cor5.11} gives, on the entire cylinder,
		\[
		\widehat T_r(\lambda_*,\Lambda)<b_{r+1}
		\qquad(1\le r\le m).
		\]
		Thus Lemma~\ref{Lem6.10} again supplies one simultaneous final level family.  The induced relative map is transported from \(\lambda_*\) to \(\Lambda\), while the matched-scale inequality remains attached to \(\lambda_*\).
	\end{proof}

	\subsection{Completion of the proof}

	\begin{proof}[Proof of Theorem~\ref{Thm1.1}]
		Assume by contradiction that the hyperbolic problem has no positive solution.  By Lemma~\ref{Lem2.1}, the conformally equivalent Euclidean problem
		\[
		L_av=v^{2^*-1},\qquad v>0\ \text{in }\Omega,\qquad
		v=0\ \text{on }\partial\Omega
		\]
		has no positive solution either.
		
		Choose
		\[
		0\ne\alpha\in H_d(\Omega;\mathbb F_2),
		\qquad 1\le d\le N-1.
		\]
		Lemma~\ref{Lem6.1} gives a smooth map
		\[
		h:V\to K\Subset\Omega
		\]
		and a class \(\omega\) satisfying \(\langle h^*\omega,[V]\rangle=1\). Choose \(m,\lambda_*,\Lambda\), and the single family of sublevels given by Proposition~\ref{Prop6.11}.  Its topological conclusion gives
		\begin{equation}\label{eq6.final-nonzero}
			(g_{a,\Lambda,m,h})_*(\beta_m)\ne0
			\quad\text{in}\quad
			H_{m(d+1)-1}(W_{a,m},W_{a,m-1};\mathbb F_2).
		\end{equation}
		For the same levels and scales, Proposition~\ref{Prop5.19} gives
		\[
		(g_{a,\Lambda,m,h})_*=0
		\quad\text{on}\quad
		H_*(B_m(V),B_{m-1}(V);\mathbb F_2),
		\]
		contradicting \eqref{eq6.final-nonzero}.  In the \(N=3\) branch this is the direct drop at the single scale \(\lambda_*=\Lambda\).  In the \(N=4,5\) branch the matched-scale inequality is applied at \(\lambda_*=A m^{2/(N-2)}\), and the homotopy of pairs transports the zero map to \(\Lambda\).
		
		Thus the Euclidean \(L_a\)-problem has a positive solution \(v\). Lemma~\ref{Lem2.1} then yields the positive solution \(u=\phi^{-1}v\) of the original hyperbolic problem.
	\end{proof}
	\section*{Acknowledgments}
	
	\noindent\textbf{Funding.} This work was supported by the National Natural Science Foundation of China (Grant Nos. 12301145, 12561020, and 12261107) and the Yunnan Fundamental Research Projects (Grant Nos. 202401AU070123 and 202601AT070048). T.~Rana and M.~Ruzhansky were supported by the FWO Odysseus 1 Grant G.0H94.18N (Analysis and Partial Differential Equations) and the Methusalem programme of the Ghent University Special Research Fund (BOF) (Grant Nos. BOFMET2021000601 and 01M01021). T.~Rana was also supported by a BOF postdoctoral fellowship at Ghent University (Grant No. BOF24/PDO/025). M.~Ruzhansky was also supported by EPSRC Grant UKRI3645 and FWO Senior Research Grant G022821N.
	
	\medskip
	\noindent\textbf{Author contributions.} All authors contributed equally to the preparation of the manuscript.
	
	\medskip
	\noindent\textbf{Data availability.} No data were generated or analysed in this study.
	
	\medskip
	\noindent\textbf{Conflict of interest.} The authors declare that they have no conflict of interest.

\end{document}